\ifdefined\synctex
\fi
\documentclass[12pt, reqno]{amsart}
\usepackage{amsmath}
\usepackage{amsfonts}
\usepackage{amssymb}
\usepackage{txfonts}
\usepackage{mathtools}
\usepackage{array}
\usepackage[shortlabels]{enumitem}
\usepackage{cite}
\usepackage[colorlinks=true,linkcolor=red,citecolor=red,urlcolor=red,final,hyperindex]{hyperref}

\setlist[enumerate]{leftmargin=2em,labelsep=.5em,topsep=.4\baselineskip,itemsep=.2\baselineskip}

\newcommand{\nc}{\newcommand}
\nc{\delete}[1]{}
\nc{\mlabel}[1]{\label{#1}}
\nc{\mcite}[1]{\cite{#1}}
\nc{\mref}[1]{\ref{#1}}
\nc{\meqref}[1]{\eqref{#1}}
\nc{\mbibitem}[1]{\bibitem{#1}}
\nc{\mrm}[1]{{\rm #1}}
\nc{\id}{\mrm{id}}
\nc{\Id}{\mrm{Id}}
\def\o{\otimes}

\newtheorem{thm}{Theorem}[section]
\newtheorem{lem}[thm]{Lemma}
\newtheorem{cor}[thm]{Corollary}
\newtheorem{pro}[thm]{Proposition}
\theoremstyle{definition}
\newtheorem{defi}[thm]{Definition}
\newtheorem{ex}[thm]{Example}
\newtheorem{rmk}[thm]{Remark}

\title[NS-Leibniz bialgebras]{NS-Leibniz bialgebras, the Yang--Baxter equation and relative Rota--Baxter operators}
\author[Li]{Bei Li}
\address{School of Mathematical Sciences, Qufu Normal University,  Qufu 273165, P. R. of China}
\email{libei9717@163.com}

\author[Wang]{Dingguo Wang \textsuperscript{*}}
\address{School of Mathematical Sciences, Qufu Normal University,  Qufu 273165, P. R. of China}
\email{dgwang@qfnu.edu.cn}

 \thanks{\textsuperscript{*}Corresponding author}
\date{}
\subjclass[2020]{17A30, 17A32, 17A36, 17B38.}
\keywords{NS-Leibniz algebra, Manin triple, Yang--Baxter equation,
relative Rota--Baxter operator, Nijenhuis operator}
\begin{document}
\baselineskip=15pt
\allowdisplaybreaks
\raggedbottom

\begin{abstract}
We introduce NS-Leibniz bialgebras and establish their equivalence
with matched pairs and Manin triples of NS-Leibniz algebras.
In the coboundary case, skew-symmetric solutions of the
NS-Leibniz Yang--Baxter equation yield NS-Leibniz bialgebras
and correspond to $\mathcal{O}$-operators for the dual regular representation.
We study quasi-triangular and factorizable structures, and relate
factorizable tensors to quadratic Rota--Baxter NS-Leibniz algebras
of any fixed nonzero weight. We also construct compatible
bialgebra structures from skew-adjoint Nijenhuis operators
commuting with the Rota--Baxter operator.
\end{abstract}
\maketitle
\markboth{NS-LEIBNIZ BIALGEBRAS}{NS-LEIBNIZ BIALGEBRAS}
\tableofcontents

\section{Introduction}
\mlabel{sec:introduction}
NS-Leibniz algebras decompose a Leibniz product into three
operations $\succ,\prec,\vee$. When $\vee=0$, the defining
identities reduce to those of a Leibniz-dendriform algebra.
In this paper, we introduce NS-Leibniz bialgebras and study
their representations, double constructions and associated
Yang--Baxter equation. When $\vee=0$ and $\Delta_\vee=0$, the definition recovers that of a Leibniz-dendriform bialgebra introduced by Sun and Guo~\mcite{SunGuo}.

\subsection{Leibniz algebras and NS-Leibniz algebras}
Leibniz algebras were first considered by Blokh \mcite{Blokh}
and were subsequently introduced by Loday in the study of
Lie algebra homology \mcite{Loday}. Their defining identity
retains the derivation property of a Lie bracket without
requiring skew-symmetry. Loday and Pirashvili developed their
representation theory and cohomology in \mcite{LodayPirashvili}.
A relative Rota--Baxter operator on a Leibniz algebra induces
two operations whose sum is a Leibniz product. The resulting
structure is called a Leibniz-dendriform algebra
\mcite{TangSheng}, or a pre-Leibniz algebra \mcite{DasPre}.
Its relation with symplectic structures and phase spaces was
studied in \mcite{TangXuSheng}.

Splitting an algebraic product into several operations also
occurs in dendriform and quadri-algebras
\mcite{LodayDialgebras,AguiarLoday}. Leroux introduced
NS-algebras in connection with Nijenhuis operators
\mcite{Leroux}; their relation with Nijenhuis algebras was
further studied by Lei and Guo \mcite{LeiGuo}.
Das and Guo \mcite{DasGuo} introduced NS-Leibniz algebras
from twisted relative Rota--Baxter operators. Such an algebra
has three products $\succ,\prec,\vee$, whose sum is Leibniz.
Setting $\vee=0$ recovers a Leibniz-dendriform algebra.
Ospel, Panaite and Vanhaecke \mcite{OPV} placed these
constructions in a general setting based on bimodules.

\subsection{Bialgebras and Yang--Baxter equations}
Lie bialgebras and the classical Yang--Baxter equation provide
a model for double constructions and coboundary bialgebras
\mcite{Drinfeld}. Analogous constructions have been developed
for associative algebras \mcite{Aguiar,Bai2010}, left-symmetric
algebras \mcite{Bai2008}, and Novikov algebras
\mcite{HongBaiGuo}. Tang and Sheng \mcite{TangSheng}
introduced Leibniz bialgebras and their classical Yang--Baxter
equation. Sun and Guo \mcite{SunGuo} developed the corresponding
theory for Leibniz-dendriform algebras.

Quasi-triangular and factorizable structures relate tensor
solutions to quadratic Rota--Baxter algebras. This relation
has been studied for Lie algebras \mcite{LangSheng},
associative algebras \mcite{ShengWang}, pre-Lie algebras
\mcite{WangBaiLiuSheng}, and Leibniz algebras
\mcite{BaiLiuShengTang}. These results suggest considering
both skew-symmetric tensors and tensors with invariant
symmetric parts in the NS-Leibniz setting.

\subsection{NS-Leibniz bialgebras and relative Rota--Baxter operators}
We first establish the representation theory of NS-Leibniz
algebras and determine the six actions of the dual
representation. The total dual actions are
$(L_\succ^*,-(L_\succ+R_\prec)^*)$.
When $\vee=0$ and the actions associated with $\vee$ vanish,
these structures reduce to the representations and dual
representations of Leibniz-dendriform algebras.

We define NS-Leibniz bialgebras by four coalgebra identities
and six compatibility conditions, with total coproduct
$\Delta=\Delta_\succ+\Delta_\prec+\Delta_\vee$.
We establish their equivalence with matched pairs and
Manin triples. In the coboundary case, substituting the
three coproducts into these identities leads to the
NS-Leibniz Yang--Baxter equation. Its skew-symmetric
solutions determine NS-Leibniz bialgebras.

We then investigate $\mathcal O$-operators, quasi-triangular
and factorizable bialgebras, and quadratic Rota--Baxter
algebras. When $\vee=0$ and $\Delta_\vee=0$, the bialgebra
structures reduce to Leibniz-dendriform bialgebras
\mcite{SunGuo}.
Finally, skew-adjoint Nijenhuis operators commuting with a
quadratic Rota--Baxter operator produce compatible families
of NS-Leibniz bialgebras. The Leibniz results in
\mcite{MondalSaha,MaSunZheng} provide the background for
this construction.

\subsection{Outline of the paper}
Section~\mref{sec:representations} concerns representations,
matched pairs and Manin triples of NS-Leibniz algebras.
Section~\mref{sec:manin-bialgebras} introduces NS-Leibniz
bialgebras, coboundary structures and the Yang--Baxter
equation, followed by $\mathcal O$-operators and their induced
splittings. Section~\mref{sec:quasitriangular} treats
quasi-triangular and factorizable NS-Leibniz bialgebras.
Section~\mref{sec:relative-rb} relates these structures to
relative Rota--Baxter operators and quadratic Rota--Baxter
NS-Leibniz algebras. Section~\mref{sec:nijenhuis} contains
the Nijenhuis constructions and compatible bialgebras.

\subsection{Conventions and notation}
Throughout the paper, $k$ is a field of characteristic zero,
and all vector spaces are finite-dimensional over $k$.
The identity map is denoted by $I$, and
$\tau(x\o y)=y\o x$ is the flip map.
We write $\langle\zeta,x\rangle$ for the natural pairing
between a vector space and its dual.

\section{Representations and matched pairs of NS-Leibniz algebras}
\mlabel{sec:representations}
\mlabel{sec:matched-pairs}
This section develops representations, dual representations
and matched pairs of NS-Leibniz algebras. We begin with
the underlying Leibniz algebra and its representations.
Recall that a left Leibniz algebra $(A,\circ )$ satisfies
\begin{equation}\mlabel{eq:leibniz}
 x\circ (y\circ z)=(x\circ y)\circ z+y\circ (x\circ z).
\end{equation}
A representation is a vector space $V$ with linear maps
$l_\circ,r_\circ:A\to\mathrm{End}(V)$ satisfying
\begin{equation}\mlabel{eq:leibniz-rep}
 l_\circ(x\circ y)=[l_\circ(x),l_\circ(y)],\quad
 r_\circ(x\circ y)=l_\circ(x)r_\circ(y)-r_\circ(y)l_\circ(x),\quad
 r_\circ(y)\bigl(r_\circ(x)+l_\circ(x)\bigr)=0.
\end{equation}
For a linear map $T:V\to W$, write $T^{\mathsf t}:W^*\to V^*$
for its ordinary transpose. For an action $a:A\to\mathrm{End}(V)$,
define $a^*:A\to\mathrm{End}(V^*)$ by
\[
 \langle a^*(x)\zeta,u\rangle=-\langle \zeta,a(x)u\rangle.
\]
Thus $a^*(x)=-a(x)^{\mathsf t}$. For an ordinary linear map,
we also use $T^*=T^{\mathsf t}$.
The corresponding dual representation is $(V^*,l_\circ^*,-l_\circ^*-r_\circ^*)$.

\begin{defi}\mlabel{def:ns}
An NS-Leibniz algebra is a vector space $A$ with three bilinear operations
$\succ,\prec,\vee$ satisfying, with $x\circ y=x\prec y+x\succ y+x\vee y$,
\begin{align}
 x\prec(y\circ z)&=(x\prec y)\prec z+y\succ(x\prec z),\mlabel{eq:ns1}\\
 x\succ(y\prec z)&=(x\succ y)\prec z+y\prec(x\circ z),\mlabel{eq:ns2}\\
 x\succ(y\succ z)&=(x\circ y)\succ z+y\succ(x\succ z),\mlabel{eq:ns3}\\
 x\succ(y\vee z)+x\vee(y\circ z)
 &=(x\vee y)\prec z+(x\circ y)\vee z
   +y\succ(x\vee z)+y\vee(x\circ z).\mlabel{eq:ns4}
\end{align}
\end{defi}
Summing \meqref{eq:ns1}--\meqref{eq:ns4} shows that $(A,\circ)$
is a Leibniz algebra. We use the conventions of \mcite{DasGuo,OPV}.
For each $\diamond\in\{\succ,\prec,\vee\}$, set
$L_\diamond(x)y=x\diamond y$ and $R_\diamond(x)y=y\diamond x$.
Also set $L_\circ=L_\succ+L_\prec+L_\vee$,
$R_\circ=R_\succ+R_\prec+R_\vee$, and $L_\star=L_\circ+R_\circ$.
For module actions introduced below, we use $l_\circ=l_\succ+l_\prec+l_\vee$,
$r_\circ=r_\succ+r_\prec+r_\vee$, $l_\star=l_\circ+r_\circ$,
$l_\odot=l_\succ+r_\prec$, and $r_\odot=r_\succ+l_\prec$.
If $\vee=0$, $A$ is a Leibniz-dendriform algebra.

When $\vee=0$, the fourth identity vanishes and
\meqref{eq:ns1}--\meqref{eq:ns3} are precisely the
Leibniz-dendriform identities \mcite{SunGuo}.
The relation with the subadjacent Leibniz representation
is described by the following proposition.

\begin{pro}\mlabel{prop:ns-leibniz-representation}
Let $A$ have three bilinear products $\succ,\prec,\vee$, and put
$\circ=\succ+\prec+\vee$.
Then $A$ is an NS-Leibniz algebra if and only if $(A,\circ)$
is a Leibniz algebra and $(A,L_\succ,R_\prec)$ is a
representation of $(A,\circ)$.
\end{pro}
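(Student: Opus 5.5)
We prove the equivalence by translating each of the three representation identities \meqref{eq:leibniz-rep} for $(l_\circ,r_\circ)=(L_\succ,R_\prec)$ into an identity among the products. The claim is that these translations are exactly \meqref{eq:ns3}, \meqref{eq:ns2} and \meqref{eq:ns1}. The remaining axiom \meqref{eq:ns4} will then be recovered from the Leibniz identity for $\circ$ by subtraction.

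Evaluate each identity on $z$ (respectively on $y$).

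First, $L_\succ(x\circ y)z=[L_\succ(x),L_\succ(y)]z$ reads
\[
(x\circ y)\succ z=x\succ(y\succ z)-y\succ(x\succ z),
\]
which is precisely \meqref{eq:ns3}.

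Second, $R_\prec(x\circ z)y=L_\succ(x)R_\prec(z)y-R_\prec(z)L_\succ(x)y$. With the variables renamed, this is
\[
y\prec(x\circ z)=x\succ(y\prec z)-(x\succ y)\prec z,
\]
that is, \meqref{eq:ns2}.

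Third, $R_\prec(y)\bigl(R_\prec(x)+L_\succ(x)\bigr)w=0$ reads
\[
(w\prec x)\prec y+(x\succ w)\prec y=0.
\]
This does not match \meqref{eq:ns1} as written, so the main care is in choosing the correct normalization of the third axiom for left Leibniz algebras. For the left Leibniz identity \meqref{eq:leibniz}, the adjoint pair $(L_\circ,R_\circ)$ must satisfy the third relation in the form
\[
R_\circ(y\circ z)? \quad\text{i.e.}\quad x\circ(z\circ y)=(x\circ z)\circ y+z\circ(x\circ y).
\]
Read in the right-action variable, this gives
\[
R_\circ(z\circ y)=R_\circ(y)R_\circ(z)+L_\circ(z)R_\circ(y).
\]
I will therefore use the third condition in whichever equivalent form the conventions of \mcite{DasGuo,OPV} fix; the natural candidate is
\[
r(y\circ z)=r(z)r(y)+l(y)r(z).
\]
Applied to $(L_\succ,R_\prec)$ on $x$, this candidate gives
\[
x\prec(y\circ z)=(x\prec y)\prec z+y\succ(x\prec z),
\]
exactly \meqref{eq:ns1}. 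The first step is thus to confirm that \meqref{eq:leibniz-rep}, combined with the first two relations, is equivalent to this candidate form. I would verify this via the semidirect product $A\ltimes V$: a pair $(l,r)$ is a representation iff $A\oplus V$ with
\[
(x,u)\circ(y,v)=(x\circ y,\ l(x)v+r(y)u)
\]
is left Leibniz. Expanding the Leibniz identity with exactly one entry in $V$ gives three identities, according to the position of the $V$ entry. These are precisely the three translated identities above, which resolves the normalization.

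Granting this, the forward direction is immediate. If $A$ is NS-Leibniz, summing \meqref{eq:ns1}--\meqref{eq:ns4} gives \meqref{eq:leibniz}. The identities \meqref{eq:ns1}--\meqref{eq:ns3} give the three representation conditions by the translation above.

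Conversely, assume $(A,\circ)$ is Leibniz and $(A,L_\succ,R_\prec)$ is a representation. The translation gives \meqref{eq:ns1}--\meqref{eq:ns3}. Subtracting these from \meqref{eq:leibniz}, we must check that what remains is \meqref{eq:ns4}. This is bookkeeping: expand each side of \meqref{eq:leibniz} by $\circ=\succ+\prec+\vee$, taken in the outer operation only, and sort the terms by outer operation.

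On the left side of \meqref{eq:leibniz}:
\begin{itemize}
\item $x\prec(y\circ z)$ matches the left side of \meqref{eq:ns1};
\item $x\succ(y\circ z)$ splits as $x\succ(y\prec z)+x\succ(y\succ z)+x\succ(y\vee z)$, and the first two pieces match \meqref{eq:ns2} and \meqref{eq:ns3}.
\end{itemize}
On the right side:
\begin{itemize}
\item $(x\circ y)\circ z$ splits by outer operation, and $(x\circ y)\prec z=(x\succ y)\prec z+(x\prec y)\prec z+(x\vee y)\prec z$;
\item $y\circ(x\circ z)$ splits similarly.
\end{itemize}
After cancelling the terms of \meqref{eq:ns1}--\meqref{eq:ns3}, the leftover terms are exactly those of \meqref{eq:ns4}. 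This cancellation is routine once the terms are grouped this way.
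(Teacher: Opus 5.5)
Your proposal is correct and is essentially the paper's proof: the first two representation relations for $(L_\succ,R_\prec)$ are \eqref{eq:ns3} and \eqref{eq:ns2}, the third is equivalent to \eqref{eq:ns1} given \eqref{eq:ns2}, and \eqref{eq:ns4} is what remains after subtracting \eqref{eq:ns1}--\eqref{eq:ns3} from the expanded Leibniz identity. The semidirect-product detour is unnecessary, and as phrased it assumes the equivalence it is meant to settle; subtracting $r(y\circ z)=l(y)r(z)-r(z)l(y)$ from $r(y\circ z)=r(z)r(y)+l(y)r(z)$ gives $r(z)\bigl(r(y)+l(y)\bigr)=0$ directly, which is exactly the paper's remark that \eqref{eq:ns1} and \eqref{eq:ns2} together are equivalent to the last two relations.
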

\begin{proof}
The representation identities for $(L_\succ,R_\prec)$ are
\begin{equation}\mlabel{eq:ns-underlying-representation}
\begin{aligned}
 L_\succ(x\circ y)&=[L_\succ(x),L_\succ(y)],\\
 R_\prec(x\circ y)&=L_\succ(x)R_\prec(y)-R_\prec(y)L_\succ(x),\\
 R_\prec(y)\bigl(R_\prec(x)+L_\succ(x)\bigr)&=0.
\end{aligned}
\end{equation}
The first two equalities are \meqref{eq:ns3} and
\meqref{eq:ns2}. Together, \meqref{eq:ns1} and
\meqref{eq:ns2} imply the third equality. Conversely,
the last two equalities imply \meqref{eq:ns1}.
Thus \meqref{eq:ns1}--\meqref{eq:ns3} are equivalent to
\meqref{eq:ns-underlying-representation}.

Summing the four NS-Leibniz identities yields the Leibniz
identity for $\circ$. Conversely, expand that Leibniz identity
using $\circ=\succ+\prec+\vee$, and subtract
\meqref{eq:ns1}--\meqref{eq:ns3}. The remaining equality is
\meqref{eq:ns4}.
\end{proof}

\begin{ex}\mlabel{ex:one-dimensional-ns}
Let $A=ke$ and define
$e\succ e=e$, $e\prec e=0$, $e\vee e=-e$.
The total product is zero. Direct substitution verifies
\meqref{eq:ns1}--\meqref{eq:ns4}.
\end{ex}

\begin{defi}\mlabel{def:quadratic}
A quadratic NS-Leibniz algebra is an NS-Leibniz algebra $A$ with a
nondegenerate symmetric bilinear form $\omega$ satisfying
\begin{align}
\omega(x\prec y,z)&=\omega(x,y\circ z+z\circ y),\mlabel{eq:inv-p}\\
\omega(x\succ y,z)&=-\omega(y,x\circ z).\mlabel{eq:inv-s}
\end{align}
We write $\omega^\sharp:A\to A^*$ for the map defined by
$\langle\omega^\sharp(x),y\rangle=\omega(x,y)$.
\end{defi}

The symmetry of $\omega$ implies
\begin{align}
\omega(x\vee y,z)&=\omega(y,x\vee z),\mlabel{eq:inv-v-left}\\
\omega(x\vee y,z)&=-\omega(x,y\vee z+z\vee y).\mlabel{eq:inv-v-right}
\end{align}
Indeed, expanding $\vee=\circ-\prec-\succ$ yields
\[
\omega(x\vee y,z)=\omega(x\circ y,z)+\omega(y,x\circ z)
 -\omega(x,y\circ z+z\circ y).
\]
The right-hand side is symmetric in $y,z$, and its cyclic sum
is zero. These two observations establish the asserted identities.

A nondegenerate symmetric bilinear form determines
$\succ$ and $\prec$ through the invariance identities.
Their difference from the total product determines $\vee$.

\begin{thm}
\mlabel{thm:metric-splitting}
Let $(A,\circ)$ be a Leibniz algebra with a nondegenerate symmetric
bilinear form $\omega$. There is a unique quadratic NS-Leibniz
algebra on $A$ with form $\omega$ and total product $\circ$.
Its operations are determined by
\begin{equation}\mlabel{eq:metric-construct}
\begin{aligned}
 \omega(x\prec y,z)&=\omega(x,y\circ z+z\circ y),\\
 \omega(x\succ y,z)&=-\omega(y,x\circ z),\\
 x\vee y&=x\circ y-x\prec y-x\succ y.
\end{aligned}
\end{equation}
Every quadratic NS-Leibniz algebra arises in this way.
\end{thm}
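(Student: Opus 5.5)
The plan is to construct the operations, check the four NS-Leibniz identities by pairing with $\omega$, and then read off uniqueness from nondegeneracy. Since $\omega$ is nondegenerate, the first two lines of \meqref{eq:metric-construct} define unique bilinear products $\prec,\succ$: for fixed $x,y$ the right-hand sides are linear functionals in $z$, hence are represented by unique vectors. The third line then defines $\vee$, and $\circ=\succ+\prec+\vee$ holds by construction. Uniqueness and the final sentence are immediate. In any quadratic NS-Leibniz algebra with form $\omega$ and total product $\circ$, the invariance identities \meqref{eq:inv-p} and \meqref{eq:inv-s} are exactly the first two lines, so $\prec,\succ$ are forced, and then $\vee=\circ-\prec-\succ$ is forced. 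The whole content is therefore existence: the products so defined satisfy \meqref{eq:ns1}--\meqref{eq:ns4}.

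For existence I will use Proposition~\mref{prop:ns-leibniz-representation}. Since $(A,\circ)$ is Leibniz by hypothesis, it suffices to show that $(A,L_\succ,R_\prec)$ is a representation of $(A,\circ)$. The definitions say precisely that
\[
L_\succ(x)=-\bigl(\omega^\sharp\bigr)^{-1}L_\circ(x)^{\mathsf t}\omega^\sharp,\qquad
R_\prec(y)=\bigl(\omega^\sharp\bigr)^{-1}\bigl(L_\circ(y)+R_\circ(y)\bigr)^{\mathsf t}\omega^\sharp .
\]
That is, $\omega^\sharp$ intertwines $(L_\succ,R_\prec)$ with $(L_\circ^*,-L_\circ^*-R_\circ^*)$ on $A^*$. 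This is the dual of the adjoint representation $(A,L_\circ,R_\circ)$ in the convention recalled before Definition~\mref{def:ns}. Transporting a representation along a linear isomorphism gives a representation, so the three identities \meqref{eq:ns-underlying-representation} follow at once, provided the claimed dual of a Leibniz representation really is a representation. The proposition then yields \meqref{eq:ns1}--\meqref{eq:ns4}, the fourth arising as the remainder of the Leibniz identity.

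The main obstacle is this standard fact. Given a representation $(V,l,r)$, one must show that $(V^*,l^*,-l^*-r^*)$ is a representation. I would verify the three conditions of \meqref{eq:leibniz-rep} by pairing against $u\in V$.
\begin{enumerate}
\item The first condition is the transpose of $l(x\circ y)=[l(x),l(y)]$.
\item For the second, $-(l+r)^*(x\circ y)$ must equal $l^*(x)(-(l+r)^*(y))+(l+r)^*(y)l^*(x)$. After transposing, this amounts to $(l+r)(x\circ y)=l(x)(l+r)(y)-(l+r)(y)l(x)$. That identity follows by adding the first two conditions of \meqref{eq:leibniz-rep}.
\item For the third, one needs $(l+r)(y)\,r(x)=0$ after transposition, with $l^*-l^*-r^*=-r^*$. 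This should follow from combining $r(y)(r(x)+l(x))=0$ with the commutator identities.
\end{enumerate}
If step (iii) resists, I would instead verify \meqref{eq:ns1}--\meqref{eq:ns3} directly by pairing each side with $\omega(\cdot,w)$. Each pairing moves all products onto $\circ$ and reduces to instances of \meqref{eq:leibniz}.
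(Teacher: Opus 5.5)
Your argument follows the paper's proof. Via $\omega^\sharp$, the pair $(L_\succ,R_\prec)$ is identified with the coadjoint pair $(L_\circ^*,-L_\circ^*-R_\circ^*)$. Proposition~\ref{prop:ns-leibniz-representation} then gives \eqref{eq:ns1}--\eqref{eq:ns3}, and \eqref{eq:ns4} is what remains of the Leibniz identity. Uniqueness and the final sentence come from nondegeneracy, as you say.

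Step (iii) contains a slip that needs fixing, because transposition reverses the order of composition. Correctly,
\[
\bigl(-l^*(y)-r^*(y)\bigr)\bigl(-r^*(x)\bigr)=\bigl(r(x)(l(y)+r(y))\bigr)^{\mathsf t},
\]
so the condition to verify is $r(x)\bigl(l(y)+r(y)\bigr)=0$. This is exactly the third axiom in \eqref{eq:leibniz-rep} with $x$ and $y$ interchanged, so nothing further is needed.

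The identity $(l+r)(y)\,r(x)=0$ that you wrote down is false in general. For the regular representation of the algebra in Example~\ref{ex:metric},
\[
(L_\circ(e)+R_\circ(e))R_\circ(f)e=e\circ(e\circ f)+(e\circ f)\circ e=f\neq0.
\]
An attempt to derive it from the commutator identities would therefore fail. With the order corrected, steps (i)--(iii) are all immediate, and the fallback direct verification is unnecessary.
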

\begin{proof}
Nondegeneracy of $\omega$ determines $\prec$ and $\succ$ by
\meqref{eq:metric-construct}, and $\vee$ is then determined by
$\circ=\prec+\succ+\vee$. Under $\omega^\sharp$, the pair
$(L_\succ,R_\prec)$ is identified with the coadjoint representation
$(L_\circ^*,-L_\circ^*-R_\circ^*)$. Its representation relations
imply \meqref{eq:ns3}, \meqref{eq:ns2} and \meqref{eq:ns1}.
Subtracting these identities from the Leibniz identity for $\circ$
establishes \meqref{eq:ns4}.

The first two invariance identities hold by construction;
\meqref{eq:inv-v-left} and \meqref{eq:inv-v-right} follow
from the preceding calculation.
Conversely, the defining invariance relations of a quadratic
NS-Leibniz algebra are \meqref{eq:metric-construct}, so uniqueness
proves the last statement.
\end{proof}

\begin{rmk}\mlabel{rem:phase}
The total product satisfies
\begin{equation}\mlabel{eq:defect}
 \omega(x\circ y,z)+\omega(y,x\circ z)-\omega(x,y\circ z+z\circ y)=\omega(x\vee y,z).
\end{equation}
Since $\omega$ is nondegenerate, the symplectic Leibniz identity
holds if and only if $\vee=0$. In that case, the quadratic
NS-Leibniz structure reduces to the symplectic
Leibniz-dendriform structure of \mcite{SunGuo}.
\end{rmk}

\begin{ex}\mlabel{ex:metric}
Let $A$ have basis $e,f$, with only nonzero total product $e\circ f=-f$.
Equip it with $\omega(e,e)=\omega(f,f)=1$, $\omega(e,f)=0$.
The operations in Theorem~\mref{thm:metric-splitting} have nonzero products
\[
 f\prec e=-f,\quad f\prec f=-e,\quad e\succ f=f,\qquad
 e\vee f=-2f,\quad f\vee e=f,\quad f\vee f=e.
\]
Here $\vee\ne0$, so $\omega$ is not symplectic for the total
Leibniz product.
\end{ex}

\begin{defi}\mlabel{def:representation}
A representation of an NS-Leibniz algebra $A$ consists of a vector space $V$
and six linear maps $l_\diamond,r_\diamond:A\to\mathrm{End}(V)$, $\diamond\in\{\succ,\prec,\vee\}$.
We denote it by
\[
(V,l_\succ,r_\succ,l_\prec,r_\prec,l_\vee,r_\vee)
\]
where $l_\circ=l_\succ+l_\prec+l_\vee$ and
$r_\circ=r_\succ+r_\prec+r_\vee$.
The four actions $l_\succ,r_\succ,l_\prec,r_\prec$ satisfy
the following relations for all $x,y\in A$:
\begin{align}
l_\prec(x)l_\circ (y)&=l_\prec(x\prec y)+l_\succ(y)l_\prec(x),\mlabel{eq:r1}\\
l_\prec(x)r_\circ (y)&=r_\prec(y)l_\prec(x)+r_\succ(x\prec y),\mlabel{eq:r2}\\
r_\prec(x\circ y)&=r_\prec(y)r_\prec(x)+l_\succ(x)r_\prec(y),\mlabel{eq:r3}\\
l_\succ(x)l_\prec(y)&=l_\prec(x\succ y)+l_\prec(y)l_\circ (x),\mlabel{eq:r4}\\
l_\succ(x)r_\prec(y)&=r_\prec(y)l_\succ(x)+r_\prec(x\circ y),\mlabel{eq:r5}\\
r_\succ(x\prec y)&=r_\prec(y)r_\succ(x)+l_\prec(x)r_\circ (y),\mlabel{eq:r6}\\
l_\succ(x)l_\succ(y)&=l_\succ(x\circ y)+l_\succ(y)l_\succ(x),\mlabel{eq:r7}\\
l_\succ(x)r_\succ(y)&=r_\succ(y)l_\circ (x)+r_\succ(x\succ y),\mlabel{eq:r8}\\
r_\succ(x\succ y)&=r_\succ(y)r_\circ (x)+l_\succ(x)r_\succ(y).\mlabel{eq:r9}
\end{align}
For the third operation, we require
\begin{align}
l_\succ(x)l_\vee(y)+l_\vee(x)l_\circ (y)
 &=l_\prec(x\vee y)+l_\vee(x\circ y)
   +l_\succ(y)l_\vee(x)+l_\vee(y)l_\circ (x),\mlabel{eq:r10}\\
l_\succ(x)r_\vee(y)+l_\vee(x)r_\circ (y)
 &=r_\prec(y)l_\vee(x)+r_\vee(y)l_\circ (x)
   +r_\succ(x\vee y)+r_\vee(x\circ y),\mlabel{eq:r11}\\
r_\succ(x\vee y)+r_\vee(x\circ y)
 &=r_\prec(y)r_\vee(x)+r_\vee(y)r_\circ (x)
   +l_\succ(x)r_\vee(y)+l_\vee(x)r_\circ (y).\mlabel{eq:r12}
\end{align}
\end{defi}

When $\vee=0$ and $l_\vee=r_\vee=0$, these relations
reduce to the Leibniz-dendriform representation identities
\mcite{SunGuo}. Relations
\meqref{eq:r10}--\meqref{eq:r12} correspond to
\meqref{eq:ns4} and can equivalently be replaced by the
requirement that $(V,l_\circ,r_\circ)$ be a representation of
$(A,\circ)$, as the next proposition shows.

\begin{pro}\mlabel{prop:semidirect}
Let $A$ be an NS-Leibniz algebra and let
$l_\diamond,r_\diamond:A\to\mathrm{End}(V)$ be linear maps.
They form a representation of $A$ if and only if $A\oplus V$ is an NS-Leibniz algebra under
\begin{equation}\mlabel{eq:semidirect}
 (x,u)\diamond(y,v)=(x\diamond y,l_\diamond(x)v+r_\diamond(y)u),
 \qquad \diamond\in\{\succ,\prec,\vee\}.
\end{equation}
Equivalently, it suffices to impose
\meqref{eq:r1}--\meqref{eq:r9} and require that
$(V,l_\circ,r_\circ)$ be a representation of $(A,\circ)$.
Moreover, $(V,l_\succ,r_\prec)$ is also a representation of
$(A,\circ)$.
\end{pro}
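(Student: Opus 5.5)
The plan is to expand each NS-Leibniz identity \meqref{eq:ns1}--\meqref{eq:ns4} on the space $A\oplus V$ with the products \meqref{eq:semidirect}. Since these identities are trilinear, it suffices to evaluate them on triples in which exactly one entry lies in $V$ and the other two lie in $A$. Triples with all entries in $A$ give the identities of $A$ itself. Triples with two or more entries in $V$ vanish, because $V\diamond V=0$.

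For a single identity, the element of $V$ can sit in the first, second or third slot, so each identity yields three operator equations on $V$. Consider first \meqref{eq:ns1}, which reads $x\prec(y\circ z)=(x\prec y)\prec z+y\succ(x\prec z)$.
\begin{enumerate}
\item With $z=v\in V$, it becomes $l_\prec(x)l_\circ(y)=l_\prec(x\prec y)+l_\succ(y)l_\prec(x)$, which is \meqref{eq:r1}.
\item With $y=v$, it gives \meqref{eq:r2}.
\item With $x=v$, it gives $r_\prec(y\circ z)=r_\prec(z)r_\prec(y)+l_\succ(y)r_\prec(z)$, which is \meqref{eq:r3}.
\end{enumerate}
In the same way, \meqref{eq:ns2} yields \meqref{eq:r4}--\meqref{eq:r6}, \meqref{eq:ns3} yields \meqref{eq:r7}--\meqref{eq:r9}, and \meqref{eq:ns4} yields \meqref{eq:r10}--\meqref{eq:r12}. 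This correspondence is term-by-term bookkeeping. The main thing to check is that the placements match the labels in Definition~\mref{def:representation} after renaming variables. I would verify a few of them explicitly, in particular \meqref{eq:r6} and \meqref{eq:r11}, where the order of the operators matters. This establishes the first equivalence.

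For the second characterization, I would apply Proposition~\mref{prop:ns-leibniz-representation} to $A\oplus V$. It says $A\oplus V$ is NS-Leibniz if and only if \meqref{eq:ns1}--\meqref{eq:ns3} hold and the total product is Leibniz. The total product on $A\oplus V$ is the Leibniz semidirect product defined by $(l_\circ,r_\circ)$. That product is Leibniz exactly when $(V,l_\circ,r_\circ)$ is a representation of $(A,\circ)$; this is the standard fact, and it follows from the same slot-by-slot expansion of \meqref{eq:leibniz}. Meanwhile \meqref{eq:ns1}--\meqref{eq:ns3} on $A\oplus V$ are equivalent to \meqref{eq:r1}--\meqref{eq:r9}. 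Hence \meqref{eq:r10}--\meqref{eq:r12} may be replaced by the Leibniz representation condition.

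Concretely, the three Leibniz representation identities for $(l_\circ,r_\circ)$ are the sums of the corresponding triples among \meqref{eq:r1}--\meqref{eq:r12}. In the leftmost slot, for instance, the sum of \meqref{eq:r1}, \meqref{eq:r4}, \meqref{eq:r7} and \meqref{eq:r10} is the first identity of \meqref{eq:leibniz-rep}, after the usual relabelling of variables. Subtracting the first nine relations therefore recovers the last three.

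For the final claim, I would reuse Proposition~\mref{prop:ns-leibniz-representation}, now applied to $A\oplus V$, which is NS-Leibniz by the above. The proposition says that $(A\oplus V,L_\succ,R_\prec)$ is a representation of $(A\oplus V,\circ)$. Restricting the acting elements to $A$ and the module elements to $V$, where $L_\succ(x)|_V=l_\succ(x)$ and $R_\prec(y)|_V=r_\prec(y)$, shows that $(V,l_\succ,r_\prec)$ is a representation of $(A,\circ)$. Alternatively, one can read this off directly: \meqref{eq:r7} gives the first identity of \meqref{eq:leibniz-rep}, \meqref{eq:r5} gives the second, and \meqref{eq:r3} together with \meqref{eq:r5} gives the third, exactly as in the proof of Proposition~\mref{prop:ns-leibniz-representation}.

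The only real obstacle is the sign and order bookkeeping in the nine-term relations \meqref{eq:r10}--\meqref{eq:r12}. I would sidestep it by the summation argument of the second step rather than by expanding \meqref{eq:ns4} directly.
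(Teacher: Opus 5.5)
Your proposal is correct and follows the paper's proof essentially step for step. It uses the same slot-by-slot expansion of \meqref{eq:ns1}--\meqref{eq:ns4} on $A\oplus V$, the same summation argument replacing \meqref{eq:r10}--\meqref{eq:r12} by the Leibniz representation condition for $(l_\circ,r_\circ)$, and \meqref{eq:r7}, \meqref{eq:r5}, \meqref{eq:r3}$+$\meqref{eq:r5} for $(l_\succ,r_\prec)$; your alternative of applying Proposition~\mref{prop:ns-leibniz-representation} to $A\oplus V$ and restricting to $V$ also works. One small precision: the sum of \meqref{eq:r3}, \meqref{eq:r6}, \meqref{eq:r9} and \meqref{eq:r12} is $r_\circ(x\circ y)=r_\circ(y)r_\circ(x)+l_\circ(x)r_\circ(y)$, which is equivalent to the third axiom in \meqref{eq:leibniz-rep} only in the presence of the second, but this does not affect the argument.
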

\begin{proof}
Substitute the products in \meqref{eq:semidirect} into
the four NS-Leibniz identities
\meqref{eq:ns1}--\meqref{eq:ns4}.
For each identity, place the $V$ entry successively
in the third, second and first positions, with the
other two entries in $A$.
These substitutions yield, respectively,
\meqref{eq:r1}--\meqref{eq:r3} for \meqref{eq:ns1},
\meqref{eq:r4}--\meqref{eq:r6} for \meqref{eq:ns2},
\meqref{eq:r7}--\meqref{eq:r9} for \meqref{eq:ns3},
and \meqref{eq:r10}--\meqref{eq:r12} for \meqref{eq:ns4}.
Triples with at least two entries in $V$ contribute zero,
whereas triples in $A$ satisfy the NS-Leibniz identities
by assumption. By trilinearity, this proves the first
statement.

Relations \meqref{eq:r1}--\meqref{eq:r9} express the first
three NS-Leibniz identities on $A\oplus V$. The total product is
\[
 (x,u)\circ(y,v)
 =(x\circ y,l_\circ(x)v+r_\circ(y)u).
\]
It is Leibniz if and only if $(V,l_\circ,r_\circ)$ is a
Leibniz representation. By the sum of the four identities in
Definition~\mref{def:ns}, its Leibniz identity and the first
three NS-Leibniz identities are equivalent to the fourth.
This proves the equivalent description.

For $(l_\succ,r_\prec)$, the first two Leibniz representation
relations are \meqref{eq:r7} and \meqref{eq:r5}. Adding \meqref{eq:r3} and \meqref{eq:r5} after moving all terms to the left yields
\[
 r_\prec(y)\bigl(r_\prec(x)+l_\succ(x)\bigr)=0.
\]
The pair $(l_\circ,r_\circ)$ is the representation associated with
the total product on $A\oplus V$.
\end{proof}

\begin{ex}
The regular representation of $A$ is
$(A,L_\succ,R_\succ,L_\prec,R_\prec,L_\vee,R_\vee)$.
The representation relations follow from
\meqref{eq:ns1}--\meqref{eq:ns4}.
By Proposition~\mref{prop:semidirect}, $(A,L_\succ,R_\prec)$ is
also a representation of $(A,\circ)$.
\end{ex}

The six dual actions are determined by the representation
identities and the natural pairing, as follows.

\begin{pro}\mlabel{thm:dual}
Let $(V,l_\succ,r_\succ,l_\prec,r_\prec,l_\vee,r_\vee)$ be a
representation of an NS-Leibniz algebra $A$. Then
\begin{equation}\mlabel{eq:dual-representation}
 \bigl(V^*,\,l_\circ^*,\,(l_\prec+r_\succ)^*,\,
       -l_\prec^*,\,-l_\star^*,\,-l_\vee^*,\,
       (l_\vee+r_\vee)^*\bigr)
\end{equation}
is also a representation of $A$.
The total left and right actions, and the left $\succ$ and
right $\prec$ actions, respectively, are
\begin{equation}\mlabel{eq:dual-total-partial}
 \bigl(l_\succ^*,-(l_\succ+r_\prec)^*\bigr),
 \qquad \bigl(l_\circ^*,-l_\star^*\bigr).
\end{equation}
Under the natural identification $V^{**}\simeq V$, the dual
of this representation is the original representation.
\end{pro}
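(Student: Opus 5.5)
The plan is to apply the equivalent criterion in Proposition~\mref{prop:semidirect}. Write the proposed dual actions as
\[
l'_\succ=l_\circ^*,\quad r'_\succ=(l_\prec+r_\succ)^*,\quad l'_\prec=-l_\prec^*,\quad r'_\prec=-l_\star^*,\quad l'_\vee=-l_\vee^*,\quad r'_\vee=(l_\vee+r_\vee)^*.
\]
By that proposition it suffices to check two things: relations \meqref{eq:r1}--\meqref{eq:r9} for the primed actions, and that $(V^*,l'_\circ,r'_\circ)$ is a representation of $(A,\circ)$. This replaces the awkward $\vee$-relations \meqref{eq:r10}--\meqref{eq:r12} for the dual by a single Leibniz-representation check.

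First I compute the totals, which gives \meqref{eq:dual-total-partial}. Summing, $l'_\circ=l_\circ^*-l_\prec^*-l_\vee^*=l_\succ^*$. For the right total, the $l_\prec,l_\vee,r_\succ,r_\vee$ terms cancel:
\[
r'_\circ=(l_\prec+r_\succ)^*-l_\star^*+(l_\vee+r_\vee)^*=-(l_\succ+r_\prec)^*.
\]
The pair $(l'_\succ,r'_\prec)=(l_\circ^*,-l_\star^*)$ can be read off directly.

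The Leibniz condition on the totals follows quickly. By the last assertion of Proposition~\mref{prop:semidirect}, $(V,l_\succ,r_\prec)$ is a representation of $(A,\circ)$. For any Leibniz representation $(V,l,r)$, the dual $(V^*,l^*,-l^*-r^*)$ is again a representation, as recalled before Definition~\mref{def:ns}. Hence $(V^*,l_\succ^*,-(l_\succ+r_\prec)^*)$ is a representation.

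For \meqref{eq:r1}--\meqref{eq:r9}, the basic tool is the rule $a^*(x)b^*(y)=(a(x)^{\mathsf t})(b(y)^{\mathsf t})=(b(y)a(x))^{\mathsf t}=-(b(y)a(x))^*$. Thus each primed relation is the negative transpose of an identity in $\mathrm{End}(V)$ with the order of composition reversed. Substituting the definitions then turns each primed relation into a linear combination of \meqref{eq:r1}--\meqref{eq:r12} and the Leibniz relations for $(l_\circ,r_\circ)$, where the latter follow from Proposition~\mref{prop:semidirect}. For example, primed \meqref{eq:r7} says $l_\circ^*$ is a left Leibniz action, which is the first Leibniz relation of $(l_\circ,r_\circ)$. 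Primed \meqref{eq:r4} and \meqref{eq:r1} should become, after transposition, \meqref{eq:r1} and \meqref{eq:r4} respectively, combined with the Leibniz relation for $l_\circ$.

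The main obstacle is the relations involving the composite actions $r'_\succ=(l_\prec+r_\succ)^*$ and $r'_\prec=-l_\star^*$, namely primed \meqref{eq:r2}, \meqref{eq:r3}, \meqref{eq:r5}, \meqref{eq:r6}, \meqref{eq:r8} and \meqref{eq:r9}. Each expands into several terms, and matching them requires the right combination of \meqref{eq:r1}--\meqref{eq:r9} together with the second and third Leibniz relations for $(l_\circ,r_\circ)$, for instance $r_\circ(y)l_\star(x)=0$. I would first sort the terms by which original actions appear and then cancel them systematically.

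Finally, for the involution statement I apply the construction twice and use $(a^*)^*=a$ under $V^{**}\simeq V$. The twice-dualized actions are:
\begin{itemize}
\item $l''_\succ=(l'_\circ)^*=l_\succ$;
\item $r''_\succ=(l'_\prec+r'_\succ)^*=(r_\succ^*)^*=r_\succ$;
\item $l''_\prec=l_\prec$ and $l''_\vee=l_\vee$ directly;
\item $l'_\star=l_\succ^*-(l_\succ+r_\prec)^*=-r_\prec^*$, so $r''_\prec=-(l'_\star)^*=r_\prec$;
\item $l'_\vee+r'_\vee=r_\vee^*$, so $r''_\vee=r_\vee$.
\end{itemize}
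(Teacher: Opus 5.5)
Your proposal is correct, and it handles the $\vee$-relations differently from the paper. The paper defers the primed versions of \eqref{eq:r1}--\eqref{eq:r9} to the transposition computations of Sun--Guo. It then checks the three dual $\vee$-relations \eqref{eq:r10}--\eqref{eq:r12} directly by pairing with $v\in V$. The dual \eqref{eq:r10} pairs to the original \eqref{eq:r10}, while the dual \eqref{eq:r11} pairs to the sum of the original \eqref{eq:r10} and \eqref{eq:r11}.

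You instead use the equivalent criterion of Proposition~\ref{prop:semidirect}. You observe that the dual total pair $\bigl(l_\succ^*,-(l_\succ+r_\prec)^*\bigr)$ is exactly the Leibniz dual of $(l_\succ,r_\prec)$, which is already a Leibniz representation. This replaces all three $\vee$-computations by one structural observation, and it explains where \eqref{eq:dual-total-partial} comes from.

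Your route also makes explicit something the paper's citation leaves implicit. The primed \eqref{eq:r3}, \eqref{eq:r5} and \eqref{eq:r7} reduce to the Leibniz relations for the full pair $(l_\circ,r_\circ)$. In the NS setting these depend on \eqref{eq:r10}--\eqref{eq:r12}, not only on \eqref{eq:r1}--\eqref{eq:r9}, and you correctly obtain them from Proposition~\ref{prop:semidirect}. The paper's direct pairing, in exchange, is self-contained and does not rely on that equivalence.

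When you carry out the nine transpositions, two details in your sketch need adjusting.
\begin{enumerate}
\item The primed \eqref{eq:r1} and \eqref{eq:r4} transpose to exactly \eqref{eq:r1} and \eqref{eq:r4} themselves. There is no swap between them, and no Leibniz relation is needed.
\item For the primed \eqref{eq:r6} and \eqref{eq:r9}, first use \eqref{eq:r1} and \eqref{eq:r2}, respectively \eqref{eq:r4} and \eqref{eq:r8}. What remains are the terms $r_\succ(x)l_\star(y)$ and $r_\prec(x)\bigl(l_\prec(y)+r_\succ(y)\bigr)$. These vanish by the consequences of \eqref{eq:r8} combined with \eqref{eq:r9}, and of \eqref{eq:r2} combined with \eqref{eq:r6}.
\end{enumerate}
Your computation of the total actions and your double-dual check are correct as written.
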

\begin{proof}
The relations arising from
\meqref{eq:ns1}--\meqref{eq:ns3} are verified by the
transposition computations in \mcite{SunGuo},
Proposition~2.9, with
$l_\circ=l_\succ+l_\prec+l_\vee$ and
$r_\circ=r_\succ+r_\prec+r_\vee$.
It remains to verify the relations involving $\vee$.

For all $x,y\in A$, $\zeta\in V^*$ and $v\in V$, we have
\begin{align*}
&\Bigl\langle
\bigl(
-l_\circ^*(x)l_\vee^*(y)
-l_\vee^*(x)l_\succ^*(y)
+l_\prec^*(x\vee y)
+l_\vee^*(x\circ y)\\
&\hspace{35mm}
+l_\circ^*(y)l_\vee^*(x)
+l_\vee^*(y)l_\succ^*(x)
\bigr)\zeta,v
\Bigr\rangle\\
={}&\Bigl\langle\zeta,
\bigl(
l_\succ(x)l_\vee(y)
+l_\vee(x)l_\circ(y)
-l_\prec(x\vee y)
-l_\vee(x\circ y)\\
&\hspace{35mm}
-l_\succ(y)l_\vee(x)
-l_\vee(y)l_\circ(x)
\bigr)v
\Bigr\rangle
=0
\end{align*}
by \meqref{eq:r10}.
The other two relations involving $\vee$ are verified
by the same pairing calculation, using
\meqref{eq:r11} and \meqref{eq:r12}.
Hence \meqref{eq:dual-representation} is a representation.

Summing the three left actions and the three right
actions in \meqref{eq:dual-representation} yields
the first pair in \meqref{eq:dual-total-partial}.
The left $\succ$ and right $\prec$ actions yield
the second pair.
Applying \meqref{eq:dual-representation} once more,
one checks that the resulting six actions coincide
with the initial actions under the natural
identification $V^{**}\cong V$.
\end{proof}

\begin{cor}\mlabel{cor:dual-regular}
Let $A$ be an NS-Leibniz algebra. Then
\begin{equation}\mlabel{eq:dual-regular}
 \bigl(A^*,\,L_\circ^*,\,(L_\prec+R_\succ)^*,\,
       -L_\prec^*,\,-L_\star^*,\,-L_\vee^*,\,
       (L_\vee+R_\vee)^*\bigr)
\end{equation}
is a representation of $A$, called the dual regular
representation. When $\vee=0$, \meqref{eq:dual-regular} reduces to
the dual regular representation of the corresponding
Leibniz-dendriform algebra \mcite{SunGuo}.
\end{cor}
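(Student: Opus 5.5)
The corollary is the special case $V=A$ of Proposition~\mref{thm:dual}, applied to the regular representation
\[
(A,L_\succ,R_\succ,L_\prec,R_\prec,L_\vee,R_\vee).
\]
The plan has three steps: confirm that this six-tuple is a representation, substitute it into the dual formula, and then set $\vee=0$.

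First I will confirm that the regular six-tuple satisfies Definition~\mref{def:representation}. By Proposition~\mref{prop:semidirect} it suffices to check that $A\oplus A$ with the products \meqref{eq:semidirect} is an NS-Leibniz algebra. Here $l_\diamond=L_\diamond$ and $r_\diamond=R_\diamond$. Evaluating each relation \meqref{eq:r1}--\meqref{eq:r12} on an element $z$ reproduces one of \meqref{eq:ns1}--\meqref{eq:ns4} with the arguments permuted:
\begin{itemize}
\item \meqref{eq:r1} applied to $z$ gives $x\prec(y\circ z)=(x\prec y)\prec z+y\succ(x\prec z)$, which is \meqref{eq:ns1};
\item \meqref{eq:r2} applied to $z$ gives $x\prec(z\circ y)=(x\prec z)\prec y+z\succ(x\prec y)$, which is \meqref{eq:ns1} with $y,z$ swapped.
\end{itemize}
The remaining relations are handled the same way. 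This is routine bookkeeping, and it is the only place where any verification actually happens.

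Second, I substitute $l_\succ=L_\succ$, $r_\succ=R_\succ$, $l_\prec=L_\prec$, $r_\prec=R_\prec$, $l_\vee=L_\vee$, $r_\vee=R_\vee$ into \meqref{eq:dual-representation}. This gives exactly the tuple \meqref{eq:dual-regular}, since
\[
l_\circ=L_\circ,\qquad l_\star=L_\circ+R_\circ=L_\star .
\]
The dual actions are taken with respect to the natural pairing between $A$ and $A^*$, so $L_\diamond^*(x)=-L_\diamond(x)^{\mathsf t}$.

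Third, for the case $\vee=0$ we have $L_\vee=R_\vee=0$ and $L_\circ=L_\succ+L_\prec$. The tuple \meqref{eq:dual-regular} then becomes
\[
\bigl(A^*,\,(L_\succ+L_\prec)^*,\,(L_\prec+R_\succ)^*,\,-L_\prec^*,\,-(L_\succ+L_\prec+R_\succ+R_\prec)^*,\,0,\,0\bigr).
\]
The two $\vee$-actions vanish and the remaining four actions form the tuple called the dual regular representation of the Leibniz-dendriform algebra in \mcite{SunGuo}. To conclude, I compare this term by term with their formula, under the same sign convention $a^*=-a^{\mathsf t}$. The main risk in this step is a convention mismatch, for instance if \mcite{SunGuo} uses a plain transpose for $a^*$. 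If that happens, I would rewrite both tuples with ordinary transposes and then compare them.
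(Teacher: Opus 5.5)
Your proposal is correct and follows essentially the paper's route. The corollary is Proposition~\ref{thm:dual} specialized to the regular representation $(A,L_\succ,R_\succ,L_\prec,R_\prec,L_\vee,R_\vee)$, whose representation relations the paper records in the preceding example exactly as you check them: \eqref{eq:r1}--\eqref{eq:r12} evaluated on $z$ reproduce \eqref{eq:ns1}--\eqref{eq:ns4} with permuted arguments, and $l_\star=L_\star$ gives the displayed tuple. The paper only asserts the $\vee=0$ reduction to Sun and Guo's dual regular representation, so your term-by-term comparison is the same brief step.
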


\begin{lem}
Let $A$ be an NS-Leibniz algebra with a nondegenerate
symmetric bilinear form $\omega$.
Then \meqref{eq:inv-p}--\meqref{eq:inv-v-right} hold
if and only if $\omega^\sharp:A\to A^*$ is an
isomorphism from the regular representation to
the dual regular representation
\meqref{eq:dual-regular}.
\end{lem}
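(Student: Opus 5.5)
The plan is to unwind what it means for $\omega^\sharp$ to be a homomorphism of representations. Since $\omega^\sharp$ is a linear isomorphism by nondegeneracy, it is an isomorphism from the regular representation $(A,L_\succ,R_\succ,L_\prec,R_\prec,L_\vee,R_\vee)$ to \meqref{eq:dual-regular} if and only if it intertwines each of the six actions. Pairing each intertwining equation with an arbitrary $z\in A$ turns it into a scalar identity in $\omega$.

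For example, $\omega^\sharp(L_\succ(x)y)=L_\circ^*(x)\omega^\sharp(y)$ reads
\[
\omega(x\succ y,z)=-\omega(y,x\circ z).
\]
This is \meqref{eq:inv-s}. The remaining five conditions translate as follows.
\begin{enumerate}
\item $R_\succ\leftrightarrow(L_\prec+R_\succ)^*$ gives $\omega(y\succ x,z)=-\omega(y,x\prec z+z\succ x)$.
\item $L_\prec\leftrightarrow -L_\prec^*$ gives $\omega(x\prec y,z)=\omega(y,x\prec z)$.
\item $R_\prec\leftrightarrow -L_\star^*$ gives $\omega(y\prec x,z)=\omega(y,x\circ z+z\circ x)$, which is \meqref{eq:inv-p}.
\item $L_\vee\leftrightarrow -L_\vee^*$ gives \meqref{eq:inv-v-left}.
\item $R_\vee\leftrightarrow (L_\vee+R_\vee)^*$ gives $\omega(y\vee x,z)=-\omega(y,x\vee z+z\vee x)$, which is \meqref{eq:inv-v-right}.
\end{enumerate}
Thus the "if" direction of the statement amounts to deriving conditions (1) and (2) from \meqref{eq:inv-p}--\meqref{eq:inv-v-right}. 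The "only if" direction is then immediate, since the four listed identities appear among the six intertwining conditions.

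The main step is deriving (1) and (2) from \meqref{eq:inv-p} and \meqref{eq:inv-s} together with the symmetry of $\omega$. For (2), apply \meqref{eq:inv-p} and then symmetry to obtain
\[
\omega(x\prec y,z)=\omega(x,y\circ z+z\circ y).
\]
Similarly, $\omega(y,x\prec z)=\omega(x\prec z,y)=\omega(x,z\circ y+y\circ z)$. The two right-hand sides agree, which proves (2).

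For (1), use \meqref{eq:inv-s} to write $\omega(y\succ x,z)=-\omega(x,y\circ z)$. On the other side, (2) gives $\omega(y,x\prec z)=\omega(x\prec y,z)$, and \meqref{eq:inv-s} with symmetry gives $\omega(y,z\succ x)=\omega(z\succ x,y)=-\omega(x,z\circ y)$. Hence
\[
-\omega(y,x\prec z+z\succ x)=-\omega(x,y\circ z+z\circ y)+\omega(x,z\circ y)=-\omega(x,y\circ z),
\]
which proves (1). I expect bookkeeping of which slot each variable occupies under the convention $a^*(x)=-a(x)^{\mathsf t}$ to be the only real obstacle, so I will recheck the sign conventions against \meqref{eq:dual-regular} carefully.
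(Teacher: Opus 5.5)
Your proof is correct and is essentially the paper's argument: the intertwining relations for $L_\succ$, $R_\prec$, $L_\vee$, $R_\vee$ are literally \eqref{eq:inv-s}, \eqref{eq:inv-p}, \eqref{eq:inv-v-left}, \eqref{eq:inv-v-right}, the $L_\prec$ relation comes from the $y,z$-symmetry of the right-hand side of \eqref{eq:inv-p}, and the $R_\succ$ relation follows from \eqref{eq:inv-s} and \eqref{eq:inv-p}. You pass through the $L_\prec$ relation where the paper expands directly, but the computation is the same, and the only slip is cosmetic: you have interchanged the labels ``if'' and ``only if'', although both implications are covered.
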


\begin{proof}
The intertwining relations for $L_\succ$, $R_\prec$, $L_\vee$ and
$R_\vee$ are \meqref{eq:inv-s}, \meqref{eq:inv-p},
\meqref{eq:inv-v-left} and \meqref{eq:inv-v-right}, respectively.
The relation for $L_\prec$ follows from the symmetry in $y,z$ of
\meqref{eq:inv-p}. For $R_\succ$, it remains to establish
\[
\omega(x\succ y,z)=-\omega(x,z\succ y+y\prec z).
\]
By \meqref{eq:inv-s} and \meqref{eq:inv-p}, the right-hand side is
$\omega(y,z\circ x)-\omega(y,z\circ x+x\circ z)
=-\omega(y,x\circ z)$, which equals the left-hand side.
\end{proof}

\begin{defi}\mlabel{def:matched}
Let $A$ and $B$ be NS-Leibniz algebras. Assume that
$l_{\diamond A},r_{\diamond A}:A\to\mathrm{End}(B)$ and
$l_{\diamond B},r_{\diamond B}:B\to\mathrm{End}(A)$,
for $\diamond\in\{\succ,\prec,\vee\}$, define representations
of $A$ on $B$ and of $B$ on $A$, respectively.
The tuple
$(A,B,(l_{\diamond A},r_{\diamond A}),
(l_{\diamond B},r_{\diamond B}))_{\diamond\in\{\succ,\prec,\vee\}}$
is called a matched pair of NS-Leibniz algebras if the following
compatibility conditions hold for all $x,y\in A$ and $a,b\in B$:
\begin{align}
&x\succ(r_{\succ B}(a)y) +r_{\succ B}(l_{\succ A}(y)a)x
-r_{\succ B}(a)(x\circ y) -y\succ(r_{\succ B}(a)x)
-r_{\succ B}(l_{\succ A}(x)a)y=0\mlabel{eq:matched-cross}\\
&x\succ(l_{\succ B}(a)y) +r_{\succ B}(r_{\succ A}(y)a)x
-(r_{\circ B} (a)x)\succ y -l_{\succ B}(l_{\circ A} (x)a)y
-l_{\succ B}(a)(x\succ y)=0\\
&l_{\succ B}(a)(x\succ y) -(l_{\circ B} (a)x)\succ y
-l_{\succ B}(r_{\circ A} (x)a)y -x\succ(l_{\succ B}(a)y)
-r_{\succ B}(r_{\succ A}(y)a)x=0\\
&a\succ(r_{\succ A}(x)b) +r_{\succ A}(l_{\succ B}(b)x)a
-r_{\succ A}(x)(a\circ b) -b\succ(r_{\succ A}(x)a)
-r_{\succ A}(l_{\succ B}(a)x)b=0\\
&a\succ(l_{\succ A}(x)b) +r_{\succ A}(r_{\succ B}(b)x)a
-(r_{\circ A} (x)a)\succ b -l_{\succ A}(l_{\circ B} (a)x)b
-l_{\succ A}(x)(a\succ b)=0\\
&l_{\succ A}(x)(a\succ b) -(l_{\circ A} (x)a)\succ b
-l_{\succ A}(r_{\circ B} (a)x)b -a\succ(l_{\succ A}(x)b)
-r_{\succ A}(r_{\succ B}(b)x)a=0\\
&x\succ(r_{\prec B}(a)y) +r_{\succ B}(l_{\prec A}(y)a)x
-r_{\prec B}(a)(x\succ y) -y\prec(r_{\circ B} (a)x)
-r_{\prec B}(l_{\circ A} (x)a)y=0\\
&x\succ(l_{\prec B}(a)y) +r_{\succ B}(r_{\prec A}(y)a)x
-(r_{\succ B}(a)x)\prec y -l_{\prec B}(l_{\succ A}(x)a)y
-l_{\prec B}(a)(x\circ y)=0\\
&l_{\succ B}(a)(x\prec y) -(l_{\succ B}(a)x)\prec y
-l_{\prec B}(r_{\succ A}(x)a)y -x\prec(l_{\circ B} (a)y)
-r_{\prec B}(r_{\circ A} (y)a)x=0\\
&a\succ(r_{\prec A}(x)b) +r_{\succ A}(l_{\prec B}(b)x)a
-r_{\prec A}(x)(a\succ b) -b\prec(r_{\circ A} (x)a)
-r_{\prec A}(l_{\circ B} (a)x)b=0\\
&a\succ(l_{\prec A}(x)b) +r_{\succ A}(r_{\prec B}(b)x)a
-(r_{\succ A}(x)a)\prec b -l_{\prec A}(l_{\succ B}(a)x)b
-l_{\prec A}(x)(a\circ b)=0\\
&l_{\succ A}(x)(a\prec b) -(l_{\succ A}(x)a)\prec b
-l_{\prec A}(r_{\succ B}(a)x)b -a\prec(l_{\circ A} (x)b)
-r_{\prec A}(r_{\circ B} (b)x)a=0\\
&x\prec(r_{\circ B} (a)y) +r_{\prec B}(l_{\circ A} (y)a)x
-r_{\prec B}(a)(x\prec y) -y\succ(r_{\prec B}(a)x)
-r_{\succ B}(l_{\prec A}(x)a)y=0\\
&x\prec(l_{\circ B} (a)y) +r_{\prec B}(r_{\circ A} (y)a)x
-(r_{\prec B}(a)x)\prec y -l_{\prec B}(l_{\prec A}(x)a)y
-l_{\succ B}(a)(x\prec y)=0\\
&l_{\prec B}(a)(x\circ y) -(l_{\prec B}(a)x)\prec y
-l_{\prec B}(r_{\prec A}(x)a)y -x\succ(l_{\prec B}(a)y)
-r_{\succ B}(r_{\prec A}(y)a)x=0\\
&a\prec(r_{\circ A} (x)b) +r_{\prec A}(l_{\circ B} (b)x)a
-r_{\prec A}(x)(a\prec b) -b\succ(r_{\prec A}(x)a)
-r_{\succ A}(l_{\prec B}(a)x)b=0\\
&a\prec(l_{\circ A} (x)b) +r_{\prec A}(r_{\circ B} (b)x)a
-(r_{\prec A}(x)a)\prec b -l_{\prec A}(l_{\prec B}(a)x)b
-l_{\succ A}(x)(a\prec b)=0\\
&l_{\prec A}(x)(a\circ b) -(l_{\prec A}(x)a)\prec b
-l_{\prec A}(r_{\prec B}(a)x)b -a\succ(l_{\prec A}(x)b)
-r_{\succ A}(r_{\prec B}(b)x)a=0\\
&x\succ(r_{\vee B}(a)y)+r_{\succ B}(l_{\vee A}(y)a)x+x\vee(r_{\circ B} (a)y)+r_{\vee B}(l_{\circ A} (y)a)x-r_{\prec B}(a)(x\vee y)\notag\\*
 &-r_{\vee B}(a)(x\circ y)-y\succ(r_{\vee B}(a)x)-r_{\succ B}(l_{\vee A}(x)a)y-y\vee(r_{\circ B} (a)x)-r_{\vee B}(l_{\circ A} (x)a)y=0\\
&x\succ(l_{\vee B}(a)y)+r_{\succ B}(r_{\vee A}(y)a)x+x\vee(l_{\circ B} (a)y)+r_{\vee B}(r_{\circ A} (y)a)x-(r_{\vee B}(a)x)\prec y\notag\\*
 &-l_{\prec B}(l_{\vee A}(x)a)y-(r_{\circ B} (a)x)\vee y-l_{\vee B}(l_{\circ A} (x)a)y-l_{\succ B}(a)(x\vee y)-l_{\vee B}(a)(x\circ y)=0\\
&l_{\succ B}(a)(x\vee y)+l_{\vee B}(a)(x\circ y)-(l_{\vee B}(a)x)\prec y-l_{\prec B}(r_{\vee A}(x)a)y-(l_{\circ B} (a)x)\vee y\notag\\*
 &-l_{\vee B}(r_{\circ A} (x)a)y-x\succ(l_{\vee B}(a)y)-r_{\succ B}(r_{\vee A}(y)a)x-x\vee(l_{\circ B} (a)y)-r_{\vee B}(r_{\circ A} (y)a)x=0\\
&a\succ(r_{\vee A}(x)b)+r_{\succ A}(l_{\vee B}(b)x)a+a\vee(r_{\circ A} (x)b)+r_{\vee A}(l_{\circ B} (b)x)a-r_{\prec A}(x)(a\vee b)\notag\\*
 &-r_{\vee A}(x)(a\circ b)-b\succ(r_{\vee A}(x)a)-r_{\succ A}(l_{\vee B}(a)x)b-b\vee(r_{\circ A} (x)a)-r_{\vee A}(l_{\circ B} (a)x)b=0\\
&a\succ(l_{\vee A}(x)b)+r_{\succ A}(r_{\vee B}(b)x)a+a\vee(l_{\circ A} (x)b)+r_{\vee A}(r_{\circ B} (b)x)a-(r_{\vee A}(x)a)\prec b\notag\\*
 &-l_{\prec A}(l_{\vee B}(a)x)b-(r_{\circ A} (x)a)\vee b-l_{\vee A}(l_{\circ B} (a)x)b-l_{\succ A}(x)(a\vee b)-l_{\vee A}(x)(a\circ b)=0\\
&l_{\succ A}(x)(a\vee b)+l_{\vee A}(x)(a\circ b)-(l_{\vee A}(x)a)\prec b-l_{\prec A}(r_{\vee B}(a)x)b-(l_{\circ A} (x)a)\vee b\notag\\*
 &-l_{\vee A}(r_{\circ B} (a)x)b-a\succ(l_{\vee A}(x)b)-r_{\succ A}(r_{\vee B}(b)x)a-a\vee(l_{\circ A} (x)b)-r_{\vee A}(r_{\circ B} (b)x)a=0\mlabel{eq:matched-cross-last}
\end{align}

where $l_{\circ A}=l_{\succ A}+l_{\prec A}+l_{\vee A}$,
$r_{\circ A}=r_{\succ A}+r_{\prec A}+r_{\vee A}$,
$l_{\circ B}=l_{\succ B}+l_{\prec B}+l_{\vee B}$ and
$r_{\circ B}=r_{\succ B}+r_{\prec B}+r_{\vee B}$.
Define three binary operations on $A\oplus B$ by
\begin{equation}\mlabel{eq:bicrossed}
\begin{aligned}
 (x+a)\succ_D(y+b)
 &=x\succ_A y+l_{\succ B}(a)y+r_{\succ B}(b)x
   +a\succ_B b+l_{\succ A}(x)b+r_{\succ A}(y)a,\\
 (x+a)\prec_D(y+b)
 &=x\prec_A y+l_{\prec B}(a)y+r_{\prec B}(b)x
   +a\prec_B b+l_{\prec A}(x)b+r_{\prec A}(y)a,\\
 (x+a)\vee_D(y+b)
 &=x\vee_A y+l_{\vee B}(a)y+r_{\vee B}(b)x
   +a\vee_B b+l_{\vee A}(x)b+r_{\vee A}(y)a.
\end{aligned}
\end{equation}

The direct sum $A\oplus B$ equipped with these products is denoted by
$A\bowtie B$.
\end{defi}
The relations in each direction correspond to the four
NS-Leibniz identities. When both third products and their
associated actions vanish, they reduce to the matched-pair
conditions for Leibniz-dendriform algebras \mcite{SunGuo}.

\begin{pro}\mlabel{thm:matched}
Let $A,B$ be NS-Leibniz algebras with the six action maps
in each direction as above. The products \meqref{eq:bicrossed}
define an NS-Leibniz algebra on $A\oplus B$ if and only if
these data form a matched pair. Conversely, every NS-Leibniz
algebra that is the vector-space direct sum of two NS-Leibniz subalgebras
arises in this way.
If both action systems are representations, the last three
compatibility relations in each direction can equivalently
be replaced by the requirement that the total action pairs
form a matched pair of the subadjacent Leibniz algebras.
\end{pro}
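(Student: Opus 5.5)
The plan is to argue by trilinear expansion, as in the proof of Proposition~\mref{prop:semidirect}. Write $D=A\oplus B$ with the products \meqref{eq:bicrossed}. Each of the four NS-Leibniz identities \meqref{eq:ns1}--\meqref{eq:ns4} on $D$ is trilinear, so it suffices to check it on triples whose entries lie in $A$ or in $B$. There are eight types of triple; they split into three groups as follows.
\begin{enumerate}
\item Triples entirely in $A$ or entirely in $B$. These hold because $A$ and $B$ are NS-Leibniz algebras.
\item Triples with exactly one entry in $B$ (resp. $A$). Project the identity onto $B$ (resp. $A$). The resulting component is exactly one of the representation relations \meqref{eq:r1}--\meqref{eq:r12} for the actions of $A$ on $B$ (resp. $B$ on $A$), so it holds by hypothesis.
\item The same triples, projected onto the other summand. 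Placing the single foreign entry in the first, second or third position, for each of the four identities, gives $3\times 4=12$ relations in each direction. I will match them with the twelve compatibility relations of Definition~\mref{def:matched} in that direction: three for each of $\succ$-$\succ$, $\succ$-$\prec$, $\prec$-$\circ$ and the $\vee$ identity, listed in the order of \meqref{eq:matched-cross}--\meqref{eq:matched-cross-last}.
\end{enumerate}
Together these show that $D$ is NS-Leibniz if and only if the data form a matched pair. The computation is bookkeeping. For instance, for \meqref{eq:ns3} with $(x,y,a)$, the $A$-component of $x\succ(y\succ a)-(x\circ y)\succ a-y\succ(x\succ a)$ gives the first displayed compatibility condition, after expanding $y\succ_D a=r_{\succ B}(a)y+l_{\succ A}(y)a$.

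For the converse, let $D=A\oplus B$ be an NS-Leibniz algebra with $A,B$ subalgebras. For $x\in A$ and $a\in B$, define the actions by decomposing each mixed product into components:
\[
x\diamond a=r_{\diamond B}(a)x+l_{\diamond A}(x)a,\qquad a\diamond x=l_{\diamond B}(a)x+r_{\diamond A}(x)a .
\]
With these definitions the products of $D$ are exactly \meqref{eq:bicrossed}. The first part of the proof then shows that the actions are representations and that the compatibility conditions hold.

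For the last assertion, assume both action systems are representations. Summing the four NS-Leibniz identities gives the Leibniz identity for $\circ_D$. Equivalently, \meqref{eq:ns4} on $D$ can be traded for the Leibniz identity of $(D,\circ_D)$, given \meqref{eq:ns1}--\meqref{eq:ns3}. The total product on $D$ is the bicrossed Leibniz product built from $(l_{\circ A},r_{\circ A})$ and $(l_{\circ B},r_{\circ B})$. Its mixed components are therefore the Leibniz matched-pair conditions, and the three $\vee$-relations in each direction may be replaced by them.

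The main obstacle I expect is the sign and position bookkeeping in the $\vee$-relations. Each of these has ten terms, and they must be aligned precisely with the listed conditions, including which action ($\circ$ versus a single product) appears where. I will handle this by expanding \meqref{eq:ns4} on a mixed triple symbolically once, then reading off the components.
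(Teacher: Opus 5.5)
Your proposal is correct and follows essentially the same route as the paper. You expand \meqref{eq:ns1}--\meqref{eq:ns4} on mixed triples, read off the representation relations from one component and the twelve compatibility conditions per direction from the other, and obtain the converse by projecting mixed products. For the last assertion you trade \meqref{eq:ns4} for the total Leibniz identity once \meqref{eq:ns1}--\meqref{eq:ns3} hold on $A\oplus B$, as the paper does.

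Your check that \meqref{eq:matched-cross} comes from \meqref{eq:ns3} on $(x,y,a)$ is accurate, and so is your grouping of the list by identity. One small wording point: in the ``only if'' direction the representation relations are derived from the $B$-components (resp.\ $A$-components) rather than assumed, which your converse paragraph already acknowledges.
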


\begin{proof}
Substitute the products \meqref{eq:bicrossed} into
the four defining identities of an NS-Leibniz algebra.
For triples with two entries in $A$ and one in $B$,
the $B$ components are precisely the representation
relations for the action of $A$ on $B$, while the
$A$ components are the twelve compatibility conditions
involving $x,y\in A$ and $a\in B$.
For triples with two entries in $B$ and one in $A$,
the $A$ components are the representation relations
for the action of $B$ on $A$, while the $B$ components
are the twelve compatibility conditions involving
$a,b\in B$ and $x\in A$.
Triples lying entirely in $A$ or entirely in $B$
satisfy the identities by assumption.
By trilinearity, this proves the first assertion.

Assume that both action systems are representations.
The nine compatibility conditions in each direction
corresponding to the first three NS-Leibniz identities
ensure that these identities hold on $A\oplus B$.
Adding the four NS-Leibniz identities yields
the Leibniz identity for the total product.
Hence, under the first three identities, the fourth
is equivalent to the Leibniz identity for the total
product, which is precisely the matched-pair condition
for the total action pairs.

Conversely, suppose that $D=A\oplus B$ is an
NS-Leibniz algebra and that $A$ and $B$ are
NS-Leibniz subalgebras.
Projecting the mixed products onto $A$ and $B$
defines the actions in \meqref{eq:bicrossed}.
The same expansions establish the representation
relations and all the compatibility conditions.
\end{proof}

\begin{cor}\mlabel{cor:underlying-matched}
A matched pair of NS-Leibniz algebras induces a matched pair of their
subadjacent Leibniz algebras with action pairs $(l_{\circ A} ,r_{\circ A} )$ and $(l_{\circ B} ,r_{\circ B} )$.
\end{cor}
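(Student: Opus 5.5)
The plan is to pass through the double $A\bowtie B$ of Proposition~\mref{thm:matched}. Given a matched pair, that proposition makes $D=A\oplus B$ with the products \meqref{eq:bicrossed} an NS-Leibniz algebra. The summation remark after Definition~\mref{def:ns} then shows that $(D,\circ_D)$, with $\circ_D=\succ_D+\prec_D+\vee_D$, is a Leibniz algebra. Adding the three lines of \meqref{eq:bicrossed}, the total product is
\[
 (x+a)\circ_D(y+b)=x\circ_A y+l_{\circ B}(a)y+r_{\circ B}(b)x+a\circ_B b+l_{\circ A}(x)b+r_{\circ A}(y)a,
\]
which is exactly the bicrossed-product formula for the subadjacent Leibniz algebras $(A,\circ_A)$ and $(B,\circ_B)$ with the total action pairs.

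Next I will check that each total action pair is a Leibniz representation. Since $(l_{\diamond A},r_{\diamond A})$ is an NS-Leibniz representation of $A$ on $B$, Proposition~\mref{prop:semidirect} gives that $(B,l_{\circ A},r_{\circ A})$ is a representation of $(A,\circ_A)$. Symmetrically, $(A,l_{\circ B},r_{\circ B})$ is a representation of $(B,\circ_B)$.

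Finally I will invoke the standard characterization of matched pairs of Leibniz algebras. The Leibniz compatibility conditions hold exactly when the bicrossed product on $A\oplus B$ is Leibniz, since they are the mixed components of the Leibniz identity. The Leibniz identity for $\circ_D$ has been established, so its mixed components, taken with two entries in one summand and one in the other, are precisely the required compatibility conditions. Alternatively, this follows directly from the last statement of Proposition~\mref{thm:matched}, read in the forward direction.

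The only potential obstacle is conventions. The matched-pair conditions for Leibniz algebras should be taken as the ones obtained from the Leibniz identity on $A\oplus B$, and with that convention the argument is immediate. No direct summation of the twenty-four relations \meqref{eq:matched-cross}--\meqref{eq:matched-cross-last} will be needed.
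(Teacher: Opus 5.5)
Your proposal is correct and follows the paper's own argument. The paper's proof simply notes that summing the three products in \meqref{eq:bicrossed} gives the Leibniz bicrossed product for the total action pairs, which is Leibniz because $A\bowtie B$ is NS-Leibniz. Your extra check via Proposition~\mref{prop:semidirect}, that $(l_{\circ A},r_{\circ A})$ and $(l_{\circ B},r_{\circ B})$ are Leibniz representations, makes explicit a step the paper leaves implicit.
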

\begin{proof}
The sum of the products in \meqref{eq:bicrossed} is the Leibniz
bicrossed product for $(l_{\circ A},r_{\circ A})$ and
$(l_{\circ B},r_{\circ B})$.
\end{proof}

\begin{defi}\mlabel{def:manin}
A Manin triple of NS-Leibniz algebras is a quadratic NS-Leibniz algebra
$(D,\succ,\prec,\vee,\omega)$ with complementary isotropic NS-Leibniz
subalgebras $A,B_0$, so $D=A\oplus B_0$ as vector spaces.
The form identifies $B_0$ with $A^*$; in this identification it is
\begin{equation}\mlabel{eq:canonical-form}
 \omega(x+\zeta,y+\eta)=\zeta(y)+\eta(x).
\end{equation}
\end{defi}

The dual regular representations determine the mixed
products in the double. The resulting structure is
characterized by the following theorem.

\begin{thm}\mlabel{thm:double}
Let $A,A^*$ be NS-Leibniz algebras. The following are equivalent:
\begin{enumerate}[label=(\roman*)]
\item They form a matched pair with the two dual regular action systems
\meqref{eq:dual-regular}.
\item Their subadjacent Leibniz algebras form a matched pair with
\begin{equation}\mlabel{eq:partial-pair}
 (l_A,r_A)=(L_{\succ,A}^*,-(L_{\succ,A}+R_{\prec,A})^*),\quad
 (l_{A^*},r_{A^*})=(L_{\succ,A^*}^*,-(L_{\succ,A^*}+R_{\prec,A^*})^*).
\end{equation}
\item The products
\begin{align}
(x+\zeta)\prec_D(y+\eta)
={}&x\prec y-L_{\prec,A^*}^*(\zeta)y
 -(L_{\circ ,A^*}+R_{\circ ,A^*})^*(\eta)x\notag\\
&+\zeta\prec \eta-L_{\prec,A}^*(x)\eta
 -(L_{\circ ,A}+R_{\circ ,A})^*(y)\zeta,\mlabel{eq:double-p}\\
(x+\zeta)\succ_D(y+\eta)
={}&x\succ y+L_{\circ ,A^*}^*(\zeta)y
 +(L_{\prec,A^*}+R_{\succ,A^*})^*(\eta)x\notag\\
&+\zeta\succ \eta+L_{\circ ,A}^*(x)\eta
 +(L_{\prec,A}+R_{\succ,A})^*(y)\zeta,\mlabel{eq:double-s}\\
(x+\zeta)\vee_D(y+\eta)
={}&x\vee y-L_{\vee,A^*}^*(\zeta)y
 +(L_{\vee,A^*}+R_{\vee,A^*})^*(\eta)x\notag\\
&+\zeta\vee \eta-L_{\vee,A}^*(x)\eta
 +(L_{\vee,A}+R_{\vee,A})^*(y)\zeta\mlabel{eq:double-v}
\end{align}
make $(D=A\oplus A^*,A,A^*,\omega)$ a Manin triple, with
$\omega$ given by \meqref{eq:canonical-form}.
\end{enumerate}
\end{thm}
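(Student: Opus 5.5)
We will prove (i)$\Leftrightarrow$(iii) and (i)$\Leftrightarrow$(ii) separately. Throughout, the mixed products of $D=A\oplus A^*$ are read off from Corollary~\mref{cor:dual-regular}. For $x\in A$ and $\eta\in A^*$, the component $l_{\diamond A}(x)\eta$ of $x\diamond_D\eta$ is the corresponding entry of \meqref{eq:dual-regular}, and the same holds with the roles of $A$ and $A^*$ exchanged. Thus $\prec_D$ uses $l_\prec=-L_\prec^*$ and $r_\prec=-L_\star^*$. Next, $\succ_D$ uses $l_\succ=L_\circ^*$ and $r_\succ=(L_\prec+R_\succ)^*$. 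Finally, $\vee_D$ uses $l_\vee=-L_\vee^*$ and $r_\vee=(L_\vee+R_\vee)^*$. Substituting these into \meqref{eq:bicrossed} reproduces \meqref{eq:double-p}--\meqref{eq:double-v} exactly. Hence the products in (iii) are the bicrossed products $A\bowtie A^*$ for the dual regular action systems.

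For (i)$\Rightarrow$(iii), Proposition~\mref{thm:matched} shows that $D$ is an NS-Leibniz algebra containing $A$ and $A^*$ as subalgebras. The form \meqref{eq:canonical-form} is symmetric and nondegenerate, and $A$, $A^*$ are isotropic. It remains to check the invariance identities \meqref{eq:inv-p} and \meqref{eq:inv-s} on $D$. By trilinearity we split into cases according to how many arguments lie in $A^*$.
\begin{itemize}
\item If all three arguments lie in $A$, or all three lie in $A^*$, both sides vanish by isotropy.
\item If exactly one argument lies in $A^*$, only the mixed action terms contribute. The identity becomes a pairing identity, for instance $\langle -L_{\prec}^*(x)\eta, y\rangle$ against $\langle \eta, y\circ z+z\circ y\rangle$-type terms. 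Each such identity holds by the definition $\langle a^*(x)\zeta,u\rangle=-\langle\zeta,a(x)u\rangle$, because the dual actions were chosen precisely to make $\omega^\sharp$ intertwine. The case of two arguments in $A^*$ is symmetric.
\end{itemize}
This is essentially the content of the lemma preceding Definition~\mref{def:matched}, applied on each side of $D$.

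For (iii)$\Rightarrow$(i), we start from a Manin triple. Invariance of $\omega$ together with isotropy forces each mixed product $x\diamond_D\eta$ to have the components prescribed by \meqref{eq:dual-regular}. For example, $\omega(x\succ_D\eta,y)=-\omega(\eta,x\circ y)$ gives the $A^*$-component $L_\circ^*(x)\eta$, and the remaining components are determined similarly. The converse part of Proposition~\mref{thm:matched} then says that these actions form a matched pair.

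For (i)$\Leftrightarrow$(ii), we use Proposition~\mref{thm:dual}: by \meqref{eq:dual-total-partial}, the total actions of the dual regular representation are $(L_\succ^*,-(L_\succ+R_\prec)^*)$, which are exactly \meqref{eq:partial-pair}. Corollary~\mref{cor:underlying-matched} then gives (i)$\Rightarrow$(ii).

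The converse (ii)$\Rightarrow$(i) is the main obstacle, because a matched pair of subadjacent Leibniz algebras a priori encodes only the sum of the four identities. We plan to use (iii) as an intermediary. Form $D$ with the products \meqref{eq:double-p}--\meqref{eq:double-v}; by the computation above, its total product is the Leibniz bicrossed product for \meqref{eq:partial-pair}. Under (ii), $(D,\circ_D)$ is therefore a Leibniz algebra. The same pairing computations as in the case analysis show that $\omega$ satisfies the invariance relations \meqref{eq:metric-construct} with respect to $\succ_D,\prec_D$, and that $\vee_D=\circ_D-\prec_D-\succ_D$.

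We can now apply the uniqueness part of Theorem~\mref{thm:metric-splitting}. The Leibniz algebra $(D,\circ_D)$ with the form $\omega$ carries a unique quadratic NS-Leibniz structure, and its operations are determined by \meqref{eq:metric-construct}. The operations $\succ_D,\prec_D,\vee_D$ satisfy these determining relations, so they are that structure, and in particular $D$ is NS-Leibniz. Proposition~\mref{thm:matched} then yields (i).

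The delicate points are the following.
\begin{itemize}
\item We must check that the restrictions of $\succ_D,\prec_D$ to $A$ and to $A^*$ agree with the given operations. This holds because the given structures on $A$ and $A^*$ are fixed, and invariance relates only mixed components.
\item We must carefully verify the invariance of $\prec_D$ on triples of the form $(\zeta,y,z)$, where the $L_\star^*$ terms appear. We expect this sign bookkeeping to be the hardest routine step.
\end{itemize}
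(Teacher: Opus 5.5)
Your proposal is correct and essentially follows the paper. Its key step (equip the Leibniz bicrossed product with the canonical form, invoke the uniqueness in Theorem~\ref{thm:metric-splitting}, then apply Proposition~\ref{thm:matched}) is exactly the paper's route (ii)$\Rightarrow$(iii)$\Rightarrow$(i), except that you check the explicit products satisfy the determining relations instead of deriving them from the splitting. Your direct check of (i)$\Rightarrow$(iii) is correct but redundant once the cycle (i)$\Rightarrow$(ii)$\Rightarrow$(iii)$\Rightarrow$(i) is closed; the invariance pairings you reuse there depend only on the definitions of the dual actions, not on $D$ already being NS-Leibniz, so they are legitimately available under hypothesis (ii).
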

\begin{proof}
The implication (i) to (ii) follows from
\meqref{eq:dual-total-partial} and Corollary~\mref{cor:underlying-matched}.
Suppose (ii) holds. Equip the Leibniz bicrossed product $D$ with
\meqref{eq:canonical-form} and apply
Theorem~\mref{thm:metric-splitting}. For $x,y,z\in A$,
$\omega(x\prec_Dy,z)=\omega(x,y\circ z+z\circ y)=0$ and
$\omega(x\succ_Dy,z)=-\omega(y,x\circ z)=0$.
Since $A^\perp=A$, both products belong to $A$; their residual
$\vee_D$ also belongs to $A$. The same argument applies to $A^*$.

The restricted products agree with the given products. Indeed,
for $\zeta\in A^*$,
\[
\omega(x\succ_Dy,\zeta)=-\omega(y,x\circ_D \zeta)
 =-\langle L_{\succ,A}^*(x)\zeta,y\rangle=\langle \zeta,x\succ_Ay\rangle.
\]
The $A^*$ component of $y\circ_D\zeta+\zeta\circ_Dy$ is
$-R_{\prec,A}^*(y)\zeta$, so
$\omega(x\prec_Dy,\zeta)=\langle \zeta,x\prec_Ay\rangle$.
Subtracting from the total product yields the equality for $\vee$.
Pairing mixed products with each summand and applying
\meqref{eq:inv-p}--\meqref{eq:inv-v-right} yields
\meqref{eq:double-p}--\meqref{eq:double-v}. This proves (iii).
The implication (iii) to (i) follows from Proposition~\mref{thm:matched}.
\end{proof}

\section{NS-Leibniz bialgebras}
\mlabel{sec:manin-bialgebras}
\subsection{NS-Leibniz bialgebras}
We define the coalgebra structure by dualizing the four
NS-Leibniz identities. When $\Delta_\vee=0$, these
identities reduce to the Leibniz-dendriform coalgebra
identities \mcite{SunGuo}.

\begin{defi}\mlabel{def:coalgebra}
An NS-Leibniz coalgebra consists of three maps
$\Delta_\succ,\Delta_\prec,\Delta_\vee:A\to A\o A$ satisfying,
with $\Delta=\Delta_\prec+\Delta_\succ+\Delta_\vee$,
\begin{align}
(I\o\Delta)\Delta_\prec
 &=(\Delta_\prec\o I)\Delta_\prec
   +(\tau\o I)(I\o\Delta_\prec)\Delta_\succ,\mlabel{eq:c1}\\
(I\o\Delta_\prec)\Delta_\succ
 &=(\Delta_\succ\o I)\Delta_\prec
   +(\tau\o I)(I\o\Delta)\Delta_\prec,\mlabel{eq:c2}\\
(I\o\Delta_\succ)\Delta_\succ
 &=(\Delta\o I)\Delta_\succ
   +(\tau\o I)(I\o\Delta_\succ)\Delta_\succ,\mlabel{eq:c3}\\
(I\o\Delta_\vee)\Delta_\succ+(I\o\Delta)\Delta_\vee
 &=(\Delta_\vee\o I)\Delta_\prec+(\Delta\o I)\Delta_\vee\notag\\
 &\quad+(\tau\o I)(I\o\Delta_\vee)\Delta_\succ
       +(\tau\o I)(I\o\Delta)\Delta_\vee.\mlabel{eq:c4}
\end{align}
\end{defi}
\begin{lem}\mlabel{lem:coalgebra-dual}
The coalgebra identities above are equivalent to an NS-Leibniz structure
on $A^*$ defined by
\begin{equation}\mlabel{eq:coalgebra-pair}
 \langle \Delta_\diamond(x),\zeta\o \eta\rangle=\langle \zeta\diamond \eta,x\rangle,
 \qquad \diamond\in\{\succ,\prec,\vee\}.
\end{equation}
Equivalently, it suffices to impose
\meqref{eq:c1}--\meqref{eq:c3} and require the total
coproduct $\Delta$ to satisfy the Leibniz coalgebra identity
\begin{equation}\mlabel{eq:total-leibniz-coalgebra}
 (I\o\Delta)\Delta
 =(\Delta\o I)\Delta+(\tau\o I)(I\o\Delta)\Delta.
\end{equation}
\end{lem}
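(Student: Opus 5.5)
The plan is to pair each coalgebra identity with $\zeta\o\eta\o\theta\in A^*\o A^*\o A^*$ and read off the corresponding NS-Leibniz identity on $A^*$. By \meqref{eq:coalgebra-pair}, the basic dictionary is
\[
\langle(\Delta_\diamond\o I)\Delta_{\diamond'}(x),\zeta\o\eta\o\theta\rangle
=\langle(\zeta\diamond\eta)\diamond'\theta,x\rangle
\]
and
\[
\langle(I\o\Delta_\diamond)\Delta_{\diamond'}(x),\zeta\o\eta\o\theta\rangle
=\langle\zeta\diamond'(\eta\diamond\theta),x\rangle .
\]
The flip gives
\[
\langle(\tau\o I)(I\o\Delta_\diamond)\Delta_{\diamond'}(x),\zeta\o\eta\o\theta\rangle
=\langle\eta\diamond'(\zeta\diamond\theta),x\rangle .
\]
By linearity, $\Delta$ corresponds to the total product $\circ=\succ+\prec+\vee$ on $A^*$.

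First I will apply this dictionary term by term. Identity \meqref{eq:c1} becomes $\zeta\prec(\eta\circ\theta)=(\zeta\prec\eta)\prec\theta+\eta\succ(\zeta\prec\theta)$, which is \meqref{eq:ns1} on $A^*$. In the same way, \meqref{eq:c2} and \meqref{eq:c3} become \meqref{eq:ns2} and \meqref{eq:ns3}. The identity \meqref{eq:c4} becomes \meqref{eq:ns4}, with the six terms matching in order. Since $A$ is finite-dimensional, the tensors $\zeta\o\eta\o\theta$ span $(A\o A\o A)^*$, and an element $x\in A$ vanishes when all pairings $\langle\cdot,x\rangle$ vanish. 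Hence each coalgebra identity holds for all $x$ if and only if the dual algebra identity holds for all $\zeta,\eta,\theta$. This gives the first equivalence.

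For the second statement, the same dictionary turns \meqref{eq:total-leibniz-coalgebra} into $\zeta\circ(\eta\circ\theta)=(\zeta\circ\eta)\circ\theta+\eta\circ(\zeta\circ\theta)$, which is the Leibniz identity \meqref{eq:leibniz} for $(A^*,\circ)$. As observed after Definition~\mref{def:ns} and in the proof of Proposition~\mref{prop:ns-leibniz-representation}, the four NS-Leibniz identities sum to the Leibniz identity. Conversely, subtracting \meqref{eq:ns1}--\meqref{eq:ns3} from the expanded Leibniz identity leaves exactly \meqref{eq:ns4}. Dually, \meqref{eq:c1}--\meqref{eq:c3} together with \meqref{eq:total-leibniz-coalgebra} are equivalent to \meqref{eq:c1}--\meqref{eq:c4}. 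One can also verify this directly: summing \meqref{eq:c1}--\meqref{eq:c4} and expanding $\Delta$ reproduces \meqref{eq:total-leibniz-coalgebra}.

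The only real obstacle is bookkeeping. I have to check the order of tensor legs under $\tau\o I$ so that, for example, $(\tau\o I)(I\o\Delta)\Delta_\prec$ pairs to $\eta\prec(\zeta\circ\theta)$, matching the last term of \meqref{eq:ns2}. I also have to check that the two-term left side of \meqref{eq:c4} pairs to $\zeta\succ(\eta\vee\theta)+\zeta\vee(\eta\circ\theta)$. I will do this by checking the two dictionary formulas carefully on pure tensors $\Delta_{\diamond'}(x)=\sum x_{(1)}\o x_{(2)}$ once, and then applying them mechanically.
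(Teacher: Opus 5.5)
Your proposal is correct and follows essentially the same route as the paper's proof. You pair \eqref{eq:c1}--\eqref{eq:c4} with $\zeta\o\eta\o\theta$ to obtain \eqref{eq:ns1}--\eqref{eq:ns4} on $A^*$, and your dictionary formulas, including the leg order under $\tau\o I$, check out term by term; nondegeneracy of the pairing then gives the equivalence, and summing the four identities, or subtracting the first three from \eqref{eq:total-leibniz-coalgebra}, handles the second statement.
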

\begin{proof}
Pair \meqref{eq:c1}--\meqref{eq:c4} with
$\zeta\o\eta\o h$. For the first identity, this yields
\[
 \zeta\prec(\eta\circ h)
 =(\zeta\prec\eta)\prec h+\eta\succ(\zeta\prec h).
\]
The other three identities yield
\meqref{eq:ns2}--\meqref{eq:ns4} on $A^*$. Nondegeneracy of the
pairing proves the equivalence.
Summing \meqref{eq:c1}--\meqref{eq:c4} yields
\meqref{eq:total-leibniz-coalgebra}; conversely, subtracting
the first three identities from it yields the fourth.
\end{proof}

For the following tensor identities, set
\begin{equation}\mlabel{eq:odot}
x\odot y=x\succ y+y\prec x,\quad
L_\odot=L_\succ+R_\prec,\quad
R_\odot=R_\succ+L_\prec,\quad
\Delta_\odot=\Delta_\succ+\tau\Delta_\prec.
\end{equation}

An NS-Leibniz bialgebra consists of compatible algebra and
coalgebra structures. The compatibility conditions below
involve the total product $\circ=\succ+\prec+\vee$ and
the total coproduct
$\Delta=\Delta_\succ+\Delta_\prec+\Delta_\vee$.

\begin{defi}\mlabel{def:bialgebra}
An NS-Leibniz bialgebra is an NS-Leibniz algebra together with an
NS-Leibniz coalgebra on the same space such that, for all $x,y\in A$,
\begin{align}
0={}&\Delta(x\odot y)-(I\o L_\odot(x))\Delta(y)
 +\tau(I\o L_\odot(x))\Delta(y)\notag\\
 &+\tau(I\o R_\odot(y))\Delta_\succ(x)
 -(I\o R_\odot(y))\Delta_\succ(x),\mlabel{eq:k1}\\
0={}&\Delta(x\succ y)
 -(L_\succ(x)\o I+I\o L_\succ(x))\Delta(y)\notag\\
 &-(I\o R_\succ(y))\Delta_\odot(x)+\tau(I\o R_\odot(y))\Delta_\odot(x),\mlabel{eq:k2}\\
0={}&(I\o R_\succ(y))\tau\Delta_\prec(x)
 -(R_\prec(x)\o I)\Delta(y),\mlabel{eq:k3}\\
0={}&\Delta_\odot(x\circ y)-(I\o L_\circ (x)+L_\succ(x)\o I)\Delta_\odot(y)\notag\\
 &+(I\o L_\circ (y)+L_\succ(y)\o I)\Delta_\odot(x),\mlabel{eq:k4}\\
0={}&\Delta_\succ(x\circ y)-(I\o R_\circ (y))\Delta_\succ(x)\notag\\
 &-(I\o L_\circ (x)+L_\odot(x)\o I)\Delta_\succ(y)
 +(L_\odot(y)\o I)\Delta_\odot(x),\mlabel{eq:k5}\\
0={}&(I\o R_\circ (y))\tau\Delta_\prec(x)
 -(R_\prec(x)\o I)\Delta_\succ(y).\mlabel{eq:k6}
\end{align}
\end{defi}

\begin{thm}\mlabel{thm:bialgebra}
Let $A$ be an NS-Leibniz algebra and let
$(\Delta_\succ,\Delta_\prec,\Delta_\vee)$ be an
NS-Leibniz coalgebra. Then
$(A,\succ,\prec,\vee,\Delta_\succ,\Delta_\prec,\Delta_\vee)$
is an NS-Leibniz bialgebra if and only if
$(A,\circ)$ and $(A^*,\circ)$ form a matched pair of
Leibniz algebras with the actions in \meqref{eq:partial-pair}.
\end{thm}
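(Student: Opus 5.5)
We reduce the statement to an explicit comparison of matched-pair conditions with \meqref{eq:k1}--\meqref{eq:k6}. By Lemma~\mref{lem:coalgebra-dual}, the coalgebra structure is the same as an NS-Leibniz structure on $A^*$ via \meqref{eq:coalgebra-pair}. By Proposition~\mref{prop:semidirect} and Proposition~\mref{thm:dual}, the pairs \meqref{eq:partial-pair} are representations of the subadjacent Leibniz algebras $(A,\circ)$ and $(A^*,\circ)$ on $A^*$ and $A$. Indeed, they are the pairs $(l_\succ,r_\prec)$ of the dual regular representations, namely $(L_\succ^*, -(L_\succ+R_\prec)^*)$. So the only content is the set of mixed compatibility conditions of a Leibniz matched pair.

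Recall these conditions for $(A,\circ)$, $(B,\circ)$ with actions $(l_A,r_A)$ and $(l_B,r_B)$. They come from the Leibniz identity on $A\oplus B$ evaluated on triples with two entries in $A$ and one in $B$, the $A$-component. There are three such conditions, according to the position of the $B$ entry. The three mirror conditions from triples with two entries in $B$ and one in $A$ arise analogously. My plan is to pair each of the three conditions with the $A$-entries fixed as $x,y$, the $A^*$-entry $\zeta$, and a test element, and to rewrite everything through $\Delta_\diamond$ via \meqref{eq:coalgebra-pair}. Here $l_{A^*}(\zeta)=L_{\succ,A^*}^*(\zeta)$ and $r_{A^*}(\zeta)=-(L_{\succ,A^*}+R_{\prec,A^*})^*(\zeta)$, paired against $A$, become expressions in $\Delta_\succ$ and $\tau\Delta_\prec$, that is, in $\Delta_\odot$. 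The actions of $A$ on $A^*$ become $L_\succ$ and $L_\odot=L_\succ+R_\prec$ after transposition.

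More concretely, I would test each mixed condition against a pair $\zeta\otimes\eta$ or $\zeta\otimes z$ as appropriate and then transpose. The three conditions with $x,y\in A$, $\zeta\in A^*$ and $A$-component then become identities in $A\otimes A$ of the form \meqref{eq:k1}, \meqref{eq:k2}, \meqref{eq:k3}. The three with two dual entries become \meqref{eq:k4}, \meqref{eq:k5}, \meqref{eq:k6}. Conversely, the $A^*$-components of the mixed identities are already automatic, being the representation relations. One also checks that the $A$-components of the second group give exactly the dual (transpose) forms of the remaining $k$-identities. Nondegeneracy of the pairing then gives the equivalence in both directions. Since the statement of the theorem compares the bialgebra only with the matched pair of subadjacent Leibniz algebras, no NS-level compatibility beyond this must be checked. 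The implication to the full NS matched pair and the Manin triple then follows from Theorem~\mref{thm:double}.

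The main obstacle is the bookkeeping. One must match each of the six mixed Leibniz matched-pair conditions with precisely one of \meqref{eq:k1}--\meqref{eq:k6}, with correct signs and flips $\tau$. The condition $r(y)(r(x)+l(x))=0$-type terms produce the short identities \meqref{eq:k3} and \meqref{eq:k6}. The identification of $\Delta_\succ+\tau\Delta_\prec$ with the transpose of $L_{\succ,A^*}+R_{\prec,A^*}$ must be handled carefully, since $\langle\Delta_\prec(x),\eta\otimes\zeta\rangle=\langle\zeta\, R_\prec\text{-type}\ \eta, x\rangle$ introduces the flip. I would first do the computation for the condition coming from the third slot, which should give \meqref{eq:k3}, to calibrate the conventions, and then proceed to the longer ones.
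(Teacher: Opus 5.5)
Your overall strategy is the paper's. You use Lemma~\ref{lem:coalgebra-dual} for the algebra on $A^*$, note that the pairs in \eqref{eq:partial-pair} are Leibniz representations, pair every mixed matched-pair condition with test elements, and conclude by nondegeneracy. However, the dictionary you propose between the two families of mixed conditions and \eqref{eq:k1}--\eqref{eq:k6} is backwards, so the central step fails as written.

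The reason is structural. The identities \eqref{eq:k1}--\eqref{eq:k3} contain the total coproduct $\Delta$, hence $\Delta_\vee$. On the dual side, $\Delta_\vee$ can only enter through the total product $\zeta\circ\eta$ of $A^*$, which occurs only in identities with two entries in $A^*$. By contrast, the $A$-components of the Leibniz identity on $(x,y,\zeta)$, $(x,\zeta,y)$, $(\zeta,x,y)$ see $\zeta$ only through $l_{A^*}(\zeta)$ and $r_{A^*}(\zeta)$, and
\[
\langle l_{A^*}(\zeta)x,\theta\rangle=-\langle\Delta_\succ(x),\zeta\otimes\theta\rangle,\qquad
\langle r_{A^*}(\zeta)x,\theta\rangle=\langle\Delta_\odot(x),\zeta\otimes\theta\rangle.
\]
So they can only produce identities in $\Delta_\succ$ and $\Delta_\odot$, which are exactly \eqref{eq:k4}--\eqref{eq:k6}. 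In particular, your calibration step would not work: the $A$-component on $(x,y,\zeta)$ gives \eqref{eq:k4}, not \eqref{eq:k3}. Likewise, your claim about the $A$-components of the identities with two dual entries is wrong. Those components are just the representation relations of $(A^*,\circ)$ on $A$; the content of that family sits in its $A^*$-components.

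The correct matching uses $\langle l_A(x)\zeta,y\rangle=-\langle\zeta,x\succ y\rangle$ and $\langle r_A(x)\zeta,y\rangle=\langle\zeta,x\odot y\rangle$.

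For two $A$-entries and one dual entry, pair the $A$-components with $\eta$:
\begin{enumerate}
\item the identity on $(x,y,\zeta)$ is \eqref{eq:k4};
\item the identity on $(\zeta,x,y)$ is \eqref{eq:k5};
\item the sum of those on $(x,\zeta,y)$ and $(\zeta,x,y)$ is \eqref{eq:k6}, namely $\bigl((l_{A^*}(\zeta)+r_{A^*}(\zeta))x\bigr)\circ y+l_{A^*}\bigl((l_A(x)+r_A(x))\zeta\bigr)y=0$.
\end{enumerate}

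For two dual entries and one $A$-entry, pair the $A^*$-components with $y$:
\begin{enumerate}
\item the identity on $(\zeta,\eta,x)$ is exactly \eqref{eq:k1};
\item the identity on $(x,\zeta,\eta)$ is \eqref{eq:k2};
\item the sum of those on $(\zeta,x,\eta)$ and $(x,\zeta,\eta)$ is \eqref{eq:k3}.
\end{enumerate}

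Your remark that the short identities come from $r(y)(r(x)+l(x))$-type sums is right. It also means the correspondence is not one raw condition per $k$-identity: in each direction one identity is replaced by its sum with another, which is an equivalent transformation.

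A minor point: \eqref{eq:partial-pair} consists of the total action pairs of the dual regular representations. Their $(l_\succ,r_\prec)$ pairs are $(L_\circ^*,-L_\star^*)$ instead. Both are Leibniz representations by Proposition~\ref{prop:semidirect}, so nothing breaks there. With the dictionary corrected, your argument coincides with the paper's.
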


\begin{proof}
By Lemma~\mref{lem:coalgebra-dual}, the coproducts
$(\Delta_\succ,\Delta_\prec,\Delta_\vee)$ induce an
NS-Leibniz algebra structure on $A^*$.
The action pairs in \meqref{eq:partial-pair} are
representations of the corresponding subadjacent
Leibniz algebras.

The pairing computations corresponding to
\meqref{eq:k1}--\meqref{eq:k3} are the same as those
in the proof of \mcite{SunGuo}, Theorem~3.4,
with $\Delta=\Delta_\succ+\Delta_\prec+\Delta_\vee$.
We therefore only verify the conditions involving
the total multiplication on $A$.

For all $x,y\in A$ and $\zeta,\eta\in A^*$, we have
\[
\begin{aligned}
&\Big\langle\eta,\,
r_{A^*}(\zeta)(x\circ y)
-x\circ(r_{A^*}(\zeta)y)
+y\circ(r_{A^*}(\zeta)x)\\
&\qquad
-r_{A^*}(l_A(y)\zeta)x
+r_{A^*}(l_A(x)\zeta)y
\Big\rangle\\
={}&\Big\langle
\Delta_\odot(x\circ y)
-(I\otimes L_\circ(x)+L_\succ(x)\otimes I)
 \Delta_\odot(y)\\
&\qquad
+(I\otimes L_\circ(y)+L_\succ(y)\otimes I)
 \Delta_\odot(x),
\zeta\otimes\eta
\Big\rangle.
\end{aligned}
\]
Thus this matched-pair condition is equivalent to
\meqref{eq:k4}. Similarly,
\[
\begin{aligned}
&\Big\langle\eta,\,
-l_{A^*}(\zeta)(x\circ y)
+(l_{A^*}(\zeta)x)\circ y
+x\circ(l_{A^*}(\zeta)y)\\
&\qquad
+l_{A^*}(r_A(x)\zeta)y
+r_{A^*}(r_A(y)\zeta)x
\Big\rangle\\
={}&\Big\langle
\Delta_\succ(x\circ y)
-(I\otimes R_\circ(y))\Delta_\succ(x)\\
&\qquad
-(I\otimes L_\circ(x)+L_\odot(x)\otimes I)
 \Delta_\succ(y)
+(L_\odot(y)\otimes I)\Delta_\odot(x),
\zeta\otimes\eta
\Big\rangle.
\end{aligned}
\]
Hence this condition is equivalent to
\meqref{eq:k5}.

Adding the mixed Leibniz identities with arguments
$(x,\zeta,y)$ and $(\zeta,x,y)$ and taking their
$A$ components, we obtain
\[
\bigl((l_{A^*}(\zeta)+r_{A^*}(\zeta))x\bigr)\circ y
+l_{A^*}\bigl((l_A(x)+r_A(x))\zeta\bigr)y=0.
\]
Moreover,
\[
\begin{aligned}
&\Big\langle\eta,\,
\bigl((l_{A^*}(\zeta)+r_{A^*}(\zeta))x\bigr)\circ y
+l_{A^*}\bigl((l_A(x)+r_A(x))\zeta\bigr)y
\Big\rangle\\
={}&\Big\langle
(I\otimes R_\circ(y))\tau\Delta_\prec(x)
-(R_\prec(x)\otimes I)\Delta_\succ(y),
\zeta\otimes\eta
\Big\rangle.
\end{aligned}
\]
This condition is therefore equivalent to
\meqref{eq:k6}.

The remaining components hold by the representation
identities. In each direction, replacing one mixed
identity by its sum with another is an equivalent
transformation. By nondegeneracy of the natural
pairings, all the matched-pair conditions are
equivalent to \meqref{eq:k1}--\meqref{eq:k6}.
This proves the assertion.
\end{proof}

\begin{cor}\mlabel{cor:bialgebra-characterizations}
Let $A$ be an NS-Leibniz algebra and let
$(\Delta_\succ,\Delta_\prec,\Delta_\vee)$ be an NS-Leibniz coalgebra.
Then the following conditions are equivalent:
\begin{enumerate}[label=(\roman*)]
\item $(A,\succ,\prec,\vee,\Delta_\succ,\Delta_\prec,\Delta_\vee)$
is an NS-Leibniz bialgebra.
\item $(A,\circ )$ and $(A^*,\circ )$ form a Leibniz matched pair with
\meqref{eq:partial-pair}.
\item $A,A^*$ form an NS-Leibniz matched pair with their dual regular actions.
\item \meqref{eq:double-p}--\meqref{eq:double-v} and
\meqref{eq:canonical-form} define a Manin triple.
\end{enumerate}
The algebra in (iv) is called the canonical double of the bialgebra.
\end{cor}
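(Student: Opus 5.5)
The plan is to chain together the equivalences already established, using (ii) as the hub. The equivalence (i)$\Leftrightarrow$(ii) is exactly Theorem~\mref{thm:bialgebra}. The standing hypotheses are that $A$ is an NS-Leibniz algebra and $(\Delta_\succ,\Delta_\prec,\Delta_\vee)$ is an NS-Leibniz coalgebra. Under these hypotheses, Lemma~\mref{lem:coalgebra-dual} turns $A^*$ into an NS-Leibniz algebra via \meqref{eq:coalgebra-pair}. Theorem~\mref{thm:double} then applies to the pair $A,A^*$, and it gives (ii)$\Leftrightarrow$(iii)$\Leftrightarrow$(iv) directly.

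The steps, in order, are as follows.
\begin{enumerate}[label=(\arabic*)]
\item Use Lemma~\mref{lem:coalgebra-dual} to equip $A^*$ with its NS-Leibniz products, and note that these are the products appearing in \meqref{eq:double-p}--\meqref{eq:double-v}.
\item Quote Theorem~\mref{thm:bialgebra} for (i)$\Leftrightarrow$(ii).
\item Quote Theorem~\mref{thm:double}: its (i) is our (iii), its (ii) is our (ii), and its (iii) is our (iv).
\item Define the canonical double as the Manin triple in (iv).
\end{enumerate}

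The only point needing care is that the actions in \meqref{eq:partial-pair} used in Theorem~\mref{thm:bialgebra} coincide with those used in Theorem~\mref{thm:double}(ii). Both are built from the same NS-Leibniz structure on $A^*$ induced by the coproducts, and Proposition~\mref{thm:dual}, through \meqref{eq:dual-total-partial}, identifies them as the total actions of the dual regular representations. So the identification is by definition.

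I expect no real obstacle here, since the corollary is a bookkeeping consequence of earlier results. The mild one is this consistency check that the dual regular action systems of Corollary~\mref{cor:dual-regular}, computed for $A^*$ with the induced products, give the same pair \meqref{eq:partial-pair}. I would settle it by appealing to $V^{**}\cong V$ in Proposition~\mref{thm:dual}.
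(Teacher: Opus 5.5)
Your proposal is correct and matches the paper's proof, which likewise combines Theorem~\mref{thm:bialgebra} for (i)$\Leftrightarrow$(ii) with Theorem~\mref{thm:double} for (ii)$\Leftrightarrow$(iii)$\Leftrightarrow$(iv), using the NS-Leibniz structure on $A^*$ supplied by Lemma~\mref{lem:coalgebra-dual}. Your consistency check is automatic, because both theorems refer to the same display \meqref{eq:partial-pair}, so the appeal to double duality is not needed.
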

\begin{proof}
Combine Theorems~\mref{thm:double} and~\mref{thm:bialgebra}.
\end{proof}

\begin{rmk}
The third product and coproduct enter the six compatibility
conditions through $\circ$ and $\Delta$.
No further independent bialgebra compatibility condition is
needed for $\vee$. Its mixed identities follow from the
Leibniz double and the quadratic splitting in
Theorem~\mref{thm:double}.
When $\vee=0$ and $\Delta_\vee=0$, the definition reduces
to that of a Leibniz-dendriform bialgebra \mcite{SunGuo},
and the canonical double is a symplectic Leibniz phase space.
\end{rmk}

\begin{ex}
For any NS-Leibniz algebra $A$, the semidirect product
$A\ltimes A^*$ for the dual regular representation, with zero
products on $A^*$, is quadratic for the canonical form.
The associated Manin triple determines the bialgebra
$(A,\succ,\prec,\vee,0,0,0)$.
\end{ex}

\subsection{Coboundary NS-Leibniz bialgebras and the Yang--Baxter equation}
\mlabel{sec:coboundary}
\begin{defi}\mlabel{def:coboundary}
Let $(A,\succ,\prec,\vee)$ be an NS-Leibniz algebra.
For $r\in A\o A$, put
\begin{align}
\Delta_{\succ,r}(x)
 &=(L_\odot(x)\o I-I\o R_\circ (x))r,\mlabel{eq:delta-r-s}\\
\Delta_{\prec,r}(x)
 &=(L_\star(x)\o I-I\o R_\prec(x))\tau(r),\mlabel{eq:delta-r-p}\\
\Delta_{\vee,r}(x)
 &=((L_\vee(x)+R_\vee(x))\o I+I\o R_\vee(x))r.\mlabel{eq:delta-r-v}
\end{align}
An NS-Leibniz bialgebra is coboundary if its three coproducts have this
form for a common tensor $r$. Thus the maps in \meqref{eq:delta-r-s}--\meqref{eq:delta-r-v}
define a coboundary bialgebra when they satisfy the
coalgebra and compatibility relations of
Definition~\mref{def:bialgebra}.
\end{defi}

Write $\Delta_r=\Delta_{\succ,r}+\Delta_{\prec,r}+\Delta_{\vee,r}$,
$\Delta_{\odot,r}=\Delta_{\succ,r}+\tau\Delta_{\prec,r}$ and
$q=r+\tau(r)$. We also write $x\star y=x\circ y+y\circ x$,
so that $L_\star(x)y=x\star y$.
On $A^{\o3}$, put $\tau_{12}=\tau\o I$,
$\tau_{23}=I\o\tau$ and $\rho=\tau_{12}\tau_{23}$.
Thus $\rho(x\o y\o z)=z\o x\o y$.

Substituting the three coboundary maps into the coalgebra
identities yields the following criterion. We display
the equations for all four NS-Leibniz identities.

Write $\Delta_r=\Delta_{\succ,r}+\Delta_{\prec,r}+\Delta_{\vee,r}$ and $\Delta_{\odot,r}=\Delta_{\succ,r}+\tau\Delta_{\prec,r}$. We use $x\star y=x\circ y+y\circ x$, so that $L_\star(x)=L_\circ(x)+R_\circ(x)$.

\begin{pro}\mlabel{prop:general-coboundary-coalgebra}
Let $r=\sum_i a_i\otimes b_i\in A\otimes A$, and define the three maps by \meqref{eq:delta-r-s}--\meqref{eq:delta-r-v}. Then \meqref{eq:c1}, \meqref{eq:c2}, \meqref{eq:c3} and \meqref{eq:c4} hold if and only if, respectively, the following equations hold for every $x\in A$:
\begin{align}
0={}&\sum_{i,j}\Bigl[
 -(x\star b_i)\otimes(a_i\prec a_j)\otimes b_j
 -(x\star b_i)\otimes(a_j\succ a_i)\otimes b_j
 \notag\\
&\quad
 -(x\star b_i)\otimes a_j\otimes(b_j\succ a_i)
 +b_i\otimes((a_i\prec x)\prec a_j)\otimes b_j
 \notag\\
&\quad
 +b_i\otimes(a_j\succ(a_i\prec x))\otimes b_j
 +b_i\otimes a_j\otimes(b_j\succ(a_i\prec x))
 \notag\\
&\quad
 -((x\star b_i)\star b_j)\otimes a_j\otimes a_i
 +b_j\otimes(a_j\prec(x\star b_i))\otimes a_i
 \notag\\
&\quad
 +(b_i\star b_j)\otimes a_j\otimes(a_i\prec x)
 -b_j\otimes(a_j\prec b_i)\otimes(a_i\prec x)
 \notag\\
&\quad
 -(b_i\star b_j)\otimes(x\odot a_i)\otimes a_j
 +b_j\otimes(x\odot a_i)\otimes(a_j\prec b_i)
 \notag\\
&\quad
 +((b_i\circ x)\star b_j)\otimes a_i\otimes a_j
 -b_j\otimes a_i\otimes(a_j\prec(b_i\circ x))
 \Bigr]
 \notag\\
&+\sum_i\Bigl[
 (x\star b_i)\otimes
 \bigl((L_\star(a_i)\otimes I-I\otimes R_\prec(a_i))
 (r+\tau(r))\bigr)
 \notag\\
&\quad
 -b_i\otimes
 \bigl((L_\star(a_i\prec x)\otimes I
 -I\otimes R_\prec(a_i\prec x))(r+\tau(r))\bigr)
 \Bigr],
 \mlabel{eq:general-cb-coalgebra-p}\\
0={}&\sum_{i,j}\Bigl[
 (x\odot a_i)\otimes(b_i\star b_j)\otimes a_j
 -(x\odot a_i)\otimes b_j\otimes(a_j\prec b_i)
 \notag\\
&\quad
 -a_i\otimes((b_i\circ x)\star b_j)\otimes a_j
 +a_i\otimes b_j\otimes(a_j\prec(b_i\circ x))
 \notag\\
&\quad
 -((x\star b_i)\odot a_j)\otimes b_j\otimes a_i
 +a_j\otimes(b_j\circ(x\star b_i))\otimes a_i
 \notag\\
&\quad
 +(b_i\odot a_j)\otimes b_j\otimes(a_i\prec x)
 -a_j\otimes(b_j\circ b_i)\otimes(a_i\prec x)
 \notag\\
&\quad
 +(a_i\prec a_j)\otimes(x\star b_i)\otimes b_j
 +(a_j\succ a_i)\otimes(x\star b_i)\otimes b_j
 \notag\\
&\quad
 +a_j\otimes(x\star b_i)\otimes(b_j\succ a_i)
 -((a_i\prec x)\prec a_j)\otimes b_i\otimes b_j
 \notag\\
&\quad
 -(a_j\succ(a_i\prec x))\otimes b_i\otimes b_j
 -a_j\otimes b_i\otimes(b_j\succ(a_i\prec x))
 \Bigr]
 \notag\\
&-(\tau\otimes I)\sum_i\Bigl[
 (x\star b_i)\otimes
 \bigl((L_\star(a_i)\otimes I-I\otimes R_\prec(a_i))
 (r+\tau(r))\bigr)
 \notag\\
&\quad
 -b_i\otimes
 \bigl((L_\star(a_i\prec x)\otimes I
 -I\otimes R_\prec(a_i\prec x))(r+\tau(r))\bigr)
 \Bigr],
 \mlabel{eq:general-cb-coalgebra-sp}\\
0={}&\sum_{i,j}\Bigl[
 -((x\odot a_i)\prec a_j)\otimes b_j\otimes b_i
 -(a_j\succ(x\odot a_i))\otimes b_j\otimes b_i
 \notag\\
&\quad
 -a_j\otimes(b_j\succ(x\odot a_i))\otimes b_i
 +(a_i\prec a_j)\otimes b_j\otimes(b_i\circ x)
 \notag\\
&\quad
 +(a_j\succ a_i)\otimes b_j\otimes(b_i\circ x)
 +a_j\otimes(b_j\succ a_i)\otimes(b_i\circ x)
 \notag\\
&\quad
 -(x\odot a_i)\otimes(b_i\odot a_j)\otimes b_j
 +(x\odot a_i)\otimes a_j\otimes(b_j\circ b_i)
 \notag\\
&\quad
 +a_i\otimes((b_i\circ x)\odot a_j)\otimes b_j
 -a_i\otimes a_j\otimes(b_j\circ(b_i\circ x))
 \notag\\
&\quad
 +(b_i\odot a_j)\otimes(x\odot a_i)\otimes b_j
 -a_j\otimes(x\odot a_i)\otimes(b_j\circ b_i)
 \notag\\
&\quad
 -((b_i\circ x)\odot a_j)\otimes a_i\otimes b_j
 +a_j\otimes a_i\otimes(b_j\circ(b_i\circ x))
 \Bigr]
 \notag\\
&+\sum_i\Bigl[
 \bigl((L_\star(x\odot a_i)\otimes I
 -I\otimes R_\prec(x\odot a_i))(r+\tau(r))\bigr)
 \otimes b_i
 \notag\\
&\quad
 -\bigl((L_\star(a_i)\otimes I-I\otimes R_\prec(a_i))
 (r+\tau(r))\bigr)\otimes(b_i\circ x)
 \Bigr],
 \mlabel{eq:general-cb-coalgebra-s}\\
0={}&(I^{\otimes3}-\tau\otimes I)
 \sum_{i,j}\Bigl[
 (x\odot a_i)\otimes(b_i\vee a_j)\otimes b_j
 +(x\odot a_i)\otimes(a_j\vee b_i)\otimes b_j
 \notag\\
&\quad
 +(x\odot a_i)\otimes a_j\otimes(b_j\vee b_i)
 -a_i\otimes((b_i\circ x)\vee a_j)\otimes b_j
 \notag\\
&\quad
 -a_i\otimes(a_j\vee(b_i\circ x))\otimes b_j
 -a_i\otimes a_j\otimes(b_j\vee(b_i\circ x))
 \notag\\
&\quad
 -(x\vee a_i)\otimes(b_i\prec a_j)\otimes b_j
 -(x\vee a_i)\otimes(a_j\succ b_i)\otimes b_j
 \notag\\
&\quad
 -(x\vee a_i)\otimes a_j\otimes(b_j\succ b_i)
 -(a_i\vee x)\otimes(b_i\prec a_j)\otimes b_j
 \notag\\
&\quad
 -(a_i\vee x)\otimes(a_j\succ b_i)\otimes b_j
 -(a_i\vee x)\otimes a_j\otimes(b_j\succ b_i)
 \notag\\
&\quad
 -a_i\otimes((b_i\vee x)\prec a_j)\otimes b_j
 -a_i\otimes(a_j\succ(b_i\vee x))\otimes b_j
 \notag\\
&\quad
 -a_i\otimes a_j\otimes(b_j\succ(b_i\vee x))
 \Bigr]
 \notag\\
&+(I^{\otimes3}-\tau\otimes I)\sum_i\Bigl[
 (x\vee a_i+a_i\vee x)\otimes
 \bigl((L_\star(b_i)\otimes I-I\otimes R_\prec(b_i))
 (r+\tau(r))\bigr)
 \notag\\
&\quad
 +a_i\otimes
 \bigl((L_\star(b_i\vee x)\otimes I
 -I\otimes R_\prec(b_i\vee x))(r+\tau(r))\bigr)
 \Bigr]
 \notag\\
&-\sum_{i,j}\Bigl[
 ((x\star b_i)\vee a_j)\otimes b_j\otimes a_i
 +(a_j\vee(x\star b_i))\otimes b_j\otimes a_i
 \notag\\
&\quad
 +a_j\otimes(b_j\vee(x\star b_i))\otimes a_i
 -(b_i\vee a_j)\otimes b_j\otimes(a_i\prec x)
 \notag\\
&\quad
 -(a_j\vee b_i)\otimes b_j\otimes(a_i\prec x)
 -a_j\otimes(b_j\vee b_i)\otimes(a_i\prec x)
 \notag\\
&\quad
 -((x\vee a_i)\prec a_j)\otimes b_j\otimes b_i
 -(a_j\succ(x\vee a_i))\otimes b_j\otimes b_i
 \notag\\
&\quad
 -a_j\otimes(b_j\succ(x\vee a_i))\otimes b_i
 -((a_i\vee x)\prec a_j)\otimes b_j\otimes b_i
 \notag\\
&\quad
 -(a_j\succ(a_i\vee x))\otimes b_j\otimes b_i
 -a_j\otimes(b_j\succ(a_i\vee x))\otimes b_i
 \notag\\
&\quad
 -(a_i\prec a_j)\otimes b_j\otimes(b_i\vee x)
 -(a_j\succ a_i)\otimes b_j\otimes(b_i\vee x)
 \notag\\
&\quad
 -a_j\otimes(b_j\succ a_i)\otimes(b_i\vee x)
 \Bigr]
 \notag\\
&-\sum_i\Bigl[
 \bigl((L_\star(x\vee a_i+a_i\vee x)\otimes I
 -I\otimes R_\prec(x\vee a_i+a_i\vee x))
 (r+\tau(r))\bigr)\otimes b_i
 \notag\\
&\quad
 +\bigl((L_\star(a_i)\otimes I-I\otimes R_\prec(a_i))
 (r+\tau(r))\bigr)\otimes(b_i\vee x)
 \Bigr].
 \mlabel{eq:general-cb-coalgebra-v}
\end{align}
\end{pro}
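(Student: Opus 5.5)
The plan is a direct substitution. First write each coboundary map in components. With $r=\sum_i a_i\o b_i$ and $\tau(r)=\sum_i b_i\o a_i$, we have
\[
\Delta_{\succ,r}(x)=\sum_i (x\odot a_i)\o b_i-a_i\o (b_i\circ x),\qquad
\Delta_{\prec,r}(x)=\sum_i (x\star b_i)\o a_i-b_i\o(a_i\prec x),
\]
\[
\Delta_{\vee,r}(x)=\sum_i (x\vee a_i+a_i\vee x)\o b_i+a_i\o(b_i\vee x).
\]
Here $L_\odot(x)a_i=x\odot a_i$, $R_\circ(x)b_i=b_i\circ x$, $L_\star(x)b_i=x\star b_i$ and $R_\prec(x)a_i=a_i\prec x$. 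I then compute, for each of \meqref{eq:c1}--\meqref{eq:c4}, the difference of the two sides as an element of $A^{\o3}$. Each term has the form $(\Delta_{\diamond'}\o I)\Delta_{\diamond}(x)$ or $(I\o\Delta_{\diamond'})\Delta_\diamond(x)$, possibly followed by $\tau\o I$. Each such term is obtained by substituting the component formula for $\Delta_\diamond(x)$ and then applying the component formula for $\Delta_{\diamond'}$ to the first or second tensor leg, with a second summation index $j$.

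The key organizational step is how to handle the total coproduct $\Delta_r$ when it appears in \meqref{eq:c1}, \meqref{eq:c2} and \meqref{eq:c4}. I will not expand it term by term. Instead I will observe that the $\Delta_{\prec,r}$ contribution can be written through $\tau(r)$, which is why the expression $(L_\star(\cdot)\o I-I\o R_\prec(\cdot))(r+\tau(r))$ appears. Concretely, $\Delta_{\prec,r}(y)=(L_\star(y)\o I-I\o R_\prec(y))(q-r)$ with $q=r+\tau(r)$. This rewriting separates a "$q$-part", which gives the single-sum brackets in the statement, from an "$r$-part", which gives the double sums. So for each identity I will:
\begin{enumerate}
\item expand all terms using the component formulas, grouping by which coproduct acts on which leg;
\item replace every occurrence of $\tau(r)$ coming from $\Delta_{\prec,r}$ by $q-r$ and collect the $q$-terms;
\item in \meqref{eq:c2} and \meqref{eq:c4}, collect the terms related by $\tau\o I$ so that the prefactor $(I^{\o3}-\tau\o I)$ can be factored out.
\end{enumerate}
After this, the remaining double sums should match the displayed ones term by term, after relabeling $i\leftrightarrow j$ where needed. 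No NS-Leibniz identity is used at this stage: the proposition is a pure rewriting. The ``if and only if'' is therefore immediate once each side of \meqref{eq:c1}--\meqref{eq:c4}, minus the other, is shown to equal the corresponding displayed expression.

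For \meqref{eq:c3}, which involves only $\Delta_{\succ,r}$ on the left and $\Delta_r$ on the right, the same procedure applies. Here the $q$-part arises from $(\Delta_{\prec,r}\o I)\Delta_{\succ,r}$, and this explains the last single sum in \meqref{eq:general-cb-coalgebra-s}.

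The main obstacle is bookkeeping in \meqref{eq:c4}. There, $\Delta_{\vee,r}$ has three summands and $\Delta_r$ has seven component summands, so the expansion produces many terms, and the flip $\tau\o I$ must be tracked consistently. I will handle it by first treating $\Delta_r=\Delta_{\succ,r}+\Delta_{\vee,r}+\Delta_{\prec,r}$ in the two places where it occurs, namely $(I\o\Delta_r)\Delta_{\vee,r}$ and $(\Delta_r\o I)\Delta_{\vee,r}$, and then pairing $(I\o\Delta_r)\Delta_{\vee,r}$ with $(\tau\o I)(I\o\Delta_r)\Delta_{\vee,r}$. The same pairing applies to the $\Delta_{\vee}$-after-$\Delta_\succ$ terms; this gives the $(I^{\o3}-\tau\o I)$ blocks. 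The terms $(\Delta_{\vee,r}\o I)\Delta_{\prec,r}$ and $(\Delta_r\o I)\Delta_{\vee,r}$ then give the final unflipped double and single sums.

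As a sanity check, I will verify that setting $\vee=0$ recovers the coboundary coalgebra equations of Leibniz-dendriform algebras in \mcite{SunGuo}. In that case \meqref{eq:general-cb-coalgebra-v} becomes trivial.
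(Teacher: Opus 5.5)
Your plan is correct and matches the paper's proof. You substitute the component formulas and rewrite the total coproduct as $\Delta_r(y)=-\sum_i\bigl[(y\prec a_i+a_i\succ y)\otimes b_i+a_i\otimes(b_i\succ y)\bigr]+(L_\star(y)\otimes I-I\otimes R_\prec(y))(r+\tau(r))$. This is what your substitution $\tau(r)=q-r$ gives when you apply it only to the $\Delta_{\prec,r}$ summand inside $\Delta_r$, after the leftover $r$-terms collapse via $\circ=\succ+\prec+\vee$. You then expand without using any NS-Leibniz identity, as the paper does.

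The only slip is in your step (3). The factor $(I^{\otimes3}-\tau\otimes I)$ appears only for \meqref{eq:c4}. In \meqref{eq:c2} the flipped block $(\tau\otimes I)(I\otimes\Delta_r)\Delta_{\prec,r}$ has no unflipped partner, so the displayed equation simply carries $-(\tau\otimes I)$ in front of the $r+\tau(r)$ terms.
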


\begin{proof}
The coboundary formulas yield
\begin{align*}
\Delta_{\succ,r}(x)
 &=\sum_i\bigl[
 (x\odot a_i)\otimes b_i-a_i\otimes(b_i\circ x)
 \bigr],\\
\Delta_{\prec,r}(x)
 &=\sum_i\bigl[
 (x\star b_i)\otimes a_i-b_i\otimes(a_i\prec x)
 \bigr],\\
\Delta_{\vee,r}(x)
 &=\sum_i\bigl[
 (x\vee a_i+a_i\vee x)\otimes b_i
 +a_i\otimes(b_i\vee x)
 \bigr].
\end{align*}
Adding these equalities, we obtain
\begin{equation}\mlabel{eq:general-total-coboundary}
\begin{aligned}
\Delta_r(x)
={}&-\sum_i\bigl[
 (x\prec a_i+a_i\succ x)\otimes b_i
 +a_i\otimes(b_i\succ x)
 \bigr]\\
&+(L_\star(x)\otimes I-I\otimes R_\prec(x))(r+\tau(r)).
\end{aligned}
\end{equation}
For \meqref{eq:c3}, direct substitution yields
\[
\begin{aligned}
&(\Delta_r\otimes I)\Delta_{\succ,r}(x)
 -(I\otimes\Delta_{\succ,r})\Delta_{\succ,r}(x)
 +(\tau\otimes I)(I\otimes\Delta_{\succ,r})
 \Delta_{\succ,r}(x)\\
&=I(1)+I(2)+I(3),
\end{aligned}
\]
where
\begin{align*}
I(1)={}&\sum_{i,j}\Bigl[
 -((x\odot a_i)\prec a_j)\otimes b_j\otimes b_i
 -(a_j\succ(x\odot a_i))\otimes b_j\otimes b_i\\
&\quad
 -(x\odot a_i)\otimes(b_i\odot a_j)\otimes b_j
 +(x\odot a_i)\otimes a_j\otimes(b_j\circ b_i)\\
&\quad
 -((b_i\circ x)\odot a_j)\otimes a_i\otimes b_j
 \Bigr]\\
&+\sum_i
 \bigl((L_\star(x\odot a_i)\otimes I
 -I\otimes R_\prec(x\odot a_i))(r+\tau(r))\bigr)
 \otimes b_i,\\
I(2)={}&\sum_{i,j}\Bigl[
 -a_j\otimes(b_j\succ(x\odot a_i))\otimes b_i
 +a_i\otimes((b_i\circ x)\odot a_j)\otimes b_j\\
&\quad
 +(b_i\odot a_j)\otimes(x\odot a_i)\otimes b_j
 -a_j\otimes(x\odot a_i)\otimes(b_j\circ b_i)
 \Bigr],\\
I(3)={}&\sum_{i,j}\Bigl[
 (a_i\prec a_j)\otimes b_j\otimes(b_i\circ x)
 +(a_j\succ a_i)\otimes b_j\otimes(b_i\circ x)\\
&\quad
 +a_j\otimes(b_j\succ a_i)\otimes(b_i\circ x)
 -a_i\otimes a_j\otimes(b_j\circ(b_i\circ x))\\
&\quad
 +a_j\otimes a_i\otimes(b_j\circ(b_i\circ x))
 \Bigr]\\
&-\sum_i
 \bigl((L_\star(a_i)\otimes I-I\otimes R_\prec(a_i))
 (r+\tau(r))\bigr)\otimes(b_i\circ x).
\end{align*}
Adding these expressions proves the equivalence between \meqref{eq:c3} and \meqref{eq:general-cb-coalgebra-s}.

For \meqref{eq:c1} and \meqref{eq:c2}, the differences between the two sides are, respectively,
\begin{align*}
&\sum_i\Bigl[
 (x\star b_i)\otimes\Delta_r(a_i)
 -b_i\otimes\Delta_r(a_i\prec x)\\
&\qquad
 -\Delta_{\prec,r}(x\star b_i)\otimes a_i
 +\Delta_{\prec,r}(b_i)\otimes(a_i\prec x)
 \Bigr]\\
&\quad
 -(\tau\otimes I)\sum_i\Bigl[
 (x\odot a_i)\otimes\Delta_{\prec,r}(b_i)
 -a_i\otimes\Delta_{\prec,r}(b_i\circ x)
 \Bigr],
\end{align*}
and
\begin{align*}
&\sum_i\Bigl[
 (x\odot a_i)\otimes\Delta_{\prec,r}(b_i)
 -a_i\otimes\Delta_{\prec,r}(b_i\circ x)\\
&\qquad
 -\Delta_{\succ,r}(x\star b_i)\otimes a_i
 +\Delta_{\succ,r}(b_i)\otimes(a_i\prec x)
 \Bigr]\\
&\quad
 -(\tau\otimes I)\sum_i\Bigl[
 (x\star b_i)\otimes\Delta_r(a_i)
 -b_i\otimes\Delta_r(a_i\prec x)
 \Bigr].
\end{align*}
Expanding the coproducts yields \meqref{eq:general-cb-coalgebra-p} and \meqref{eq:general-cb-coalgebra-sp}.

For \meqref{eq:c4}, the difference between the two sides is
\begin{align*}
&(I^{\otimes3}-\tau\otimes I)\sum_i\Bigl[
 (x\odot a_i)\otimes\Delta_{\vee,r}(b_i)
 -a_i\otimes\Delta_{\vee,r}(b_i\circ x)\\
&\qquad
 +(x\vee a_i+a_i\vee x)\otimes\Delta_r(b_i)
 +a_i\otimes\Delta_r(b_i\vee x)
 \Bigr]\\
&-\sum_i\Bigl[
 \Delta_{\vee,r}(x\star b_i)\otimes a_i
 -\Delta_{\vee,r}(b_i)\otimes(a_i\prec x)\\
&\qquad
 +\Delta_r(x\vee a_i+a_i\vee x)\otimes b_i
 +\Delta_r(a_i)\otimes(b_i\vee x)
 \Bigr].
\end{align*}
Expanding $\Delta_{\vee,r}$ and $\Delta_r$ yields \meqref{eq:general-cb-coalgebra-v}. This completes the proof.
\end{proof}

\begin{pro}\mlabel{prop:general-coboundary-compatibility}
Let $r\in A\otimes A$, and define the three coboundary maps by \meqref{eq:delta-r-s}--\meqref{eq:delta-r-v}. Then \meqref{eq:k4}--\meqref{eq:k6} hold automatically. The remaining conditions \meqref{eq:k1}--\meqref{eq:k3} are equivalent, respectively, to
\begin{align}
0={}&\Bigl[
 \bigl(L_\star(x\odot y)
 -L_\odot(x)(R_\succ(y)+R_\prec(y))
 -R_\odot(y)R_\circ(x)\bigr)\otimes I
 \notag\\
&\quad
 +I\otimes\bigl(
 L_\odot(x)R_\prec(y)-R_\prec(x\odot y)
 \bigr)
 \notag\\
&\quad
 +L_\odot(x)\otimes(L_\star(y)-R_\odot(y))
 -(L_\star(y)-R_\odot(y))\otimes L_\odot(x)
 \Bigr](r+\tau(r)),
 \mlabel{eq:general-cb-k1}\\
0={}&\Bigl[
 \bigl(L_\star(x\succ y)-L_\succ(x)L_\star(y)
 +R_\odot(y)L_\circ(x)\bigr)\otimes I
 \notag\\
&\quad
 +I\otimes\bigl(
 L_\succ(x)R_\prec(y)-R_\prec(x\succ y)
 \bigr)
 \notag\\
&\quad
 +L_\succ(x)\otimes R_\prec(y)
 -(L_\star(y)-R_\odot(y))\otimes L_\succ(x)
 \Bigr](r+\tau(r)),
 \mlabel{eq:general-cb-k2}\\
0={}&\bigl[
 -R_\prec(x)L_\star(y)\otimes I
 +R_\prec(x)\otimes R_\prec(y)
 \bigr](r+\tau(r)).
 \mlabel{eq:general-cb-k3}
\end{align}
Consequently, if the three maps form an NS-Leibniz coalgebra, they define an NS-Leibniz bialgebra if and only if \meqref{eq:general-cb-k1}--\meqref{eq:general-cb-k3} hold.
\end{pro}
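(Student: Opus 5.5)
The plan is direct substitution. I will write $r=\sum_i a_i\o b_i$ and use the expanded formulas for $\Delta_{\succ,r}$, $\Delta_{\prec,r}$, $\Delta_{\vee,r}$ from the proof of Proposition~\mref{prop:general-coboundary-coalgebra}. For $\Delta_r$ I will use the form \meqref{eq:general-total-coboundary}. From these I also get
\[
\Delta_{\odot,r}(x)=\sum_i\bigl[(x\odot a_i)\o b_i-a_i\o(b_i\circ x)+a_i\o(x\star b_i)-(a_i\prec x)\o b_i\bigr].
\]
Every term of \meqref{eq:k1}--\meqref{eq:k6} then becomes an operator on $A\o A$ applied either to $r$ or to $\tau(r)$. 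The aim is to group everything so that the terms linear in $r$ alone cancel by the NS-Leibniz identities, leaving only operators applied to $q=r+\tau(r)$.

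\textbf{Step 1: \meqref{eq:k4}--\meqref{eq:k6} hold identically.} I start with \meqref{eq:k6}, the simplest. Its first term is
\[
\sum_i\bigl[-a_i\o((x\star b_i)\circ y)+(a_i\prec x)\o(b_i\circ y)\bigr],
\]
and its second is
\[
-\sum_i\bigl[((y\odot a_i)\prec x)\o b_i-(a_i\prec x)\o(b_i\circ y)\bigr].
\]
The $(a_i\prec x)\o(b_i\circ y)$ terms cancel. The remaining two terms should cancel using \meqref{eq:r3} and \meqref{eq:r5} for the regular representation: together these give $R_\prec(x)R_\odot(\cdot)$-type vanishing, namely $(R_\prec(x)+L_\succ(x))$ absorbed by $R_\prec$, and the left Leibniz identity for $\circ$ with $\star$ in the middle slot, which gives $(x\star b)\circ y=0$ in the sense that $L_\star$ lands in the left kernel of $\circ$. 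For \meqref{eq:k4} and \meqref{eq:k5} I expand in the same way and collect terms in the first and second tensor factors separately. The first factor is controlled by \meqref{eq:r7}, \meqref{eq:r5} and \meqref{eq:r3}, that is, by $(L_\succ,R_\prec)$ being a representation of $(A,\circ)$ (Proposition~\mref{prop:semidirect}). The second factor is controlled by the Leibniz identity \meqref{eq:leibniz} of $\circ$. Each group should vanish identically in $r$.

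\textbf{Step 2: \meqref{eq:k1}--\meqref{eq:k3}.} Here $\Delta=\Delta_r$ enters through \meqref{eq:general-total-coboundary}. That formula already consists of a part linear in $r$ plus the operator $L_\star(x)\o I-I\o R_\prec(x)$ applied to $q$. I substitute it, together with $\Delta_{\succ,r}$ and $\Delta_{\odot,r}$, into each of the three conditions. The $q$-part produces products of operators that I regroup as the displayed bracket acting on $q$. The remaining $r$-linear part should cancel by \meqref{eq:ns1}--\meqref{eq:ns3} in their operator forms (\meqref{eq:r1}--\meqref{eq:r9} for the regular representation). For example, in \meqref{eq:k3} the $r$-part of $-(R_\prec(x)\o I)\Delta_r(y)$ against $(I\o R_\succ(y))\tau\Delta_{\prec,r}(x)$ should reduce via \meqref{eq:r3}/\meqref{eq:r6}, leaving exactly \meqref{eq:general-cb-k3}. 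The final assertion then follows from Definition~\mref{def:bialgebra}, since \meqref{eq:k4}--\meqref{eq:k6} are automatic.

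\textbf{Main obstacle.} The hardest part is the bookkeeping in \meqref{eq:k1} and \meqref{eq:k2}. These involve the $\tau$-twisted terms and the mixed $\odot$ operators, and residues from $r$ and $\tau(r)$ must be paired to form $q$. My first attempt will be to rewrite every $\tau(r)$ term as $q-r$ at the outset. I will then show that the coefficient of $r$ is an operator identity equivalent to a linear combination of \meqref{eq:ns1}--\meqref{eq:ns4}, checking it on the semidirect product $A\ltimes A$ to avoid tracking indices.
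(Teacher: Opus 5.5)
Your proposal is correct and is essentially the paper's own proof. Substitute the coboundary maps, using $\Delta_{\odot,r}(x)=(L_\succ(x)\otimes I+I\otimes L_\circ(x))r$ and the split of $\Delta_r$ into an $r$-part plus an $(r+\tau(r))$-part, and check that the coefficient of $r$ alone vanishes identically by the NS-Leibniz identities and the Leibniz identity for $\circ$; this also makes \eqref{eq:k4}--\eqref{eq:k6} automatic.

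When you write it out, fix two slips. First, the first term of \eqref{eq:k6} is $\sum_i[a_i\otimes((x\star b_i)\circ y)-(a_i\prec x)\otimes(b_i\circ y)]$; your displayed sign is reversed, which would spoil the cancellation you assert. Second, the $r$-coefficient $R_\prec(x)R_\odot(y)\otimes I+I\otimes R_\succ(y)L_\star(x)$ in \eqref{eq:k3} vanishes because $(a\odot b)\prec x=0$ and $(x\star z)\succ y=0$, and the latter comes from symmetrizing \eqref{eq:ns3}, not from \eqref{eq:r6}.
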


\begin{proof}
The coboundary formulas imply
\[
\begin{aligned}
\Delta_{\odot,r}(x)
 &=(L_\succ(x)\otimes I+I\otimes L_\circ(x))r,\\
\tau\Delta_{\prec,r}(x)
 &=(I\otimes L_\star(x)-R_\prec(x)\otimes I)r,\\
\Delta_r(x)
 &=-\bigl((L_\prec(x)+R_\succ(x))\otimes I
 +I\otimes R_\succ(x)\bigr)r\\
 &\quad
 +(L_\star(x)\otimes I-I\otimes R_\prec(x))(r+\tau(r)).
\end{aligned}
\]
Substituting these expressions into the left-hand side of \meqref{eq:k3}, we obtain
\[
\begin{aligned}
&(I\otimes R_\succ(y))\tau\Delta_{\prec,r}(x)
 -(R_\prec(x)\otimes I)\Delta_r(y)\\
={}&\bigl[
 R_\prec(x)R_\odot(y)\otimes I
 +I\otimes R_\succ(y)L_\star(x)
 \bigr]r\\
&+\bigl[
 -R_\prec(x)L_\star(y)\otimes I
 +R_\prec(x)\otimes R_\prec(y)
 \bigr](r+\tau(r)).
\end{aligned}
\]
The coefficient of $r$ vanishes by \meqref{eq:ns1}--\meqref{eq:ns3}. Hence \meqref{eq:k3} is equivalent to \meqref{eq:general-cb-k3}.

Substituting the same expressions into \meqref{eq:k1} and \meqref{eq:k2}, and collecting the terms containing $r+\tau(r)$, yields \meqref{eq:general-cb-k1} and \meqref{eq:general-cb-k2}. The coefficients of $r$ vanish by the NS-Leibniz identities.

For \meqref{eq:k4}, substitution yields
\[
\begin{aligned}
\Bigl[
&\bigl(L_\succ(x\circ y)
 -L_\succ(x)L_\succ(y)
 +L_\succ(y)L_\succ(x)\bigr)\otimes I\\
&+I\otimes\bigl(L_\circ(x\circ y)
 -L_\circ(x)L_\circ(y)
 +L_\circ(y)L_\circ(x)\bigr)
\Bigr]r=0.
\end{aligned}
\]
Similarly, \meqref{eq:k5} and \meqref{eq:k6} follow from the NS-Leibniz identities. The last assertion follows from Definition~\mref{def:bialgebra}.
\end{proof}

\begin{thm}\mlabel{thm:general-coboundary-criterion}
Let $(A,\succ,\prec,\vee)$ be an NS-Leibniz algebra and $r\in A\otimes A$. Then $(A,\succ,\prec,\vee,\Delta_{\succ,r},\Delta_{\prec,r},\Delta_{\vee,r})$ is an NS-Leibniz bialgebra if and only if \meqref{eq:general-cb-coalgebra-p}--\meqref{eq:general-cb-coalgebra-v} and \meqref{eq:general-cb-k1}--\meqref{eq:general-cb-k3} hold.
\end{thm}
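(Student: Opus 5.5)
This theorem follows by assembling Definition~\mref{def:bialgebra} with Propositions~\mref{prop:general-coboundary-coalgebra} and~\mref{prop:general-coboundary-compatibility}. By definition, $(A,\succ,\prec,\vee,\Delta_{\succ,r},\Delta_{\prec,r},\Delta_{\vee,r})$ is an NS-Leibniz bialgebra exactly when two things hold. First, the three maps form an NS-Leibniz coalgebra, that is, \meqref{eq:c1}--\meqref{eq:c4} hold. Second, the six compatibility conditions \meqref{eq:k1}--\meqref{eq:k6} hold. I will treat these two groups of conditions separately and then combine them.

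\emph{Coalgebra part.} Proposition~\mref{prop:general-coboundary-coalgebra} states that, for the coboundary maps, \meqref{eq:c1}, \meqref{eq:c2}, \meqref{eq:c3} and \meqref{eq:c4} are equivalent, respectively, to \meqref{eq:general-cb-coalgebra-p}, \meqref{eq:general-cb-coalgebra-sp}, \meqref{eq:general-cb-coalgebra-s} and \meqref{eq:general-cb-coalgebra-v}, each holding for every $x\in A$. Hence the coalgebra axioms hold if and only if \meqref{eq:general-cb-coalgebra-p}--\meqref{eq:general-cb-coalgebra-v} all hold. No choice among alternatives is involved, since all four equivalences are stated separately.

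\emph{Compatibility part.} Proposition~\mref{prop:general-coboundary-compatibility} states that \meqref{eq:k4}--\meqref{eq:k6} hold automatically for any $r$. It also states that \meqref{eq:k1}--\meqref{eq:k3} are equivalent to \meqref{eq:general-cb-k1}--\meqref{eq:general-cb-k3}. These equivalences are established by direct substitution of the coboundary formulas. That substitution uses only the NS-Leibniz identities on $A$ and does not use the coalgebra axioms. So these equivalences hold unconditionally, and not merely under the coalgebra hypothesis stated in the last sentence of that proposition.

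\emph{Conclusion.} The bialgebra property is the conjunction of the coalgebra axioms and \meqref{eq:k1}--\meqref{eq:k6}. By the two steps above, this conjunction is equivalent to \meqref{eq:general-cb-coalgebra-p}--\meqref{eq:general-cb-coalgebra-v} together with \meqref{eq:general-cb-k1}--\meqref{eq:general-cb-k3}. This proves the theorem in both directions.

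\emph{Main point to check.} The only delicate step is the one just noted. Proposition~\mref{prop:general-coboundary-compatibility} phrases its final assertion under the assumption that the maps already form a coalgebra. The argument therefore needs the fact that the underlying equivalences for \meqref{eq:k1}--\meqref{eq:k3} do not depend on that assumption. Granted this, the theorem is a direct combination of the two propositions and requires no further computation.
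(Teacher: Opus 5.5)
Your proposal is correct and is the paper's own proof: the theorem is obtained by combining Definition~\mref{def:bialgebra} with Propositions~\mref{prop:general-coboundary-coalgebra} and~\mref{prop:general-coboundary-compatibility}. The point you flag as delicate needs no further argument. Proposition~\mref{prop:general-coboundary-compatibility} already states the equivalences of \meqref{eq:k1}--\meqref{eq:k3} with \meqref{eq:general-cb-k1}--\meqref{eq:general-cb-k3} unconditionally; its coalgebra-hypothesis sentence is only a consequence of those equivalences.
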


\begin{proof}
This follows from Definition~\mref{def:bialgebra} and Propositions~\mref{prop:general-coboundary-coalgebra} and~\mref{prop:general-coboundary-compatibility}.
\end{proof}

\begin{pro}\mlabel{prop:skew-compatibility}
If $r\in A\otimes A$ is skew-symmetric, then the six compatibility conditions \meqref{eq:k1}--\meqref{eq:k6} hold. Consequently, the coboundary maps define an NS-Leibniz bialgebra if and only if they define an NS-Leibniz coalgebra.
\end{pro}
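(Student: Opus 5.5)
This is a direct application of Proposition~\mref{prop:general-coboundary-compatibility}, so the plan is to reduce everything to that result. The proposition already shows that \meqref{eq:k4}--\meqref{eq:k6} hold for every tensor $r\in A\otimes A$, with no symmetry assumption. These identities came from substituting $\Delta_{\odot,r}$, $\tau\Delta_{\prec,r}$ and $\Delta_r$ into the conditions and applying the NS-Leibniz identities \meqref{eq:ns1}--\meqref{eq:ns4}. So only \meqref{eq:k1}--\meqref{eq:k3} remain to be checked.

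By the same proposition, \meqref{eq:k1}, \meqref{eq:k2} and \meqref{eq:k3} are equivalent to \meqref{eq:general-cb-k1}, \meqref{eq:general-cb-k2} and \meqref{eq:general-cb-k3}, respectively. Each of these has the form $\Phi_{x,y}(r+\tau(r))=0$, where $\Phi_{x,y}$ is a linear operator on $A\otimes A$ built from the left and right multiplication operators. The terms that involve $r$ alone were shown in that proof to vanish identically by the NS-Leibniz identities; this is the only substantive input. I would briefly recheck the \meqref{eq:k3} computation displayed there: the coefficient $R_\prec(x)R_\odot(y)\otimes I+I\otimes R_\succ(y)L_\star(x)$ of $r$ is cancelled, which confirms that the residual really depends only on $q=r+\tau(r)$. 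For \meqref{eq:k1} and \meqref{eq:k2} I would rely on the proposition as stated.

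Now take $r$ skew-symmetric, so $\tau(r)=-r$ and hence $q=r+\tau(r)=0$. Each right-hand side in \meqref{eq:general-cb-k1}--\meqref{eq:general-cb-k3} is a linear operator applied to $q$, so each vanishes. Therefore \meqref{eq:k1}--\meqref{eq:k3} hold, and together with the first step all six conditions \meqref{eq:k1}--\meqref{eq:k6} hold.

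For the consequence, Definition~\mref{def:bialgebra} requires an NS-Leibniz coalgebra together with \meqref{eq:k1}--\meqref{eq:k6}. Since the six conditions are automatic for skew-symmetric $r$, the coboundary maps define a bialgebra exactly when they satisfy \meqref{eq:c1}--\meqref{eq:c4}. The main obstacle is therefore not in this argument but in trusting the bookkeeping in Proposition~\mref{prop:general-coboundary-compatibility}, namely that the pure-$r$ coefficients cancel and no term survives outside the $q$-part. If I verified one case by hand, I would take \meqref{eq:k1}: I would expand $\Delta_r(x\odot y)$ and check, using \meqref{eq:ns1}--\meqref{eq:ns3}, that the $r$-coefficient cancels.
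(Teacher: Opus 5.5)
Your proposal is correct and matches the paper's proof: both take \meqref{eq:k4}--\meqref{eq:k6} as automatic from Proposition~\mref{prop:general-coboundary-compatibility} and observe that \meqref{eq:general-cb-k1}--\meqref{eq:general-cb-k3} are linear in $r+\tau(r)$, which is $0$ for skew-symmetric $r$. The pure-$r$ cancellations you flagged as the real input do hold: they reduce to $(x\odot y)\prec z=0$, $(x\star y)\succ z=0$ and the identities \meqref{eq:odot-computation}, so relying on that proposition is sound.
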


\begin{proof}
Since $r+\tau(r)=0$, the three equations in Proposition~\mref{prop:general-coboundary-compatibility} hold automatically. Hence all six compatibility conditions hold.
\end{proof}

For $r=\sum_i a_i\otimes b_i$, the tensor products used below are
\[
\begin{aligned}
r_{13}\diamond r_{23}
 &=\sum_{i,j}a_i\otimes a_j\otimes(b_i\diamond b_j),\\
r_{12}\diamond r_{23}
 &=\sum_{i,j}a_i\otimes(b_i\diamond a_j)\otimes b_j,\\
r_{21}\diamond r_{13}
 &=\sum_{i,j}(b_i\diamond a_j)\otimes a_i\otimes b_j,\\
r_{13}\diamond r_{21}
 &=\sum_{i,j}(a_i\diamond b_j)\otimes a_j\otimes b_i.
\end{aligned}
\]
Set
\begin{align}
S_\prec(r)
 &=r_{13}\prec r_{23}-r_{12}\prec r_{23}
 -r_{21}\circ r_{13}-r_{13}\circ r_{21},
 \mlabel{eq:ybe-p}\\
S_\succ(r)
 &=r_{13}\succ r_{23}+r_{12}\circ r_{23}
 +r_{21}\prec r_{13}+r_{13}\succ r_{21},
 \mlabel{eq:ybe-s}\\
S_\vee(r)
 &=r_{13}\vee r_{23}-r_{12}\vee r_{23}
 +r_{21}\vee r_{13}+r_{13}\vee r_{21}.
 \mlabel{eq:ybe-v}
\end{align}
Write $S_\circ(r)=S_\prec(r)+S_\succ(r)+S_\vee(r)$.

\begin{pro}\mlabel{thm:skew-coboundary-criterion}
Let $r\in A\otimes A$ be skew-symmetric. The maps \meqref{eq:delta-r-s}--\meqref{eq:delta-r-v} define an NS-Leibniz bialgebra if and only if the following equations hold for every $x\in A$:
\begin{align}
0={}&(I\otimes I\otimes R_\prec(x))S_\prec(r)
 \notag\\
&-(I\otimes L_\odot(x)\otimes I)
 (I\otimes\tau)S_\prec(r)
 \notag\\
&-(L_\star(x)\otimes I\otimes I)
 (\tau\otimes I)(I\otimes\tau)S_\circ(r),
 \mlabel{eq:skew-coalgebra-p}\\
0={}&(I\otimes I\otimes R_\prec(x))S_\succ(r)
 \notag\\
&+(L_\odot(x)\otimes I\otimes I)
 (\tau\otimes I)(I\otimes\tau)S_\prec(r)
 \notag\\
&+(I\otimes L_\star(x)\otimes I)
 (I\otimes\tau)S_\circ(r),
 \mlabel{eq:skew-coalgebra-sp}\\
0={}&(L_\odot(x)\otimes I\otimes I)
 (\tau\otimes I)(I\otimes\tau)S_\succ(r)
 \notag\\
&-(I\otimes L_\odot(x)\otimes I)
 (I\otimes\tau)S_\succ(r)
 \notag\\
&-(I\otimes I\otimes R_\circ(x))S_\circ(r),
 \mlabel{eq:skew-coalgebra-s}\\
0={}&(L_\odot(x)\otimes I\otimes I)
 (\tau\otimes I)(I\otimes\tau)S_\vee(r)
 \notag\\
&+((L_\vee(x)+R_\vee(x))\otimes I\otimes I)
 (\tau\otimes I)(I\otimes\tau)S_\circ(r)
 \notag\\
&+(I\otimes I\otimes R_\prec(x))S_\vee(r)
 +(I\otimes I\otimes R_\vee(x))S_\circ(r)
 \notag\\
&-(I\otimes L_\odot(x)\otimes I)
 (I\otimes\tau)S_\vee(r)
 \notag\\
&-(I\otimes(L_\vee(x)+R_\vee(x))\otimes I)
 (I\otimes\tau)S_\circ(r).
 \mlabel{eq:skew-coalgebra-v}
\end{align}
\end{pro}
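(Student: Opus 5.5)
By Proposition~\mref{prop:skew-compatibility}, when $r$ is skew-symmetric the six compatibility conditions \meqref{eq:k1}--\meqref{eq:k6} hold automatically. So the coboundary maps form an NS-Leibniz bialgebra if and only if they satisfy the coalgebra identities \meqref{eq:c1}--\meqref{eq:c4}. By Proposition~\mref{prop:general-coboundary-coalgebra}, these are equivalent respectively to \meqref{eq:general-cb-coalgebra-p}, \meqref{eq:general-cb-coalgebra-sp}, \meqref{eq:general-cb-coalgebra-s} and \meqref{eq:general-cb-coalgebra-v}. It therefore suffices to show that, under $\tau(r)=-r$, each of these four equations coincides with the corresponding equation \meqref{eq:skew-coalgebra-p}--\meqref{eq:skew-coalgebra-v}.

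The first simplification is uniform. Every term of the form $(L_\star(\cdot)\otimes I-I\otimes R_\prec(\cdot))(r+\tau(r))$ vanishes, since $q=r+\tau(r)=0$. What remains in each equation is a double sum over $i,j$.

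In these double sums, skew-symmetry lets us exchange the roles of $a_i$ and $b_i$ at the cost of a sign. I will rename summation indices so that every term reads as a fixed operator applied to one of the basic tensors $r_{13}\diamond r_{23}$, $r_{12}\diamond r_{23}$, $r_{21}\diamond r_{13}$, $r_{13}\diamond r_{21}$, possibly after a permutation $\tau_{12}$, $\tau_{23}$ or $\rho$. The operator is one of $L_\odot(x)$, $L_\star(x)$, $R_\prec(x)$, $R_\circ(x)$, or $L_\vee(x)+R_\vee(x)$, acting in a single tensor slot.

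The step that links an operator in $x$ to the quadratic expressions $S_\diamond(r)$ is the NS-Leibniz identities \meqref{eq:ns1}--\meqref{eq:ns4}. They rewrite nested products such as $(a_i\prec x)\prec a_j+a_j\succ(a_i\prec x)$, $b_j\succ(x\odot a_i)$, and $b_j\circ(b_i\circ x)$ as an operator in $x$ applied to a product of two tensor legs, plus correction terms. For instance, \meqref{eq:ns1} written with $a_j$ in the first slot gives $a_j\prec(a_i\circ x)=(a_j\prec a_i)\prec x+a_i\succ(a_j\prec x)$. Together with skew-symmetry, this produces $(I\otimes I\otimes R_\prec(x))$ applied to $r_{13}\prec r_{23}$-type terms.

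The plan is to treat \meqref{eq:general-cb-coalgebra-s} first, since it involves only $\succ$ and the total product and is closest to the Leibniz-dendriform case in \mcite{SunGuo}. There I would group the sixteen terms into three blocks:
\begin{itemize}
\item[] those with $x$ in the third slot, giving $R_\circ(x)$ applied to $S_\circ(r)$;
\item[] those with $L_\odot(x)$ in the second slot;
\item[] those with $L_\odot(x)$ in the first slot.
\end{itemize}
After applying the appropriate permutation $(I\otimes\tau)$ or $(\tau\otimes I)(I\otimes\tau)$, the last two blocks should match $S_\succ(r)$. Equations \meqref{eq:general-cb-coalgebra-p} and \meqref{eq:general-cb-coalgebra-sp} would then follow by the same grouping, with $R_\prec(x)$ in the third slot and $L_\star(x)$ or $L_\odot(x)$ in the outer slots. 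Their structural similarity, which is visible in the second term of \meqref{eq:general-cb-coalgebra-sp}, suggests reusing one computation for both.

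The main obstacle will be the $\vee$-equation \meqref{eq:general-cb-coalgebra-v}. It mixes $\vee$ with $\succ,\prec,\circ$ through \meqref{eq:ns4}, which has six terms and is not of single-operator type. My first attempt will use \meqref{eq:ns4}, together with \meqref{eq:ns1}--\meqref{eq:ns3} subtracted from the Leibniz identity for $\circ$, to trade every nested $\vee$-product for an outer operator $L_\odot(x)$, $L_\vee(x)+R_\vee(x)$, $R_\prec(x)$ or $R_\vee(x)$ acting on $S_\vee(r)$ or $S_\circ(r)$. I would then check that the antisymmetrizer $(I^{\otimes3}-\tau\otimes I)$ reorganizes these terms into the pairs displayed in \meqref{eq:skew-coalgebra-v}. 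Leftover terms that do not match would be checked against the residual identity obtained by summing the other three coalgebra equations, in the spirit of Lemma~\mref{lem:coalgebra-dual}.
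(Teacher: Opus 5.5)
Your outline follows the paper's proof. You reduce via Proposition~\ref{prop:skew-compatibility} to \eqref{eq:c1}--\eqref{eq:c4}, substitute the coboundary maps with $r+\tau(r)=0$, and sort the double sums by the tensor slot carrying $x$ (the paper's blocks $A(1)$, $A(2)$, $A(3)$ for \eqref{eq:c3}); the nested products are handled by consequences of the NS-Leibniz identities such as $(y\odot z)\prec w=0$ and $y\succ(x\odot z)=(y\circ x)\odot z+x\odot(y\succ z)$, together with the Leibniz identity, so that the correction terms cancel in pairs after exchanging $i,j$ and using $\tau(r)=-r$. Your fallback for \eqref{eq:c4} is also sound: the defect of \eqref{eq:c4} equals the defect of \eqref{eq:total-leibniz-coalgebra} minus those of \eqref{eq:c1}--\eqref{eq:c3}, and the total defect can be computed from $\Delta_r(x)=-(R_\odot(x)\otimes I+I\otimes R_\succ(x))r$ alone.
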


\begin{proof}
By Proposition~\mref{prop:skew-compatibility}, it suffices to verify the coalgebra identities. Since $r+\tau(r)=0$, we have
\[
\Delta_r(x)
=-\sum_i\bigl[
 (x\prec a_i+a_i\succ x)\otimes b_i
 +a_i\otimes(b_i\succ x)
\bigr].
\]
Direct substitution yields
\[
\begin{aligned}
&(\Delta_r\otimes I)\Delta_{\succ,r}(x)
 -(I\otimes\Delta_{\succ,r})\Delta_{\succ,r}(x)
 +(\tau\otimes I)(I\otimes\Delta_{\succ,r})
 \Delta_{\succ,r}(x)\\
&=A(1)+A(2)+A(3),
\end{aligned}
\]
where
\begin{align*}
A(1)={}&\sum_{i,j}\Bigl[
 -(a_j\odot(x\odot a_i))\otimes b_j\otimes b_i
 -(x\odot a_i)\otimes(b_i\odot a_j)\otimes b_j\\
&\quad
 +(x\odot a_i)\otimes a_j\otimes(b_j\circ b_i)
 -((b_i\circ x)\odot a_j)\otimes a_i\otimes b_j
 \Bigr],\\
A(2)={}&\sum_{i,j}\Bigl[
 -a_j\otimes(b_j\succ(x\odot a_i))\otimes b_i
 +a_i\otimes((b_i\circ x)\odot a_j)\otimes b_j\\
&\quad
 +(b_i\odot a_j)\otimes(x\odot a_i)\otimes b_j
 -a_j\otimes(x\odot a_i)\otimes(b_j\circ b_i)
 \Bigr],\\
A(3)={}&\sum_{i,j}\Bigl[
 (a_i\prec a_j+a_j\succ a_i)\otimes b_j\otimes(b_i\circ x)
 +a_j\otimes(b_j\succ a_i)\otimes(b_i\circ x)\\
&\quad
 -a_i\otimes a_j\otimes(b_j\circ(b_i\circ x))
 +a_j\otimes a_i\otimes(b_j\circ(b_i\circ x))
 \Bigr].
\end{align*}
The first three NS-Leibniz identities imply
\begin{equation}\mlabel{eq:odot-computation}
\begin{aligned}
y\odot(x\odot z)-(y\circ x)\odot z
 &=x\odot(y\succ z),\\
y\odot(x\odot z)&=y\succ(x\odot z).
\end{aligned}
\end{equation}
Indeed, subtracting \meqref{eq:ns2} from \meqref{eq:ns1} with the first two variables exchanged yields $(x\odot z)\prec y=0$. Expanding $\odot$ and applying \meqref{eq:ns2} and \meqref{eq:ns3} then establishes \meqref{eq:odot-computation}.

Using these identities and $\tau(r)=-r$, we obtain
\begin{align*}
A(1)
={}&\sum_{i,j}\Bigl[
 -((a_j\circ x)\odot a_i)\otimes b_j\otimes b_i
 -(x\odot(a_j\succ a_i))\otimes b_j\otimes b_i\\
&\quad
 -(x\odot a_i)\otimes(b_i\odot a_j)\otimes b_j
 +(x\odot a_i)\otimes a_j\otimes(b_j\circ b_i)\\
&\quad
 -((b_j\circ x)\odot a_i)\otimes a_j\otimes b_i
 \Bigr]\\
={}&(L_\odot(x)\otimes I\otimes I)
 \sum_{i,j}\Bigl[
 -(a_j\succ a_i)\otimes b_j\otimes b_i\\
&\qquad
 -a_i\otimes(b_i\odot a_j)\otimes b_j
 +a_i\otimes a_j\otimes(b_j\circ b_i)
 \Bigr]\\
={}&-(L_\odot(x)\otimes I\otimes I)
 (\tau\otimes I)(I\otimes\tau)S_\succ(r).
\end{align*}
Similarly,
\begin{align*}
A(2)
={}&(I\otimes L_\odot(x)\otimes I)
 \sum_{i,j}\Bigl[
 -a_j\otimes(b_j\succ a_i)\otimes b_i\\
&\qquad
 +(b_i\odot a_j)\otimes a_i\otimes b_j
 -a_j\otimes a_i\otimes(b_j\circ b_i)
 \Bigr]\\
={}&(I\otimes L_\odot(x)\otimes I)
 (I\otimes\tau)S_\succ(r).
\end{align*}
Using the Leibniz identity for $\circ$, we obtain
\begin{align*}
A(3)
={}&(I\otimes I\otimes R_\circ(x))
 \sum_{i,j}\Bigl[
 (a_i\prec a_j+a_j\succ a_i)\otimes b_j\otimes b_i\\
&\qquad
 +a_j\otimes(b_j\succ a_i)\otimes b_i
 +a_i\otimes a_j\otimes(b_i\circ b_j)
 \Bigr]\\
={}&(I\otimes I\otimes R_\circ(x))S_\circ(r).
\end{align*}
Consequently, \meqref{eq:c3} is equivalent to \meqref{eq:skew-coalgebra-s}.

For the first two coalgebra identities, the same computation yields
\begin{align*}
&(I\otimes\Delta_r)\Delta_{\prec,r}(x)
 -(\Delta_{\prec,r}\otimes I)\Delta_{\prec,r}(x)\\
&\qquad
 -(\tau\otimes I)(I\otimes\Delta_{\prec,r})
 \Delta_{\succ,r}(x)\\
={}&(I\otimes I\otimes R_\prec(x))S_\prec(r)
 -(I\otimes L_\odot(x)\otimes I)
 (I\otimes\tau)S_\prec(r)\\
&\qquad
 -(L_\star(x)\otimes I\otimes I)
 (\tau\otimes I)(I\otimes\tau)S_\circ(r),
\end{align*}
and
\begin{align*}
&(I\otimes\Delta_{\prec,r})\Delta_{\succ,r}(x)
 -(\Delta_{\succ,r}\otimes I)\Delta_{\prec,r}(x)\\
&\qquad
 -(\tau\otimes I)(I\otimes\Delta_r)
 \Delta_{\prec,r}(x)\\
={}&(I\otimes I\otimes R_\prec(x))S_\succ(r)
 +(L_\odot(x)\otimes I\otimes I)
 (\tau\otimes I)(I\otimes\tau)S_\prec(r)\\
&\qquad
 +(I\otimes L_\star(x)\otimes I)
 (I\otimes\tau)S_\circ(r).
\end{align*}
Thus \meqref{eq:c1} and \meqref{eq:c2} are equivalent to \meqref{eq:skew-coalgebra-p} and \meqref{eq:skew-coalgebra-sp}, respectively.

Finally, substituting the coboundary maps into the difference between the two sides of \meqref{eq:c4}, and using \meqref{eq:ns1}--\meqref{eq:ns4}, yields
\begin{align*}
&(L_\odot(x)\otimes I\otimes I)
 (\tau\otimes I)(I\otimes\tau)S_\vee(r)\\
&+((L_\vee(x)+R_\vee(x))\otimes I\otimes I)
 (\tau\otimes I)(I\otimes\tau)S_\circ(r)\\
&-(I\otimes L_\odot(x)\otimes I)
 (I\otimes\tau)S_\vee(r)\\
&-(I\otimes(L_\vee(x)+R_\vee(x))\otimes I)
 (I\otimes\tau)S_\circ(r)\\
&+(I\otimes I\otimes R_\prec(x))S_\vee(r)
 +(I\otimes I\otimes R_\vee(x))S_\circ(r).
\end{align*}
Hence \meqref{eq:c4} is equivalent to \meqref{eq:skew-coalgebra-v}. This completes the proof.
\end{proof}

\begin{rmk}\mlabel{prop:skew-ybe-reduction}
Put
\[
 S(r)=r_{23}\circ r_{13}-r_{12}\odot r_{23}-r_{12}\succ r_{13}.
\]
If $r$ is skew-symmetric, the tensors obtained above satisfy
\begin{align}
 S_\circ(r)&=\tau_{12}S(r),&
 S_\succ(r)&=-\tau_{23}\tau_{12}S(r),\mlabel{eq:single-ybe-reduction-s}\\
 S_\prec(r)&=(\rho+\tau_{13})\tau_{12}S(r),&
 S_\vee(r)&=(I^{\o3}+\tau_{23}-\rho-\tau_{13})\tau_{12}S(r),
 \mlabel{eq:single-ybe-reduction-pv}
\end{align}
where $\tau_{13}(x\o y\o z)=z\o y\o x$.
Consequently, $S(r)=0$ if and only if
$S_\prec(r)=S_\succ(r)=S_\vee(r)=0$.
\end{rmk}
\begin{proof}
Summing \meqref{eq:ybe-p}--\meqref{eq:ybe-v} yields
\[
 S_\circ(r)=r_{13}\circ r_{23}+r_{12}\succ r_{23}
                -r_{21}\succ r_{13}-r_{13}\prec r_{21}.
\]
For skew-symmetric $r$,
\[
\begin{aligned}
\tau_{12}(r_{23}\circ r_{13})&=r_{13}\circ r_{23},\\
\tau_{12}(r_{12}\odot r_{23})
 &=r_{21}\succ r_{13}+r_{13}\prec r_{21},\\
\tau_{12}(r_{12}\succ r_{13})&=-r_{12}\succ r_{23}.
\end{aligned}
\]
Thus $S_\circ(r)=\tau_{12}S(r)$.
Expanding the tensor-leg products and using $\tau(r)=-r$
also yields
$S_\succ=-\tau_{23}S_\circ$ and
$S_\prec=(\rho+\tau_{13})S_\circ$.
Subtracting these two equalities from
$S_\circ=S_\prec+S_\succ+S_\vee$ proves the formula for
$S_\vee$. The equivalence follows, since $\tau_{12}$ is invertible.
\end{proof}

\begin{cor}\mlabel{cor:skew-tensor-zero}
If a skew-symmetric tensor $r\in A\o A$ satisfies $S(r)=0$,
then $S_\prec(r)=S_\succ(r)=S_\vee(r)=S_\circ(r)=0$.
\end{cor}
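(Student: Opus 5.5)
This is an immediate consequence of Remark~\mref{prop:skew-ybe-reduction}, so the plan is to read the four vanishing statements off the reduction formulas \meqref{eq:single-ybe-reduction-s} and \meqref{eq:single-ybe-reduction-pv}. Assume $r$ is skew-symmetric and $S(r)=0$.

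First, the identity $S_\circ(r)=\tau_{12}S(r)$ gives $S_\circ(r)=\tau_{12}(0)=0$. Next, the remaining formulas in the remark express each of $S_\succ(r)$, $S_\prec(r)$ and $S_\vee(r)$ as a linear operator on $A^{\o3}$ applied to $\tau_{12}S(r)$. These operators are $-\tau_{23}$, $\rho+\tau_{13}$ and $I^{\o3}+\tau_{23}-\rho-\tau_{13}$, respectively. Each of them sends zero to zero, so all three tensors vanish.

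Alternatively, one can argue in two steps. The ``consequently'' clause of the remark already gives $S_\prec(r)=S_\succ(r)=S_\vee(r)=0$. Then $S_\circ(r)=0$ follows from the definition $S_\circ=S_\prec+S_\succ+S_\vee$.

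There is no genuine obstacle here; the only substantive input is the remark's verification of the reduction formulas, which may be taken as given. The one point to check is that skew-symmetry, which the remark's formulas assume, is indeed among the hypotheses, and it is.
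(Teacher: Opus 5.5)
Your proposal is correct and follows the paper's proof: the paper also reads the vanishing of $S_\prec(r)$, $S_\succ(r)$, $S_\vee(r)$ and $S_\circ(r)$ directly off the tensor-permutation relations in Remark~\mref{prop:skew-ybe-reduction}. Taking those relations as given is legitimate, since they follow from expanding the tensor legs and using $\tau(r)=-r$ alone, with no algebra identities needed.
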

\begin{proof}
Apply the tensor-permutation relations in
Remark~\mref{prop:skew-ybe-reduction}.
\end{proof}

\begin{defi}\mlabel{def:ybe}
Let $A$ be an NS-Leibniz algebra and $r\in A\o A$.
The equation
\begin{equation}\mlabel{eq:ns-ybe}
 S(r)=r_{23}\circ r_{13}-r_{12}\odot r_{23}
                         -r_{12}\succ r_{13}=0
\end{equation}
is called the NS-Leibniz Yang--Baxter equation.
Here $\circ=\succ+\prec+\vee$ and
$x\odot y=x\succ y+y\prec x$.
The preceding calculation shows why this equation is sufficient
for the four coalgebra conditions when $r$ is skew-symmetric.
For $\vee=0$, it is the LD-YBE of
\mcite{SunGuo}, Definition~3.12.
\end{defi}

\begin{thm}\mlabel{thm:skew-coboundary}
Every skew-symmetric solution of \meqref{eq:ns-ybe}
produces an NS-Leibniz bialgebra with coproducts
\meqref{eq:delta-r-s}--\meqref{eq:delta-r-v}.
\end{thm}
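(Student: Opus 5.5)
The plan is to assemble Theorem~\mref{thm:skew-coboundary} from the results just established, with essentially no new computation. Let $r\in A\o A$ be skew-symmetric with $S(r)=0$. Define $\Delta_{\succ,r},\Delta_{\prec,r},\Delta_{\vee,r}$ by \meqref{eq:delta-r-s}--\meqref{eq:delta-r-v}. The goal is to show that these three maps form an NS-Leibniz coalgebra and satisfy the compatibility conditions \meqref{eq:k1}--\meqref{eq:k6}.

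\emph{Compatibility.} By Proposition~\mref{prop:skew-compatibility}, skew-symmetry gives $r+\tau(r)=0$. Each of \meqref{eq:general-cb-k1}--\meqref{eq:general-cb-k3} is an operator applied to $r+\tau(r)$, so all three vanish. Together with Proposition~\mref{prop:general-coboundary-compatibility}, this yields \meqref{eq:k1}--\meqref{eq:k6} at once. It therefore remains only to verify the coalgebra identities \meqref{eq:c1}--\meqref{eq:c4}.

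\emph{Coalgebra identities.} For skew-symmetric $r$, Proposition~\mref{thm:skew-coboundary-criterion} reduces \meqref{eq:c1}--\meqref{eq:c4} to \meqref{eq:skew-coalgebra-p}--\meqref{eq:skew-coalgebra-v}. Every term of these equations is a linear operator on $A^{\o3}$ applied to one of $S_\prec(r)$, $S_\succ(r)$, $S_\vee(r)$ or $S_\circ(r)$. By Corollary~\mref{cor:skew-tensor-zero} (via Remark~\mref{prop:skew-ybe-reduction}), the hypothesis $S(r)=0$ makes all four tensors vanish. Hence each of the four equations holds identically in $x$, and the maps form an NS-Leibniz coalgebra.

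\emph{Conclusion and main obstacle.} Combining the two parts, $(A,\succ,\prec,\vee,\Delta_{\succ,r},\Delta_{\prec,r},\Delta_{\vee,r})$ is an NS-Leibniz bialgebra in the sense of Definition~\mref{def:bialgebra}; equivalently, this follows from Theorem~\mref{thm:general-coboundary-criterion}. The only substantive point is the reduction formulas \meqref{eq:single-ybe-reduction-s}--\meqref{eq:single-ybe-reduction-pv}, which express each $S_\diamond(r)$ as an invertible-permutation combination of $\tau_{12}S(r)$. If I had to recheck anything, it would be the identity $S_\prec=(\rho+\tau_{13})S_\circ$. I would verify it by expanding the leg notation termwise, using $\tau(r)=-r$ to reindex $a_i\o b_i\mapsto -b_i\o a_i$, and then matching the four summands of $S_\prec$ against the permuted summands of $S_\circ$. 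Since this identity is already asserted earlier, the theorem follows directly.
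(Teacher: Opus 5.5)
Your proposal is correct and is essentially the paper's proof. Corollary~\mref{cor:skew-tensor-zero} makes $S_\prec(r),S_\succ(r),S_\vee(r),S_\circ(r)$ all vanish, so the four coalgebra equations of Proposition~\mref{thm:skew-coboundary-criterion} hold, and Proposition~\mref{prop:skew-compatibility} supplies \meqref{eq:k1}--\meqref{eq:k6}; the identity $S_\prec=(\rho+\tau_{13})S_\circ$ you flagged does check out by the termwise reindexing with $\tau(r)=-r$ that you describe.
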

\begin{proof}
By Corollary~\mref{cor:skew-tensor-zero},
$S_\prec(r)=S_\succ(r)=S_\vee(r)=S_\circ(r)=0$.
The four coalgebra conditions in
Proposition~\mref{thm:skew-coboundary-criterion} therefore hold.
The compatibility conditions hold by
Proposition~\mref{prop:skew-compatibility}.
\end{proof}

\subsection{\texorpdfstring{$\mathcal O$}{O}-operators and induced splittings}
\mlabel{sec:ooperators}
For $r=\sum_i a_i\o b_i\in A\o A$, define
\begin{equation}\mlabel{eq:rsharp}
 T_r(\zeta)=\sum_i \zeta(a_i)b_i,\qquad T_r^{\mathsf t}=T_{\tau(r)}.
\end{equation}
If $r$ is skew-symmetric, then $T_r^{\mathsf t}=-T_r$.

We now characterize solutions of the NS-Leibniz
Yang--Baxter equation by $\mathcal O$-operators and use
these operators to split the three products. Product
splittings also occur in quadri-algebras \mcite{AguiarLoday}.

\begin{defi}\mlabel{def:ooperator}
For a representation $(V,l_\diamond,r_\diamond)$ of $A$, a linear map
$T:V\to A$ is an $\mathcal{O}$-operator if, for every $\diamond\in\{\succ,\prec,\vee\}$,
\begin{equation}\mlabel{eq:ooperator}
 T(u)\diamond T(v)=T\bigl(l_\diamond(Tu)v+r_\diamond(Tv)u\bigr).
\end{equation}
For the regular representation, $T$ is a Rota--Baxter operator of
weight zero for each product.
\end{defi}

\begin{thm}\mlabel{thm:r-o}
Let $r\in A\o A$ be skew-symmetric. The following conditions
are equivalent:
\begin{enumerate}[label=(\alph*)]
\item $r$ satisfies the NS-Leibniz Yang--Baxter equation.
\item $T_r$ is an $\mathcal O$-operator on $A$ for the dual
regular representation; explicitly,
\begin{equation}\mlabel{eq:skew-contraction}
 T_r\zeta\diamond T_r\eta
 =T_r\bigl(l_\diamond(T_r\zeta)\eta
          +r_\diamond(T_r\eta)\zeta\bigr),
 \qquad \diamond\in\{\succ,\prec,\vee\}.
\end{equation}
\item $T_r$ is an $\mathcal O$-operator on $(A,\circ)$ for
$(A^*,L_\succ^*,-L_\odot^*)$; explicitly,
\begin{equation}\mlabel{eq:single-total-ooperator}
 T_r\zeta\circ T_r\eta
 =T_r\bigl(L_\succ^*(T_r\zeta)\eta
                   -L_\odot^*(T_r\eta)\zeta\bigr).
\end{equation}
\end{enumerate}
\end{thm}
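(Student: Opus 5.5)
The plan is to translate each $\mathcal O$-operator identity into a tensor identity by pairing with a third dual vector. Fix $\zeta,\eta,\theta\in A^*$ and write $r=\sum_i a_i\otimes b_i$, so that $T_r\zeta=\sum_i\zeta(a_i)b_i$. Since $r$ is skew-symmetric, we also have $T_r\zeta=-\sum_i\zeta(b_i)a_i$. Both expressions will be used freely to move dual vectors between tensor legs.

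\textbf{Step 1: (a)$\Leftrightarrow$(c).} Pair both sides of \meqref{eq:single-total-ooperator} with $\theta$.
\begin{itemize}
\item The left side gives $\sum_{i,j}\zeta(a_i)\eta(a_j)\theta(b_i\circ b_j)=\langle r_{13}\circ r_{23},\zeta\otimes\eta\otimes\theta\rangle$.
\item On the right, $\langle\theta,T_r\xi\rangle=-\langle\xi,T_r\theta\rangle$. Hence the term with $L_\succ^*(T_r\zeta)\eta$ becomes $\langle\eta,T_r\zeta\succ T_r\theta\rangle$, which expands to a tensor of the form $r_{12}\succ r_{23}$ (up to legs and sign).
\item The term with $L_\odot^*(T_r\eta)\zeta$ becomes $-\langle\zeta,T_r\eta\odot T_r\theta\rangle$, which gives tensors of the form $r_{21}\succ r_{13}+r_{13}\prec r_{21}$.
\end{itemize}
Collecting terms, the difference of the two sides equals $\langle S_\circ(r),\zeta\otimes\eta\otimes\theta\rangle$. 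Here $S_\circ(r)$ is taken in the summed form displayed in the proof of Remark~\mref{prop:skew-ybe-reduction}, and skew-symmetry is used to rewrite the legs. By that remark, $S_\circ(r)=\tau_{12}S(r)$. Nondegeneracy of the pairing then gives (c)$\Leftrightarrow S(r)=0$.

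\textbf{Step 2: (b)$\Leftrightarrow$ the three component equations.} Do the same pairing for each $\diamond\in\{\succ,\prec,\vee\}$ in \meqref{eq:skew-contraction}, using the dual regular actions \meqref{eq:dual-regular}. Explicitly,
\begin{itemize}
\item for $\succ$: $l_\succ=L_\circ^*$ and $r_\succ=(L_\prec+R_\succ)^*$;
\item for $\prec$: $l_\prec=-L_\prec^*$ and $r_\prec=-L_\star^*$;
\item for $\vee$: $l_\vee=-L_\vee^*$ and $r_\vee=(L_\vee+R_\vee)^*$.
\end{itemize}
For each product, the difference of the two sides pairs to a three-leg tensor built from $\diamond$, $\circ$ and $\star$. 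I expect these tensors to be, up to a fixed leg permutation, exactly $S_\succ(r)$, $S_\prec(r)$ and $S_\vee(r)$ of \meqref{eq:ybe-s}, \meqref{eq:ybe-p} and \meqref{eq:ybe-v}. This is presumably how those combinations were designed: $S_\prec$ contains $\circ$-terms that come from $r_\prec=-L_\star^*$, and $S_\succ$ contains $r_{12}\circ r_{23}$ from $l_\succ=L_\circ^*$.

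Once this is established, (b) is equivalent to $S_\prec(r)=S_\succ(r)=S_\vee(r)=0$. By Remark~\mref{prop:skew-ybe-reduction}, this is equivalent to $S(r)=0$, which is (a). As a consistency check, summing the three identities in (b) recovers (c), because the total dual actions are $(L_\succ^*,-L_\odot^*)$ by \meqref{eq:dual-total-partial}. This gives (b)$\Rightarrow$(c) directly, and the converse goes through (a).

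\textbf{Main obstacle.} The hard part is the bookkeeping in Step 2: matching each paired expression to the precise leg ordering and signs of $S_\diamond(r)$. The $\vee$ case is the worst, because $r_\vee=(L_\vee+R_\vee)^*$ produces both $r_{21}\vee r_{13}$ and $r_{13}\vee r_{21}$ terms. I would first do the $\succ$ case carefully, recording which permutation $\sigma$ satisfies $\langle\text{difference},\zeta\otimes\eta\otimes\theta\rangle=\pm\langle\sigma S_\succ(r),\zeta\otimes\eta\otimes\theta\rangle$, and then reuse the same $\sigma$ for the other two products.

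If the leg orders for the three products come out different, that does not matter. Since every permutation is invertible, the vanishing of each $S_\diamond(r)$ is all that is needed, and the equivalence with $S(r)=0$ from Remark~\mref{prop:skew-ybe-reduction} still applies.
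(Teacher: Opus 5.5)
Your proposal is correct and follows essentially the same route as the paper. The paper also identifies $\langle S_\diamond(r),\zeta\otimes\eta\otimes\theta\rangle$ with $\langle\theta,\cdot\rangle$ applied to the $\mathcal O$-operator defect for each $\diamond$. This holds exactly, with no leg permutation, so your hedge in Step~2 is unnecessary. The paper then invokes Remark~\ref{prop:skew-ybe-reduction} for (a)$\Leftrightarrow$(b) and uses $S_\circ(r)=\tau_{12}S(r)$ for (c). The only minor difference is that the paper obtains the (c)-defect by summing the three contractions, whereas you compute it directly; both give the same summed form of $S_\circ(r)$.
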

\begin{proof}
Expanding the tensor-leg products and using $\tau(r)=-r$,
we obtain
\begin{equation}\mlabel{eq:quadratic-defect-contraction}
\begin{aligned}
&(\zeta\o\eta\o I)S_\diamond(r)\\
&\quad=T_r\zeta\diamond T_r\eta
 -T_r\bigl(l_\diamond(T_r\zeta)\eta
          +r_\diamond(T_r\eta)\zeta\bigr),
\qquad \diamond\in\{\succ,\prec,\vee\}.
\end{aligned}
\end{equation}
Here the left-hand side denotes contraction of the first
two tensor factors. For example,
\begin{align*}
\langle S_\prec(r),\zeta\o\eta\o h\rangle
={}&\sum_{i,j}\bigl[
 \zeta(a_i)\eta(a_j)h(b_i\prec b_j)
 -\zeta(a_i)\eta(b_i\prec a_j)h(b_j)\\
&\quad-\zeta(b_i\circ a_j)\eta(a_i)h(b_j)
 -\zeta(a_i\circ b_j)\eta(a_j)h(b_i)\bigr]\\
={}&\bigl\langle h,T_r\zeta\prec T_r\eta
 +T_r\bigl(L_\prec^*(T_r\zeta)\eta
            +L_\star^*(T_r\eta)\zeta\bigr)\bigr\rangle.
\end{align*}
The formulas for $\succ$ and $\vee$ follow by contracting
their displayed tensor expressions in the same order.
Thus condition (b) is equivalent to
$S_\prec=S_\succ=S_\vee=0$.
Remark~\mref{prop:skew-ybe-reduction} proves the equivalence
of (a) and (b). Summing the three contractions yields
\[
\begin{aligned}
&\big\langle h,T_r\zeta\circ T_r\eta
 -T_r\bigl(L_\succ^*(T_r\zeta)\eta
                   -L_\odot^*(T_r\eta)\zeta\bigr)\big\rangle\\
&\qquad=\langle S_\circ(r),\zeta\o\eta\o h\rangle
 =\langle\tau_{12}S(r),\zeta\o\eta\o h\rangle.
\end{aligned}
\]
Nondegeneracy of the pairing proves the equivalence with (c).
\end{proof}

The case of a tensor with invariant symmetric part is treated
in Section~\mref{sec:quasitriangular}.

\begin{rmk}
If $\vee=0$, then $S_\vee=0$. The other two equations are the
operator equations for the Leibniz-dendriform dual regular representation. Their skew-symmetric solutions are therefore
those of the LD-YBE in \mcite{SunGuo}.
\end{rmk}

\begin{thm}\mlabel{thm:nondegenerate-r}
Let $r\in A\o A$ be skew-symmetric and nondegenerate, and put
$\varpi(x,y)=\langle T_r^{-1}x,y\rangle$.
Then $r$ satisfies the NS-Leibniz Yang--Baxter equation
if and only if
\begin{equation}\mlabel{eq:skew-cocycle}
 \varpi(x,y\odot z)-\varpi(y,x\succ z)
                       +\varpi(z,x\circ y)=0
\end{equation}
for all $x,y,z\in A$.
\end{thm}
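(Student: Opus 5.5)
By Theorem~\mref{thm:r-o}, for skew-symmetric $r$ the NS-Leibniz Yang--Baxter equation is equivalent to condition (c): $T_r$ is an $\mathcal O$-operator on $(A,\circ)$ for $(A^*,L_\succ^*,-L_\odot^*)$, i.e.\ \meqref{eq:single-total-ooperator} holds for all $\zeta,\eta\in A^*$. I would work with this single total identity rather than with the three identities in (b), since \meqref{eq:skew-cocycle} involves exactly $\circ$, $\succ$ and $\odot$.

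The first step is to record the properties of $T_r^{-1}$. Since $r$ is nondegenerate, $T_r$ is a linear isomorphism, so $\varpi$ is a nondegenerate bilinear form. Since $r$ is skew-symmetric, $T_r^{\mathsf t}=-T_r$, hence $(T_r^{-1})^{\mathsf t}=-T_r^{-1}$. Thus $\varpi(x,y)=\langle T_r^{-1}x,y\rangle=-\langle T_r^{-1}y,x\rangle=-\varpi(y,x)$, so $\varpi$ is skew-symmetric.

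Next, because $T_r$ is bijective, \meqref{eq:single-total-ooperator} for all $\zeta,\eta$ is equivalent to the same identity with $\zeta=T_r^{-1}x$ and $\eta=T_r^{-1}y$ for all $x,y\in A$. Applying $T_r^{-1}$, the identity becomes
\[
T_r^{-1}(x\circ y)=L_\succ^*(x)T_r^{-1}y-L_\odot^*(y)T_r^{-1}x .
\]
By nondegeneracy of the pairing, this holds if and only if its pairing with every $z\in A$ vanishes. Using $\langle a^*(x)\xi,z\rangle=-\langle\xi,a(x)z\rangle$, the right-hand side pairs with $z$ to give
\[
-\langle T_r^{-1}y,x\succ z\rangle+\langle T_r^{-1}x,y\odot z\rangle
=-\varpi(y,x\succ z)+\varpi(x,y\odot z).
\]
The left-hand side pairs with $z$ to give $\varpi(x\circ y,z)=-\varpi(z,x\circ y)$, by the skew-symmetry of $\varpi$. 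Moving all terms to one side yields exactly \meqref{eq:skew-cocycle}. Every step is reversible, so the two conditions are equivalent.

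The only point requiring care is the sign bookkeeping. One must keep track of the convention $a^*(x)=-a(x)^{\mathsf t}$, of $L_\odot=L_\succ+R_\prec$ (so that $L_\odot(y)z=y\odot z$), and of the orientation $\varpi(x,y)=\langle T_r^{-1}x,y\rangle$ together with its skew-symmetry. There is no genuine obstacle beyond this, since Theorem~\mref{thm:r-o} already absorbs the tensor computations.
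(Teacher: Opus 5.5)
Your proposal is correct and is essentially the paper's own proof: both reduce to condition (c) of Theorem~\ref{thm:r-o}, substitute $x=T_r\zeta$ and $y=T_r\eta$, apply $T_r^{-1}$, pair with $z$ to obtain $\varpi(x\circ y,z)=-\varpi(y,x\succ z)+\varpi(x,y\odot z)$, and conclude using the skew-symmetry of $\varpi$. The only difference is that you write out the skew-symmetry of $\varpi$ and the sign conventions, which the paper leaves implicit.
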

\begin{proof}
Since $T_r$ is invertible and $r$ is skew-symmetric,
$\varpi$ is nondegenerate and skew-symmetric. Set $x=T_r\zeta$ and
$y=T_r\eta$ in \meqref{eq:single-total-ooperator}.
Applying $T_r^{-1}$ and pairing with $z$ yields
\[
 \varpi(x\circ y,z)
 =-\varpi(y,x\succ z)+\varpi(x,y\odot z).
\]
By skew-symmetry, this is \meqref{eq:skew-cocycle}.
All steps are reversible, so Theorem~\mref{thm:r-o}
proves the equivalence.
\end{proof}

\begin{defi}\mlabel{def:skew-cocycle}
A bilinear form on an NS-Leibniz algebra is called a
$2$-cocycle if it satisfies \meqref{eq:skew-cocycle}.
\end{defi}

For the dual regular representation, the equation
\meqref{eq:ooperator} is \meqref{eq:skew-contraction}.
Thus, by Theorem~\mref{thm:r-o}, a skew-symmetric tensor $r$
satisfies the NS-Leibniz Yang--Baxter equation if and only if
$T_r$ is an $\mathcal{O}$-operator for this representation.

An $\mathcal O$-operator determines a skew-symmetric
tensor in the dual semidirect product. The following
theorem characterizes when this tensor satisfies the
NS-Leibniz Yang--Baxter equation.

\begin{thm}\mlabel{thm:T-tensor}
Let $V$ be a representation of $A$ and let
$\widehat A=A\ltimes V^*$ use its dual representation
\meqref{eq:dual-representation}.
Let $T:V\to A$ be a linear map. For a basis $(v_i)$ with dual
basis $(v_i^*)$, put
\begin{equation}\mlabel{eq:T-tensor}
 r_T=\sum_i\bigl(Tv_i\o v_i^*-v_i^*\o Tv_i\bigr).
\end{equation}
This tensor is basis-independent and skew-symmetric. It satisfies the
NS-Leibniz Yang--Baxter equation in $\widehat A$ if and only
if $T$ is an $\mathcal{O}$-operator for the original representation on $V$.
Consequently, every such operator produces a coboundary NS-Leibniz
bialgebra on $\widehat A$ by
\meqref{eq:delta-r-s}--\meqref{eq:delta-r-v}.
\end{thm}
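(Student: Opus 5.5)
The plan is to reduce the statement to Theorem~\mref{thm:r-o} applied to the NS-Leibniz algebra $\widehat A=A\ltimes V^*$. Since $r_T$ is skew-symmetric in $\widehat A\o\widehat A$, the Yang--Baxter equation for $r_T$ is equivalent to $T_{r_T}:\widehat A^*\to\widehat A$ being an $\mathcal O$-operator for the dual regular representation of $\widehat A$. The basis-independence and skew-symmetry are immediate. The tensor $\sum_i Tv_i\o v_i^*$ is the image of the canonical element $\sum_i v_i\o v_i^*\in V\o V^*$ under $T\o I$, which does not depend on the basis. Its second summand is minus its flip, so $\tau(r_T)=-r_T$.

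First, I will compute $T_{r_T}$ explicitly. Identify $\widehat A^*=A^*\oplus V$ using $V^{**}\cong V$. For $\zeta\in A^*$ and $u\in V$, formula \meqref{eq:rsharp} gives
\[
T_{r_T}(\zeta+u)=\sum_i\bigl(\zeta(Tv_i)v_i^*-\langle v_i^*,u\rangle Tv_i\bigr)=T^{\mathsf t}\zeta-Tu.
\]
So $T_{r_T}$ maps $A^*$ into $V^*$ and $V$ into $A$.

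Second, I will avoid the six-action dual regular representation of $\widehat A$ by contracting $S_\diamond(r_T)$ directly. By \meqref{eq:quadratic-defect-contraction}, the contraction $(\xi_1\o\xi_2\o I)S_\diamond(r_T)$, for $\xi_1,\xi_2\in\widehat A^*$, is the $\mathcal O$-operator defect of $T_{r_T}$. I will split $\xi_1,\xi_2$ into their $A^*$ and $V$ components and use bilinearity.

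\begin{itemize}
\item If both are in $A^*$, then both images lie in $V^*$, whose products in $\widehat A$ vanish. Each term of the defect then has a factor that is a product or action of two elements of $V^*$, so the defect is zero.
\item If both are $u,w\in V$, the defect has $A$-component $Tu\diamond Tw-T(l_\diamond(Tu)w+r_\diamond(Tw)u)$, up to a sign and the identification of the actions. The required identification comes from \meqref{eq:dual-representation} and the final assertion of Proposition~\mref{thm:dual} that the dual of the dual is the original representation. The $V^*$-component of this defect is zero.
\item In the mixed case $\zeta\in A^*$, $u\in V$, the defect takes values in $V^*$. Pairing it with $w\in V$ should turn it into the pairing of $\zeta$ with the same $\mathcal O$-defect in $u,w$, possibly with the arguments permuted.
\end{itemize}

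Hence $S_\diamond(r_T)=0$ for all $\diamond\in\{\succ,\prec,\vee\}$ exactly when \meqref{eq:ooperator} holds. By Remark~\mref{prop:skew-ybe-reduction}, this is equivalent to $S(r_T)=0$.

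The last claim then follows from Theorem~\mref{thm:skew-coboundary}. The main obstacle is the bookkeeping in the mixed case. There the dual actions on $V^*$, namely $l_\circ^*$, $(l_\prec+r_\succ)^*$, $-l_\prec^*$, $-l_\star^*$ and so on, have to recombine so that all three mixed defects reduce to the single defect \meqref{eq:ooperator}. I would first try pairing the mixed terms against $w$ and invoking the symmetric roles of $l$ and $r$ in the dual-of-dual statement. If the signs do not match, I would instead work with the total product and use part (c) of Theorem~\mref{thm:r-o} together with Proposition~\mref{prop:semidirect} to organize the terms.
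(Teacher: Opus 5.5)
Your proposal is correct and follows the paper's proof essentially step for step. You compute $T_{r_T}(\zeta+u)=T^{\mathsf t}\zeta-Tu$ and then split the $\mathcal O$-defect of $T_{r_T}$ by components. On $A^*\times A^*$ it vanishes. On $V\times V$ it equals $(D_\diamond(u,w),0)$ by double duality, exactly and with no sign, where $D_\diamond(u,w)=Tu\diamond Tw-T\bigl(l_\diamond(Tu)w+r_\diamond(Tw)u\bigr)$. On mixed inputs it is a linear combination of such defects, and Theorems~\ref{thm:r-o} and~\ref{thm:skew-coboundary} then finish the argument.

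One small correction concerns your expectation in the mixed case: a mixed defect can combine defects of different operations rather than reproduce the same one. For instance, the $\succ$-defect at $(\zeta,u)$, evaluated at $w$, equals $-\bigl\langle\zeta,D_\prec(u,w)+D_\succ(w,u)\bigr\rangle$. This is harmless, because the forward direction only needs each mixed defect to be some combination of the $D_\diamond$, and the converse only uses the $V\times V$ component.
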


\begin{proof}
The tensor $\sum_i Tv_i\otimes v_i^*$ corresponds to $T$ under $\operatorname{Hom}(V,A)\cong A\otimes V^*$. Hence $r_T$ is basis-independent and skew-symmetric. Under $\widehat A^*=A^*\oplus V$, its contraction map is
\[
T_{r_T}(\zeta,u)=(-Tu,T^{\mathsf t}\zeta).
\]

For $\diamond\in\{\succ,\prec,\vee\}$, put
\[
D_\diamond(u,v)
=Tu\diamond Tv
-T\bigl(l_\diamond(Tu)v+r_\diamond(Tv)u\bigr),
\]
and let $E_\diamond$ denote the difference between the two sides of \meqref{eq:skew-contraction} for $T_{r_T}$. By \meqref{eq:dual-representation} and the double duality in Proposition~\mref{thm:dual}, direct calculation yields
\[
E_\diamond(u,v)=(D_\diamond(u,v),0),
\qquad
E_\diamond(\zeta,\eta)=0.
\]
For mixed inputs, the $A$ components vanish. We identify $A^*$ and $V$ with the corresponding summands of $\widehat A^*$. For example, the dual actions associated with $\vee$ in \meqref{eq:dual-representation} yield, for $u,v\in V$ and $\zeta\in A^*$,
\[
\begin{aligned}
&\bigl\langle\bigl(E_\vee(u,\zeta)\bigr)_{V^*},v\bigr\rangle\\
&\quad=-\langle\zeta,T(l_\vee(Tu)v)\rangle
       -\langle L_\vee^*(Tu)\zeta,Tv\rangle
       -\langle\zeta,T(r_\vee(Tv)u)\rangle\\
&\quad=\bigl\langle\zeta,
       Tu\vee Tv-T\bigl(l_\vee(Tu)v+r_\vee(Tv)u\bigr)
       \bigr\rangle\\
&\quad=\langle\zeta,D_\vee(u,v)\rangle.
\end{aligned}
\]
Here the subscript $V^*$ denotes the $V^*$ component. The remaining mixed components are likewise linear combinations of the functionals
\[
v\longmapsto\langle\zeta,D_\diamond(u,v)\rangle,
\qquad
v\longmapsto\langle\zeta,D_\diamond(v,u)\rangle,
\qquad \diamond\in\{\succ,\prec,\vee\}.
\]
Thus all $E_\diamond$ vanish if and only if all $D_\diamond$ vanish. The equivalence follows from Theorem~\mref{thm:r-o}, and the final assertion follows from Theorem~\mref{thm:skew-coboundary}.
\end{proof}
The three induced products satisfy the NS-Leibniz
identities, as the following proposition shows.

\begin{pro}\mlabel{prop:induced}
An $\mathcal{O}$-operator $T:V\to A$ induces an NS-Leibniz algebra on $V$ by
\begin{equation}\mlabel{eq:induced}
 u\diamond_Tv=l_\diamond(Tu)v+r_\diamond(Tv)u,\qquad\diamond\in\{\succ,\prec,\vee\}.
\end{equation}
The map $T:(V,\succ_T,\prec_T,\vee_T)\to A$ is a homomorphism.
\end{pro}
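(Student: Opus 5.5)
The standard route is through the semidirect product $A\ltimes V$ of Proposition~\mref{prop:semidirect}, which avoids expanding the twelve representation identities by hand. Since $(V,l_\diamond,r_\diamond)$ is a representation, $A\oplus V$ with the products \meqref{eq:semidirect} is an NS-Leibniz algebra. Consider the graph
\[
\Gamma_T=\{(Tu,u)\mid u\in V\}\subseteq A\oplus V .
\]
The first step is to show that $\Gamma_T$ is closed under each of $\succ,\prec,\vee$. For $u,v\in V$, formula \meqref{eq:semidirect} gives
\[
(Tu,u)\diamond(Tv,v)=\bigl(Tu\diamond Tv,\;l_\diamond(Tu)v+r_\diamond(Tv)u\bigr)
=\bigl(Tu\diamond Tv,\;u\diamond_T v\bigr).
\]
By \meqref{eq:ooperator}, the first component equals $T(u\diamond_T v)$. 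Hence the product lies in $\Gamma_T$, so $\Gamma_T$ is an NS-Leibniz subalgebra of $A\ltimes V$.

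The second step transports the structure. The map $\phi:V\to\Gamma_T$, $u\mapsto(Tu,u)$, is a linear isomorphism. The computation above shows $\phi(u)\diamond\phi(v)=\phi(u\diamond_T v)$ for each $\diamond$. The defining identities \meqref{eq:ns1}--\meqref{eq:ns4} are polynomial identities in the three products, so they hold in the subalgebra $\Gamma_T$. Pulling them back along $\phi$, they therefore hold for $(V,\succ_T,\prec_T,\vee_T)$. Note that the total product $\circ_T$ corresponds to $\circ$ on $\Gamma_T$, because $\phi$ respects each summand separately.

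The third step is the homomorphism property. The projection $\pi_A:A\ltimes V\to A$ is a homomorphism for each product, since the $A$-component of \meqref{eq:semidirect} is $x\diamond y$. Then $T=\pi_A\circ\phi$ is a composite of homomorphisms. Equivalently, one can read the property off directly as $T(u\diamond_T v)=Tu\diamond Tv$, which is exactly \meqref{eq:ooperator}.

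There is essentially no hard step. The only point requiring care is the first step: one must check that the same formula $u\diamond_Tv$ appears in the $V$-component for every $\diamond$, including $\vee$. This matters because Definition~\mref{def:representation} places the $\vee$-actions on the same footing through \meqref{eq:r10}--\meqref{eq:r12}. Proposition~\mref{prop:semidirect} already packages all twelve relations, so no direct verification of \meqref{eq:r1}--\meqref{eq:r12} is needed.
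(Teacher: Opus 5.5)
Your proposal is correct and follows essentially the same route as the paper. The paper also observes that the $\mathcal{O}$-operator equation makes the graph $\{(Tu,u):u\in V\}$ a subalgebra of $A\ltimes V$, identifies its products with $\diamond_T$ by projecting onto $V$, and obtains the homomorphism $T$ as the projection onto $A$.
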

\begin{proof}
Equation \meqref{eq:ooperator} states that
$\{(Tu,u):u\in V\}$ is a subalgebra of $A\ltimes V$.
Projection onto $V$ identifies its products with \meqref{eq:induced},
and projection onto $A$ is the homomorphism $T$.
\end{proof}

\begin{defi}\mlabel{def:six-splitting}
A six-operation splitting consists of bilinear operations
$\diamond^L,\diamond^R$, $\diamond\in\{\succ,\prec,\vee\}$, on $V$ such that the sums
$u\diamond v=u\diamond^Lv+u\diamond^Rv$ form an NS-Leibniz algebra and the maps
\[
 a_\diamond(u)v=u\diamond^Lv,\qquad b_\diamond(v)u=u\diamond^Rv
\]
form a representation of the sum algebra in the sense of
Definition~\mref{def:representation}.

\end{defi}

\begin{thm}\mlabel{thm:six-splitting}
An $\mathcal{O}$-operator induces a six-operation splitting by
\begin{equation}\mlabel{eq:six-induced}
 u\diamond^Lv=l_\diamond(Tu)v,\qquad
 u\diamond^Rv=r_\diamond(Tv)u.
\end{equation}
Conversely, for a six-operation splitting the identity map on $V$ is
an $\mathcal{O}$-operator of its sum algebra for the representation
$(a_\diamond,b_\diamond)$. If $T$ is invertible, the transported
operations on $A$ are
\[
 x\diamond^Ly=T\bigl(l_\diamond(x)T^{-1}y\bigr),\qquad
 x\diamond^Ry=T\bigl(r_\diamond(y)T^{-1}x\bigr).
\]
\end{thm}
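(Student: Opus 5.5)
The plan has three parts: the forward direction, the converse, and the transport along an invertible $T$. For the forward direction, let $T:V\to A$ be an $\mathcal O$-operator for a representation $(V,l_\diamond,r_\diamond)$. Proposition~\mref{prop:induced} already shows that the sums $u\diamond_T v=u\diamond^Lv+u\diamond^Rv$ form an NS-Leibniz algebra on $V$. It remains to check that $a_\diamond(u)=l_\diamond(Tu)$ and $b_\diamond(v)=r_\diamond(Tv)$ form a representation of $(V,\succ_T,\prec_T,\vee_T)$ in the sense of Definition~\mref{def:representation}.

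The key observation is that each relation \meqref{eq:r1}--\meqref{eq:r12} for $(a_\diamond,b_\diamond)$ is the corresponding relation for $(l_\diamond,r_\diamond)$ evaluated at arguments $Tu,Tv$. Here a product $u\diamond_T v$ inside an action must be replaced by $T(u\diamond_T v)$. Since $T$ is a homomorphism by \meqref{eq:ooperator}, we have $T(u\diamond_Tv)=Tu\diamond Tv$. Hence, for instance, $a_\prec(u\prec_T v)=l_\prec(Tu\prec Tv)$, and \meqref{eq:r1} for $a,b$ at $(u,v)$ becomes \meqref{eq:r1} for $l,r$ at $(Tu,Tv)$. The relations \meqref{eq:r2}, \meqref{eq:r3}, \meqref{eq:r6}, \meqref{eq:r8}, \meqref{eq:r9}, \meqref{eq:r11} and \meqref{eq:r12} contain right actions of products, and they transform in the same way. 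Composites of actions also pose no problem: $a_\diamond(u)a_{\diamond'}(v)=l_\diamond(Tu)l_{\diamond'}(Tv)$ involves no product of elements of $V$. Thus all twelve relations follow at once from the representation relations together with the homomorphism property. Equivalently, one can pull back the representation along the homomorphism $T$ given by Proposition~\mref{prop:induced}.

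For the converse, take a six-operation splitting on $V$ with sum algebra $(V,\succ,\prec,\vee)$ and representation $(a_\diamond,b_\diamond)$. For $T=\mathrm{id}_V$, condition \meqref{eq:ooperator} reads $u\diamond v=a_\diamond(u)v+b_\diamond(v)u=u\diamond^Lv+u\diamond^Rv$, which is the definition of the sum. This direction is therefore tautological once the representation hypothesis in Definition~\mref{def:six-splitting} ensures that \meqref{eq:ooperator} is posed for a genuine representation.

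For invertible $T$, transport the operations $\diamond^L,\diamond^R$ on $V$ to $A$ along the linear isomorphism $T$, setting $x\diamond^Ly=T\bigl((T^{-1}x)\diamond^L(T^{-1}y)\bigr)$. Substituting \meqref{eq:six-induced} gives $T\bigl(l_\diamond(x)T^{-1}y\bigr)$, and likewise for $\diamond^R$. Since $T$ is an isomorphism of NS-Leibniz algebras onto $A$ by \meqref{eq:ooperator}, the transported sums recover the original products $\diamond$ on $A$, and the splitting axioms are preserved under transport. The main obstacle is only bookkeeping in the first step: one must confirm that every product of $V$-elements appearing inside an action in \meqref{eq:r1}--\meqref{eq:r12} sits in a slot where the homomorphism property applies. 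Such products occur only as arguments of $l_\diamond$ or $r_\diamond$, so this is the case.
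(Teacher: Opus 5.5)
Your proposal is correct and follows the paper's proof. Proposition~\ref{prop:induced} gives the NS-Leibniz structure on $V$ and the homomorphism property of $T$, and pulling back the representation along $T$ yields $(a_\diamond,b_\diamond)=(l_\diamond\circ T,\,r_\diamond\circ T)$. The converse is the tautological $\mathcal O$-operator identity for the identity map, and the transported formulas come from conjugating by $T$. Your relation-by-relation check, using $T(u\diamond_T v)=Tu\diamond Tv$ inside the actions, just unpacks the paper's one-line pullback argument.
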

\begin{proof}
By Proposition~\mref{prop:induced}, $T$ is a homomorphism for the sum
products. Pullback along $T$ therefore defines a representation on
$V$ with actions $l_\diamond\circ T,r_\diamond\circ T$.
These are the actions associated with \meqref{eq:six-induced}.
Conversely, the defining equality
$u\diamond v=a_\diamond(u)v+b_\diamond(v)u$ is the
$\mathcal{O}$-operator equation for the identity map. If $T$ is invertible,
transport of the operations through $T$ yields the stated formulas
on $A$.
\end{proof}

\begin{defi}\mlabel{def:split-invariant-form}
A bilinear form $\varpi$ on a six-operation splitting is
called invariant if, for all $x,y,z$,
\begin{align}
\varpi(x\succ^L y,z)&=-\varpi(y,x\circ z),&
\varpi(x\succ^R y,z)&=-\varpi(x,z\succ y+y\prec z),
\mlabel{eq:split-form-s}\\
\varpi(x\prec^L y,z)&=\varpi(y,x\prec z),&
\varpi(x\prec^R y,z)&=\varpi(x,y\circ z+z\circ y),
\mlabel{eq:split-form-p}\\
\varpi(x\vee^L y,z)&=\varpi(y,x\vee z),&
\varpi(x\vee^R y,z)&=-\varpi(x,y\vee z+z\vee y).
\mlabel{eq:split-form-v}
\end{align}
Here $\diamond=\diamond^L+\diamond^R$ for
$\diamond\in\{\succ,\prec,\vee\}$ and
$\circ=\succ+\prec+\vee$.
\end{defi}

\begin{pro}\mlabel{prop:cocycle-splitting}
Let $A$ be an NS-Leibniz algebra with a nondegenerate
skew-symmetric $2$-cocycle $\varpi$. Then there is a unique
six-operation splitting of its products satisfying
\meqref{eq:split-form-s}--\meqref{eq:split-form-v}.
Conversely, an invariant skew-symmetric form on such a
splitting is a $2$-cocycle of its sum NS-Leibniz algebra.
\end{pro}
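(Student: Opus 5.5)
The plan is to transport the problem through the nondegenerate form and read the cocycle condition as an $\mathcal O$-operator equation. Let $\varpi^\sharp:A\to A^*$ be given by $\langle\varpi^\sharp(x),y\rangle=\varpi(x,y)$, and set $T=(\varpi^\sharp)^{-1}:A^*\to A$. Since $\varpi$ is nondegenerate and skew-symmetric, $T$ is invertible and $T^{\mathsf t}=-T$. Hence $T=T_r$ for a unique nondegenerate skew-symmetric tensor $r$, with $\varpi(x,y)=\langle T_r^{-1}x,y\rangle$. By Theorem~\mref{thm:nondegenerate-r}, the cocycle identity \meqref{eq:skew-cocycle} says exactly that $r$ satisfies the NS-Leibniz Yang--Baxter equation. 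By Theorem~\mref{thm:r-o}(b), this means $T$ is an $\mathcal O$-operator for the dual regular representation \meqref{eq:dual-regular}.

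For existence, I will apply Theorem~\mref{thm:six-splitting} to the invertible operator $T$ and transport the six operations to $A$:
\[
x\diamond^Ly=T\bigl(l_\diamond(x)T^{-1}y\bigr),\qquad x\diamond^Ry=T\bigl(r_\diamond(y)T^{-1}x\bigr),
\]
where $(l_\diamond,r_\diamond)$ are the six dual regular actions. By the $\mathcal O$-operator equation, the sums recover the original products $\diamond$. The representation property of the splitting comes from Proposition~\mref{prop:induced} and Theorem~\mref{thm:six-splitting}, because $T$ is an isomorphism of the induced NS-Leibniz structure on $A^*$ onto $A$.

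The key check is that these operations satisfy the invariance relations. Pair $x\diamond^Ly$ with $z$ using $\varpi(u,z)=\langle T^{-1}u,z\rangle$:
\[
\varpi(x\diamond^Ly,z)=\langle l_\diamond(x)T^{-1}y,z\rangle.
\]
For $\diamond=\succ$, the action is $L_\circ^*$, so this equals $-\langle T^{-1}y,x\circ z\rangle=-\varpi(y,x\circ z)$. For $\prec$ the action is $-L_\prec^*$, and for $\vee$ it is $-L_\vee^*$; these give $\varpi(y,x\prec z)$ and $\varpi(y,x\vee z)$. For the right operations,
\[
\varpi(x\diamond^Ry,z)=\langle r_\diamond(y)T^{-1}x,z\rangle,
\]
and the actions $(L_\prec+R_\succ)^*$, $-L_\star^*$, $(L_\vee+R_\vee)^*$ give precisely \meqref{eq:split-form-s}--\meqref{eq:split-form-v}. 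This sign bookkeeping for $a^*=-a^{\mathsf t}$ is the main place where care is needed.

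Uniqueness is immediate. Each relation in \meqref{eq:split-form-s}--\meqref{eq:split-form-v} prescribes $\varpi(x\diamond^{L/R}y,\cdot)$ in terms of the fixed products, and $\varpi$ is nondegenerate, so each operation is determined.

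For the converse, suppose $\varpi$ is invariant and skew-symmetric on a splitting. Add the $L$ and $R$ relations to express $\varpi(x\diamond y,z)$, then sum over $\diamond$ to obtain $\varpi(x\circ y,z)$ in terms of the sum products. Substituting into \meqref{eq:skew-cocycle} and using skew-symmetry should make the terms cancel.

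This converse is where I expect the main obstacle. If the direct cancellation does not work out cleanly, I will instead reverse the argument above. When $\varpi$ is nondegenerate, invariance says that the identity map, rewritten through $\varpi^\sharp$, intertwines $(a_\diamond,b_\diamond)$ with the dual regular actions. The $\mathcal O$-operator property of the identity from Theorem~\mref{thm:six-splitting} then makes $(\varpi^\sharp)^{-1}$ an $\mathcal O$-operator for the dual regular representation. Theorems~\mref{thm:r-o} and~\mref{thm:nondegenerate-r} then give the cocycle identity. If $\varpi$ is degenerate, I will have to fall back on the direct computation.
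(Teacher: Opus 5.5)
Your proposal is correct and follows the paper's proof. You set $T=(\varpi^\sharp)^{-1}$ and make it an $\mathcal O$-operator for the dual regular representation via Theorems~\ref{thm:nondegenerate-r} and~\ref{thm:r-o}, transport the splitting of Theorem~\ref{thm:six-splitting}, pair with $z$ (your sign bookkeeping is right), and get uniqueness from nondegeneracy.

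For the converse, the direct summation you propose does work, and it needs no nondegeneracy, so your fallback is unnecessary. The six relations add up to $\varpi(x\circ y,z)=-\varpi(y,x\succ z)+\varpi(x,y\odot z)$, which is \eqref{eq:skew-cocycle} by skew-symmetry.
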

\begin{proof}
Put $T=(\varpi^\sharp)^{-1}$.
Theorems~\mref{thm:nondegenerate-r} and~\mref{thm:r-o}
show that $T$ is an $\mathcal O$-operator for the dual
regular representation. Transporting the splitting of
Theorem~\mref{thm:six-splitting} through $T$ yields
\[
 x\diamond^L y=T(l_\diamond(x)T^{-1}y),\qquad
 x\diamond^R y=T(r_\diamond(y)T^{-1}x).
\]
Pairing with $z$ produces
\meqref{eq:split-form-s}--\meqref{eq:split-form-v}.
Nondegeneracy proves uniqueness. Conversely, summing the
six invariance equalities yields
\[
 \varpi(x\circ y,z)=-\varpi(y,x\succ z)+\varpi(x,y\odot z),
\]
which is \meqref{eq:skew-cocycle} by skew-symmetry.
\end{proof}

\begin{rmk}
If $\vee^L=\vee^R=0$, the identity map is an
$\mathcal O$-operator on the sum Leibniz-dendriform algebra.
By \mcite{SunGuo}, Proposition~3.24, the remaining four
operations form its corresponding Leibniz-quadri-algebra.
\end{rmk}

\begin{cor}\mlabel{cor:canonical-splitting-tensor}
For a six-operation splitting on $V$, its identity $\mathcal{O}$-operator
produces the skew tensor
$\sum_i(v_i\o v_i^*-v_i^*\o v_i)$ in the dual semidirect
product of the sum NS algebra and $V^*$. This tensor satisfies the
NS-Leibniz Yang--Baxter equation and defines a coboundary bialgebra there.
\end{cor}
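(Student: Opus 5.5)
The corollary follows by specializing Theorem~\mref{thm:T-tensor} to the identity $\mathcal O$-operator supplied by Theorem~\mref{thm:six-splitting}. No new tensor computation should be needed. The steps, in order, are as follows.

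First, fix a six-operation splitting on $V$, and let $(V,\succ,\prec,\vee)$ be its sum NS-Leibniz algebra with $\diamond=\diamond^L+\diamond^R$. By Definition~\mref{def:six-splitting}, the maps $a_\diamond(u)v=u\diamond^Lv$ and $b_\diamond(v)u=u\diamond^Rv$ form a representation of this sum algebra on $V$ itself. By the converse part of Theorem~\mref{thm:six-splitting}, the identity map $\mathrm{id}:V\to V$ is an $\mathcal O$-operator for $(a_\diamond,b_\diamond)$. This is just the observation that \meqref{eq:ooperator} with $T=\mathrm{id}$ reads $u\diamond v=u\diamond^Lv+u\diamond^Rv$.

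Second, apply Theorem~\mref{thm:T-tensor} with $A$ replaced by the sum algebra $V$ and with the representation $(V,a_\diamond,b_\diamond)$. Form $\widehat A=V\ltimes V^*$, using the dual representation \meqref{eq:dual-representation} of $(a_\diamond,b_\diamond)$; this is the dual semidirect product in the statement. Substituting $T=\mathrm{id}$ into \meqref{eq:T-tensor} gives precisely $r=\sum_i(v_i\o v_i^*-v_i^*\o v_i)$. Theorem~\mref{thm:T-tensor} then yields three conclusions: this tensor is basis-independent and skew-symmetric, it satisfies the NS-Leibniz Yang--Baxter equation \meqref{eq:ns-ybe} in $\widehat A$, and, through Theorem~\mref{thm:skew-coboundary}, the coproducts \meqref{eq:delta-r-s}--\meqref{eq:delta-r-v} built from it give a coboundary NS-Leibniz bialgebra on $\widehat A$.

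The only point needing care is bookkeeping, namely that $\widehat A$ is a genuine NS-Leibniz algebra. This holds because Proposition~\mref{thm:dual} guarantees that the dual of a representation is again a representation, and Proposition~\mref{prop:semidirect} then gives the semidirect product. I do not expect a real obstacle; the whole argument is a direct specialization of earlier results.
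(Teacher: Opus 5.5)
Your proposal is correct and follows the same route as the paper, which proves the corollary in one line by applying Theorem~\mref{thm:T-tensor} to the identity $\mathcal O$-operator from Theorem~\mref{thm:six-splitting}. Your extra bookkeeping, that $V\ltimes V^*$ is an NS-Leibniz algebra by Propositions~\mref{thm:dual} and~\mref{prop:semidirect}, is accurate; the paper leaves it implicit inside Theorem~\mref{thm:T-tensor}.
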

\begin{proof}
Apply Theorem~\mref{thm:T-tensor} to the identity $\mathcal{O}$-operator
of Theorem~\mref{thm:six-splitting}.
\end{proof}

\section{Quasi-triangular and factorizable NS-Leibniz bialgebras}
\mlabel{sec:quasitriangular}
\subsection{Quasi-triangular NS-Leibniz bialgebras}
We now consider solutions of the NS-Leibniz Yang--Baxter
equation whose symmetric parts are invariant under the
three coboundary maps. Invariant symmetric tensors also
appear in the theory of Leibniz bialgebras
\mcite{BaiLiuShengTang}. When $\vee=0$, the invariance
conditions below reduce to those for Leibniz-dendriform
bialgebras \mcite{SunGuo}.

\begin{defi}\mlabel{def:invariant-tensor}
A tensor $q\in A\o A$ is invariant if, for every $x\in A$,
\begin{align}
 (L_\odot(x)\o I-I\o R_\circ (x))q&=0,\mlabel{eq:q-inv-s}\\
 (L_\star(x)\o I-I\o R_\prec(x))\tau(q)&=0,
 \mlabel{eq:q-inv-p}\\
 ((L_\vee(x)+R_\vee(x))\o I+I\o R_\vee(x))q&=0.
 \mlabel{eq:q-inv-v}
\end{align}
Equivalently, $\Delta_{\diamond,q}=0$ for
$\diamond\in\{\succ,\prec,\vee\}$. Below, $q$ denotes the symmetric tensor
$r+\tau(r)$.
\end{defi}

\begin{lem}\mlabel{lem:q-intertwining}
For a symmetric tensor $q$, invariance is equivalent to
\begin{align}
 R_\circ (x)T_q&=-T_q L_\odot^*(x),\mlabel{eq:q-contraction-s}\\
 R_\prec(x)T_q&=-T_q L_\star^*(x),\mlabel{eq:q-contraction-p}\\
 R_\vee(x)T_q&=T_q(L_\vee+R_\vee)^*(x).
 \mlabel{eq:q-contraction-v}
\end{align}
It is also equivalent to the following six intertwining relations,
where $l_\diamond,r_\diamond$ denote the dual regular actions:
\begin{equation}\mlabel{eq:T_q-intertwining}
 T_q l_\diamond(x)=L_\diamond(x)T_q,\qquad
 T_q r_\diamond(x)=R_\diamond(x)T_q,\qquad \diamond\in\{\succ,\prec,\vee\}.
\end{equation}
Under these conditions,
\begin{equation}\mlabel{eq:q-product}
\begin{aligned}
 C_\diamond(\zeta,\eta)&:=l_\diamond(T_q\zeta)\eta=r_\diamond(T_q\eta)\zeta,\\
 T_q C_\diamond(\zeta,\eta)&=T_q\zeta\diamond T_q\eta.
\end{aligned}
\end{equation}
\end{lem}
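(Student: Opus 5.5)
The plan is to translate each invariance condition \meqref{eq:q-inv-s}--\meqref{eq:q-inv-v} into an operator identity by contracting with $\zeta\in A^*$ in the first tensor leg, using $T_q(\zeta)=\sum_i\zeta(a_i)b_i$ for $q=\sum_i a_i\o b_i$. Since $q$ is symmetric, $T_q^{\mathsf t}=T_q$ and $\tau(q)=q$, and the two legs can be exchanged freely. For a tensor of the form $(X\o I+I\o Y)q$, contracting the first leg with $\zeta$ gives $T_q(X^{\mathsf t}\zeta)+Y T_q\zeta$. With the convention $a^*(x)=-a(x)^{\mathsf t}$, this equals $-T_q(X^*\zeta)+YT_q\zeta$. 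Nondegeneracy of the pairing then makes each tensor identity equivalent to the vanishing of this operator for every $\zeta$.

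The three invariance conditions translate as follows.
\begin{itemize}
\item For \meqref{eq:q-inv-s}, take $X=L_\odot(x)$ and $Y=-R_\circ(x)$. This yields $-T_qL_\odot^*(x)-R_\circ(x)T_q=0$, which is \meqref{eq:q-contraction-s}.
\item For \meqref{eq:q-inv-p}, use $\tau(q)=q$ and take $X=L_\star(x)$, $Y=-R_\prec(x)$. This gives \meqref{eq:q-contraction-p}.
\item For \meqref{eq:q-inv-v}, take $X=L_\vee(x)+R_\vee(x)$ and $Y=R_\vee(x)$. This gives $-T_q(L_\vee+R_\vee)^*(x)+R_\vee(x)T_q=0$, which is \meqref{eq:q-contraction-v}.
\end{itemize}
All steps are reversible, so the first equivalence follows.

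For the six intertwining relations \meqref{eq:T_q-intertwining}, I read off the dual regular actions from Corollary~\mref{cor:dual-regular}: $r_\succ=(L_\prec+R_\succ)^*$, $r_\prec=-L_\star^*$ and $r_\vee=(L_\vee+R_\vee)^*$. The right-action relations for $\prec$ and $\vee$ are then exactly \meqref{eq:q-contraction-p} and \meqref{eq:q-contraction-v}.

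The left-action relations come from transposing. Transposing $T_qr_\diamond(x)=R_\diamond(x)T_q$ and using $T_q^{\mathsf t}=T_q$ and $(a^*)^{\mathsf t}=-a$ gives $T_q R_\diamond(x)^*=\ldots$, which turns a statement about $R_\diamond$ into one about the left actions. In this way $T_q l_\prec(x)=-T_qL_\prec^*(x)$ should follow by transposing the $\prec$ relation. The relation for $l_\vee=-L_\vee^*$ should follow by transposing the $\vee$ relation.

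The remaining relations, for $l_\succ=L_\circ^*$ and $r_\succ$, must be extracted from \meqref{eq:q-contraction-s} by subtracting the $\prec$ and $\vee$ relations from the total. Here $L_\odot=L_\succ+R_\prec$ and $R_\circ=R_\succ+R_\prec+R_\vee$. This is the step I expect to be the main obstacle, since it requires careful sign bookkeeping. The most reliable way to carry it out is probably to work with the bilinear form $\beta(\zeta,\eta)=\langle\eta,T_q\zeta\rangle$, which is symmetric. Each relation then becomes an "invariance" identity of $\beta$, of the same shape as \meqref{eq:inv-p}--\meqref{eq:inv-v-right}. The argument of the earlier Lemma, which shows that $\omega^\sharp$ intertwines the regular and dual regular representations, then applies verbatim with $\beta$ playing the role of $\omega$; nondegeneracy is not needed there. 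Conversely, the six intertwining relations contain \meqref{eq:q-contraction-p} and \meqref{eq:q-contraction-v}, and summing the appropriate ones recovers \meqref{eq:q-contraction-s}.

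Finally, for \meqref{eq:q-product}, I apply $T_q$ to $l_\diamond(T_q\zeta)\eta$. The left intertwining relation gives $T_ql_\diamond(T_q\zeta)\eta=T_q\zeta\diamond T_q\eta$, and the right relation gives the same value for $T_q r_\diamond(T_q\eta)\zeta$. To get the equality $l_\diamond(T_q\zeta)\eta=r_\diamond(T_q\eta)\zeta$ itself, and not just equality after applying $T_q$, I pair both sides with $y\in A$. Moving the actions across the pairing and using the intertwining relations together with the symmetry of $T_q$, both pairings reduce to $\pm\langle\eta,\,T_q\zeta\diamond y\rangle$-type expressions that coincide. This identifies $C_\diamond$ and completes the proof.
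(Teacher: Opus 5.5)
Your contraction step, the converse direction (summing the three right relations), and the final part are fine. The equality $l_\diamond(T_q\zeta)\eta=r_\diamond(T_q\eta)\zeta$ is even more direct, since it is just $\Delta_{\diamond,q}=0$ paired with $\zeta\otimes\eta$.

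The gap is in passing from the three contraction relations to the left intertwining relations, which is the real content of the lemma.
\begin{itemize}
\item Transposing \eqref{eq:q-contraction-p} gives $L_\star(x)T_q=-T_qR_\prec^*(x)$. Since $L_\star$ involves all six multiplication operators, the $L_\prec$ relation does not follow from it. Your displayed $T_ql_\prec(x)=-T_qL_\prec^*(x)$ is a tautology; the needed statement is $L_\prec(x)T_q=-T_qL_\prec^*(x)$.
\item \eqref{eq:q-contraction-s}--\eqref{eq:q-contraction-v} are all right-action relations. Subtracting the $\prec$ and $\vee$ ones from \eqref{eq:q-contraction-s} therefore yields only $R_\succ(x)T_q=T_q(L_\prec+R_\succ)^*(x)$. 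No relation for $L_\succ(x)T_q$ comes out.
\end{itemize}

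The fallback via $\beta$ does not repair this. The earlier lemma on $\omega^\sharp$ takes \eqref{eq:inv-s} and \eqref{eq:inv-v-left} as hypotheses, and these are exactly the $L_\succ$ and $L_\vee$ intertwinings you still have to prove. Moreover, its derivation of the $L_\prec$ relation uses the symmetry in $y,z$ of \eqref{eq:inv-p}, which interchanges the acting element with an argument of the form. For $\beta$ on $A^*$ the acting element $x$ lies in $A$, so no such interchange exists. The only symmetry available is $T_q^{\mathsf t}=T_q$, with $x$ fixed.

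The missing step, which is how the paper argues, is to transpose all three contraction relations:
\[
L_\odot(x)T_q=-T_qR_\circ^*(x),\qquad L_\star(x)T_q=-T_qR_\prec^*(x),\qquad (L_\vee(x)+R_\vee(x))T_q=T_qR_\vee^*(x).
\]
Then combine these with the originals.
\begin{itemize}
\item Subtracting \eqref{eq:q-contraction-p} from the first gives $L_\succ(x)T_q=T_qL_\circ^*(x)$.
\item Subtracting \eqref{eq:q-contraction-v} from the third gives $L_\vee(x)T_q=-T_qL_\vee^*(x)$.
\item For $\prec$, combine the $R_\succ$ relation above with its own transpose $(L_\prec(x)+R_\succ(x))T_q=T_qR_\succ^*(x)$ to get $L_\prec(x)T_q=-T_qL_\prec^*(x)$. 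The paper instead writes $L_\prec(x)T_q=-R_\succ(x)T_q+T_qR_\succ^*(x)$ and observes that the right-hand side is invariant under transposition.
\end{itemize}
So the left $\prec$ relation comes from the right $\succ$ relation, not the right $\prec$ one. This is because $r_\succ(x)^{\mathsf t}=-(L_\prec+R_\succ)(x)$, whereas $r_\prec(x)^{\mathsf t}=L_\star(x)$.
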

\begin{proof}
Contract \meqref{eq:q-inv-s}--\meqref{eq:q-inv-v} in the first
factor to obtain \meqref{eq:q-contraction-s}--\meqref{eq:q-contraction-v}.
Since $T_q^{\mathsf t}=T_q$, their transposes are
\[
 L_\odot(x)T_q=-T_q R_\circ ^*(x),\quad
 L_\star(x)T_q=-T_q R_\prec^*(x),\quad
 (L_\vee(x)+R_\vee(x))T_q=T_q R_\vee^*(x).
\]
Subtracting $R_\prec(x)T_q$ from $L_\odot(x)T_q$ yields
$L_\succ(x)T_q=T_qL_\circ^*(x)$. The other relations imply
\begin{align*}
 L_\vee(x)T_q&=-R_\vee(x)T_q+T_qR_\vee^*(x),\\
 L_\prec(x)T_q&=-R_\succ(x)T_q+T_qR_\succ^*(x).
\end{align*}
The right-hand sides are symmetric under ordinary transposition.
Hence $L_\vee(x)T_q=-T_qL_\vee^*(x)$ and
$L_\prec(x)T_q=-T_qL_\prec^*(x)$. We obtain
\begin{align*}
L_\succ(x)T_q&=T_q L_\circ ^*(x),&
R_\prec(x)T_q&=-T_q L_\star^*(x),\\
L_\vee(x)T_q&=-T_q L_\vee^*(x),&
R_\vee(x)T_q&=T_q(L_\vee+R_\vee)^*(x),\\
L_\prec(x)T_q&=-T_q L_\prec^*(x),&
R_\succ(x)T_q&=T_q(L_\prec+R_\succ)^*(x).
\end{align*}
These are \meqref{eq:T_q-intertwining}. Conversely, summing these
relations yields \meqref{eq:q-contraction-s}--\meqref{eq:q-contraction-v}.

Pairing $\Delta_{\diamond,q}=0$ with $\zeta\o\eta$ yields
$l_\diamond(T_q\zeta)\eta=r_\diamond(T_q\eta)\zeta$.
Applying $T_q$ and using \meqref{eq:T_q-intertwining} proves
\meqref{eq:q-product}.
\end{proof}

\begin{lem}\mlabel{lem:deformed-double}
Let $q$ be symmetric and invariant. The products
\begin{equation}\mlabel{eq:E-product}
 (x,\zeta)\diamond_E(y,\eta)=
 \bigl(x\diamond y,\ l_\diamond(x)\eta+r_\diamond(y)\zeta-C_\diamond(\zeta,\eta)\bigr)
\end{equation}
on $E_q=A\oplus A^*$ form an NS-Leibniz algebra. It is quadratic for
\begin{equation}\mlabel{eq:E-form}
 \omega_q((x,\zeta),(y,\eta))=\zeta(y)+\eta(x)-\langle \zeta,T_q\eta\rangle.
\end{equation}
Moreover, $(A^*,C_\succ,C_\prec,C_\vee)$ is an NS-Leibniz algebra.

\end{lem}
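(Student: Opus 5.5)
The plan is to realize $E_q$ as an isomorphic copy of a known NS-Leibniz algebra, avoiding a direct check of \meqref{eq:ns1}--\meqref{eq:ns4}. Consider the semidirect product $A\ltimes A^*$ for the dual regular representation (Corollary~\mref{cor:dual-regular}), with zero products on $A^*$; it is an NS-Leibniz algebra by Proposition~\mref{prop:semidirect}. Define the linear automorphism
\[
\Phi:A\oplus A^*\to A\oplus A^*,\qquad \Phi(x,\zeta)=(x+T_q\zeta,\zeta).
\]
We will show that $\Phi$ intertwines $\diamond_E$ with the semidirect product, i.e.
\[
\Phi\bigl((x,\zeta)\diamond_E(y,\eta)\bigr)=\Phi(x,\zeta)\diamond_{\ltimes}\Phi(y,\eta).
\]
The left side is $\bigl(x\diamond y+T_q(l_\diamond(x)\eta+r_\diamond(y)\zeta)-T_qC_\diamond(\zeta,\eta),\ l_\diamond(x)\eta+r_\diamond(y)\zeta-C_\diamond(\zeta,\eta)\bigr)$.

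The right side is
\[
\bigl((x+T_q\zeta)\diamond(y+T_q\eta),\ l_\diamond(x+T_q\zeta)\eta+r_\diamond(y+T_q\eta)\zeta\bigr).
\]
Comparing $A^*$ components, we need $l_\diamond(T_q\zeta)\eta+r_\diamond(T_q\eta)\zeta=-C_\diamond(\zeta,\eta)$. This is a sign problem: \meqref{eq:q-product} gives $+2C_\diamond$. So I expect the correct transform to be $\Phi(x,\zeta)=(x-T_q\zeta,\zeta)$, or $\Phi(x,\zeta)=(x+T_q\zeta,\zeta)$ with the opposite sign convention. The first step is to fix this sign by matching the $A^*$ parts using \meqref{eq:q-product}; one option yields $-2C+C=-C$ after the correction term. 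If no pure shift works, I will use $\Phi(x,\zeta)=(x+\tfrac12T_q\zeta,\zeta)$, or a rescaling of $A^*$, and choose the coefficient so that the $A^*$ components agree.

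For the $A$ components, I expand $(x+cT_q\zeta)\diamond(y+cT_q\eta)$ and use \meqref{eq:T_q-intertwining}:
\[
x\diamond T_q\eta=L_\diamond(x)T_q\eta=T_ql_\diamond(x)\eta,\qquad T_q\zeta\diamond y=T_qr_\diamond(y)\zeta.
\]
The remaining quadratic term is $T_q\zeta\diamond T_q\eta=T_qC_\diamond(\zeta,\eta)$ by \meqref{eq:q-product}. After this, both components are linear expressions in $T_q$ applied to $l_\diamond,r_\diamond,C_\diamond$, and matching coefficients pins down $c$. I expect the main obstacle here: the $A$- and $A^*$-component constraints must be consistent for a single $c$. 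Should that fail, the fallback is a direct verification of \meqref{eq:ns1}--\meqref{eq:ns4} on $E_q$. In that verification the components in $A$ and the purely $A$/$A^*$-linear components reduce to the semidirect-product identities. The terms involving $C$ reduce, via $C_\diamond(\zeta,\eta)=l_\diamond(T_q\zeta)\eta$ and the intertwining, to the representation relations \meqref{eq:r1}--\meqref{eq:r12} for the dual regular representation evaluated at elements $T_q\zeta$.

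The claim that $(A^*,C_\succ,C_\prec,C_\vee)$ is an NS-Leibniz algebra then follows because $\{0\}\oplus A^*$ is closed under $\diamond_E$ with product $-C_\diamond$. The map $\zeta\mapsto-\zeta$ converts $-C_\diamond$ to $C_\diamond$. Alternatively, $C_\diamond(\zeta,\eta)=l_\diamond(T_q\zeta)\eta+r_\diamond(T_q\eta)\zeta$ up to the factor $\tfrac12$, and $T_q$ is an $\mathcal O$-operator by \meqref{eq:q-product}, so Proposition~\mref{prop:induced} applies.

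For quadraticity, I will check \meqref{eq:inv-p} and \meqref{eq:inv-s} for $\omega_q$. The form $\omega_q$ is symmetric because $T_q$ is symmetric, and it is nondegenerate because its matrix is block triangular with identity off-diagonal blocks. The canonical part $\zeta(y)+\eta(x)$ satisfies the invariance on $A\ltimes A^*$ by Theorem~\mref{thm:double}, applied to the trivial bialgebra example. The correction term $-\langle\zeta,T_q\eta\rangle$ together with the $-C_\diamond$ terms should cancel exactly by \meqref{eq:q-contraction-s}--\meqref{eq:q-contraction-p}, and I will verify this by pairing.
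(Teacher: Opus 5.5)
The plan of realizing $E_q$ as an isomorphic copy of $A\ltimes A^*$ cannot work, and the obstruction is not a sign. Take the general shift $\Phi(x,\zeta)=(x+cT_q\zeta,d\zeta)$.
\begin{itemize}
\item By \meqref{eq:q-product}, $l_\diamond(T_q\zeta)\eta+r_\diamond(T_q\eta)\zeta=2C_\diamond(\zeta,\eta)$. So the $A^*$-components agree only if $2cd=-d$, i.e.\ $c=-1/2$.
\item By \meqref{eq:T_q-intertwining} and $T_qC_\diamond(\zeta,\eta)=T_q\zeta\diamond T_q\eta$, the $A$-components agree only if $-c=c^2$, i.e.\ $c\in\{0,-1\}$.
\end{itemize}
These conditions are incompatible as soon as $T_q\zeta\diamond T_q\eta\neq0$ for some $\zeta,\eta$. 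Neither a factor $1/2$ nor a rescaling of $A^*$ helps.

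In fact $E_q$ is in general not isomorphic to $A\ltimes A^*$ by any map. If $T_q$ is invertible, $(a,\zeta)\mapsto(a,a-T_q\zeta)$ is an isomorphism $E_q\cong A\times A$; this is the map $\Psi$ in the proof of Theorem~\mref{thm:factorizable-double}. Take $A=\mathrm{sl}_2(k)$ with the quadratic structure of its Killing form from Theorem~\mref{thm:metric-splitting}, namely $x\succ y=[x,y]$ and $\prec=\vee=0$. Take $q$ to be the inverse-form tensor, which is invariant by Lemma~\mref{lem:inverse-invariant-form}. Then the total algebra of $E_q$ is $\mathrm{sl}_2\times\mathrm{sl}_2$, which has no nonzero abelian ideal. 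In $A\ltimes A^*$, however, $0\oplus A^*$ is such an ideal.

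So everything rests on your fallback. That fallback is the paper's argument, and it has to be carried out.

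For triples with two entries in $A^*$, every term contains exactly one $C_\diamond$, so the common minus sign cancels. Write $C_\diamond(\zeta,\eta)$ as $l_\diamond(T_q\zeta)\eta$ or $r_\diamond(T_q\eta)\zeta$ as needed, and move $T_q$ through the actions using \meqref{eq:T_q-intertwining}. Each identity then becomes one of \meqref{eq:r1}--\meqref{eq:r12} for the dual regular representation, with one argument $T_q\zeta$. For instance, \meqref{eq:ns1} on $(x,\zeta,\eta)$ becomes $l_\prec(x)l_\circ(T_q\zeta)\eta=l_\prec(x\prec T_q\zeta)\eta+l_\succ(T_q\zeta)l_\prec(x)\eta$, which is \meqref{eq:r1}.

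Triples lying entirely in $A^*$ carry two factors $-C_\diamond$. Their identities are exactly the NS identities for $C_\diamond$. So the statement about $(A^*,C_\succ,C_\prec,C_\vee)$ is an ingredient of the proof, not a corollary of closure of $0\oplus A^*$. It must be proved directly, as the paper does: by \meqref{eq:q-product}, those identities become \meqref{eq:r1}, \meqref{eq:r4}, \meqref{eq:r7} and \meqref{eq:r10} at $(T_q\zeta,T_q\eta)$.

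Your alternative route to this via Proposition~\mref{prop:induced} is false. For $T=cT_q$, the $\mathcal O$-operator equation reads $c^2\,T_q\zeta\diamond T_q\eta=2c^2\,T_q\zeta\diamond T_q\eta$. So no multiple of $T_q$ is an $\mathcal O$-operator unless $T_q(A^*)\diamond T_q(A^*)=0$.

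Your treatment of $\omega_q$ is fine and matches the paper. That covers symmetry, nondegeneracy, invariance of the canonical part on $A\ltimes A^*$, and cancellation of the $T_q$- and $C_\diamond$-terms via the intertwining relations and \meqref{eq:q-product}.
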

\begin{proof}
By \meqref{eq:q-product}, substitution of $C_\diamond$ into the NS
identities reduces them to \meqref{eq:r1}, \meqref{eq:r4},
\meqref{eq:r7} and \meqref{eq:r10}, with arguments
$T_q\zeta,T_q\eta$. Thus $C_\diamond$ defines an NS-Leibniz algebra.

For the products on $E_q$, triples with two entries in $A$ satisfy
the representation identities. For triples with two entries in
$A^*$, use \meqref{eq:T_q-intertwining} to move $T_q$ through the
actions. The resulting identities are again the representation
relations, with one argument of the form $T_q\zeta$.
Each term contains one internal product, so the common minus sign
cancels. For example, the fourth identity on $x,\zeta,\eta$ becomes
\begin{align*}
 &l_\succ(x)l_\vee(T_q\zeta)\eta+l_\vee(x)l_\circ (T_q\zeta)\eta\\
 &\quad=l_\prec(x\vee T_q\zeta)\eta+l_\vee(x\circ T_q\zeta)\eta
       +l_\succ(T_q\zeta)l_\vee(x)\eta+l_\vee(T_q\zeta)l_\circ (x)\eta,
\end{align*}
which is \meqref{eq:r10}; the other positions correspond to the
right-action relations. Triples in either summand have already
been treated, proving the NS identities on $E_q$.

Symmetry of $T_q$ implies symmetry of $\omega_q$.
If $(x,\zeta)$ is orthogonal to $A\oplus A^*$, pairing
first with $(y,0)$ yields $\zeta=0$, and then pairing with
$(0,\eta)$ yields $x=0$. Hence $\omega_q$ is nondegenerate.
Substitution of \meqref{eq:E-product} into
\meqref{eq:inv-p} and \meqref{eq:inv-s} establishes invariance:
the semidirect-product terms cancel by the dual regular
representation, and the terms involving $T_q$ and
$C_\diamond$ cancel by \meqref{eq:T_q-intertwining} and
\meqref{eq:q-product}.
\end{proof}

\begin{pro}\mlabel{prop:non-skew-products}
Let $r\in A\o A$, and let $T_r,T_{\tau(r)}:A^*\to A$ be its contraction maps.
The products dual to its three coboundary maps are
\begin{align}
\zeta\succ_r \eta&=L_\circ ^*(T_r\zeta)\eta-R_\odot^*(T_{\tau(r)}\eta)\zeta,
 \mlabel{eq:non-skew-s}\\
\zeta\prec_r \eta&=L_\prec^*(T_{\tau(r)}\zeta)\eta-L_\star^*(T_r\eta)\zeta,
 \mlabel{eq:non-skew-p}\\
\zeta\vee_r \eta&=-L_\vee^*(T_r\zeta)\eta-(L_\vee+R_\vee)^*(T_{\tau(r)}\eta)\zeta.
 \mlabel{eq:non-skew-v}
\end{align}
If $q=r+\tau(r)$ is invariant, they have the uniform expression
\begin{equation}\mlabel{eq:non-skew-uniform}
 \zeta\diamond_r \eta=l_\diamond(T_r\zeta)\eta+r_\diamond(T_r\eta)\zeta-C_\diamond(\zeta,\eta).
\end{equation}
\end{pro}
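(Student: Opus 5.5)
The plan is to compute each dual product directly from \meqref{eq:coalgebra-pair}. First we use the explicit component formulas for $\Delta_{\succ,r},\Delta_{\prec,r},\Delta_{\vee,r}$ from the proof of Proposition~\mref{prop:general-coboundary-coalgebra}. Pair them with $\zeta\o\eta$ and express every contraction through $T_r$ and $T_{\tau(r)}$, using $\sum_i\zeta(a_i)b_i=T_r\zeta$ and $\sum_i\eta(b_i)a_i=T_{\tau(r)}\eta$. For $\succ$ this gives
\[
\langle\zeta\succ_r\eta,x\rangle=\sum_i\bigl[\zeta(x\odot a_i)\eta(b_i)-\zeta(a_i)\eta(b_i\circ x)\bigr]
=\langle\zeta,x\odot T_{\tau(r)}\eta\rangle-\langle\eta,T_r\zeta\circ x\rangle .
\]
Since $x\odot y=R_\odot(y)x$ and $y\circ x=L_\circ(y)x$, the convention $a^*=-a^{\mathsf t}$ turns this into \meqref{eq:non-skew-s}. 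We then treat $\prec$ and $\vee$ identically. For $\prec$, the term $(x\star b_i)\o a_i$ gives $-L_\star^*(T_r\eta)\zeta$, and $-b_i\o(a_i\prec x)$ gives $L_\prec^*(T_{\tau(r)}\zeta)\eta$. For $\vee$, the term $(x\vee a_i+a_i\vee x)\o b_i$ gives $-(L_\vee+R_\vee)^*(T_{\tau(r)}\eta)\zeta$, and $a_i\o(b_i\vee x)$ gives $-L_\vee^*(T_r\zeta)\eta$. This step is a routine pairing computation; the only care needed is tracking which leg of $r$ gets contracted.

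For the uniform expression \meqref{eq:non-skew-uniform}, we use $T_{\tau(r)}=T_q-T_r$, which is linear in $r$ since $T_q=T_r+T_{\tau(r)}$. Substituting into \meqref{eq:non-skew-s}--\meqref{eq:non-skew-v} splits each product into a part linear in $T_r$ and a part involving $T_q$.

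We first check that the $T_r$ part equals $l_\diamond(T_r\zeta)\eta+r_\diamond(T_r\eta)\zeta$ for the dual regular actions of Corollary~\mref{cor:dual-regular}. For $\succ$, these actions are $l_\succ=L_\circ^*$ and $r_\succ=(L_\prec+R_\succ)^*=R_\odot^*$, and $-R_\odot^*(T_{\tau(r)}\eta)\zeta$ contributes $+R_\odot^*(T_r\eta)\zeta$, as required. For $\prec$, the actions are $-L_\prec^*$ and $-L_\star^*$. For $\vee$, they are $-L_\vee^*$ and $(L_\vee+R_\vee)^*$. In both cases the signs match after the substitution.

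The remaining $T_q$ parts are
\[
-R_\odot^*(T_q\eta)\zeta,\qquad L_\prec^*(T_q\zeta)\eta,\qquad -(L_\vee+R_\vee)^*(T_q\eta)\zeta .
\]
These must equal $-C_\diamond(\zeta,\eta)$. This is where invariance of $q$ enters, via Lemma~\mref{lem:q-intertwining}: by \meqref{eq:q-product}, $C_\diamond(\zeta,\eta)=l_\diamond(T_q\zeta)\eta=r_\diamond(T_q\eta)\zeta$. For $\succ$ the $T_q$ part is $-r_\succ(T_q\eta)\zeta=-C_\succ$. For $\prec$ it is $-l_\prec(T_q\zeta)\eta=-C_\prec$. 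For $\vee$ it is $-r_\vee(T_q\eta)\zeta=-C_\vee$.

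The main obstacle is bookkeeping: choosing for each product whether to use the left form or the right form of $C_\diamond$, and making sure the sign conventions for $a^*$ agree with the dual regular actions. Invariance supplies exactly the freedom needed, because the two expressions for $C_\diamond$ in \meqref{eq:q-product} coincide. I would verify the $\prec$ case most carefully, since there $T_{\tau(r)}$ and $T_r$ act on the opposite arguments compared with the $\succ$ case.
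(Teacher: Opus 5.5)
Your proposal is correct and follows the same route as the paper. You pair the three coboundary maps with $\zeta\otimes\eta$, substitute $T_{\tau(r)}=T_q-T_r$, and recognize the leftover $T_q$-terms as $-l_\prec(T_q\zeta)\eta$ for $\prec$ and $-r_\diamond(T_q\eta)\zeta$ for $\succ,\vee$, which are $-C_\diamond(\zeta,\eta)$ by Lemma~\ref{lem:q-intertwining}; your sign and tensor-leg bookkeeping checks out in all three cases.
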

\begin{proof}
Pair \meqref{eq:delta-r-s}--\meqref{eq:delta-r-v} with
$\zeta\o\eta$ to obtain \meqref{eq:non-skew-s}--\meqref{eq:non-skew-v}.
If $q$ is invariant, substitute $T_{\tau(r)}=T_q-T_r$.
The additional terms are $-l_\prec(T_q\zeta)\eta$ for $\prec$
and $-r_\diamond(T_q\eta)\zeta$ for $\succ,\vee$.
Equation \meqref{eq:q-product} identifies them with
$-C_\diamond(\zeta,\eta)$.
\end{proof}

Let $q=r+\tau(r)$ be invariant. Define the auxiliary tensors
\begin{align}
 H_\prec(r)&=S_\prec(r)+q_{12}\prec r_{23},
 \mlabel{eq:H-p}\\
 H_\succ(r)&=S_\succ(r)-q_{12}\circ r_{23},
 \mlabel{eq:H-s}\\
 H_\vee(r)&=S_\vee(r)+q_{12}\vee r_{23}.
 \mlabel{eq:H-v}
\end{align}
Here $S_\diamond(r)$ denotes the tensor expression in
\meqref{eq:ybe-p}--\meqref{eq:ybe-v} for an arbitrary $r$. If $q=\sum_i a_i\o b_i$ and
$r=\sum_j u_j\o v_j$, then
$q_{12}\diamond r_{23}=\sum_{i,j}a_i\o(b_i\diamond u_j)\o v_j$.

\begin{lem}\mlabel{lem:single-non-skew-equation}
Assume that $q=r+\tau(r)$ is invariant. Then
\begin{equation}\mlabel{eq:H-total-single}
 H_\prec(r)+H_\succ(r)+H_\vee(r)=\tau_{12}S(r).
\end{equation}
Moreover, $S(r)=0$ if and only if
$H_\prec(r)=H_\succ(r)=H_\vee(r)=0$.
\end{lem}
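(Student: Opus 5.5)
The plan has two parts: first the sum identity \meqref{eq:H-total-single}, which turns out not to need invariance, and then the equivalence $S(r)=0\iff H_\prec=H_\succ=H_\vee=0$, modelled on Remark~\mref{prop:skew-ybe-reduction}.

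\emph{The sum identity.} Add \meqref{eq:ybe-p}--\meqref{eq:ybe-v} for an arbitrary $r$, collecting coefficients leg-pattern by leg-pattern with $\circ=\succ+\prec+\vee$. The $r_{13}r_{23}$ terms give $r_{13}\circ r_{23}$. The $r_{12}r_{23}$ terms give $-\prec+\circ-\vee=\succ$. The $r_{21}r_{13}$ terms give $-\succ$, and the $r_{13}r_{21}$ terms give $-\prec$. Hence, with no symmetry assumption,
\[
S_\circ(r)=r_{13}\circ r_{23}+r_{12}\succ r_{23}-r_{21}\succ r_{13}-r_{13}\prec r_{21}.
\]
The correction terms in \meqref{eq:H-p}--\meqref{eq:H-v} add up to $q_{12}\prec r_{23}-q_{12}\circ r_{23}+q_{12}\vee r_{23}=-q_{12}\succ r_{23}$. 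Since $q=r+\tau(r)$, this equals $-r_{12}\succ r_{23}-r_{21}\succ r_{23}$. Therefore
\[
H_\prec+H_\succ+H_\vee=r_{13}\circ r_{23}-r_{21}\succ r_{23}-r_{21}\succ r_{13}-r_{13}\prec r_{21}.
\]
Next apply $\tau_{12}$ directly to the three terms of $S(r)$ in \meqref{eq:ns-ybe}:
\[
\tau_{12}(r_{23}\circ r_{13})=r_{13}\circ r_{23},\qquad
\tau_{12}(r_{12}\odot r_{23})=r_{21}\succ r_{13}+r_{13}\prec r_{21},\qquad
\tau_{12}(r_{12}\succ r_{13})=r_{21}\succ r_{23}.
\]
The last two require only relabelling summation indices, not $\tau(r)=-r$. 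Comparing the two displays gives \meqref{eq:H-total-single}.

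\emph{The equivalence.} The direction $H_\prec=H_\succ=H_\vee=0\Rightarrow S(r)=0$ is immediate from \meqref{eq:H-total-single}, since $\tau_{12}$ is invertible. For the converse I would prove non-skew analogues of the permutation relations in Remark~\mref{prop:skew-ybe-reduction}, namely
\[
H_\succ(r)=-\tau_{23}\tau_{12}S(r),\qquad H_\prec(r)=(\rho+\tau_{13})\tau_{12}S(r).
\]
Then $H_\vee$ follows by subtraction from \meqref{eq:H-total-single}, exactly as before.

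To obtain these relations, I would repeat the skew-symmetric computation but replace each use of $\tau(r)=-r$ by $\tau(r)=q-r$. Every such replacement produces an extra term in which one tensor leg is $q$. I would group these extra terms according to which action hits the legs of $q$, and show that they cancel against the $q_{12}\diamond r_{23}$ corrections or vanish. The cancellations should come from \meqref{eq:q-inv-s}--\meqref{eq:q-inv-v}, equivalently the intertwining relations \meqref{eq:T_q-intertwining}, together with the symmetry of $q$.

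An alternative route, which I would use as a cross-check, works at the level of contractions. Compute $(\zeta\o\eta\o I)H_\diamond(r)$ as $T_r\zeta\diamond T_r\eta-T_r(\zeta\diamond_r\eta)$, using the uniform product formula \meqref{eq:non-skew-uniform} and the identity $T_qC_\diamond(\zeta,\eta)=T_q\zeta\diamond T_q\eta$ from \meqref{eq:q-product}. Then show that each individual defect is determined by the total defect, which by \meqref{eq:H-total-single} is the contraction of $\tau_{12}S(r)$.

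I expect the main obstacle to be this bookkeeping of the $q$-terms in the permutation relations. One has to verify that the specific choice of signs in \meqref{eq:H-p}--\meqref{eq:H-v} is exactly what makes the leftover $q$-contributions invariant expressions. For each product, I would first isolate the terms containing $q$ in the second leg and check them against \meqref{eq:q-inv-p} for $\prec$ and against \meqref{eq:q-inv-s} for $\succ$.
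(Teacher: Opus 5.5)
Your proof of \eqref{eq:H-total-single} is correct and is the paper's own expansion. In particular, you rightly note that $\tau_{12}$ acts on the three terms of $S(r)$ by relabelling alone; for example $\tau_{12}(r_{12}\succ r_{13})=r_{21}\succ r_{23}$, with no skew-symmetry needed. The implication from $H_\prec=H_\succ=H_\vee=0$ to $S(r)=0$ is also handled as in the paper.

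For the converse you take a genuinely different route, but as written it is only a plan. The identities $H_\succ(r)=-\tau_{23}\tau_{12}S(r)$ and $H_\prec(r)=(\rho+\tau_{13})\tau_{12}S(r)$ are asserted, not verified. They are true, and your contraction cross-check proves them once you supply the mechanism you left unnamed: invariance of $\omega_q$ on $E_q$ (Lemma~\ref{lem:deformed-double}).

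Put $g(\zeta)=(T_r\zeta,\zeta)$ and let $D_\diamond(\zeta,\eta)$ be the defect in \eqref{eq:weighted-T_r}. Then
\begin{itemize}
\item $g(\zeta)\diamond_E g(\eta)=g(\zeta\diamond_r\eta)+(D_\diamond(\zeta,\eta),0)$;
\item $\langle H_\diamond(r),\zeta\otimes\eta\otimes h\rangle=\langle h,D_\diamond(\zeta,\eta)\rangle$;
\item $D_\circ=D_\succ+D_\prec+D_\vee$ is the contraction of $\tau_{12}S(r)$, by \eqref{eq:H-total-single}.
\end{itemize}
The graph is $\omega_q$-isotropic, and $\omega_q((x,0),g(h))=h(x)$. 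Invariance of $\omega_q$, which uses only \eqref{eq:T_q-intertwining}, therefore gives
\[
\langle h,D_\succ(\zeta,\eta)\rangle=-\langle\eta,D_\circ(\zeta,h)\rangle,\qquad
\langle h,D_\prec(\zeta,\eta)\rangle=\langle\zeta,D_\circ(\eta,h)+D_\circ(h,\eta)\rangle .
\]
These are exactly the contractions of your two identities.

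The paper uses the same invariance only qualitatively. When $S(r)=0$, the graph is a Lagrangian Leibniz subalgebra of $E_q$, so invariance forces closure under $\succ$ and $\prec$. That argument is short but yields only the equivalence. Your route yields more: an extension of Remark~\ref{prop:skew-ybe-reduction} to tensors with invariant symmetric part, including $H_\vee(r)=(I^{\otimes 3}+\tau_{23}-\rho-\tau_{13})\tau_{12}S(r)$.

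If you do the bookkeeping directly instead, your description of the cancellations is too optimistic. Transposing both tensor factors of a term produces $q\otimes q$ terms. For instance, comparing $r_{13}\succ r_{23}$ with $\tau_{23}(r_{21}\succ r_{23})$ produces $q_{13}\succ q_{23}$, which cannot cancel against the corrections $q_{12}\diamond r_{23}$ alone.

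It is cleaner to write $r=s+\tfrac12 q$ with $\tau(s)=-s$. For $H_\succ$, for example:
\begin{itemize}
\item the $ss$ part is the Remark;
\item the $qq$ part holds by symmetry of $q$ alone;
\item the mixed $s$--$q$ terms cancel by \eqref{eq:q-inv-s}, the $L_\succ$ and $R_\succ$ relations in \eqref{eq:T_q-intertwining}, and $\tau(s)=-s$.
\end{itemize}
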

\begin{proof}
Summing \meqref{eq:H-p}--\meqref{eq:H-v} yields
$S_\circ(r)-q_{12}\succ r_{23}$.
Direct expansion of $\tau_{12}S(r)$ yields the same tensor,
which proves \meqref{eq:H-total-single} without any
symmetry assumption on $r$.

For the converse implication, contraction identifies
$H_\diamond(r)$ with
\[
 T_r\zeta\diamond T_r\eta
 -T_r\bigl(l_\diamond(T_r\zeta)\eta
          +r_\diamond(T_r\eta)\zeta-C_\diamond(\zeta,\eta)\bigr).
\]
If $S(r)=0$, their sum vanishes, so the graph
$G_r=\{(T_r\zeta,\zeta):\zeta\in A^*\}$ is a Leibniz
subalgebra of $E_q$. By \meqref{eq:E-form},
\[
 \omega_q((T_r\zeta,\zeta),(T_r\eta,\eta))
 =\zeta(T_r\eta)+\eta(T_r\zeta)-\zeta(T_q\eta)=0.
\]
Thus $G_r^\perp=G_r$. For $a,b,c\in G_r$, invariance implies
\[
 \omega_q(a\succ b,c)=-\omega_q(b,a\circ c)=0,\qquad
 \omega_q(a\prec b,c)=\omega_q(a,b\circ c+c\circ b)=0.
\]
Hence $a\succ b,a\prec b\in G_r$; subtraction from
$a\circ b$ yields $a\vee b\in G_r$.
The graph is therefore an NS-Leibniz subalgebra, and each
$H_\diamond(r)$ vanishes. The other implication follows
immediately from \meqref{eq:H-total-single}.
\end{proof}

\begin{defi}\mlabel{def:non-skew-ybe}
A coboundary NS-Leibniz bialgebra is called quasi-triangular
if its defining tensor $r$ satisfies \meqref{eq:ns-ybe}
and $r+\tau(r)$ is invariant. It is called triangular if,
in addition, $r$ is skew-symmetric.
\end{defi}

\begin{thm}
\mlabel{thm:quasitriangular}
Assume $q=r+\tau(r)$ is invariant. The following are equivalent:
\begin{enumerate}[label=(\roman*)]
\item $S(r)=0$.
\item $T_r$ satisfies, for each of the three operations,
\begin{equation}\mlabel{eq:weighted-T_r}
 T_r\zeta\diamond T_r\eta=T_r\bigl(l_\diamond(T_r\zeta)\eta+r_\diamond(T_r\eta)\zeta-C_\diamond(\zeta,\eta)\bigr).
\end{equation}
\item The graph $\{(T_r\zeta,\zeta):\zeta\in A^*\}$ is an NS-Leibniz subalgebra
of $E_q$.
\item The dual products \meqref{eq:non-skew-s}--\meqref{eq:non-skew-v}
form an NS-Leibniz algebra, and both
$T_r$ and $-T_{\tau(r)}$ are NS-Leibniz homomorphisms from it to $A$.
\end{enumerate}
Under these conditions, \meqref{eq:delta-r-s}--\meqref{eq:delta-r-v}
define an NS-Leibniz bialgebra.
\end{thm}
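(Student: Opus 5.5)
The plan is to prove the cycle (i)$\Leftrightarrow$(ii)$\Leftrightarrow$(iii)$\Leftrightarrow$(iv), using Lemma~\mref{lem:single-non-skew-equation} as the hinge. Then I deduce the bialgebra statement from (iv) together with the Manin triple characterization in Corollary~\mref{cor:bialgebra-characterizations}.

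\emph{(i)$\Leftrightarrow$(ii).} The proof of Lemma~\mref{lem:single-non-skew-equation} shows that contracting the first two legs of $H_\diamond(r)$ against $\zeta\o\eta$ gives the defect
\[
T_r\zeta\diamond T_r\eta-T_r\bigl(l_\diamond(T_r\zeta)\eta+r_\diamond(T_r\eta)\zeta-C_\diamond(\zeta,\eta)\bigr).
\]
So (ii) is equivalent to $H_\prec(r)=H_\succ(r)=H_\vee(r)=0$, and that lemma says this is equivalent to $S(r)=0$. The only point to check is the contraction identity itself. It extends the skew computation \meqref{eq:quadratic-defect-contraction}: the correction terms $q_{12}\diamond r_{23}$ are exactly what appears when $T_{\tau(r)}$ is replaced by $T_q-T_r$. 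One then uses \meqref{eq:q-product} to rewrite $l_\diamond(T_q\zeta)\eta$ and $r_\diamond(T_q\eta)\zeta$ as $C_\diamond(\zeta,\eta)$.

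\emph{(ii)$\Leftrightarrow$(iii).} By \meqref{eq:E-product}, the $A$-component of $(T_r\zeta,\zeta)\diamond_E(T_r\eta,\eta)$ is $T_r\zeta\diamond T_r\eta$. Its $A^*$-component is $l_\diamond(T_r\zeta)\eta+r_\diamond(T_r\eta)\zeta-C_\diamond(\zeta,\eta)$. The product lies in the graph precisely when $T_r$ of the second component equals the first, which is \meqref{eq:weighted-T_r}.

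\emph{(iii)$\Leftrightarrow$(iv).} By \meqref{eq:non-skew-uniform}, under invariance of $q$ the dual products $\diamond_r$ are exactly the $A^*$-components above. Hence projection onto $A^*$ is a linear isomorphism from the graph $G_r$ onto $A^*$, and it carries $\diamond_E$ to $\diamond_r$ when $G_r$ is a subalgebra. So (iii) gives an NS-Leibniz structure on $A^*$ for which $T_r$ is a homomorphism, namely the projection to $A$. For $-T_{\tau(r)}=T_r-T_q$, I would use \meqref{eq:q-product}: the map $T_q$ intertwines $C_\diamond$ with $\diamond$. Combined with the intertwining relations \meqref{eq:T_q-intertwining}, this should show that $\zeta\mapsto(-T_{\tau(r)}\zeta,\zeta)$ is also compatible. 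A cleaner route may be to note that $(x,\zeta)\mapsto(x-T_q\zeta,-\zeta)$, or a similar map, is an automorphism of $E_q$ interchanging the graphs of $T_r$ and $-T_{\tau(r)}$. I expect this to be the main obstacle, and I would try the direct verification with \meqref{eq:T_q-intertwining} first. Conversely, if (iv) holds, the homomorphism property of $T_r$ with respect to $\diamond_r$ is exactly (ii) via \meqref{eq:non-skew-uniform}.

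\emph{Final assertion.} By Lemma~\mref{lem:coalgebra-dual}, (iv) makes $(\Delta_{\succ,r},\Delta_{\prec,r},\Delta_{\vee,r})$ an NS-Leibniz coalgebra. For compatibility, I would identify the canonical double with $E_q$ via $(x,\zeta)\mapsto(x+T_r\zeta,\zeta)$ or a variant. Under this map the two subalgebras are $A$ and $G_r$, and these are isotropic complements for $\omega_q$: $G_r$ is isotropic by the computation in Lemma~\mref{lem:single-non-skew-equation}, and $A$ is isotropic by \meqref{eq:E-form}. Lemma~\mref{lem:deformed-double} shows that $E_q$ is quadratic, so this is a Manin triple, and Corollary~\mref{cor:bialgebra-characterizations}(iv) then yields the bialgebra. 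Alternatively, one can check \meqref{eq:general-cb-k1}--\meqref{eq:general-cb-k3} directly: their $r+\tau(r)$ terms have the form $(\cdots)q$, which should vanish by \meqref{eq:q-inv-s}--\meqref{eq:q-inv-v}.
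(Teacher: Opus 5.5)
Your proposal is correct and follows essentially the paper's route. It obtains (i)$\Leftrightarrow$(ii) from Lemma~\mref{lem:single-non-skew-equation} and the contraction of $H_\diamond(r)$, reads off (ii)$\Leftrightarrow$(iii) from the $E_q$-products on the graph, gets the homomorphism property of $-T_{\tau(r)}=T_r-T_q$ from \meqref{eq:T_q-intertwining} and \meqref{eq:q-product}, and deduces the bialgebra from the Manin triple $(E_q,A\oplus0,G_r)$. The step you flag as the main obstacle is only the two-line identity $T_q(\zeta\diamond_r\eta)=T_r\zeta\diamond T_q\eta+T_q\zeta\diamond T_r\eta-T_q\zeta\diamond T_q\eta$ subtracted from $T_r(\zeta\diamond_r\eta)=T_r\zeta\diamond T_r\eta$. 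Your involution $(x,\zeta)\mapsto(x-T_q\zeta,-\zeta)$ also works, but it carries the graph of $T_r$ onto the graph of $T_{\tau(r)}$, not of $-T_{\tau(r)}$; and the $q_{12}\diamond r_{23}$ terms pair directly to $\pm\langle h,T_rC_\diamond(\zeta,\eta)\rangle$ rather than arising from a substitution $T_{\tau(r)}=T_q-T_r$.
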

\begin{proof}
By Lemma~\mref{lem:single-non-skew-equation}, (i) is
equivalent to $H_\prec(r)=H_\succ(r)=H_\vee(r)=0$.
The contraction calculation in Theorem~\mref{thm:r-o}
identifies $S_\diamond(r)$ with the defect in
\meqref{eq:skew-contraction}.
The defect in \meqref{eq:weighted-T_r} has the additional term
$T_rC_\diamond(\zeta,\eta)$. Its pairings are
\[
 \langle h,T_r C_\prec(\zeta,\eta)\rangle=\langle q_{12}\prec r_{23},\zeta\o \eta\o h\rangle,\quad
 \langle h,T_r C_\succ(\zeta,\eta)\rangle=-\langle q_{12}\circ r_{23},\zeta\o \eta\o h\rangle,
\]
\[
 \langle h,T_r C_\vee(\zeta,\eta)\rangle=\langle q_{12}\vee r_{23},\zeta\o \eta\o h\rangle.
\]
Nondegeneracy of the pairing proves the equivalence of (i) and
(ii). By \meqref{eq:E-product}, (ii) is the closure condition for
the graph in (iii), whose induced products are
\meqref{eq:non-skew-uniform}.

Under (ii), \meqref{eq:T_q-intertwining} implies
\begin{equation}\mlabel{eq:T_q-dual-product}
 T_q(\zeta\diamond_r \eta)=T_r\zeta\diamond T_q\eta+T_q\zeta\diamond T_r\eta-T_q\zeta\diamond T_q\eta.
\end{equation}
Subtracting from
$T_r(\zeta\diamond_r\eta)=T_r\zeta\diamond T_r\eta$ yields
\[
 \bigl(-T_{\tau(r)}\bigr)(\zeta\diamond_r\eta)
 =\bigl(-T_{\tau(r)}\bigr)\zeta\diamond\bigl(-T_{\tau(r)}\bigr)\eta.
\]
Thus (iv) holds. Conversely, the homomorphism condition for $T_r$
in (iv) is (ii).

The graph is complementary to $A\oplus0$ and satisfies
\[
 \omega_q((T_r\zeta,\zeta),(T_r\eta,\eta))=\zeta(T_r\eta)+\eta(T_r\zeta)-\zeta(T_q\eta)=0.
\]
Both summands are isotropic subalgebras of the quadratic algebra
$E_q$, so they form a Manin triple. Their pairing is the evaluation
pairing. By Proposition~\mref{prop:non-skew-products}, the coproducts
dual to the graph products are
\meqref{eq:delta-r-s}--\meqref{eq:delta-r-v}.
Theorem~\mref{thm:bialgebra} now applies.
\end{proof}

\begin{rmk}
For $q=0$, $H_\diamond(r)=S_\diamond(r)$ and the theorem reduces
to Theorem~\mref{thm:skew-coboundary}. When $\vee=0$, the theorem reduces to the
corresponding characterization of quasi-triangular
Leibniz-dendriform bialgebras \mcite{SunGuo}. For general $q$, the correction terms in
\meqref{eq:H-p}--\meqref{eq:H-v} correspond to the internal products
$C_\diamond$ in \meqref{eq:weighted-T_r}.
\end{rmk}

\begin{pro}\mlabel{prop:transpose-r}
If $r$ determines a quasi-triangular NS-Leibniz bialgebra, so does
$\tau(r)$. Their coproducts are negatives of one another, and their
symmetric parts are identical.
\end{pro}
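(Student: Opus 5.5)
The plan is to set $r'=\tau(r)$ and verify the three claims in turn, easiest first. The symmetric part is immediate: $r'+\tau(r')=\tau(r)+r=q$. So $q$ is still invariant in the sense of Definition~\mref{def:invariant-tensor}, and the auxiliary algebra $E_q$, the products $C_\diamond$ of Lemma~\mref{lem:q-intertwining} and the form $\omega_q$ are unchanged.

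Next I will show that the coproducts change sign. By linearity of \meqref{eq:delta-r-s}--\meqref{eq:delta-r-v} in the tensor, $\Delta_{\diamond,\tau(r)}=\Delta_{\diamond,q}-\Delta_{\diamond,r}$. Invariance of $q$ means exactly $\Delta_{\diamond,q}=0$ for each $\diamond\in\{\succ,\prec,\vee\}$. Hence $\Delta_{\diamond,\tau(r)}=-\Delta_{\diamond,r}$, and in particular $\Delta_{\tau(r)}=-\Delta_r$.

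The substantive step is to show that $S(\tau(r))=0$. I will use criterion (iv) of Theorem~\mref{thm:quasitriangular} rather than expanding tensor legs.
\begin{enumerate}
\item Since $r$ is quasi-triangular, Theorem~\mref{thm:quasitriangular} shows that the dual products $\diamond_r$ form an NS-Leibniz algebra on $A^*$, and that both $T_r$ and $-T_{\tau(r)}$ are homomorphisms from it to $A$.
\item By the previous paragraph, the dual products of $\tau(r)$ are $\zeta\diamond_{\tau(r)}\eta=-\zeta\diamond_r\eta$, the negatives of those of $r$.
\item Negating all three products of an NS-Leibniz algebra gives again an NS-Leibniz algebra, since each identity \meqref{eq:ns1}--\meqref{eq:ns4} is homogeneous of degree two in the products.
\item A map $f$ is a homomorphism from $(A^*,\diamond_r)$ to $A$ if and only if $-f$ is a homomorphism from $(A^*,-\diamond_r)$ to $A$. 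Indeed, $(-f)(-\zeta\diamond_r\eta)=f(\zeta\diamond_r\eta)=f\zeta\diamond f\eta=(-f)\zeta\diamond(-f)\eta$.
\end{enumerate}
Apply step~4 to the two homomorphisms from step~1. Taking $f=-T_{\tau(r)}$ shows that $T_{\tau(r)}$ is a homomorphism for $\diamond_{\tau(r)}$. Taking $f=T_r$ shows that $-T_r=-T_{\tau(\tau(r))}$ is one. Thus condition (iv) holds for $\tau(r)$ with the same invariant $q$. Theorem~\mref{thm:quasitriangular} then gives $S(\tau(r))=0$ and that $\tau(r)$ defines an NS-Leibniz bialgebra, which is quasi-triangular by Definition~\mref{def:non-skew-ybe}.

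The main obstacle is making sure the direction (iv)$\Rightarrow$(i) of Theorem~\mref{thm:quasitriangular} is available. Its proof notes that the homomorphism condition for $T_r$ in (iv) is (ii), so it suffices to check that the dual products of $\tau(r)$ are exactly the ones in (iv) for $\tau(r)$. They are, by Proposition~\mref{prop:non-skew-products} applied to $\tau(r)$ together with the sign relation of the second paragraph. If this route were unavailable, I would fall back on the graph criterion (iii). The map $(x,\zeta)\mapsto(x,-\zeta)$ sends the graph of $T_r$ to the graph of $-T_r$; one would then relate it to the graph of $T_{\tau(r)}=T_q-T_r$ using \meqref{eq:q-product}. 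That route is messier, so I prefer (iv).
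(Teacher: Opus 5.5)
Your proposal is correct and follows the paper's own proof. Invariance of $q$ gives $\Delta_{\diamond,\tau(r)}=-\Delta_{\diamond,r}$, so the dual products of $\tau(r)$ are $-\diamond_r$; the homomorphisms $T_r$ and $-T_{\tau(r)}$ from condition (iv) of Theorem~\ref{thm:quasitriangular} then become the homomorphisms $-T_r$ and $T_{\tau(r)}$ for the negated products, and applying that theorem to $\tau(r)$ concludes. Your steps 3 and 4, the degree-two homogeneity and the sign flip of homomorphisms, spell out explicitly the check that the paper leaves implicit.
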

\begin{proof}
Since $\Delta_{\diamond,q}=0$,
$\Delta_{\diamond,\tau(r)}=-\Delta_{\diamond,r}$.
The dual products for $\tau(r)$ are therefore $-\diamond_r$.
By Theorem~\mref{thm:quasitriangular},
$T_{\tau(r)}$ and $-T_r$ are homomorphisms from this algebra to $A$.
Applying the same theorem to $\tau(r)$ proves the proposition.
\end{proof}

We next consider the case in which the symmetric part is
nondegenerate.
\subsection{Factorizable NS-Leibniz bialgebras}
Factorizability is characterized by the nondegeneracy of
the symmetric part of the defining tensor.

\begin{defi}\mlabel{def:factorizable}
A quasi-triangular NS-Leibniz bialgebra associated with $r$ is
factorizable if $T_q=T_{r+\tau(r)}:A^*\to A$ is invertible.
\end{defi}

\begin{pro}\mlabel{prop:factorization-image}
For a factorizable structure, the map
\begin{equation}\mlabel{eq:factorization-image}
 F:A^*\longrightarrow A\oplus A,\qquad F(\zeta)=(T_r\zeta,-T_{\tau(r)}\zeta)
\end{equation}
is an injective NS-Leibniz homomorphism. For every $x\in A$ there is
exactly one pair $(x_1,x_2)$ in its image such that $x=x_1-x_2$;
namely
\[
 x_1=T_r T_q^{-1}x,\qquad x_2=\bigl(-T_{\tau(r)}\bigr) T_q^{-1}x.
\]
Consequently $A\oplus A=\operatorname{diag}(A)\oplus F(A^*)$ as vector
spaces, and both summands are NS-Leibniz subalgebras.
\end{pro}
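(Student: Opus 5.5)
The plan is to read everything off Theorem~\mref{thm:quasitriangular}(iv). For a factorizable structure, $S(r)=0$ and $q$ is invariant. Hence the dual products $\diamond_r$ make $A^*$ an NS-Leibniz algebra, and both $T_r$ and $-T_{\tau(r)}$ are NS-Leibniz homomorphisms from $(A^*,\succ_r,\prec_r,\vee_r)$ to $A$. We equip $A\oplus A$ with the componentwise products. Then $F=(T_r,-T_{\tau(r)})$ is a homomorphism, because each component is. This is immediate from the definition of the direct product of NS-Leibniz algebras.

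For injectivity, I would use the identity $T_r\zeta-(-T_{\tau(r)}\zeta)=T_{r+\tau(r)}\zeta=T_q\zeta$. This holds because the contraction $r\mapsto T_r$ is linear. If $F(\zeta)=0$, then $T_q\zeta=0$, and invertibility of $T_q$ (Definition~\mref{def:factorizable}) gives $\zeta=0$.

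For the decomposition $x=x_1-x_2$, I would argue as follows. A pair $(x_1,x_2)=F(\zeta)$ in the image satisfies $x_1-x_2=T_q\zeta$. So $x=x_1-x_2$ forces $\zeta=T_q^{-1}x$, which gives existence and uniqueness together with the displayed formulas $x_1=T_rT_q^{-1}x$ and $x_2=-T_{\tau(r)}T_q^{-1}x$.

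For the direct sum, I would compare the diagonal $\operatorname{diag}(A)=\{(y,y)\}$ with $F(A^*)$.
\begin{enumerate}[label=(\alph*)]
\item \emph{Trivial intersection.} If $(y,y)=F(\zeta)$, then $T_q\zeta=y-y=0$, so $\zeta=0$.
\item \emph{Dimensions.} We have $\dim\operatorname{diag}(A)=\dim A$, and $\dim F(A^*)=\dim A^*=\dim A$ by injectivity. Finite-dimensionality then gives $A\oplus A=\operatorname{diag}(A)\oplus F(A^*)$.
\item \emph{Explicit splitting.} For completeness, $(a,b)=(y,y)+F(\zeta)$ can be solved with $\zeta=T_q^{-1}(a-b)$ and $y=a-T_r\zeta$.
\end{enumerate}
The diagonal is a subalgebra because $(y,y)\diamond(z,z)=(y\diamond z,y\diamond z)$. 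The image $F(A^*)$ is a subalgebra because $F$ is a homomorphism.

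The only substantive step is that $-T_{\tau(r)}$ is a homomorphism. That is already established in Theorem~\mref{thm:quasitriangular} through \meqref{eq:T_q-dual-product}, so I expect no real obstacle beyond invoking it carefully, including the sign convention in $F$.
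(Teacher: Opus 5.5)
Your proposal is correct and follows the paper's proof. Both arguments take the homomorphism property of $F$ from Theorem~\ref{thm:quasitriangular}(iv), and both get injectivity and the unique decomposition $x=x_1-x_2$ from $T_r\zeta+T_{\tau(r)}\zeta=T_q\zeta$ together with invertibility of $T_q$; the only cosmetic difference is that the paper derives the direct sum from bijectivity of the difference map $F(A^*)\to A$ (whose kernel on $A\oplus A$ is the diagonal), while you use trivial intersection, a dimension count and an explicit splitting.
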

\begin{proof}
Theorem~\mref{thm:quasitriangular} shows that $F$ is a homomorphism.
If $F(\zeta)=0$, then $T_q\zeta=0$; invertibility of $T_q$ yields
$\zeta=0$. The difference of the two coordinates of $F(\zeta)$ is
$T_q\zeta$. It follows that the difference map is a bijection from
$F(A^*)$ to $A$, with inverse $x\mapsto F(T_q^{-1}x)$.
The vector-space decomposition follows, and both summands are
subalgebras.
\end{proof}

The factorization identifies the canonical double with
the direct sum of two copies of $A$, as follows.

\begin{thm}\mlabel{thm:factorizable-double}
Let $D=A\bowtie A^*$ be the canonical NS double of a factorizable
structure. Then
\begin{equation}\mlabel{eq:factorizable-Phi}
 \Phi:D\longrightarrow A\oplus A,\qquad
 \Phi(x,\zeta)=(x+T_r\zeta,x-T_{\tau(r)}\zeta)
\end{equation}
is an NS-Leibniz isomorphism. Its inverse is
\[
 \zeta=T_q^{-1}(a-b),\qquad x=a-T_rT_q^{-1}(a-b).
\]
The symmetric form
\begin{equation}\mlabel{eq:B-S}
 \omega(x,y)=\langle T_q^{-1}x,y\rangle
\end{equation}
makes $A$ quadratic, and $\Phi$ identifies the canonical form on $D$
with $\omega\oplus(-\omega)$.
\end{thm}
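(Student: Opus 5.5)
The idea is to realise $D$ as an explicit Manin triple inside $A\oplus A$ and then invoke uniqueness of the double. Equip $A\oplus A$ with the componentwise NS-Leibniz products and the form $\omega\oplus(-\omega)$, where $\omega(x,y)=\langle T_q^{-1}x,y\rangle$.

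\emph{Step 1: $(A,\omega)$ is quadratic.} The form $\omega$ is symmetric because $T_q^{\mathsf t}=T_q$, and nondegenerate because $T_q$ is invertible. For invariance, put $\zeta=T_q^{-1}x$. The relations $R_\prec(y)T_q=-T_qL_\star^*(y)$ and $L_\succ(y)T_q=T_qL_\circ^*(y)$ from Lemma~\mref{lem:q-intertwining} (and \meqref{eq:T_q-intertwining}) give
\[
\omega(x\prec y,z)=\langle T_q^{-1}R_\prec(y)T_q\zeta,z\rangle=-\langle L_\star^*(y)\zeta,z\rangle=\langle\zeta,y\star z\rangle ,
\]
which is \meqref{eq:inv-p}. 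The identity \meqref{eq:inv-s} follows the same way from $L_\succ$, using that $L_\circ^*$ is minus a transpose. Consequently $\omega\oplus(-\omega)$ makes $A\oplus A$ a quadratic NS-Leibniz algebra.

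\emph{Step 2: the pieces of the triple.} The diagonal $\operatorname{diag}(A)$ is a subalgebra and is isotropic, since $\omega(x,y)-\omega(x,y)=0$. By Proposition~\mref{prop:factorization-image}, $F(A^*)$ is a subalgebra complementary to the diagonal. Its isotropy amounts to
\[
\omega(T_r\zeta,T_r\eta)-\omega(T_{\tau(r)}\zeta,T_{\tau(r)}\eta)=0 .
\]
To check this, write $T_{\tau(r)}=T_q-T_r$ and expand; the left side reduces to $\langle\zeta,T_r\eta\rangle+\langle\eta,T_r\zeta\rangle-\langle\zeta,T_q\eta\rangle$, which vanishes by $T_r+T_r^{\mathsf t}=T_q$. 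The cross pairing is
\[
\bigl(\omega\oplus(-\omega)\bigr)\bigl((y,y),F(\zeta)\bigr)=\omega(y,T_q\zeta)=\zeta(y),
\]
so it is exactly the evaluation pairing. Note that $\Phi(x,\zeta)=(x,x)+F(\zeta)$, so $\Phi$ is the linear isomorphism $A\oplus A^*\to\operatorname{diag}(A)\oplus F(A^*)$. Because of the evaluation-pairing identity, $\Phi$ carries the canonical form \meqref{eq:canonical-form} to $\omega\oplus(-\omega)$; checking this on pure pairs is immediate.

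\emph{Step 3: $\Phi$ is a homomorphism.} This is the main obstacle. By Step 2, $(A\oplus A,\operatorname{diag}(A),F(A^*))$ is a Manin triple. Restricted to $A$, $\Phi$ is an isomorphism onto the diagonal, and restricted to $A^*$, with the products $\diamond_r$ of Proposition~\mref{prop:non-skew-products}, it is the isomorphism $F$. The coproducts of the bialgebra are dual to $\diamond_r$, so the structure on $A^*$ is the one used in the canonical double.

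Theorem~\mref{thm:metric-splitting} shows that the NS-Leibniz structure of a quadratic algebra is determined by its form and its total product. Theorem~\mref{thm:double} then determines all mixed products from the restrictions to $A$ and $A^*$ together with the evaluation pairing, through \meqref{eq:double-p}--\meqref{eq:double-v}. The argument is to transport the structure of $A\oplus A$ back along $\Phi$ and observe that it satisfies exactly the data characterizing $D$; hence it coincides with $D$. If this uniqueness argument needs tightening, the fallback is a direct verification of one mixed product. Using Lemma~\mref{lem:q-intertwining}, I would check
\[
\Phi\bigl(x\diamond_D\zeta\bigr)=\bigl(x\diamond T_r\zeta,\;-x\diamond T_{\tau(r)}\zeta\bigr)
\]
after decomposing into diagonal and $F$-parts.

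\emph{Step 4: the inverse.} The inverse formula follows by solving $a=x+T_r\zeta$, $b=x-T_{\tau(r)}\zeta$. Subtracting gives $a-b=T_q\zeta$, so $\zeta=T_q^{-1}(a-b)$ and $x=a-T_rT_q^{-1}(a-b)$.
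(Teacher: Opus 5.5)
Your proof is correct, but it is organized differently from the paper's. The paper never forms the Manin triple $(A\oplus A,\operatorname{diag}(A),F(A^*))$. Instead it passes through the deformed double $E_q$ of Lemma~\ref{lem:deformed-double}. It checks directly, using \eqref{eq:T_q-intertwining} and \eqref{eq:q-product}, that $\Psi(a,\zeta)=(a,a-T_q\zeta)$ is an NS-Leibniz isomorphism $E_q\to A\oplus A$: the second coordinate of a product factors as $(a-T_q\zeta)\diamond(b-T_q\eta)$. It obtains the forms from $\omega(a,b)-\omega(a-T_q\zeta,b-T_q\eta)=\omega_q$. Finally it composes with the identification $(x,\zeta)\mapsto(x+T_r\zeta,\zeta)$ of $D$ with $E_q$ from the proof of Theorem~\ref{thm:quasitriangular}.

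That identification is itself an instance of the rigidity you invoke: a Manin triple with prescribed isotropic subalgebras and evaluation pairing has its mixed products forced. So both arguments rest on the same principle. The paper applies it inside $E_q$ and then does one explicit homomorphism computation for $\Psi$. You apply it once, directly in $A\oplus A$, and replace that computation by Proposition~\ref{prop:factorization-image} (the homomorphism property of $F$) together with your isotropy and evaluation-pairing checks. What your route buys is that it avoids $E_q$ and the products $C_\diamond$ entirely and makes the rigidity explicit. What the paper's route buys is a concrete formula-level verification of the multiplicativity of the intermediate map $\Psi$.

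Your uniqueness step needs no fallback. By the lemma that $\omega^\sharp$ intertwines the regular and dual regular representations, you can determine a mixed product $u\diamond v$ by pairing it with a test vector $w$. Use the left intertwining relation when $u$ and $w$ lie in the same summand, and the right one when $v$ and $w$ do. In either case only the restricted products on $A$ or on $A^*$ enter, which yields exactly \eqref{eq:double-p}--\eqref{eq:double-v}. Hence the structure transported along $\Phi^{-1}$ coincides with that of $D$.
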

\begin{proof}
The form $\omega$ is symmetric and nondegenerate because $T_q$ is.
By \meqref{eq:T_q-intertwining}, it is invariant. Define
\[
 \Psi:E_q\longrightarrow A\oplus A,\qquad
 \Psi(a,\zeta)=(a,a-T_q\zeta).
\]
Since $T_q$ is invertible, $\Psi$ is bijective. Equations
\meqref{eq:T_q-intertwining} and \meqref{eq:q-product} show that the
second coordinate of a product is
\[
 a\diamond b-a\diamond T_q\eta-T_q\zeta\diamond b+T_q\zeta\diamond T_q\eta
 =(a-T_q\zeta)\diamond(b-T_q\eta).
\]
Thus $\Psi$ is an NS-Leibniz isomorphism.
The identification of the canonical double with $E_q$ is
$(x,\zeta)\mapsto(x+T_r\zeta,\zeta)$; composition with $\Psi$
is $\Phi$. Solving for $\zeta,x$ yields the inverse in the statement.
Moreover,
\[
 \omega(a,b)-\omega(a-T_q\zeta,b-T_q\eta)
 =\zeta(b)+\eta(a)-\zeta(T_q\eta)=\omega_q((a,\zeta),(b,\eta)).
\]
The preceding identification pulls $\omega_q$ back to the
canonical form on $D$, proving the equality of forms.
\end{proof}

\begin{thm}\mlabel{thm:factorizable-operator}
Factorizable quasi-triangular tensors on $A$ correspond to pairs
$(\omega,P)$, where $\omega$ makes $A$ a quadratic NS-Leibniz
algebra and $P:A\to A$ satisfies, for each $\diamond\in\{\succ,\prec,\vee\}$,
\begin{equation}\mlabel{eq:weight-minus-one}
 Px\diamond Py=P(Px\diamond y+x\diamond Py-x\diamond y),
\end{equation}
and its $\omega$-adjoint satisfies
\begin{equation}\mlabel{eq:P-adjoint}
 P+P^\dagger=I.
\end{equation}
For a tensor $r$, take $\omega$ as in \meqref{eq:B-S} and set
$P=T_rT_q^{-1}$.
Conversely, set $T_q=(\omega^\sharp)^{-1}$ and let $r$ be the unique tensor with
$T_r=PT_q$. Then $r+\tau(r)$ is the inverse-form tensor of $\omega$, is
invariant, and $r$ is factorizable quasi-triangular.
\end{thm}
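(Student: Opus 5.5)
Both directions go through the invertible symmetric map $T_q$ and the intertwining relations of Lemma~\mref{lem:q-intertwining}. The core of the argument is that, once $T_q$ is invertible, the weighted $\mathcal O$-operator identity \meqref{eq:weighted-T_r} is literally \meqref{eq:weight-minus-one} for $P=T_rT_q^{-1}$.

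\emph{Forward direction.} Start from a factorizable quasi-triangular $r$ and set $\omega(x,y)=\langle T_q^{-1}x,y\rangle$. Theorem~\mref{thm:factorizable-double} already shows that $(A,\omega)$ is quadratic, so only the properties of $P=T_rT_q^{-1}$ need checking.

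For the adjoint condition, note that $T_{\tau(r)}=T_r^{\mathsf t}$. Then
\[
\omega(Px,y)=\langle T_q^{-1}T_rT_q^{-1}x,y\rangle=\langle T_q^{-1}x,T_{\tau(r)}T_q^{-1}y\rangle ,
\]
so $P^\dagger=T_{\tau(r)}T_q^{-1}$. Since $T_r+T_{\tau(r)}=T_q$, this gives \meqref{eq:P-adjoint}.

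For \meqref{eq:weight-minus-one}, write $x=T_q\zeta$ and $y=T_q\eta$. The intertwining relations \meqref{eq:T_q-intertwining} give $T_q\,l_\diamond(T_r\zeta)\eta=T_r\zeta\diamond T_q\eta$ and $T_q\,r_\diamond(T_r\eta)\zeta=T_q\zeta\diamond T_r\eta$. By \meqref{eq:q-product}, $T_qC_\diamond(\zeta,\eta)=T_q\zeta\diamond T_q\eta$. Apply $T_q^{-1}$ to these and substitute into the right side of \meqref{eq:weighted-T_r}, whose left side is $Px\diamond Py$. The result is
\[
P\,T_q^{-1}T_q\bigl(l_\diamond(T_r\zeta)\eta+r_\diamond(T_r\eta)\zeta-C_\diamond(\zeta,\eta)\bigr)=P(Px\diamond y+x\diamond Py-x\diamond y).
\]
So \meqref{eq:weighted-T_r} becomes \meqref{eq:weight-minus-one}, and Theorem~\mref{thm:quasitriangular}(ii) supplies the former.

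\emph{Converse direction.} Start from a pair $(\omega,P)$, put $T_q=(\omega^\sharp)^{-1}$, and let $q$ be the tensor with contraction $T_q$; it is symmetric because $\omega$ is. Invariance of $q$ is the main point to verify. I would derive \meqref{eq:q-contraction-s}--\meqref{eq:q-contraction-v} directly from \meqref{eq:inv-p}, \meqref{eq:inv-s}, \meqref{eq:inv-v-left} and \meqref{eq:inv-v-right}, conjugating by $\omega^\sharp$. Alternatively, the earlier lemma identifying $\omega^\sharp$ as an isomorphism from the regular representation to the dual regular representation gives \meqref{eq:T_q-intertwining} for $T_q=(\omega^\sharp)^{-1}$ immediately, and Lemma~\mref{lem:q-intertwining} converts this into invariance.

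Next define $r$ by $T_r=PT_q$. Then $T_{\tau(r)}=T_r^{\mathsf t}=T_qP^{\mathsf t}$, and $P^{\mathsf t}$ computed via $\omega$ corresponds to $T_q^{-1}P^\dagger T_q$. This gives $T_{\tau(r)}=P^\dagger T_q$, so $T_{r+\tau(r)}=(P+P^\dagger)T_q=T_q$. Hence $r+\tau(r)=q$ is invariant and nondegenerate. Running the computation above backwards turns \meqref{eq:weight-minus-one} into \meqref{eq:weighted-T_r}, and Theorem~\mref{thm:quasitriangular} then gives $S(r)=0$ and factorizability.

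The two constructions are mutually inverse: $T_rT_q^{-1}=P$, and $\omega$ is recovered from $T_q$.

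The main obstacle is bookkeeping the transposes: making sure that $P^\dagger$ corresponds to $T_{\tau(r)}T_q^{-1}$, and that the six intertwining relations are used with the correct left or right dual action for each $\diamond$. This matters especially for $\prec$, where $C_\prec$ arises from $l_\prec$ rather than $r_\prec$. I would check the case $\diamond=\prec$ explicitly first.
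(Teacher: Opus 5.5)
Your proposal is correct and follows essentially the same route as the paper: transport \eqref{eq:weighted-T_r} along the invertible map $T_q$ using \eqref{eq:T_q-intertwining} and \eqref{eq:q-product}, read $T_r+T_{\tau(r)}=T_q$ as $P+P^\dagger=I$, and reverse these steps (with Lemma~\ref{lem:q-intertwining} giving invariance of the inverse-form tensor) before invoking Theorem~\ref{thm:quasitriangular}. One small notational slip: since the bracket $l_\diamond(T_r\zeta)\eta+r_\diamond(T_r\eta)\zeta-C_\diamond(\zeta,\eta)$ lies in $A^*$, the right-hand side of \eqref{eq:weighted-T_r} should be written $T_r(\cdots)=P\,T_q(\cdots)$, not $P\,T_q^{-1}T_q(\cdots)$.
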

\begin{proof}
The isomorphism $T_q:A^*\to A$ carries $C_\diamond$ to the
products on $A$ by \meqref{eq:q-product}. Under this isomorphism,
\meqref{eq:weighted-T_r} becomes \meqref{eq:weight-minus-one}
for $P=T_rT_q^{-1}$. The equality
$T_r+T_{\tau(r)}=T_q$ is equivalent to $P+P^\dagger=I$.

Conversely, let $(\omega,P)$ satisfy the stated conditions.
Lemma~\mref{lem:q-intertwining} implies invariance of the inverse-form
tensor. The preceding identities, read in reverse, show that the
tensor with $T_r=P(\omega^\sharp)^{-1}$ has this symmetric part
and satisfies \meqref{eq:weighted-T_r}.
Theorem~\mref{thm:quasitriangular} completes the correspondence.
\end{proof}

The canonical double carries a natural factorizable
quasi-triangular NS-Leibniz bialgebra structure.

\begin{thm}\mlabel{thm:canonical-double-r}
Let $(A,\succ,\prec,\vee,\Delta_\succ,\Delta_\prec,\Delta_\vee)$ be
any NS-Leibniz bialgebra, with canonical double $D=A\bowtie A^*$.
For a basis $(e_i)$ of $A$ and its dual basis $(e_i^*)$, the tensor
\begin{equation}\mlabel{eq:canonical-double-r}
 r_D=\sum_i e_i\o e_i^*\in D\o D
\end{equation}
is basis-independent and defines a factorizable quasi-triangular
NS-Leibniz bialgebra on $D$. Its three coproducts restrict on the
subalgebra $A$ to the original three coproducts.
\end{thm}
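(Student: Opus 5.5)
Basis-independence comes first. The tensor $\sum_i e_i\otimes e_i^*$ corresponds to the identity under $A\otimes A^*\cong\operatorname{End}(A)$, so it does not depend on the chosen basis. We identify $D^*=A^*\oplus A$ through the canonical form \meqref{eq:canonical-form}. We then compute the contraction maps of $r_D$: for $(\zeta,x)\in D^*$,
\[
T_{r_D}(\zeta,x)=(0,\,\cdot\,),\qquad \text{explicitly}\quad T_{r_D}(\zeta,x)=(0,\zeta)\in A\oplus A^*,
\]
and $T_{\tau(r_D)}(\zeta,x)=(x,0)$. Up to the choice of ordering in the pairing, these are the two coordinate projections. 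Hence $q_D=r_D+\tau(r_D)=\sum_i(e_i\otimes e_i^*+e_i^*\otimes e_i)$ is the inverse tensor of the canonical form $\omega$ on $D$. Equivalently, $T_{q_D}$ is the inverse of $\omega^\sharp$ (the identity under $D^*\cong D$ via $\omega$), so it is invertible, which is the factorizability requirement.

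Next we show that $q_D$ is invariant. Since $D$ is a quadratic NS-Leibniz algebra by Corollary~\mref{cor:bialgebra-characterizations}, $\omega^\sharp$ intertwines the regular and dual regular representations (the lemma following Corollary~\mref{cor:dual-regular}). Thus $T_{q_D}=(\omega^\sharp)^{-1}$ satisfies \meqref{eq:T_q-intertwining}, and Lemma~\mref{lem:q-intertwining} gives invariance. By Theorem~\mref{thm:quasitriangular}, it then remains to verify condition (ii) or (iii) there, i.e.\ $S(r_D)=0$. I would use (iii)/(iv) in the following form: the image of $T_{r_D}$ is $A^*$ and the image of $-T_{\tau(r_D)}$ is $A$. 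Via the isomorphism $\Psi$-type identification of Theorem~\mref{thm:factorizable-double}, the graph of $T_{r_D}$ in $E_{q_D}\cong D\oplus D$ becomes $\{(\xi,-x)\}$ up to signs, which is assembled from the subalgebras $A$ and $A^*$.

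A more direct route is to verify \meqref{eq:weighted-T_r}. Write $P=T_{r_D}T_{q_D}^{-1}$, which is (up to sign) the projection of $D$ onto $A^*$ along $A$, so $P^\dagger=I-P$ is the projection onto $A$. Since the projection onto a subalgebra along a complementary subalgebra satisfies the weight $-1$ Rota--Baxter identity \meqref{eq:weight-minus-one} for each product, and $A$, $A^*$ are isotropic subalgebras of $D$, Theorem~\mref{thm:factorizable-operator} applied to $(\omega,P)$ yields that $r_D$ is factorizable quasi-triangular. The identity for a projection onto a subalgebra is checked by writing $x=x_1+x_2$, $y=y_1+y_2$ and using closure of both summands. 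The isotropy of $A$ and $A^*$ gives $P+P^\dagger=I$ with the correct adjoint.

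Finally, the restriction of the coproducts is checked. For $x\in A$, compute $\Delta_{\diamond,r_D}(x)$ from \meqref{eq:delta-r-s}--\meqref{eq:delta-r-v} and pair it with $\xi\otimes\eta$ for $\xi,\eta\in A^*\subset D^*$; elements of $A\subset D^*$ pair to zero against the relevant legs. By Proposition~\mref{prop:non-skew-products}, the dual products are given through $T_{r_D},T_{\tau(r_D)}$, and on $A^*$ these reduce to the given products $\diamond_{A^*}$, because mixed products in \meqref{eq:double-p}--\meqref{eq:double-v} project correctly. The main obstacle is this last sign and ordering bookkeeping. One must ensure that with $r_D=\sum e_i\otimes e_i^*$ (rather than its flip) the projection $P$ lands on the correct summand, so that the dual products on $A^*$ equal $+\diamond_{A^*}$ rather than $-\diamond_{A^*}$. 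If it comes out wrong, I would use Proposition~\mref{prop:transpose-r}, which swaps the roles of $r$ and $\tau(r)$ and reverses the sign of the coproducts, to fix the convention.
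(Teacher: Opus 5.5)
Your arguments for basis-independence, the contraction maps $T_{r_D}(\zeta,x)=(0,\zeta)$ and $T_{\tau(r_D)}(\zeta,x)=(x,0)$, invariance of $q_D$, and factorizability coincide with the paper's proof. In particular, the projection $P$ onto $A^*$ along $A$ is Rota--Baxter of weight $-1$ with $P^\dagger=I-P$, and Theorem~\ref{thm:factorizable-operator} then applies. Note that $P=T_{r_D}T_{q_D}^{-1}$ is exactly this projection, with no sign ambiguity. Your first route via $\Psi$ also works once made precise: $\Psi(G_{r_D})=A^*\times A\subset D\oplus D$ is a product of subalgebras. The gap is the last assertion, which you defer as ``bookkeeping''. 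That step is where the actual content lies.

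Pairing only with $\zeta\o\eta$ for $\zeta,\eta\in A^*$ detects just the $A\o A$-component of $\Delta_{\diamond,r_D}(x)$. The claim that elements of $A\subset D^*$ ``pair to zero'' is exactly what must be shown. The coboundary formulas produce the nonzero mixed terms $\sum_i(x\odot e_i)\o e_i^*$, $-\sum_i e_i^*\o(e_i\prec x)$ and $\sum_i(x\vee e_i+e_i\vee x)\o e_i^*$. These cancel because, in the double, the $A^*$-components of $e_i^*\circ_D x$, $x\star_D e_i^*$ and $e_i^*\vee_D x$ are $L_\odot(x)^{\mathsf t}e_i^*$, $R_\prec(x)^{\mathsf t}e_i^*$ and $-(L_\vee(x)+R_\vee(x))^{\mathsf t}e_i^*$. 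One then uses $\sum_i e_i\o M^{\mathsf t}e_i^*=\sum_i Me_i\o e_i^*$ and its flip. What survives is
\[
 -\sum_i e_i\o L^*_{\succ,A^*}(e_i^*)x,\qquad
 -\sum_i R^*_{\prec,A^*}(e_i^*)x\o e_i,\qquad
 -\sum_i e_i\o L^*_{\vee,A^*}(e_i^*)x .
\]
Pairing each with $\zeta\o\eta$ gives $+\langle\zeta\diamond\eta,x\rangle$ in all three cases, so $r_D$ itself has the correct sign.

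Your proposed fallback would not have been admissible. Proposition~\ref{prop:transpose-r} replaces $r_D$ by $\tau(r_D)$, so it proves the statement for a different tensor than the one in the theorem. It also negates all three coproducts at once, so it could not repair a mixed-sign outcome. Such an outcome is a priori possible, since $\Delta_{\prec,r}$ is built from $\tau(r)$ while the other two coproducts use $r$.
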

\begin{proof}
The tensor $r_D$ corresponds to the identity of $A$ and is
independent of the dual bases. For the canonical form $\omega_D$,
$T_{q_D}=(\omega_D^\sharp)^{-1}$, where $q_D=r_D+\tau(r_D)$.
Lemma~\mref{lem:q-intertwining} implies that $q_D$ is invariant.

Let $P$ be projection onto $A^*$ along $A$. Both summands are
subalgebras, so for $z=a+\zeta$ and $w=b+\eta$,
\[
 Pz\diamond w+z\diamond Pw-z\diamond w=\zeta\diamond \eta-a\diamond b,
 \qquad P(\zeta\diamond \eta-a\diamond b)=\zeta\diamond \eta=Pz\diamond Pw.
\]
Thus $P$ is Rota--Baxter of weight $-1$. Isotropy of the summands
implies $P^\dagger=I-P$. Under $D^*=A^*\oplus A$,
\[
 T_{q_D}(\zeta,x)=(x,\zeta),\qquad
 T_{r_D}(\zeta,x)=(0,\zeta),\qquad T_{\tau(r_D)}(\zeta,x)=(x,0).
\]
Hence $T_{r_D}=PT_{q_D}$, and
Theorem~\mref{thm:factorizable-operator} proves factorizability.

For $x\in A$, substitute $r_D$ in
\meqref{eq:delta-r-s}--\meqref{eq:delta-r-v} on $D$.
The dual regular action formulas cancel the terms in
$A\o A^*$ and $A^*\o A$.
The remaining terms are
\begin{align*}
 \Delta_{\succ,r_D}(x)
 &=-\sum_i e_i\o L_{\succ,A^*}^*(e_i^*)x,\\
 \Delta_{\prec,r_D}(x)
 &=-\sum_i R_{\prec,A^*}^*(e_i^*)x\o e_i,\\
 \Delta_{\vee,r_D}(x)
 &=-\sum_i e_i\o L_{\vee,A^*}^*(e_i^*)x.
\end{align*}
For example, the $A^*$ component of $e_i^*\circ_D x$ is
$(L_{\odot,A}(x))^{\mathsf t}e_i^*$, which cancels
$(L_{\odot,A}(x)\o I)r_D$ in the first formula.
Pairing the three remaining expressions with $\zeta\o\eta$
yields, respectively,
\[
 \langle \zeta\succ\eta,x\rangle,\qquad
 \langle \zeta\prec\eta,x\rangle,\qquad
 \langle \zeta\vee\eta,x\rangle.
\]
Thus their values are the original coproducts by
\meqref{eq:coalgebra-pair}.
\end{proof}

\begin{ex}
Let $A$ be the algebra in Example~\mref{ex:metric}, and let
$D=A\ltimes A^*$ be its dual regular semidirect product.
The tensor $r_D=e\o e^*+f\o f^*$ is factorizable
quasi-triangular. Its coproducts vanish on $e,f$, and their nonzero
values on $e^*,f^*$ are
\begin{align*}
 \Delta_{\prec,r_D}(e^*)&=f^*\o f^*,&
 \Delta_{\prec,r_D}(f^*)&=f^*\o e^*,\\
 \Delta_{\succ,r_D}(f^*)&=-e^*\o f^*,&
 \Delta_{\vee,r_D}(e^*)&=-f^*\o f^*,\\
 \Delta_{\vee,r_D}(f^*)&=2e^*\o f^*-f^*\o e^*.
\end{align*}
The table is obtained from \meqref{eq:delta-r-s}--\meqref{eq:delta-r-v}
using the dual regular actions on $D$.
\end{ex}

\section{Relative Rota--Baxter operators and quadratic Rota--Baxter NS-Leibniz algebras}
\mlabel{sec:relative-rb}
\subsection{Relative Rota--Baxter operators and the Yang--Baxter equation}
Relative Rota--Baxter operators relate representations
with internal products to the NS-Leibniz Yang--Baxter equation.

\begin{defi}\mlabel{def:A-ns-algebra}
An $A$-NS-Leibniz algebra is an NS-Leibniz algebra
$(V,\succ_V,\prec_V,\vee_V)$ equipped with a representation
$(l_\diamond,r_\diamond)_{\diamond\in\{\succ,\prec,\vee\}}$ of $A$ such that
\begin{equation}\mlabel{eq:module-algebra}
 (x,u)\diamond(y,v)=
 (x\diamond y,\ l_\diamond(x)v+r_\diamond(y)u+u\diamond_V v)
\end{equation}
defines an NS-Leibniz algebra on $A\oplus V$.
A relative Rota--Baxter operator of weight $\lambda\in k$
associated with this $A$-NS-Leibniz algebra is a linear map $T:V\to A$
satisfying
\begin{equation}\mlabel{eq:relative-rb}
 T(u)\diamond T(v)=T\bigl(l_\diamond(T(u))v+r_\diamond(T(v))u
                         +\lambda u\diamond_V v\bigr),
 \qquad \diamond\in\{\succ,\prec,\vee\}.
\end{equation}
When the three internal products vanish, this is an $\mathcal{O}$-operator.
For $V=A$, with its regular actions and original products, it is a
Rota--Baxter operator on $A$ of weight $\lambda$.
\end{defi}

Summing the products in \meqref{eq:module-algebra} defines an $A$-Leibniz algebra. A relative Rota--Baxter NS-Leibniz operator is therefore a relative Rota--Baxter operator for the total products. Conversely, let $r\in A\o A$ have invariant symmetric part $q=r+\tau(r)$. For $T=T_r$ and weight $-1$, Theorem~\mref{thm:relative-ybe} establishes the converse for the $A$-NS-Leibniz algebra on $A^*$ in Proposition~\mref{prop:invariant-module-algebra}.

The invariant symmetric tensor determines compatible
internal products on $A^*$ for the three dual actions.

\begin{pro}\mlabel{prop:invariant-module-algebra}
Let $q\in A\o A$ be symmetric and invariant. Define products on
$A^*$ by
\begin{align}
 \zeta\succeq_q\eta
   &=L_\circ^*(T_q\zeta)\eta
     =(L_\prec+R_\succ)^*(T_q\eta)\zeta,\mlabel{eq:internal-s}\\
 \zeta\preceq_q\eta
   &=-L_\prec^*(T_q\zeta)\eta
     =-L_\star^*(T_q\eta)\zeta,\mlabel{eq:internal-p}\\
 \zeta\vee_q\eta
   &=-L_\vee^*(T_q\zeta)\eta
     =(L_\vee+R_\vee)^*(T_q\eta)\zeta.\mlabel{eq:internal-v}
\end{align}
These products, together with the dual regular actions
\meqref{eq:dual-regular}, define an $A$-NS-Leibniz algebra.
Its total product is
\begin{equation}\mlabel{eq:internal-total}
 \zeta\circ_q\eta=L_\succ^*(T_q\zeta)\eta
                  =-(L_\succ+R_\prec)^*(T_q\eta)\zeta.
\end{equation}
In particular, the total product and total dual regular actions form
an $A$-Leibniz algebra.
\end{pro}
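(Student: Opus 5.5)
The plan is to identify the internal products with the products $C_\diamond$ of Lemma~\mref{lem:q-intertwining} and then reduce everything to Lemma~\mref{lem:deformed-double}. First, each pair of expressions in \meqref{eq:internal-s}--\meqref{eq:internal-v} must agree. By \meqref{eq:dual-regular}, the dual regular actions are $l_\succ=L_\circ^*$, $r_\succ=(L_\prec+R_\succ)^*$, $l_\prec=-L_\prec^*$, $r_\prec=-L_\star^*$, $l_\vee=-L_\vee^*$ and $r_\vee=(L_\vee+R_\vee)^*$. Hence each displayed equality reads $l_\diamond(T_q\zeta)\eta=r_\diamond(T_q\eta)\zeta$. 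This is the first line of \meqref{eq:q-product}, obtained there by pairing $\Delta_{\diamond,q}=0$ with $\zeta\o\eta$. So $\succeq_q,\preceq_q,\vee_q$ are exactly $C_\succ,C_\prec,C_\vee$. Summing them gives the total product $l_\circ(T_q\zeta)\eta=r_\circ(T_q\eta)\zeta$, and \meqref{eq:dual-total-partial} identifies $(l_\circ,r_\circ)$ with $(L_\succ^*,-(L_\succ+R_\prec)^*)$. This is \meqref{eq:internal-total}.

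Next, I will use Lemma~\mref{lem:deformed-double} to get that $(A^*,C_\succ,C_\prec,C_\vee)$ is an NS-Leibniz algebra. By the same lemma, $E_q$ with the products \meqref{eq:E-product} is an NS-Leibniz algebra. Those products differ from \meqref{eq:module-algebra} only by the sign of the internal term, so I will remove the sign by rescaling. The map $\theta(x,\zeta)=(x,-\zeta)$ transforms \meqref{eq:E-product} into
\[
(x,\zeta)\diamond(y,\eta)=\bigl(x\diamond y,\ l_\diamond(x)\eta+r_\diamond(y)\zeta+C_\diamond(\zeta,\eta)\bigr).
\]
One checks this directly: $\theta$ negates the linear terms once and the quadratic term $C_\diamond(-\zeta,-\eta)=C_\diamond(\zeta,\eta)$ twice. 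Since $\theta$ is a linear bijection, the transported products again satisfy \meqref{eq:ns1}--\meqref{eq:ns4}. The transported products are precisely \meqref{eq:module-algebra} with $u\diamond_V v=C_\diamond$, so the data form an $A$-NS-Leibniz algebra.

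As an alternative to the rescaling, one can verify the identities on $A\oplus A^*$ directly by splitting triples according to how many entries lie in $A^*$. Triples in $A$ are the given identities. Triples with one entry in $A^*$ are the representation relations of Corollary~\mref{cor:dual-regular}. Triples in $A^*$ follow from the NS-Leibniz structure of $C_\diamond$. Triples with two entries in $A^*$ reduce, after moving $T_q$ through the actions by \meqref{eq:T_q-intertwining}, to the representation relations with one argument $T_q\zeta$, as in the proof of Lemma~\mref{lem:deformed-double}. The main potential obstacle is this two-entries-in-$A^*$ case and its sign bookkeeping, and the rescaling argument sidesteps it. The final claim, that the total product and total dual regular actions give an $A$-Leibniz algebra, follows by summing the three products of \meqref{eq:module-algebra}: the sum of the four NS-Leibniz identities is the Leibniz identity, as noted after Definition~\mref{def:ns}.
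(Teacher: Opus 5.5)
Your proposal is correct and follows the paper's proof. Both arguments identify the internal products with $C_\diamond$ via \eqref{eq:dual-regular} and \eqref{eq:q-product}, reduce to Lemma~\ref{lem:deformed-double} with the sign of the internal term reversed, and read off \eqref{eq:internal-total} from \eqref{eq:dual-total-partial}. The only cosmetic difference is how the sign is flipped: the paper applies the lemma to $-q$ using $C_\diamond^{-q}=-C_\diamond^{q}$, while you transport along $\theta(x,\zeta)=(x,-\zeta)$, which is precisely an isomorphism $E_q\cong E_{-q}$.
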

\begin{proof}
By \meqref{eq:dual-regular} and \meqref{eq:q-product}, the products
are $C_\diamond$. Applying Lemma~\mref{lem:deformed-double} to
$-q$, and using $C_\diamond^{-q}=-C_\diamond^q$, yields
\[
 (x,\zeta)\diamond(y,\eta)=
 (x\diamond y,l_\diamond(x)\eta+r_\diamond(y)\zeta+C_\diamond(\zeta,\eta)),
\]
which is \meqref{eq:module-algebra}. Summing the products and using
\meqref{eq:dual-total-partial} yields \meqref{eq:internal-total}.
\end{proof}

\begin{thm}\mlabel{thm:relative-ybe}
Let $r\in A\o A$ and assume that $q=r+\tau(r)$ is invariant.
The following statements are equivalent:
\begin{enumerate}[label=(\alph*)]
\item $r$ satisfies the NS-Leibniz Yang--Baxter equation.
The coboundary maps \meqref{eq:delta-r-s}--\meqref{eq:delta-r-v}
then define a quasi-triangular NS-Leibniz bialgebra.
\item $T_r$ is a relative Rota--Baxter operator of weight $-1$ for
 the $A$-NS-Leibniz algebra in
 Proposition~\mref{prop:invariant-module-algebra}.
\item $T_r$ is a relative Rota--Baxter operator of weight $-1$ for
 its total $A$-Leibniz algebra; explicitly,
\begin{equation}\mlabel{eq:relative-total}
 T_r\zeta\circ T_r\eta
 =T_r\bigl(L_\succ^*(T_r\zeta)\eta
 -(L_\succ+R_\prec)^*(T_r\eta)\zeta-\zeta\circ_q\eta\bigr).
\end{equation}
\end{enumerate}
\end{thm}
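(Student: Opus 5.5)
The plan is to reduce everything to Theorem~\mref{thm:quasitriangular} and Lemma~\mref{lem:single-non-skew-equation}. Both are available here because $q$ is symmetric and invariant.

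\emph{(a)$\Leftrightarrow$(b).} Proposition~\mref{prop:invariant-module-algebra} identifies the internal products on $A^*$ with $C_\diamond$:
\[
\zeta\succeq_q\eta=C_\succ(\zeta,\eta),\qquad \zeta\preceq_q\eta=C_\prec(\zeta,\eta),\qquad \zeta\vee_q\eta=C_\vee(\zeta,\eta).
\]
The actions are the dual regular ones. So the weight $-1$ relative Rota--Baxter identity \meqref{eq:relative-rb} for $T_r$ reads
\[
T_r\zeta\diamond T_r\eta=T_r\bigl(l_\diamond(T_r\zeta)\eta+r_\diamond(T_r\eta)\zeta-C_\diamond(\zeta,\eta)\bigr),
\]
which is exactly \meqref{eq:weighted-T_r}. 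Theorem~\mref{thm:quasitriangular} (i)$\Leftrightarrow$(ii) then gives (a)$\Leftrightarrow$(b). The final clause of that theorem gives the bialgebra, and since $q$ is invariant it is quasi-triangular by Definition~\mref{def:non-skew-ybe}.

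\emph{(b)$\Rightarrow$(c).} Sum the three identities over $\diamond$. By \meqref{eq:dual-total-partial}, the total dual regular actions are $(L_\succ^*,-(L_\succ+R_\prec)^*)$, and by \meqref{eq:internal-total} the total internal product is $\circ_q$. The sum is therefore \meqref{eq:relative-total}.

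\emph{(c)$\Rightarrow$(a).} This is the main point, since summing loses information. The defect of \meqref{eq:weighted-T_r} for each $\diamond$ is identified by contraction with $H_\diamond(r)$, as in the proof of Theorem~\mref{thm:quasitriangular}. Explicitly, $(\zeta\o\eta\o I)H_\diamond(r)$ equals that defect, using \meqref{eq:quadratic-defect-contraction} together with the pairings of $T_rC_\diamond$ with $q_{12}\diamond r_{23}$. Summing, the defect of \meqref{eq:relative-total} is the contraction of $H_\prec(r)+H_\succ(r)+H_\vee(r)$, which is $\tau_{12}S(r)$ by \meqref{eq:H-total-single}. Hence (c) says that $\tau_{12}S(r)$ contracts to zero against every $\zeta\o\eta\o h$. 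By nondegeneracy of the pairing this gives $S(r)=0$, which is (a).

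The only delicate step is the sign bookkeeping in identifying the total defect with $\tau_{12}S(r)$. This requires checking that the pairings of $T_rC_\diamond$ carry the signs $+,-,+$ matching the corrections in \meqref{eq:H-p}--\meqref{eq:H-v}. For that I would rely directly on the pairings already displayed in the proof of Theorem~\mref{thm:quasitriangular}.
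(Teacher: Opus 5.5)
Your proposal is correct and follows essentially the same route as the paper. You get (a)$\Leftrightarrow$(b) from Theorem~\ref{thm:quasitriangular} after identifying the internal products with $C_\diamond$, (b)$\Rightarrow$(c) by summing with \eqref{eq:dual-total-partial} and \eqref{eq:internal-total}, and (c)$\Rightarrow$(a) by contracting \eqref{eq:H-total-single} to obtain $\tau_{12}S(r)=0$. The only differences are cosmetic: the paper returns to (b) through Lemma~\ref{lem:single-non-skew-equation}, whereas you stop at $S(r)=0$ and reuse (a)$\Leftrightarrow$(b); also, the contraction identity \eqref{eq:quadratic-defect-contraction} holds for arbitrary $r$, not only skew-symmetric $r$, so the sign bookkeeping you flag does go through.
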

\begin{proof}
The equivalence of (a) and (b) is
Theorem~\mref{thm:quasitriangular}. Summing the operator identities
in (b) proves (c).

Suppose (c) holds. Contracting
\meqref{eq:H-total-single} shows that $S(r)=0$.
Lemma~\mref{lem:single-non-skew-equation} then implies
$H_\prec=H_\succ=H_\vee=0$, which is (b).
When $\vee=0$, these operator conditions reduce to those
for Leibniz-dendriform algebras \mcite{SunGuo}.
\end{proof}

\begin{rmk}
If $r$ is skew-symmetric, the products on $A^*$ in
Proposition~\mref{prop:invariant-module-algebra} vanish, and the
relative Rota--Baxter equation reduces to the $\mathcal{O}$-operator equation.
\end{rmk}

We now relate factorizable tensors to Rota--Baxter operators
on quadratic NS-Leibniz algebras.

\subsection{Quadratic Rota--Baxter NS-Leibniz algebras}
The relation between factorizability and quadratic
Rota--Baxter algebras is known in the Lie
\mcite{LangSheng}, pre-Lie \mcite{WangBaiLiuSheng}, and
Leibniz \mcite{BaiLiuShengTang} settings. We establish its
NS-Leibniz version below.
\begin{defi}\mlabel{def:quadratic-rb}
A quadratic Rota--Baxter NS-Leibniz algebra of weight $\lambda$ is
$(A,\succ,\prec,\vee,P,\omega)$, where $(A,\succ,\prec,\vee,\omega)$
is quadratic, $P$ is a Rota--Baxter operator of weight $\lambda$
for all three operations, and
\begin{equation}\mlabel{eq:rb-form}
 \omega(Px,y)+\omega(x,Py)+\lambda\omega(x,y)=0.
\end{equation}
Equivalently, $P+P^\dagger=-\lambda I$, where $P^\dagger$ is the
ordinary $\omega$-adjoint.
A Rota--Baxter Leibniz algebra with a compatible symmetric form is
$(A,\circ,P,\omega)$, where $\omega$ is nondegenerate and symmetric,
$P$ is Rota--Baxter of weight $\lambda$ for $\circ$, and
\meqref{eq:rb-form} holds.
\end{defi}

\begin{pro}\mlabel{prop:rb-complement}
In either setting of Definition~\mref{def:quadratic-rb}, replacing
$P$ by $-\lambda I-P$ preserves the structure and its weight.
\end{pro}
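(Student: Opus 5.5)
The plan is to verify the Rota--Baxter identity and the adjoint condition for $P'=-\lambda I-P$ directly. Both settings are handled at once: in the NS-Leibniz setting the argument is applied to each $\diamond\in\{\succ,\prec,\vee\}$, and in the Leibniz setting only to $\diamond=\circ$. The underlying quadratic structure, meaning $\omega$ together with \meqref{eq:inv-p}--\meqref{eq:inv-s} or the plain nondegenerate symmetric form, does not involve $P$, so it is unchanged. Only two things need checking: that $P'$ is Rota--Baxter of weight $\lambda$, and that $P'$ satisfies \meqref{eq:rb-form}.

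For the Rota--Baxter identity, write the weight-$\lambda$ condition as
\[
 Px\diamond Py=P\bigl(Px\diamond y+x\diamond Py+\lambda x\diamond y\bigr).
\]
Expand $P'x\diamond P'y=(\lambda x+Px)\diamond(\lambda y+Py)$, which gives $\lambda^2x\diamond y+\lambda(Px\diamond y+x\diamond Py)+Px\diamond Py$. Separately, compute
\[
 P'\bigl(P'x\diamond y+x\diamond P'y+\lambda x\diamond y\bigr)
 =-(\lambda I+P)\bigl(-\lambda x\diamond y-Px\diamond y-x\diamond Py\bigr).
\]
This equals $(\lambda I+P)\bigl(\lambda x\diamond y+Px\diamond y+x\diamond Py\bigr)$. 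Expanding and replacing $P(Px\diamond y+x\diamond Py+\lambda x\diamond y)$ by $Px\diamond Py$ through the original identity, the two sides agree term by term.

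For the form condition, $P'+P'^\dagger=-2\lambda I-(P+P^\dagger)=-2\lambda I+\lambda I=-\lambda I$. This is \meqref{eq:rb-form} for $P'$, using linearity of the $\omega$-adjoint and $I^\dagger=I$.

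No step is a genuine obstacle. The only point needing care is the sign bookkeeping in the Rota--Baxter expansion: one must use the original identity exactly once, to absorb the $P(\cdots)$ term. The map $P\mapsto -\lambda I-P$ is an involution, so the statement is symmetric in $P$ and $P'$.
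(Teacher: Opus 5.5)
Your proof is correct and follows essentially the same route as the paper: expand $(-\lambda I-P)x\diamond(-\lambda I-P)y$, use the weight-$\lambda$ identity for $P$ once to absorb $P(Px\diamond y+x\diamond Py+\lambda x\diamond y)$, and check the form condition by linearity. The only cosmetic difference is that you phrase the form condition as $P+P^\dagger=-\lambda I$ rather than pairing directly with $\omega$, which is equivalent by nondegeneracy of $\omega$.
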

\begin{proof}
Put $Q=-\lambda I-P$ and
$W=Px\diamond y+x\diamond Py+\lambda x\diamond y$.
The Rota--Baxter identity for $P$ implies
\[
 Qx\diamond Qy=\lambda^2x\diamond y
 +\lambda(Px\diamond y+x\diamond Py)+P(W)
 =Q(-W).
\]
Since $Qx\diamond y+x\diamond Qy+\lambda x\diamond y=-W$,
$Q$ is Rota--Baxter of weight $\lambda$. The form condition follows
from
\[
 \omega(Qx,y)+\omega(x,Qy)+\lambda\omega(x,y)
 =-\omega(Px,y)-\omega(x,Py)-\lambda\omega(x,y)=0.
\]
The substitution $P\mapsto-\lambda I-P$ is an involution.
\end{proof}

A Rota--Baxter operator on the total Leibniz algebra
determines compatible operators for the three products
through the invariant bilinear form.

\begin{thm}\mlabel{thm:rb-metric-splitting}
Let $(A,\circ,P,\omega)$ be a Rota--Baxter Leibniz algebra with a
compatible symmetric form of weight $\lambda$.
The operations \meqref{eq:metric-construct} make
$(A,\succ,\prec,\vee,P,\omega)$ a quadratic Rota--Baxter NS-Leibniz
algebra of weight $\lambda$. Conversely, summing the products of any
quadratic Rota--Baxter NS-Leibniz algebra produces such a
Rota--Baxter Leibniz algebra. These constructions are inverse.
\end{thm}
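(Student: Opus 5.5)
The plan is to check the Rota--Baxter identity for $\succ$ and $\prec$ by pairing with an arbitrary $z$ and moving $P$ across $\omega$, and then to obtain $\vee$ by linearity. Throughout we use the complementary operator
\[
Q=-\lambda I-P=P^\dagger,
\]
so that $\omega(Pu,v)=\omega(u,Qv)$ for all $u,v$. By Proposition~\mref{prop:rb-complement}, $Q$ is also a Rota--Baxter operator of weight $\lambda$ for $\circ$.

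First fix the products. Theorem~\mref{thm:metric-splitting} already gives a unique quadratic NS-Leibniz algebra $(A,\succ,\prec,\vee,\omega)$ with total product $\circ$, and \meqref{eq:rb-form} is a condition on $P$ and $\omega$ alone. So only the three Rota--Baxter identities for $P$ remain to be checked.

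For $\succ$, pair both sides of the weight-$\lambda$ identity with $z$ and apply \meqref{eq:inv-s} together with the adjoint relation. The left side becomes
\[
\omega(Px\succ Py,z)=-\omega(Py,Px\circ z)=-\omega\bigl(y,Q(Px\circ z)\bigr).
\]
On the right side, $\omega(P(w),z)=\omega(w,Qz)$, and the three terms of $w=Px\succ y+x\succ Py+\lambda x\succ y$ give
\[
-\omega(y,Px\circ Qz),\qquad -\omega\bigl(y,Q(x\circ Qz)\bigr),\qquad -\lambda\,\omega(y,x\circ Qz).
\]
By nondegeneracy, the $\succ$ identity is therefore equivalent to
\[
Q(Px\circ z)=Px\circ Qz+Q(x\circ Qz)+\lambda\, x\circ Qz .
\]
Substitute $Px=-\lambda x-Qx$. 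Then $Px\circ Qz+\lambda x\circ Qz=-Qx\circ Qz$, and the required identity becomes
\[
-Q(\lambda x\circ z+Qx\circ z+x\circ Qz)=-Qx\circ Qz.
\]
This is exactly the Rota--Baxter identity of weight $\lambda$ for $Q$ on $\circ$.

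The case $\prec$ follows the same pattern, using \meqref{eq:inv-p} in the form $\omega(Px\prec Py,z)=\omega(x,Q(Py\star z))$. Here $P$ and $Q$ are Rota--Baxter operators for the symmetrized product $\star$, since that identity is linear in the product and passing from $\circ$ to $\star$ only symmetrizes it.

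The identity for $\vee$ then needs no separate computation. The Rota--Baxter identity is linear in the bilinear operation, and $\vee=\circ-\prec-\succ$.

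For the converse, start from a quadratic Rota--Baxter NS-Leibniz algebra. Summing the three Rota--Baxter identities gives the weight-$\lambda$ identity for $\circ$. The form condition \meqref{eq:rb-form} is unchanged, and $(A,\circ)$ is Leibniz by Definition~\mref{def:ns}.

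The two constructions are inverse by the uniqueness statement of Theorem~\mref{thm:metric-splitting}. In one direction, summing the split products recovers $\circ$. In the other, a quadratic NS-Leibniz structure with a given form and given total product is the one determined by \meqref{eq:metric-construct}. The operator $P$ and the form $\omega$ are untouched in both directions.

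The main obstacle is the sign bookkeeping in the $\prec$ case. There, $\star$ brings in terms with $z$ in both the left and right slot, and these must be regrouped into the Rota--Baxter identity for $Q$ on $\star$ after the substitution $P=-\lambda I-Q$.
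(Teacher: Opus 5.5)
Your proof is correct and follows essentially the paper's route. You pair the $\succ$ and $\prec$ Rota--Baxter identities with $z$, move $P$ across $\omega$ via $P^\dagger=-\lambda I-P$, and conclude by nondegeneracy. You get $\vee$ by linearity, and you use summation together with uniqueness in Theorem~\ref{thm:metric-splitting} for the converse and for inverseness. The only difference is cosmetic: you reduce to the Rota--Baxter identity for $Q=-\lambda I-P$ (on $\circ$, and on $\star$ for $\prec$, via Proposition~\ref{prop:rb-complement}), whereas the paper reduces directly to the defect $D_\circ$ of $P$, and these two defects are identical.
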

\begin{proof}
Theorem~\mref{thm:metric-splitting} defines the quadratic
NS-Leibniz products. Denote the Rota--Baxter defects by
\[
 D_\diamond(x,y)=Px\diamond Py
 -P(Px\diamond y+x\diamond Py+\lambda x\diamond y).
\]
The form condition is
$\omega(Pu,v)=-\omega(u,(P+\lambda I)v)$.
Using this equality and \meqref{eq:metric-construct}, we obtain
\begin{align}
 \omega(D_\succ(x,y),z)&=-\omega(y,D_\circ(x,z)),\mlabel{eq:rb-defect-s}\\
 \omega(D_\prec(x,y),z)&=\omega(x,D_\circ(y,z)+D_\circ(z,y)).
 \mlabel{eq:rb-defect-p}
\end{align}
Indeed, for $\succ$,
\begin{align*}
 \omega(D_\succ(x,y),z)
 &=\omega\bigl(y,(P+\lambda I)(Px\circ z)
                  +P(x\circ(P+\lambda I)z)-Px\circ(P+\lambda I)z\bigr)\\
 &=-\omega(y,D_\circ(x,z)).
\end{align*}
For $\prec$, we obtain
\begin{align*}
 \omega(D_\prec(x,y),z)
 &=\omega\bigl(x,-(P+\lambda I)(Py\circ z+z\circ Py)\bigr)\\
 &\quad+\omega\bigl(x,-P(y\circ(P+\lambda I)z+(P+\lambda I)z\circ y)\bigr)\\
 &\quad+\omega\bigl(x,Py\circ(P+\lambda I)z+(P+\lambda I)z\circ Py\bigr)\\
 &=\omega(x,D_\circ(y,z)+D_\circ(z,y)).
\end{align*}
Since $D_\circ=0$, nondegeneracy of $\omega$ yields
$D_\succ=D_\prec=0$. Also
$D_\vee=D_\circ-D_\succ-D_\prec=0$.
Conversely, summing the NS Rota--Baxter identities yields the
Leibniz Rota--Baxter identity, while the form condition is unchanged.
Uniqueness in Theorem~\mref{thm:metric-splitting} proves that the
correspondences are inverse.
\end{proof}

\begin{rmk}\mlabel{rem:rb-symplectic-specialization}
By \meqref{eq:defect}, $\omega$ is symplectic for $\circ$ if and
only if $\vee=0$. Theorem~\mref{thm:rb-metric-splitting} then reduces
to the correspondence between Rota--Baxter symplectic Leibniz
algebras and quadratic Rota--Baxter Leibniz-dendriform algebras.
\end{rmk}

For a nondegenerate form $\omega$, write
$\langle \omega^\sharp(x),y\rangle=\omega(x,y)$ and define its inverse tensor
$r_\omega$ by
\begin{equation}\mlabel{eq:inverse-form-tensor}
 T_{r_\omega}=(\omega^\sharp)^{-1}.
\end{equation}

\begin{lem}\mlabel{lem:inverse-invariant-form}
For an NS-Leibniz algebra $A$ and a nondegenerate bilinear form
$\omega$, the form makes $A$ quadratic if and only if $r_\omega$ is
symmetric and invariant.
\end{lem}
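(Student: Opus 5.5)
We plan to reduce the statement to Lemma~\mref{lem:q-intertwining}, applied with $q=r_\omega$ and $T_q=T_{r_\omega}=(\omega^\sharp)^{-1}$. First we handle symmetry. By \meqref{eq:rsharp}, $T_{r_\omega}^{\mathsf t}=T_{\tau(r_\omega)}$, and $\omega$ is symmetric exactly when $(\omega^\sharp)^{\mathsf t}=\omega^\sharp$ under $A^{**}\cong A$. Since transposition commutes with inversion, $\omega$ is symmetric if and only if $T_{r_\omega}^{\mathsf t}=T_{r_\omega}$, that is, if and only if $r_\omega=\tau(r_\omega)$. Both sides of the claimed equivalence include symmetry, so from now on we may assume that $\omega$ and $r_\omega$ are symmetric.

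Next, with symmetry in hand, Lemma~\mref{lem:q-intertwining} says that invariance of $r_\omega$ is equivalent to the six intertwining relations \meqref{eq:T_q-intertwining}:
\[
T_q l_\diamond(x)=L_\diamond(x)T_q,\qquad T_q r_\diamond(x)=R_\diamond(x)T_q,
\]
where $l_\diamond,r_\diamond$ are the dual regular actions \meqref{eq:dual-regular}. Since $T_q=(\omega^\sharp)^{-1}$ is invertible, these relations are equivalent to
\[
\omega^\sharp L_\diamond(x)=l_\diamond(x)\omega^\sharp,\qquad \omega^\sharp R_\diamond(x)=r_\diamond(x)\omega^\sharp .
\]
In other words, $\omega^\sharp$ is an isomorphism from the regular representation to the dual regular representation.

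Finally, we invoke the (unnumbered) lemma after Corollary~\mref{cor:dual-regular}. For a nondegenerate symmetric $\omega$, that lemma identifies the property of $\omega^\sharp$ being such an isomorphism with the invariance identities \meqref{eq:inv-p}--\meqref{eq:inv-v-right}. By the discussion after Definition~\mref{def:quadratic}, \meqref{eq:inv-v-left} and \meqref{eq:inv-v-right} follow from \meqref{eq:inv-p}, \meqref{eq:inv-s} and symmetry. So these identities are equivalent to $A$ being quadratic with form $\omega$. Chaining the three equivalences proves the lemma.

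The main obstacle is bookkeeping of conventions: checking that the transpose and sign conventions match between the contraction $T_q$ in Lemma~\mref{lem:q-intertwining} and the map $\omega^\sharp$ in the earlier lemma. I would check one relation explicitly as a sanity test, for instance $T_qL_\succ^*(x)$ against $L_\succ(x)T_q$. Pairing $\omega(x\succ y,z)=-\omega(y,x\circ z)$ gives
\[
\omega^\sharp L_\succ(x)=L_\circ^*(x)\omega^\sharp,
\]
which matches $l_\succ=L_\circ^*$ in \meqref{eq:dual-regular}. Inverting this gives $L_\succ(x)T_q=T_qL_\circ^*(x)$, as listed in the proof of Lemma~\mref{lem:q-intertwining}. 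The remaining five relations would then follow by the same pattern.
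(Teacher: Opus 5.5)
Your proposal is correct and follows the paper's proof. Symmetry of $\omega$ is matched with symmetry of $r_\omega$, Lemma~\mref{lem:q-intertwining} turns invariance into the six intertwining relations for $T_{r_\omega}=(\omega^\sharp)^{-1}$, and inverting these gives the intertwining relations for $\omega^\sharp$. You simply make explicit the appeal to the unnumbered lemma after Corollary~\mref{cor:dual-regular} and the reduction of \meqref{eq:inv-v-left}--\meqref{eq:inv-v-right} to \meqref{eq:inv-p}--\meqref{eq:inv-s}, both of which the paper leaves implicit.
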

\begin{proof}
Symmetry of $\omega$ is equivalent to symmetry of $r_\omega$.
For symmetric $r_\omega$, Lemma~\mref{lem:q-intertwining}
characterizes invariance by the intertwining relations for
$T_{r_\omega}$. Their inverses are the intertwining relations for
$\omega^\sharp$.
\end{proof}

\begin{pro}\mlabel{prop:rb-tensor-transport}
Let $(A,\succ,\prec,\vee,\omega)$ be quadratic and assume that
$q=r+\tau(r)$ is invariant. Put $P=T_r\omega^\sharp$.
Then $r$ satisfies the NS-Leibniz Yang--Baxter equation
if and only if
\begin{equation}\mlabel{eq:rb-tensor-transport}
 Px\diamond Py=P\bigl(Px\diamond y+x\diamond Py
                   -x\diamond T_q\omega^\sharp(y)\bigr),
 \qquad \diamond\in\{\succ,\prec,\vee\}.
\end{equation}
The correction also satisfies
$x\diamond T_q\omega^\sharp(y)=T_q\omega^\sharp(x)\diamond y$.
\end{pro}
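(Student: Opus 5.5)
The plan is to transport the operator identity \meqref{eq:weighted-T_r} of Theorem~\mref{thm:quasitriangular} along the isomorphism $\omega^\sharp:A\to A^*$. By that theorem, since $q$ is invariant, $S(r)=0$ if and only if
\[
T_r\zeta\diamond T_r\eta=T_r\bigl(l_\diamond(T_r\zeta)\eta+r_\diamond(T_r\eta)\zeta-C_\diamond(\zeta,\eta)\bigr)
\]
for all $\zeta,\eta\in A^*$ and each $\diamond\in\{\succ,\prec,\vee\}$. Put $\zeta=\omega^\sharp(x)$ and $\eta=\omega^\sharp(y)$. Since $\omega^\sharp$ is bijective, the identity for all $\zeta,\eta$ is equivalent to the identity for all $x,y$. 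The left-hand side then becomes $Px\diamond Py$.

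Next I rewrite the three terms inside $T_r$. By the lemma preceding Definition~\mref{def:matched}, $\omega^\sharp$ is an isomorphism from the regular representation to the dual regular representation. This gives $l_\diamond(z)\omega^\sharp(y)=\omega^\sharp(z\diamond y)$ and $r_\diamond(z)\omega^\sharp(x)=\omega^\sharp(x\diamond z)$. Therefore
\[
l_\diamond(Px)\omega^\sharp(y)=\omega^\sharp(Px\diamond y),\qquad r_\diamond(Py)\omega^\sharp(x)=\omega^\sharp(x\diamond Py).
\]
For the correction term, \meqref{eq:q-product} gives $C_\diamond(\zeta,\eta)=l_\diamond(T_q\zeta)\eta=r_\diamond(T_q\eta)\zeta$. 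Applying the intertwining property once more, this equals
\[
\omega^\sharp\bigl(T_q\omega^\sharp(x)\diamond y\bigr)=\omega^\sharp\bigl(x\diamond T_q\omega^\sharp(y)\bigr).
\]
The two expressions for $C_\diamond$ coincide, and $\omega^\sharp$ is injective. This proves the final claim $x\diamond T_q\omega^\sharp(y)=T_q\omega^\sharp(x)\diamond y$.

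Substituting these into $T_r(\cdots)$ and using $T_r\omega^\sharp=P$ gives exactly \meqref{eq:rb-tensor-transport}. Every step is a reversible substitution through a bijection, so the equivalence follows.

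The only point needing care is the convention for the right dual action. One must check that $r_\diamond(z)\omega^\sharp(x)=\omega^\sharp(x\diamond z)$ holds, rather than some transposed version. This is precisely the intertwining relation $\omega^\sharp R_\diamond(z)=r_\diamond(z)\omega^\sharp$ supplied by that lemma for the dual regular representation \meqref{eq:dual-regular}. I would verify it directly in one case, for instance for $\diamond=\prec$ using \meqref{eq:inv-p}, to confirm the signs.
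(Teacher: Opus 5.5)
Your proposal is correct and follows the paper's proof. Both substitute $\zeta=\omega^\sharp(x)$, $\eta=\omega^\sharp(y)$ into \meqref{eq:weighted-T_r}, rewrite the three terms using the intertwining of $\omega^\sharp$ with the dual regular actions together with \meqref{eq:q-product}, obtain $x\diamond T_q\omega^\sharp(y)=T_q\omega^\sharp(x)\diamond y$ from injectivity of $\omega^\sharp$, and conclude with Theorem~\mref{thm:quasitriangular}. The sign check you planned also works out: since $r_\prec=-L_\star^*$, \meqref{eq:inv-p} gives $r_\prec(z)\omega^\sharp(x)=\omega^\sharp(x\prec z)$, as you need.
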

\begin{proof}
Set $\zeta=\omega^\sharp(x)$ and $\eta=\omega^\sharp(y)$.
By the intertwining relations and \meqref{eq:q-product},
\begin{align*}
 l_\diamond(T_r\zeta)\eta&=\omega^\sharp(Px\diamond y),\\
 r_\diamond(T_r\eta)\zeta&=\omega^\sharp(x\diamond Py),\\
 C_\diamond(\zeta,\eta)
 &=\omega^\sharp(x\diamond T_q\omega^\sharp(y))
  =\omega^\sharp(T_q\omega^\sharp(x)\diamond y).
\end{align*}
Substitute in \meqref{eq:weighted-T_r} and apply
Theorem~\mref{thm:quasitriangular}.
\end{proof}

\begin{lem}\mlabel{lem:rb-symmetric-part}
For a nondegenerate symmetric form $\omega$, a tensor $r$, and
$P=T_r\omega^\sharp$, the equality
\begin{equation}\mlabel{eq:rb-symmetric-part}
 r+\tau(r)=-\lambda r_\omega
\end{equation}
is equivalent to \meqref{eq:rb-form}.
\end{lem}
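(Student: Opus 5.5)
The plan is to translate both sides of the claimed equivalence into statements about linear maps $A^*\to A$ and compare them through $\omega^\sharp$. Everything reduces to the identities $T_{\tau(r)}=T_r^{\mathsf t}$ from \meqref{eq:rsharp} and $T_{r_\omega}=(\omega^\sharp)^{-1}$ from \meqref{eq:inverse-form-tensor}.

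Since $r\mapsto T_r$ is a linear isomorphism $A\o A\to\operatorname{Hom}(A^*,A)$, the equality \meqref{eq:rb-symmetric-part} is equivalent to
\[
 T_r+T_r^{\mathsf t}=-\lambda(\omega^\sharp)^{-1}.
\]
Because $\omega^\sharp$ is invertible, I compose on the right with $\omega^\sharp$ and pair with $\omega^\sharp(x)$ on the left. This turns the identity into the scalar identity
\[
 \langle\omega^\sharp(x),T_r\omega^\sharp(y)\rangle
 +\langle\omega^\sharp(x),T_r^{\mathsf t}\omega^\sharp(y)\rangle
 =-\lambda\langle\omega^\sharp(x),y\rangle
\]
for all $x,y\in A$, and the argument is reversible.

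Next I identify each term. The first term is $\omega(x,T_r\omega^\sharp y)=\omega(x,Py)$. The right-hand side is $-\lambda\omega(x,y)$. For the middle term I use the transpose: it equals $\langle T_r\omega^\sharp(x),\omega^\sharp(y)\rangle=\omega(y,Px)$. By symmetry of $\omega$ this is $\omega(Px,y)$. Substituting gives exactly $\omega(Px,y)+\omega(x,Py)+\lambda\omega(x,y)=0$, which is \meqref{eq:rb-form}. Every step is an equivalence, using only nondegeneracy and symmetry of $\omega$.

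The only delicate step is keeping the transpose conventions straight. One must confirm that $\omega^\sharp$ is self-transpose under $A^{**}\cong A$ (this is where symmetry of $\omega$ is used), and that $T_r^{\mathsf t}=T_{\tau(r)}$ with the pairing order fixed in \meqref{eq:rsharp}. I would verify this on a decomposable tensor $r=a\o b$: in that case $T_r\zeta=\zeta(a)b$ and $T_{\tau(r)}\zeta=\zeta(b)a$, and the check is immediate.
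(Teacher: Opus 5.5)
Your proposal is correct and follows essentially the paper's argument. Both evaluate $r+\tau(r)+\lambda r_\omega$ against $\omega^\sharp(x)\o\omega^\sharp(y)$, identify the three resulting pairings with $\omega(Px,y)$, $\omega(x,Py)$ and $\lambda\omega(x,y)$, and conclude by surjectivity of $\omega^\sharp$; your detour through the map identity $T_r+T_r^{\mathsf t}=-\lambda(\omega^\sharp)^{-1}$ is the same computation phrased in terms of contraction maps rather than tensors.
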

\begin{proof}
For $\zeta=\omega^\sharp(x)$ and $\eta=\omega^\sharp(y)$,
\[
 \omega(Px,y)+\omega(x,Py)+\lambda\omega(x,y)
 =\langle r+\tau(r)+\lambda r_\omega,\zeta\o\eta\rangle.
\]
Since $\omega^\sharp$ is surjective, the right-hand side vanishes
for all $x,y$ if and only if \meqref{eq:rb-symmetric-part} holds.
\end{proof}

\begin{cor}\mlabel{cor:rb-zero-triangular}
Every quadratic Rota--Baxter NS-Leibniz algebra of weight zero
produces a triangular NS-Leibniz bialgebra on $A$, with tensor
$T_r=P(\omega^\sharp)^{-1}$ and the three coboundary maps
\meqref{eq:delta-r-s}--\meqref{eq:delta-r-v}.
\end{cor}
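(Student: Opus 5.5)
The plan is to specialize Proposition~\mref{prop:rb-tensor-transport} and Lemma~\mref{lem:rb-symmetric-part} to $\lambda=0$. First I define $r$ as the unique tensor with $T_r=P(\omega^\sharp)^{-1}$, so that $P=T_r\omega^\sharp$. Since $\lambda=0$, the form condition \meqref{eq:rb-form} reads $\omega(Px,y)+\omega(x,Py)=0$. Lemma~\mref{lem:rb-symmetric-part} with $\lambda=0$ then gives $r+\tau(r)=0$, so $r$ is skew-symmetric and $q=0$. In particular $q$ is trivially invariant: every invariance condition \meqref{eq:q-inv-s}--\meqref{eq:q-inv-v} is linear in $q$. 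Hence the hypotheses of Proposition~\mref{prop:rb-tensor-transport} hold, using that $(A,\succ,\prec,\vee,\omega)$ is quadratic by assumption.

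Next, with $q=0$ we have $T_q=0$, so the correction term $x\diamond T_q\omega^\sharp(y)$ in \meqref{eq:rb-tensor-transport} vanishes. The condition becomes $Px\diamond Py=P(Px\diamond y+x\diamond Py)$ for each $\diamond\in\{\succ,\prec,\vee\}$. This is exactly the weight-zero Rota--Baxter identity, which holds by hypothesis. Proposition~\mref{prop:rb-tensor-transport} therefore yields that $r$ satisfies the NS-Leibniz Yang--Baxter equation \meqref{eq:ns-ybe}.

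Finally, $r$ is a skew-symmetric solution of \meqref{eq:ns-ybe}. Theorem~\mref{thm:skew-coboundary} (or Theorem~\mref{thm:quasitriangular} with $q=0$) then shows that the coboundary maps \meqref{eq:delta-r-s}--\meqref{eq:delta-r-v} define an NS-Leibniz bialgebra. It is quasi-triangular because $S(r)=0$ and $q=0$ is invariant, and it is triangular because $r$ is skew-symmetric, as in Definition~\mref{def:non-skew-ybe}.

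There is no real obstacle here. The only point needing care is a sign and convention check: that $P=T_r\omega^\sharp$ matches $T_r=P(\omega^\sharp)^{-1}$, and that Lemma~\mref{lem:rb-symmetric-part} applies to an arbitrary tensor $r$ without assuming $q$ invariant. That lemma is stated for arbitrary $r$, so this should go through directly.
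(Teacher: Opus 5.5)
Your proposal is correct and follows the paper's proof. Lemma~\mref{lem:rb-symmetric-part} at $\lambda=0$ gives $r+\tau(r)=0$, so $q=0$ is trivially invariant. Proposition~\mref{prop:rb-tensor-transport} with $T_q=0$ then turns the weight-zero Rota--Baxter identity into the NS-Leibniz Yang--Baxter equation, and Theorem~\mref{thm:skew-coboundary} yields the bialgebra, which is triangular because $r$ is skew-symmetric.
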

\begin{proof}
By Lemma~\mref{lem:rb-symmetric-part}, $r$ is skew-symmetric.
Proposition~\mref{prop:rb-tensor-transport} implies the tensor
equations with $q=0$. Apply Theorem~\mref{thm:skew-coboundary}.
\end{proof}

We now establish the correspondence between quadratic
Rota--Baxter NS-Leibniz algebras and factorizable
NS-Leibniz bialgebras.

\begin{thm}\mlabel{thm:rb-factorizable-weight}
Fix $\lambda\ne0$. Factorizable quasi-triangular tensors $r$ on $A$
are in one-to-one correspondence with quadratic Rota--Baxter
NS-Leibniz structures $(P,\omega)$ of weight $\lambda$ on $A$.
For a tensor $r$, the corresponding data are
\begin{equation}\mlabel{eq:rb-factorizable-weight}
 \omega(x,y)=-\lambda\langle T_{r+\tau(r)}^{-1}x,y\rangle,
 \qquad P=T_r\omega^\sharp.
\end{equation}
Conversely, the tensor associated with $(P,\omega)$ is determined by
\begin{equation}\mlabel{eq:rb-factorizable-converse}
 T_r=P(\omega^\sharp)^{-1},\qquad r+\tau(r)=-\lambda r_\omega.
\end{equation}
The correspondence concerns bialgebras equipped with their defining tensor.
\end{thm}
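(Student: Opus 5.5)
The plan is to reduce the statement to Theorem~\mref{thm:factorizable-operator}, which handles weight $-1$ with the form $\omega_1(x,y)=\langle T_q^{-1}x,y\rangle$ and $P_1=T_rT_q^{-1}$. Everything will be rescaled by $\lambda$. Two facts about rescaling drive the argument. First, if $\omega$ makes $A$ quadratic, so does $c\,\omega$ for any $c\ne0$, because \meqref{eq:inv-p} and \meqref{eq:inv-s} are linear in $\omega$. Second, if $P_1$ is Rota--Baxter of weight $-1$, then $P=-\lambda P_1$ is Rota--Baxter of weight $\lambda$. Indeed,
\[
Px\diamond Py=\lambda^2 P_1x\diamond P_1y=\lambda^2P_1(P_1x\diamond y+x\diamond P_1y-x\diamond y),
\]
while
\[
P(Px\diamond y+x\diamond Py+\lambda x\diamond y)=-\lambda P_1\bigl(-\lambda(P_1x\diamond y+x\diamond P_1y)+\lambda x\diamond y\bigr),
\]
and the two right-hand sides agree. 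The reverse substitution $P_1=-\lambda^{-1}P$ works the same way, which uses $\lambda\ne0$.

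For the forward direction, start from a factorizable quasi-triangular $r$ and set $\omega=-\lambda\omega_1$, which is \meqref{eq:rb-factorizable-weight}. Then $\omega^\sharp=-\lambda T_q^{-1}$, so
\[
P=T_r\omega^\sharp=-\lambda T_rT_q^{-1}=-\lambda P_1.
\]
Theorem~\mref{thm:factorizable-operator} shows that $\omega_1$ is quadratic and that $P_1$ satisfies \meqref{eq:weight-minus-one}, so by the two rescaling facts $(P,\omega)$ is quadratic Rota--Baxter of weight $\lambda$ for all three products. For the form condition \meqref{eq:rb-form}, note that $r_\omega$ has $T_{r_\omega}=(\omega^\sharp)^{-1}=-\lambda^{-1}T_q$, so $r+\tau(r)=q=-\lambda r_\omega$. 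Lemma~\mref{lem:rb-symmetric-part} then gives \meqref{eq:rb-form}. Alternatively, $P_1+P_1^\dagger=I$ gives $P+P^\dagger=-\lambda I$ directly, since the adjoint is the same for $\omega$ and for $\omega_1$.

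For the converse, start from $(P,\omega)$ and define $r$ by \meqref{eq:rb-factorizable-converse}. This is consistent: Lemma~\mref{lem:rb-symmetric-part} gives $r+\tau(r)=-\lambda r_\omega$, so $T_q=-\lambda(\omega^\sharp)^{-1}$ is invertible. Lemma~\mref{lem:inverse-invariant-form} shows that $r_\omega$ is symmetric and invariant, hence so is $q$. Now put $\omega_1=-\lambda^{-1}\omega$ and $P_1=-\lambda^{-1}P$. Then $(\omega_1,P_1)$ satisfies the hypotheses of Theorem~\mref{thm:factorizable-operator}, with $P_1+P_1^\dagger=I$. Moreover $(\omega_1^\sharp)^{-1}=-\lambda(\omega^\sharp)^{-1}=T_q$ and
\[
P_1T_q=P(\omega^\sharp)^{-1}=T_r.
\]
So the tensor produced by that theorem is exactly our $r$, and $r$ is factorizable quasi-triangular. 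A direct alternative is Proposition~\mref{prop:rb-tensor-transport}, whose correction term reduces to $\lambda\,x\diamond y$ because $T_q\omega^\sharp=-\lambda I$.

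Finally, the two assignments are mutually inverse. Starting from $r$, we recover it because $P(\omega^\sharp)^{-1}=T_r$. Starting from $(P,\omega)$, we recover $\omega$ because $-\lambda T_q^{-1}=\omega^\sharp$, and then $P=T_r\omega^\sharp$. The main obstacle is bookkeeping rather than any new idea: the signs and the factors of $\lambda$ must be tracked consistently through $\omega^\sharp$, $r_\omega$ and the adjoint, and it has to be checked that the weight $-1$ theorem is applied with exactly the rescaled form $\omega_1$, so that its $T_q$ matches ours.
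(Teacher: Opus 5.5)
Your proof is correct, but it follows a different route from the paper's. The paper never passes through the weight $-1$ theorem.

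\textbf{The paper's route.} In the forward direction it takes quadraticity of $\omega(x,y)=-\lambda\langle T_q^{-1}x,y\rangle$ from Theorem~\mref{thm:factorizable-double} and notes that $T_q\omega^\sharp=-\lambda I$. It then feeds this into Proposition~\mref{prop:rb-tensor-transport}. There the correction term $-x\diamond T_q\omega^\sharp(y)$ becomes $\lambda\, x\diamond y$, which gives the weight-$\lambda$ Rota--Baxter identity at once. The form condition comes from Lemma~\mref{lem:rb-symmetric-part} via $r_\omega=-\lambda^{-1}q$. The converse uses the same proposition to get $H_\diamond(r)=0$ and concludes with Theorem~\mref{thm:quasitriangular}. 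This is exactly the ``direct alternative'' you mention in passing.

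\textbf{Your route.} Your main argument normalizes $(\omega,P)\mapsto(-\lambda^{-1}\omega,-\lambda^{-1}P)$ and invokes Theorem~\mref{thm:factorizable-operator}. Your rescaling checks are all correct: the Rota--Baxter identity, $P+P^\dagger$, $(\omega_1^\sharp)^{-1}=T_q$ and $P_1T_q=T_r$.

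\textbf{Comparison.} Your route shows that the weight-$\lambda$ correspondence is just the weight $-1$ theorem composed with a rescaling. So all nonzero weights are equivalent, and no new contraction computation is needed. The paper's route is uniform in the weight: the same transport proposition and Lemma~\mref{lem:rb-symmetric-part} also handle $\lambda=0$ (Corollary~\mref{cor:rb-zero-triangular}), where your rescaling is unavailable.

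One small citation point: the statement of Theorem~\mref{thm:factorizable-operator} asserts that $\omega_1$ is quadratic, but the actual check of invariance is in Theorem~\mref{thm:factorizable-double}, which is what the paper cites.
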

\begin{proof}
For a factorizable tensor, let $q=r+\tau(r)$.
Theorem~\mref{thm:factorizable-double} implies that
$\omega(x,y)=-\lambda\langle T_q^{-1}x,y\rangle$ is a nondegenerate invariant
symmetric form. Since $\omega^\sharp=-\lambda T_q^{-1}$,
$T_q\omega^\sharp=-\lambda I$.
Proposition~\mref{prop:rb-tensor-transport} then yields the
Rota--Baxter identity of weight $\lambda$.
Also $r_\omega=-\lambda^{-1}q$, so
Lemma~\mref{lem:rb-symmetric-part} yields \meqref{eq:rb-form}.

Conversely, let $(P,\omega)$ be quadratic Rota--Baxter of weight
$\lambda$, and define $r$ by \meqref{eq:rb-factorizable-converse}.
By Lemmas~\mref{lem:inverse-invariant-form} and
\mref{lem:rb-symmetric-part}, its symmetric part is invariant and
\[
 T_q=-\lambda(\omega^\sharp)^{-1}.
\]
The map is invertible because $\lambda\ne0$.
Proposition~\mref{prop:rb-tensor-transport} implies
$H_\diamond(r)=0$, and Theorem~\mref{thm:quasitriangular} proves
that $r$ is factorizable. The formulas in the statement are inverse
to one another.
\end{proof}

The dual regular representation provides the following
cotangent construction of quadratic Rota--Baxter
NS-Leibniz algebras.

\begin{pro}\mlabel{prop:rb-cotangent-lift}
Let $P$ be a Rota--Baxter operator of weight $\lambda$ on an
NS-Leibniz algebra $A$. On the dual regular semidirect product
$D=A\ltimes A^*$, define
\begin{equation}\mlabel{eq:rb-cotangent-lift}
 \widehat P(x,\zeta)=(Px,-P^{\mathsf t}\zeta-\lambda\zeta),
 \qquad
 \omega_D((x,\zeta),(y,\eta))=\zeta(y)+\eta(x).
\end{equation}
Here $P^{\mathsf t}$ denotes the ordinary transpose. Then $(D,\widehat P,\omega_D)$ is
a quadratic Rota--Baxter NS-Leibniz algebra of weight $\lambda$.
For dual bases $(e_i),(e_i^*)$, its tensor is
\begin{equation}\mlabel{eq:rb-cotangent-tensor}
 r=\sum_i\bigl(e_i^*\o P(e_i)
             -(P+\lambda I)(e_i)\o e_i^*\bigr).
\end{equation}
It yields a triangular NS-Leibniz bialgebra when $\lambda=0$,
and a factorizable quasi-triangular one when $\lambda\ne0$.
\end{pro}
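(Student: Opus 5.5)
The proof splits into three parts. First, $(D,\omega_D)$ is a quadratic NS-Leibniz algebra. Second, $\widehat P$ is a Rota--Baxter operator of weight $\lambda$ on $D$ satisfying \meqref{eq:rb-form}. Third, the tensor attached to $(\widehat P,\omega_D)$ by \meqref{eq:rb-factorizable-converse} is \meqref{eq:rb-cotangent-tensor}, after which Corollary~\mref{cor:rb-zero-triangular} and Theorem~\mref{thm:rb-factorizable-weight} finish the argument.

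For the first part, I would view $A\ltimes A^*$ as the canonical double of the trivial bialgebra $(A,\succ,\prec,\vee,0,0,0)$, exactly as in the example following Corollary~\mref{cor:bialgebra-characterizations}. Corollary~\mref{cor:bialgebra-characterizations}(iv) then shows that $\omega_D$ is invariant, so $(D,\omega_D)$ is quadratic. The form condition reduces to a one-line computation:
\[
\omega_D(\widehat P(x,\zeta),(y,\eta))+\omega_D((x,\zeta),\widehat P(y,\eta))+\lambda\omega_D
=-\langle P^{\mathsf t}\zeta,y\rangle-\lambda\zeta(y)+\eta(Px)+\zeta(Py)-\langle P^{\mathsf t}\eta,x\rangle-\lambda\eta(x)+\lambda\zeta(y)+\lambda\eta(x)=0.
\]

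For the Rota--Baxter identity, the $A$-component is exactly the identity for $P$ on $A$. Since $A^*$ is an abelian ideal, the $A^*\diamond A^*$ terms vanish, and by bilinearity it suffices to check the $A^*$-component of $\widehat P x\diamond\widehat P\zeta$ and of $\widehat P\zeta\diamond\widehat P x$. The first of these reads
\[
l_\diamond(Px)(-P^{\mathsf t}-\lambda)\zeta=(-P^{\mathsf t}-\lambda)\bigl(l_\diamond(Px)\zeta+l_\diamond(x)(-P^{\mathsf t}-\lambda)\zeta+\lambda l_\diamond(x)\zeta\bigr),
\]
where $l_\diamond$ is the corresponding dual regular action from \meqref{eq:dual-regular}. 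I would pair both sides with $y\in A$ and transpose. Each dual action is minus the transpose of a combination of $L$'s and $R$'s, and the combinations are linear, namely $L_\circ$, $L_\prec+R_\succ$, $L_\prec$, $L_\star$, $L_\vee$ and $L_\vee+R_\vee$. After transposing, the identity becomes a statement of the form $M(x,\cdot)$ for the operator $M\in\{L_\diamond,R_\diamond\}$ built from these combinations, with $P$ and $Q=-\lambda I-P$ interchanged appropriately. More concretely, it becomes the Rota--Baxter identity for $P$ and $Q$ evaluated on $x$ and $y$. By Proposition~\mref{prop:rb-complement}, $Q$ is Rota--Baxter of weight $\lambda$, and the mixed identity $Px\diamond Qy$-type relation also holds. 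That relation is $Px\diamond y'$ expanded with $Q=-\lambda-P$, and it follows from the Rota--Baxter identity for $P$ alone. Because every action is a linear combination of $L_\diamond$ and $R_\diamond$, it is enough to check that a single $L_\diamond$ and a single $R_\diamond$ transpose correctly. I expect this bookkeeping of transposes and signs to be the main obstacle, and I would carry it out for $L_\diamond^*$ first and then for $R_\diamond^*$.

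Finally, I would compute $T_r=\widehat P(\omega_D^\sharp)^{-1}$. Under $D^*=A^*\oplus A$, $(\omega_D^\sharp)^{-1}(\xi,y)=(y,\xi)$. Hence $T_r(\xi,y)=(Py,-P^{\mathsf t}\xi-\lambda\xi)$, and with the convention $T_r(\zeta)=\sum\zeta(a_i)b_i$ this is precisely \meqref{eq:rb-cotangent-tensor}. Its symmetric part is $-\lambda\sum(e_i\o e_i^*+e_i^*\o e_i)=-\lambda r_{\omega_D}$, matching \meqref{eq:rb-factorizable-converse}. For $\lambda=0$, Corollary~\mref{cor:rb-zero-triangular} then gives a triangular bialgebra. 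For $\lambda\neq0$, Theorem~\mref{thm:rb-factorizable-weight} gives a factorizable quasi-triangular one.
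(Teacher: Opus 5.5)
Your proposal is correct and follows essentially the same route as the paper. You get quadraticity from the double construction, and you reduce the $A^*$-component of the Rota--Baxter identity by transposition to the Rota--Baxter identity for $P$; since that identity is linear in the action, checking it on each $L_\diamond$ and $R_\diamond$ suffices. The form check is the same one-line computation, and $T_r=\widehat P(\omega_D^\sharp)^{-1}$ with $r+\tau(r)=-\lambda r_{\omega_D}$ feeds into Corollary~\ref{cor:rb-zero-triangular} and Theorem~\ref{thm:rb-factorizable-weight}, as in the paper.

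The appeal to Proposition~\ref{prop:rb-complement} is unnecessary. With $\bar Q=-P-\lambda I$, the transposed identity $\bar Q a(Px)=a(Px)\bar Q+\bar Q a(x)\bar Q+\lambda a(x)\bar Q$ expands directly to $a(Px)P=Pa(Px)+Pa(x)P+\lambda Pa(x)$, which is the bookkeeping the paper carries out.
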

\begin{proof}
By Theorem~\mref{thm:double}, $D$ is quadratic for $\omega_D$.
Put $Q=-P^{\mathsf t}-\lambda I$.
For $a=L_\diamond$ or $R_\diamond$, the Rota--Baxter identity reads
\begin{equation}\mlabel{eq:rb-regular-action}
 a(Px)P=P a(Px)+P a(x)P+\lambda P a(x).
\end{equation}
Taking transposes yields
\begin{equation}\mlabel{eq:rb-dual-action}
 a^*(Px)Q=Q\bigl(a^*(Px)+a^*(x)Q+\lambda a^*(x)\bigr).
\end{equation}
Writing $M=a(Px)^{\mathsf t}$, $N=a(x)^{\mathsf t}$ and
$B=P^{\mathsf t}$, the transposed equation is
\[
 BM=MB+BNB+\lambda NB.
\]
Hence
\[
 (-B-\lambda I)(-M+NB)=MB+\lambda M,
\]
which proves \meqref{eq:rb-dual-action}.
The same relation holds for each dual regular action, since these
are linear combinations of the $a^*$.
The $A$ component of the Rota--Baxter identity for $\widehat P$
is the identity for $P$; the $A^*$ component is
\meqref{eq:rb-dual-action} applied to the left and right actions.
There are no internal products on $A^*$.
Moreover,
\begin{align*}
 &\omega_D(\widehat P(x,\zeta),(y,\eta))
 +\omega_D((x,\zeta),\widehat P(y,\eta))
 +\lambda\omega_D((x,\zeta),(y,\eta))\\
 &=\zeta(Py)+Q\zeta(y)+\lambda\zeta(y)
   +\eta(Px)+Q\eta(x)+\lambda\eta(x)=0.
\end{align*}
Therefore the form condition holds. Contraction of
\meqref{eq:rb-cotangent-tensor} yields
$T_r=\widehat P(\omega_D^\sharp)^{-1}$, and
$r+\tau(r)=-\lambda\sum_i(e_i\o e_i^*+e_i^*\o e_i)$.
Corollary~\mref{cor:rb-zero-triangular} treats $\lambda=0$;
Theorem~\mref{thm:rb-factorizable-weight} treats $\lambda\ne0$.
\end{proof}

\begin{pro}\mlabel{prop:rb-transposed-tensor}
Under the correspondence of Theorem~\mref{thm:rb-factorizable-weight},
transposing a factorizable tensor $r$ leaves its quadratic form
unchanged and replaces $P$ by $-\lambda I-P$.
\end{pro}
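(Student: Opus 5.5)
The plan is to track what happens to the data in \meqref{eq:rb-factorizable-weight} when $r$ is replaced by $\tau(r)$. The first step is to check that $\tau(r)$ is again factorizable quasi-triangular. Proposition~\mref{prop:transpose-r} shows that $\tau(r)$ determines a quasi-triangular NS-Leibniz bialgebra with the same symmetric part $q=r+\tau(r)$. Since factorizability (Definition~\mref{def:factorizable}) depends only on invertibility of $T_q$, the tensor $\tau(r)$ is factorizable exactly when $r$ is. Hence Theorem~\mref{thm:rb-factorizable-weight} applies to $\tau(r)$ with the same weight $\lambda$.

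The second step concerns the form. By \meqref{eq:rb-factorizable-weight}, the form attached to $\tau(r)$ is $-\lambda\langle T_{\tau(r)+\tau^2(r)}^{-1}x,y\rangle$. Since $\tau^2=\mathrm{id}$, the subscript is again $q$, so this form coincides with $\omega$. In particular $\omega^\sharp$ is unchanged.

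The third step identifies the operator. The operator attached to $\tau(r)$ is $P'=T_{\tau(r)}\omega^\sharp$. We use $T_{\tau(r)}=T_q-T_r$ and $\omega^\sharp=-\lambda T_q^{-1}$, which was established in the proof of Theorem~\mref{thm:rb-factorizable-weight}. These give
\[
 P'=(T_q-T_r)\omega^\sharp=-\lambda I-T_r\omega^\sharp=-\lambda I-P.
\]
As a consistency check, Proposition~\mref{prop:rb-complement} independently confirms that $(-\lambda I-P,\omega)$ is again a quadratic Rota--Baxter structure of weight $\lambda$. Injectivity of the correspondence then shows that this is exactly the structure attached to $\tau(r)$.

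We do not expect a real obstacle. The only point requiring care is the sign and normalization in $\omega^\sharp=-\lambda T_q^{-1}$, in particular the identity $T_q\omega^\sharp=-\lambda I$. Once this is granted, the computation above is immediate.
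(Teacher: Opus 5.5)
Your proposal is correct and is essentially the paper's argument: the symmetric part $q$ is unchanged, so $\omega$ is unchanged, and $T_{\tau(r)}\omega^\sharp=(T_q-T_r)\omega^\sharp=-\lambda I-P$. The only difference is cosmetic. You obtain factorizability of $\tau(r)$ from Proposition~\mref{prop:transpose-r}, whereas the paper gets it by combining Proposition~\mref{prop:rb-complement} with the converse direction of Theorem~\mref{thm:rb-factorizable-weight}, and it also notes that the coproducts of $\tau(r)$ are $-\Delta_{\diamond,r}$.
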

\begin{proof}
The symmetric part of $r$ is unchanged by transposition, and
\[
 T_{\tau(r)}\omega^\sharp
 =(T_q-T_r)\omega^\sharp=-\lambda I-P.
\]
Apply Proposition~\mref{prop:rb-complement} and
Theorem~\mref{thm:rb-factorizable-weight}.
Since $\Delta_{\diamond,q}=0$, the coproducts for $\tau(r)$ are
$-\Delta_{\diamond,r}$.
\end{proof}

\begin{ex}\mlabel{ex:rb-canonical-specialization}
The zero map on any NS-Leibniz algebra is Rota--Baxter of every
weight. For $P=0$, Proposition~\mref{prop:rb-cotangent-lift} yields
$\widehat P(x,\zeta)=(0,-\lambda\zeta)$ and
$r=-\lambda\sum_i e_i\o e_i^*$.
For $\lambda=-1$, this is the canonical tensor on $A\ltimes A^*$.
For $A$ as in Example~\mref{ex:metric} and $\lambda\ne0$,
both the third product and the third coproduct are nonzero.
\end{ex}

\section{Nijenhuis operators and compatible NS-Leibniz bialgebras}
\mlabel{sec:nijenhuis}
We now apply Nijenhuis operators to the preceding constructions.
For the corresponding Leibniz theory, see
\mcite{MondalSaha,MaSunZheng}.

\begin{defi}\mlabel{def:nijenhuis}
For a bilinear product $\diamond$ and a linear map $N:A\to A$, put
\begin{equation}\mlabel{eq:nijenhuis-product}
 x\diamond_N y=Nx\diamond y+x\diamond Ny-N(x\diamond y).
\end{equation}
The map $N$ is Nijenhuis for this product if
\begin{equation}\mlabel{eq:nijenhuis-torsion}
 Nx\diamond Ny=N(x\diamond_N y).
\end{equation}
A Nijenhuis operator on an NS-Leibniz algebra satisfies this condition
for each of $\succ,\prec,\vee$.
For the total Leibniz algebra, the condition is imposed on $\circ$.
\end{defi}

\begin{pro}\mlabel{prop:rb-nijenhuis-obstruction}
If $P$ is Rota--Baxter of weight $\lambda$ on an NS-Leibniz algebra,
then its Nijenhuis torsion for each product is
\begin{equation}\mlabel{eq:rb-nijenhuis-obstruction}
 Px\diamond Py-P(x\diamond_P y)=P(P+\lambda I)(x\diamond y).
\end{equation}
Consequently, $P$ is Nijenhuis for all three operations if and only if
$P(P+\lambda I)$ annihilates
\[
 A\succ A+A\prec A+A\vee A,
\]
where $A\diamond A$ denotes the span of all products.
In particular, $P^2=-\lambda P$ is sufficient; it is also necessary
when these product spaces span $A$.
\end{pro}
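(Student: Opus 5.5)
The plan is a direct computation for a single product $\diamond$, followed by a reading of the resulting formula. First I expand the Nijenhuis torsion using \meqref{eq:nijenhuis-product} with $N=P$:
\[
 Px\diamond Py-P(x\diamond_P y)
 =Px\diamond Py-P(Px\diamond y+x\diamond Py)+P^2(x\diamond y).
\]
Next I substitute the Rota--Baxter identity of weight $\lambda$,
\[
 Px\diamond Py=P(Px\diamond y+x\diamond Py+\lambda x\diamond y).
\]
The terms $P(Px\diamond y)$ and $P(x\diamond Py)$ cancel. What remains is $\lambda P(x\diamond y)+P^2(x\diamond y)=P(P+\lambda I)(x\diamond y)$, which is \meqref{eq:rb-nijenhuis-obstruction}. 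This step is purely formal and uses nothing about NS-Leibniz identities, so it holds for each of $\succ,\prec,\vee$ separately.

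For the equivalence, $P$ is Nijenhuis for $\diamond$ exactly when the torsion vanishes for all $x,y$. By the formula above, this happens exactly when $P(P+\lambda I)$ kills every product $x\diamond y$. By linearity, that is the same as $P(P+\lambda I)$ vanishing on the span $A\diamond A$. Imposing this for the three products gives annihilation of $A\succ A+A\prec A+A\vee A$.

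For the last sentence, if $P^2=-\lambda P$ then $P(P+\lambda I)=0$ identically, so it is sufficient. Conversely, if the product spaces span $A$, then vanishing on their sum means $P(P+\lambda I)=0$ on all of $A$, so $P^2=-\lambda P$. I expect no real obstacle. The only point needing care is the bookkeeping of signs when substituting the Rota--Baxter identity, and the observation that the spanning hypothesis concerns the sum of the three product spaces, not each one separately.
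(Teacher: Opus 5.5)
Your proposal is correct and follows the paper's proof: substitute the weight-$\lambda$ Rota--Baxter identity into the expanded torsion so that the mixed terms cancel, leaving $P(P+\lambda I)(x\diamond y)$. The equivalence and the statement about $P^2=-\lambda P$ then follow by linearity, exactly as in the paper.
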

\begin{proof}
The Rota--Baxter identity implies
\begin{align*}
 Px\diamond Py-P(Px\diamond y+x\diamond Py-P(x\diamond y))
 &=P(\lambda x\diamond y+P(x\diamond y)).
\end{align*}
Thus the torsion vanishes if and only if $P(P+\lambda I)$ vanishes
on the span of the products. If that span is $A$, this is equivalent
to $P^2=-\lambda P$.
\end{proof}

\begin{thm}
\mlabel{thm:metric-nijenhuis}
Let $(A,\succ,\prec,\vee,\omega)$ be quadratic. Suppose that $N$ is
Nijenhuis on $(A,\circ)$ and is skew-adjoint for $\omega$:
\begin{equation}\mlabel{eq:nijenhuis-skew}
 \omega(Nx,y)+\omega(x,Ny)=0.
\end{equation}
Then $(A,\succ_N,\prec_N,\vee_N,\omega)$ is a quadratic NS-Leibniz
algebra. More generally, for every $a,b\in k$, the products
\begin{equation}\mlabel{eq:ns-compatible-products}
 x\diamond_{a,b}y=a(x\diamond y)+b(x\diamond_N y)
\end{equation}
form a quadratic NS-Leibniz algebra for the same form $\omega$.
In particular, the original and deformed NS-Leibniz algebras are
compatible.
\end{thm}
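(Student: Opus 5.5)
The plan is to reduce everything to Theorem~\mref{thm:metric-splitting}, which says that a quadratic NS-Leibniz structure is the same as a Leibniz product $\circ$ together with a nondegenerate symmetric form $\omega$. The products $\succ,\prec,\vee$ are then recovered uniquely from \meqref{eq:metric-construct}. So it is enough to prove two things. First, the deformed total product $x\circ_{a,b}y=a(x\circ y)+b(x\circ_N y)$ is a Leibniz product. Second, the split products $\diamond_{a,b}$ are precisely the ones that \meqref{eq:metric-construct} attaches to $(\circ_{a,b},\omega)$. The special case $a=0,b=1$ then gives the statement about $(\succ_N,\prec_N,\vee_N)$. Compatibility means that every linear combination is NS-Leibniz, so it is the general $(a,b)$ statement.

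Step 1 (Leibniz part). This is the standard Nijenhuis fact. If $N$ is Nijenhuis on $(A,\circ)$, then $\circ_N$ is a Leibniz product. Moreover, $\circ$ and $\circ_N$ are compatible, i.e.\ $\circ+t\circ_N$ is Leibniz for all $t$. I would check this by writing the Leibniz defect of $\circ_N$ in terms of the defect of $\circ$ and the torsion $T_N(x,y)=Nx\circ Ny-N(x\circ_Ny)$. Expanding $\circ_N$ in each slot, the Leibniz identity for $\circ$ cancels the terms of degree $\le 1$ in $N$. What remains is a combination of $T_N$ composed with $\circ$, which vanishes by \meqref{eq:nijenhuis-torsion}. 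The mixed term, linear in $b$, is the Leibniz defect of $\circ$ applied with one $N$ inserted in all positions, minus $N$ applied to it. It therefore vanishes by the Leibniz identity alone. Bilinearity of the Leibniz defect in the product then gives that $\circ_{a,b}$ is Leibniz. Since $\omega$ is unchanged, it remains nondegenerate and symmetric.

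Step 2 (invariance). I will show that $\prec_N,\succ_N$ satisfy \meqref{eq:inv-p} and \meqref{eq:inv-s} with respect to $\circ_N$. Take $\succ$ first and use skew-adjointness \meqref{eq:nijenhuis-skew} term by term:
\[
\omega(Nx\succ y,z)+\omega(x\succ Ny,z)-\omega(N(x\succ y),z)
=-\omega(y,Nx\circ z)+\omega(y,N(x\circ z))-\omega(y,x\circ Nz).
\]
Here the middle term comes from $\omega(x\succ Ny,z)=-\omega(Ny,x\circ z)=\omega(y,N(x\circ z))$. The third term comes from $-\omega(N(x\succ y),z)=\omega(x\succ y,Nz)=-\omega(y,x\circ Nz)$. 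The right-hand side equals $-\omega(y,x\circ_N z)$. For $\prec$, the same manipulation gives $\omega(x,y\circ_N z+z\circ_N y)$. In that case $N$ moves to the first slot through the symmetrized product, and the three terms of each $\circ_N$ appear exactly once.

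By linearity, $\succ_{a,b}$ and $\prec_{a,b}$ satisfy the invariance identities for $\circ_{a,b}$. Also $\vee_{a,b}=\circ_{a,b}-\succ_{a,b}-\prec_{a,b}$ holds, because $\diamond\mapsto\diamond_N$ is linear in the product. So these products coincide with those given by \meqref{eq:metric-construct} for $(\circ_{a,b},\omega)$. Theorem~\mref{thm:metric-splitting} then gives a quadratic NS-Leibniz algebra, and \meqref{eq:inv-v-left}--\meqref{eq:inv-v-right} follow automatically.

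The main obstacle I expect is Step 1, the torsion bookkeeping for the Leibniz identity of $\circ_N$ and its compatibility with $\circ$. That is the only place where an actual computation is needed. If a citation to the standard Leibniz result \mcite{MondalSaha,MaSunZheng} is acceptable, I would quote it. Otherwise I would carry out the expansion organized by degree in $N$, as described above. Note that we never need $N$ to be Nijenhuis for $\succ,\prec,\vee$ separately: the uniqueness in Theorem~\mref{thm:metric-splitting} replaces that requirement.
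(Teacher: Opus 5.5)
Your proposal is correct and follows the paper's architecture. You reduce to Theorem~\mref{thm:metric-splitting}, check \meqref{eq:inv-s} and \meqref{eq:inv-p} for $\succ_N,\prec_N$ against $\circ_N$ by moving $N$ across $\omega$ (your two computations are exactly the paper's), and pass to $\diamond_{a,b}$ by linearity, with $\vee_{a,b}$ forced as the residual.

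The only real difference is Step~1. The paper does no expansion. With $\circ_t=\circ+t\circ_N$, the Nijenhuis identity gives $(I+tN)(x\circ_t y)=(I+tN)x\circ(I+tN)y$. Over $k[[t]]$, $\circ_t$ is therefore the transport of $\circ$ along the invertible map $I+tN$, and so it is Leibniz. Its defect is a polynomial of degree two in $t$, so the vanishing of its coefficients gives the Leibniz identity for $\circ_N$ and compatibility with $\circ$ at once.

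Your direct route also works. Your formula for the mixed term, $J(Nx,y,z)+J(x,Ny,z)+J(x,y,Nz)-NJ(x,y,z)$ with $J$ the Leibniz defect of $\circ$, is exactly right. Your description of the defect $J_N$ of $\circ_N$ is off, however. Every term of $J_N$ has degree exactly two in $N$, so there are no terms of degree $\le 1$ to cancel. With $T(u,v)=Nu\circ Nv-N(u\circ_N v)$, the correct bookkeeping, valid for any bilinear product and any $N$, is
\begin{align*}
J_N(x,y,z)={}&J(Nx,Ny,z)+J(Nx,y,Nz)+J(x,Ny,Nz)\\
&-N\bigl(J(Nx,y,z)+J(x,Ny,z)+J(x,y,Nz)\bigr)+N^2J(x,y,z)\\
&+T(x,y)\circ z-x\circ T(y,z)+y\circ T(x,z)\\
&+T(x\circ y,z)-T(x,y\circ z)+T(y,x\circ z).
\end{align*}
This vanishes when $\circ$ is Leibniz and $T=0$. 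The explicit identity shows exactly how torsion obstructs the Leibniz property for general $N$. The paper's formal-deformation argument is shorter and avoids the bookkeeping entirely.
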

\begin{proof}
Put $\circ_t=\circ+t\circ_N$. The Nijenhuis identity yields
\begin{equation}\mlabel{eq:nijenhuis-formal-map}
 (I+tN)(x\circ_t y)=(I+tN)x\circ(I+tN)y.
\end{equation}
Over $k[[t]]$, $I+tN$ is invertible, so $\circ_t$ is a Leibniz
product. Its Leibniz defect is quadratic in $t$. Vanishing of its
coefficients proves compatibility of $\circ$ and $\circ_N$,
and hence the Leibniz identity for $a\circ+b\circ_N$.

Skew-adjointness of $N$ and \meqref{eq:inv-p}--\meqref{eq:inv-s} imply
\begin{align}
 \omega(x\succ_N y,z)&=-\omega(y,x\circ_N z),\mlabel{eq:nijenhuis-metric-s}\\
 \omega(x\prec_N y,z)&=\omega(x,y\circ_N z+z\circ_N y).
 \mlabel{eq:nijenhuis-metric-p}
\end{align}
For $\succ_N$, the left-hand side expands to
\[
 -\omega(y,Nx\circ z)+\omega(y,N(x\circ z))
 -\omega(y,x\circ Nz),
\]
and for $\prec_N$, it expands to
\[
 \omega\bigl(x,Ny\circ z+y\circ Nz-N(y\circ z)
                  +Nz\circ y+z\circ Ny-N(z\circ y)\bigr).
\]
Since $\circ_N=\succ_N+\prec_N+\vee_N$,
Theorem~\mref{thm:metric-splitting} identifies these products with
the quadratic splitting of $\circ_N$.
By linearity, \meqref{eq:ns-compatible-products} is the quadratic
splitting of $a\circ+b\circ_N$ for every $a,b\in k$.
\end{proof}

For a quadratic Rota--Baxter NS-Leibniz algebra, the preceding
deformation also induces a deformation of its coboundary maps.

\begin{thm}\mlabel{thm:nijenhuis-bialgebra-pencil}
Let $(A,\succ,\prec,\vee,P,\omega)$ be a quadratic Rota--Baxter
NS-Leibniz algebra of weight $\lambda$. Let $N$ satisfy the hypotheses
of Theorem~\mref{thm:metric-nijenhuis}, and assume
\begin{equation}\mlabel{eq:PN-commuting}
 PN=NP.
\end{equation}
For all $a,b\in k$, $P$ is Rota--Baxter of weight $\lambda$ for the
products \meqref{eq:ns-compatible-products}.
The tensor
\begin{equation}\mlabel{eq:nijenhuis-fixed-tensor}
 T_r=P(\omega^\sharp)^{-1},\qquad r+\tau(r)=-\lambda r_\omega
\end{equation}
defines an NS-Leibniz bialgebra for every pair $(a,b)$.
Its coproducts are
\begin{equation}\mlabel{eq:nijenhuis-coproduct-pencil}
 \Delta_{\diamond,r}^{a,b}
 =a\Delta_{\diamond,r}+b\Delta_{\diamond,r}^{N},
 \qquad \diamond\in\{\succ,\prec,\vee\},
\end{equation}
where $\Delta_{\diamond,r}^{N}$ is the coboundary map for
$\diamond_N$. These bialgebras are triangular for $\lambda=0$ and
factorizable quasi-triangular for $\lambda\ne0$.
In particular, the original and deformed products and coproducts
constitute a compatible pair of NS-Leibniz bialgebra structures.
\end{thm}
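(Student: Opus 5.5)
The plan is to reduce everything to Theorem~\mref{thm:rb-factorizable-weight} (for $\lambda\ne0$) and Corollary~\mref{cor:rb-zero-triangular} (for $\lambda=0$). These are applied to the quadratic Rota--Baxter NS-Leibniz algebra $(A,\diamond_{a,b},P,\omega)$. The tensor $r$ is fixed by \meqref{eq:nijenhuis-fixed-tensor}, and neither $\omega$ nor $P$ depends on $(a,b)$. So it suffices to prove two things. First, $(A,\diamond_{a,b},\omega)$ is quadratic. Second, $P$ is Rota--Baxter of weight $\lambda$ for each $\diamond_{a,b}$. The form condition \meqref{eq:rb-form} is unchanged.

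Quadraticity is exactly Theorem~\mref{thm:metric-nijenhuis}. For the Rota--Baxter identity, linearity in $(a,b)$ reduces the claim to checking it for $\diamond_N$ alone, since it already holds for $\diamond$. Write $W(x,y)=Px\diamond y+x\diamond Py+\lambda x\diamond y$. Using $PN=NP$, expand
\[
 Px\diamond_N Py=NPx\diamond Py+Px\diamond NPy-N(Px\diamond Py).
\]
Rewrite $NPx=P(Nx)$ and $NPy=P(Ny)$, and apply the Rota--Baxter identity of $\diamond$ to the pairs $(Nx,y)$, $(x,Ny)$ and $(x,y)$. The result is
\[
 P\bigl(W(Nx,y)+W(x,Ny)-NW(x,y)\bigr),
\]
where $NP=PN$ is used once more to move $N$ past $P$. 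Expanding $W$ and regrouping shows that this equals $P\bigl(Px\diamond_N y+x\diamond_N Py+\lambda x\diamond_N y\bigr)$. The terms $\pm N(Px\diamond y)$ and $\pm N(x\diamond Py)$ match after using $NP=PN$ inside the arguments. This regrouping is the only genuine computation, and it should be routine bookkeeping.

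With these two facts, Corollary~\mref{cor:rb-zero-triangular} (for $\lambda=0$) and Theorem~\mref{thm:rb-factorizable-weight} (for $\lambda\ne0$) apply to $(A,\diamond_{a,b},P,\omega)$. They show that $r$ defines a triangular, respectively factorizable quasi-triangular, bialgebra for every pair $(a,b)$. Factorizability only needs $T_q=-\lambda(\omega^\sharp)^{-1}$ to be invertible, and this map is independent of $(a,b)$.

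For \meqref{eq:nijenhuis-coproduct-pencil}, the formulas \meqref{eq:delta-r-s}--\meqref{eq:delta-r-v} are linear in the multiplication operators $L_\diamond,R_\diamond$ for fixed $r$. The operators $L_\odot$, $R_\circ$, $L_\star$, $R_\prec$, $L_\vee+R_\vee$ and $R_\vee$ are all linear in the products. For $\diamond_{a,b}$ they equal $a$ times the original operator plus $b$ times the $N$-deformed one, so the coproducts decompose as claimed. Taking $(a,b)=(1,0)$ and $(0,1)$ gives the two bialgebras, and every linear combination is again a bialgebra, which is the compatibility statement. The main obstacle is the Rota--Baxter computation for $\diamond_N$. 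If the regrouping fails to close, I would verify it instead through the formal-deformation trick of Theorem~\mref{thm:metric-nijenhuis}. The idea there is that $P$ commutes with $I+tN$, which carries $\diamond_t$ to $\diamond$, so $P$ stays Rota--Baxter for $\diamond_t$; comparing $t$-coefficients then gives the identity for $\diamond_N$.
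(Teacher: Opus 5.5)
Your proposal is correct and follows the paper's proof. It uses the same $W$-computation to show that $P$ remains Rota--Baxter for the deformed products, the same appeal to Corollary~\ref{cor:rb-zero-triangular} and Theorem~\ref{thm:rb-factorizable-weight} with $r$ fixed by $P$ and $\omega$, and the same linearity argument for the coproducts; the only difference is that you run the $W$-computation separately for each $\diamond\in\{\succ,\prec,\vee\}$ (valid, since it uses only bilinearity, the Rota--Baxter identity for that product and $PN=NP$), whereas the paper does it once for $\circ$ and recovers the three NS identities from Theorems~\ref{thm:metric-nijenhuis} and~\ref{thm:rb-metric-splitting}; your $I+tN$ fallback would not work product by product, because it needs $N$ to be Nijenhuis for each $\diamond$ and that is not assumed, but the fallback is not needed.
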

\begin{proof}
For $u,v\in A$, put
$W(u,v)=Pu\circ v+u\circ Pv+\lambda u\circ v$.
Using $PN=NP$ and the Rota--Baxter identity for $P$, we obtain
\begin{align*}
 Px\circ_N Py
 &=NPx\circ Py+Px\circ NPy-N(Px\circ Py)\\
 &=P\bigl(W(Nx,y)+W(x,Ny)-N W(x,y)\bigr)\\
 &=P(Px\circ_N y+x\circ_N Py+\lambda x\circ_N y).
\end{align*}
Thus $P$ is Rota--Baxter of weight $\lambda$ for $\circ_N$,
and by linearity for $a\circ+b\circ_N$.
The form condition \meqref{eq:rb-form} is unchanged.
Theorems~\mref{thm:metric-nijenhuis} and
\mref{thm:rb-metric-splitting} imply the NS Rota--Baxter identities.

The maps $P$ and $\omega^\sharp$ determine $r$ independently
of $a,b$.
For $\lambda=0$, apply Corollary~\mref{cor:rb-zero-triangular};
for $\lambda\ne0$, apply Theorem~\mref{thm:rb-factorizable-weight}.
The coproduct formulas are linear in the regular actions, so their
values for $a\diamond+b\diamond_N$ are
\meqref{eq:nijenhuis-coproduct-pencil}. Every simultaneous linear
combination of the products and coproducts is therefore a bialgebra.
\end{proof}

\begin{pro}\mlabel{prop:nijenhuis-dual-deformation}
Under the preceding hypotheses,
\begin{equation}\mlabel{eq:nijenhuis-tensor-skew}
 (N\o I+I\o N)r=0,
\end{equation}
and all three deformed coproducts satisfy
\begin{equation}\mlabel{eq:nijenhuis-coproduct-explicit}
 \Delta_{\diamond,r}^{N}
 =\Delta_{\diamond,r}N
  -(N\o I+I\o N)\Delta_{\diamond,r}.
\end{equation}
If $\diamond^*$ denotes the product on $A^*$ dual to
$\Delta_{\diamond,r}$, its deformed product is
\begin{equation}\mlabel{eq:nijenhuis-dual-product}
 \zeta\diamond_N^*\eta
 =(-N^{\mathsf t}\zeta)\diamond^*\eta
  +\zeta\diamond^*(-N^{\mathsf t}\eta)
  +N^{\mathsf t}(\zeta\diamond^*\eta).
\end{equation}
Thus the induced dual deformation has operator $-N^{\mathsf t}$.
\end{pro}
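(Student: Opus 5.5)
The plan is to prove the three assertions in the order stated, each by contraction or pairing. The only ingredients are the skew-adjointness \meqref{eq:nijenhuis-skew}, the commutation $PN=NP$, and the fact that each coboundary map in \meqref{eq:delta-r-s}--\meqref{eq:delta-r-v} is linear in the regular action operators.

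\emph{Step 1: the identity \meqref{eq:nijenhuis-tensor-skew}.} For $r=\sum_i a_i\o b_i$, the contraction of $(N\o I+I\o N)r$ is
\[
 \zeta\longmapsto\sum_i\bigl(\zeta(Na_i)b_i+\zeta(a_i)Nb_i\bigr)=T_rN^{\mathsf t}\zeta+NT_r\zeta .
\]
So it suffices to show $T_rN^{\mathsf t}=-NT_r$. Skew-adjointness states $N^{\mathsf t}\omega^\sharp=-\omega^\sharp N$, hence $(\omega^\sharp)^{-1}N^{\mathsf t}=-N(\omega^\sharp)^{-1}$. With $T_r=P(\omega^\sharp)^{-1}$ from \meqref{eq:nijenhuis-fixed-tensor} and $PN=NP$, we get
\[
 T_rN^{\mathsf t}=-PN(\omega^\sharp)^{-1}=-NP(\omega^\sharp)^{-1}=-NT_r .
\]
Since the contraction map determines the tensor, $(N\o I+I\o N)r=0$. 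Because $N\o I+I\o N$ commutes with $\tau$, the same identity holds for $\tau(r)$.

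\emph{Step 2: the coproduct formula \meqref{eq:nijenhuis-coproduct-explicit}.} Each coboundary map has the form $\Delta_{\diamond,r}(x)=(X(x)\o I+I\o Y(x))s$, where $s\in\{r,\tau(r)\}$ and $X,Y$ are linear combinations of left and right multiplication operators. Replacing the products by their $N$-deformations, \meqref{eq:nijenhuis-product} gives
\[
 L^N_\diamond(x)=L_\diamond(Nx)+L_\diamond(x)N-NL_\diamond(x),\qquad
 R^N_\diamond(x)=R_\diamond(Nx)+R_\diamond(x)N-NR_\diamond(x),
\]
and the same relations hold for $X,Y$ by linearity. Substituting these into $\Delta^N_{\diamond,r}(x)$ produces three groups of terms.
\begin{itemize}
\item The terms with argument $Nx$ give $\Delta_{\diamond,r}(Nx)$.
\item For the terms $(X(x)N\o I)s$ and $(I\o Y(x)N)s$, Step 1 converts them into $-(X(x)\o N)s$ and $-(N\o Y(x))s$.
\item The remaining terms are $-(NX(x)\o I)s-(I\o NY(x))s$.
\end{itemize}
Regrouping the second and third groups yields $-(N\o I+I\o N)(X(x)\o I+I\o Y(x))s$, which is $-(N\o I+I\o N)\Delta_{\diamond,r}(x)$. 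This proves \meqref{eq:nijenhuis-coproduct-explicit} uniformly for $\succ,\prec,\vee$.

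\emph{Step 3: the dual product \meqref{eq:nijenhuis-dual-product}.} Pair \meqref{eq:nijenhuis-coproduct-explicit} with $\zeta\o\eta$ and use \meqref{eq:coalgebra-pair}. This gives
\[
 \langle\Delta_{\diamond,r}(Nx),\zeta\o\eta\rangle=\langle N^{\mathsf t}(\zeta\diamond^*\eta),x\rangle,\qquad
 \langle(N\o I)\Delta_{\diamond,r}(x),\zeta\o\eta\rangle=\langle (N^{\mathsf t}\zeta)\diamond^*\eta,x\rangle,
\]
and similarly for $I\o N$. Nondegeneracy of the pairing then gives the formula, which is exactly \meqref{eq:nijenhuis-product} for the operator $-N^{\mathsf t}$.

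The main obstacle is Step 2: one must track which tensor leg $N$ sits on, and the sign conventions for $\tau(r)$ and for the right-action terms in $\Delta_{\prec,r}$ and $\Delta_{\vee,r}$. The approach is to treat all three coproducts through the single abstract form $(X\o I+I\o Y)s$ rather than expanding each one separately.
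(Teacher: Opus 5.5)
Your proof is correct and follows the paper's argument essentially step for step. You derive $NT_r=-T_rN^{\mathsf t}$ from skew-adjointness and $PN=NP$, use $(N\o I+I\o N)s=0$ for $s\in\{r,\tau(r)\}$ to cancel the $a(x)N$ terms in the deformed actions, and then pair with $\zeta\o\eta$; the paper only packages your Step~2 regrouping as the single identity $D_{\diamond,N}(x)=D_\diamond(Nx)+D_\diamond(x)K-KD_\diamond(x)$ with $K=N\o I+I\o N$.
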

\begin{proof}
Skew-adjointness yields
$N(\omega^\sharp)^{-1}=-(\omega^\sharp)^{-1}N^{\mathsf t}$.
Together with $PN=NP$, this implies $NT_r=-T_rN^{\mathsf t}$,
or equivalently \meqref{eq:nijenhuis-tensor-skew}.
For a left or right regular action,
\[
 a_{\diamond,N}(x)=a_\diamond(Nx)+a_\diamond(x)N-Na_\diamond(x).
\]
Put $K=N\o I+I\o N$ and denote by $D_\diamond(x)$
the tensor action in the corresponding coboundary formula.
Then
\[
 D_{\diamond,N}(x)=D_\diamond(Nx)+D_\diamond(x)K-KD_\diamond(x).
\]
For $\succ,\vee$, apply this equality to $r$; for $\prec$, apply
it to $\tau(r)$. Since $Kr=K\tau(r)=0$, we obtain
\meqref{eq:nijenhuis-coproduct-explicit}.
Pairing with $\zeta\o\eta$ proves
\meqref{eq:nijenhuis-dual-product}.
\end{proof}

\begin{cor}\mlabel{cor:nijenhuis-dual-total}
If $\lambda\ne0$ in Theorem~\mref{thm:nijenhuis-bialgebra-pencil},
then $-N^{\mathsf t}$ is Nijenhuis for the original total Leibniz
product on $A^*$. If $N$ is additionally Nijenhuis for each original
NS operation on $A$, then $-N^{\mathsf t}$ is Nijenhuis for each of
the three original dual operations as well.
\end{cor}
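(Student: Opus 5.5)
The plan is to transport the dual products to $A$ along the invertible map $T_q$. Once transported, the statement becomes one about $N$ and a Rota--Baxter-deformed product on $A$. That statement should follow from a linearity property of the Nijenhuis torsion together with $PN=NP$.

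\textbf{Transport.} Put $q=r+\tau(r)=-\lambda r_\omega$. Since $\lambda\neq0$, Theorem~\mref{thm:rb-factorizable-weight} gives that $T_q=-\lambda(\omega^\sharp)^{-1}$ is invertible. Theorem~\mref{thm:quasitriangular} applies, and by \meqref{eq:T_q-dual-product} we have
\[
 T_q(\zeta\diamond_r\eta)=T_r\zeta\diamond T_q\eta+T_q\zeta\diamond T_r\eta-T_q\zeta\diamond T_q\eta
\]
for each $\diamond\in\{\succ,\prec,\vee\}$. Summing gives the same identity for $\circ$. Now set $x=T_q\zeta$ and $y=T_q\eta$. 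We have $T_rT_q^{-1}=P(\omega^\sharp)^{-1}\cdot(-\lambda^{-1})\omega^\sharp=-\lambda^{-1}P$. Hence $T_q$ is an isomorphism from $(A^*,\diamond_r)$ onto $(A,\ast_\diamond)$, where
\[
 x\ast_\diamond y=-\lambda^{-1}\bigl(Px\diamond y+x\diamond Py+\lambda\,x\diamond y\bigr).
\]
Skew-adjointness of $N$ gives $N(\omega^\sharp)^{-1}=-(\omega^\sharp)^{-1}N^{\mathsf t}$, as in the proof of Proposition~\mref{prop:nijenhuis-dual-deformation}. Therefore $T_q(-N^{\mathsf t})=NT_q$. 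So $-N^{\mathsf t}$ is Nijenhuis for $\diamond_r$ (respectively for the total dual product $\circ_r$) if and only if $N$ is Nijenhuis for $\ast_\diamond$ (respectively for $\ast_\circ$). The overall factor $-\lambda^{-1}$ does not affect the Nijenhuis condition.

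\textbf{Torsion computation.} For a bilinear product $\mu$, write the torsion as
\[
 \mathcal T_\mu(x,y)=\mu(Nx,Ny)-N\mu(Nx,y)-N\mu(x,Ny)+N^2\mu(x,y),
\]
which is exactly the defect in \meqref{eq:nijenhuis-torsion}. The map $\mu\mapsto\mathcal T_\mu$ is linear. For the product $\mu^P(x,y)=\mu(Px,y)$, the relation $PN=NP$ gives $\mathcal T_{\mu^P}(x,y)=\mathcal T_\mu(Px,y)$, and similarly $\mathcal T_\mu(x,Py)$ for $\mu(x,Py)$. Consequently
\[
 \mathcal T_{\ast_\diamond}(x,y)=-\lambda^{-1}\bigl(\mathcal T_\diamond(Px,y)+\mathcal T_\diamond(x,Py)+\lambda\,\mathcal T_\diamond(x,y)\bigr).
\]
This vanishes whenever $N$ is Nijenhuis for $\diamond$.

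\textbf{Conclusion.} Applying this with $\diamond=\circ$, which is allowed by the hypothesis of Theorem~\mref{thm:metric-nijenhuis}, proves the first claim. Here one needs that the total dual product is $\circ_r=\sum_\diamond\diamond_r$, and that it transports to $\ast_\circ=\sum_\diamond\ast_\diamond$ because the formula is linear in $\diamond$. Applying it to each of $\succ,\prec,\vee$ under the additional hypothesis proves the second claim.

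The main point to check carefully is the transport step. One must confirm the identity $T_q(-N^{\mathsf t})=NT_q$ and that the normalization $T_rT_q^{-1}=-\lambda^{-1}P$ holds. This is the only place where $\lambda\neq0$ is used. After that, the torsion computation is a routine substitution.
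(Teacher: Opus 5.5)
Your proof is correct, but it follows a different route from the paper's.

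\textbf{The paper's route.} The paper embeds $(A^*,\diamond_r)$ into the product algebra $A\oplus A$ by the injective homomorphism $F(\zeta)=(T_r\zeta,-T_{\tau(r)}\zeta)$ of Proposition~\ref{prop:factorization-image}. From \eqref{eq:nijenhuis-tensor-skew} it gets $F(-N^{\mathsf t}\zeta)=(N\oplus N)F(\zeta)$; this step uses both skew-adjointness and $PN=NP$. So $F$ carries the dual torsion to the torsion of $N\oplus N$, which vanishes componentwise, and injectivity finishes the argument.

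\textbf{Your route.} You use the bijection $T_q$ instead, which is $F$ followed by the difference map $(a,b)\mapsto a-b$. That map is not multiplicative on $A\oplus A$, so the transported product is the deformed product $\ast_\diamond$ rather than $\diamond$. You therefore need the explicit torsion identity
\[
\mathcal T_{\ast_\diamond}(x,y)=-\lambda^{-1}\bigl(\mathcal T_\diamond(Px,y)+\mathcal T_\diamond(x,Py)+\lambda\,\mathcal T_\diamond(x,y)\bigr),
\]
and this is where $PN=NP$ enters for you. Your intertwining relation $T_q(-N^{\mathsf t})=NT_q$ needs only skew-adjointness.

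\textbf{Comparison.} The paper's argument involves no computation once $F$ is known to be a homomorphism, but that homomorphism property is exactly the Yang--Baxter equation. Yours is more explicit and slightly more general. The identity \eqref{eq:T_q-dual-product} follows from \eqref{eq:non-skew-uniform} and \eqref{eq:T_q-intertwining} using only the invariance of $q$. Hence your argument never uses the Yang--Baxter equation or the Rota--Baxter identity for $P$. It needs only $P+P^\dagger=-\lambda I$ with $\lambda\neq0$ and $PN=NP$.
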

\begin{proof}
Since $\lambda\ne0$, the tensor is factorizable.
Proposition~\mref{prop:factorization-image} yields an injective
homomorphism $F(\zeta)=(T_r\zeta,-T_{\tau(r)}\zeta)$.
Equation \meqref{eq:nijenhuis-tensor-skew} implies
$F(-N^{\mathsf t}\zeta)=(N\oplus N)F(\zeta)$.
Applying $F$ to the dual Nijenhuis torsion yields the
torsion of $N\oplus N$ on $F(A^*)$. It vanishes for the total
product, so injectivity proves the first statement.
If $N$ is Nijenhuis for each NS product, the same argument applies
to the corresponding dual product.
\end{proof}

Projections associated with Manin triples satisfy the hypotheses
of the preceding results.

\begin{pro}\mlabel{prop:manin-nijenhuis-pencil}
Let $(D,U,V,\omega)$ be an NS-Leibniz Manin triple and let $P$ be
projection onto $V$ along $U$. Then $P$ is quadratic Rota--Baxter
of weight $-1$, and $N=I-2P$ is skew-adjoint and Nijenhuis for all
three products. The preceding construction yields a compatible
factorizable family on $D$, with tensor $T_r=P(\omega^\sharp)^{-1}$.
Writing $\diamond_t=\diamond+t\diamond_N$, its products restrict to
\begin{equation}\mlabel{eq:manin-pencil-restrictions}
 u\diamond_t u'=(1+t)(u\diamond u'),\qquad
 v\diamond_t v'=(1-t)(v\diamond v'),
 \quad u,u'\in U,\ v,v'\in V.
\end{equation}
\end{pro}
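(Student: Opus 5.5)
The plan is to reduce everything to properties of the projection $P$ and then invoke Theorem~\mref{thm:nijenhuis-bialgebra-pencil} with $\lambda=-1$.

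\textbf{Step 1: $P$ is a quadratic Rota--Baxter operator of weight $-1$.} This is the same computation as in the proof of Theorem~\mref{thm:canonical-double-r}. Write $z=u+v$ and $w=u'+v'$ with $u,u'\in U$ and $v,v'\in V$. Since $U$ and $V$ are subalgebras,
\[
Pz\diamond w+z\diamond Pw-z\diamond w=v\diamond v'-u\diamond u'.
\]
Applying $P$ gives $v\diamond v'=Pz\diamond Pw$, which is \meqref{eq:weight-minus-one}. For the form condition, isotropy of $U$ and $V$ gives $\omega(Pz,w)=\omega(v,u')$ and $\omega(z,Pw)=\omega(u,v')$. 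Their sum is $\omega(z,w)$, which is \meqref{eq:rb-form} with $\lambda=-1$.

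\textbf{Step 2: $N=I-2P$ is skew-adjoint and Nijenhuis.} We have $N=\id$ on $U$ and $N=-\id$ on $V$. Skew-adjointness then reduces to the mixed pairings:
\[
\omega(Nu,v)+\omega(u,Nv)=\omega(u,v)-\omega(u,v)=0,
\]
while pure pairings vanish by isotropy. For the Nijenhuis identity I would not use Proposition~\mref{prop:rb-nijenhuis-obstruction}, since $P(P-I)=0$ only handles $P$ itself. Instead I would check directly on homogeneous pairs.

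\begin{itemize}
\item For $u,u'\in U$: $u\diamond_N u'=u\diamond u'+u\diamond u'-u\diamond u'=u\diamond u'$. This agrees with $Nu\diamond Nu'=u\diamond u'$ after applying $N$, because $u\diamond u'\in U$.
\item For $v,v'\in V$: $v\diamond_N v'=-v\diamond v'$, and $N(-v\diamond v')=v\diamond v'=Nv\diamond Nv'$.
\item For the mixed pair $(u,v)$: $u\diamond_N v=u\diamond v-u\diamond v-N(u\diamond v)=-N(u\diamond v)$. Hence $N(u\diamond_N v)=-N^2(u\diamond v)=-u\diamond v=Nu\diamond Nv$, using $N^2=I$.
\end{itemize}

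Bilinearity then gives the identity for all three products, and hence for $\circ$. Finally, $PN=NP$ holds because $N$ is a polynomial in $P$.

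\textbf{Step 3: conclusion and restrictions.} Theorem~\mref{thm:nijenhuis-bialgebra-pencil} with $\lambda=-1\neq0$ now gives the compatible factorizable family with $T_r=P(\omega^\sharp)^{-1}$. Setting $(a,b)=(1,t)$ covers $\diamond_t$. The restrictions \meqref{eq:manin-pencil-restrictions} are read off from Step 2: $u\diamond_N u'=u\diamond u'$ and $v\diamond_N v'=-v\diamond v'$.

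\textbf{Expected obstacle.} The only delicate point is the mixed case of the Nijenhuis identity, where $N^2=I$ is essential. The rest is bookkeeping with the subalgebra and isotropy properties of $U$ and $V$.
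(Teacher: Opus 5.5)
Your proof is correct and follows the paper's outline. Both obtain the Rota--Baxter and form identities as in Theorem~\ref{thm:canonical-double-r}, skew-adjointness from isotropy, and $PN=NP$, then apply Theorem~\ref{thm:nijenhuis-bialgebra-pencil} with $\lambda=-1$ and $(a,b)=(1,t)$. The one difference is the Nijenhuis step: the paper shows $P$ is Nijenhuis via Proposition~\ref{prop:rb-nijenhuis-obstruction}, since $P(P-I)=0$, and then uses that the torsion of $aI+bP$ is $b^2$ times that of $P$; your direct check on $U\times U$, $V\times V$ and the mixed pairs using $N^2=I$ is equally valid and avoids that torsion-scaling lemma (the pair $(v,u)$ is identical to $(u,v)$).
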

\begin{proof}
As in Theorem~\mref{thm:canonical-double-r}, $P$ is Rota--Baxter
of weight $-1$, and isotropy yields $P^\dagger=I-P$.
Hence $N^\dagger=-N$. Since $P^2=P$,
Proposition~\mref{prop:rb-nijenhuis-obstruction} implies that $P$
is Nijenhuis. For any bilinear product, the torsion of $aI+bP$
is $b^2$ times the torsion of $P$; taking $a=1,b=-2$ proves that
$N$ is Nijenhuis. Also $PN=NP$.
Apply Theorem~\mref{thm:nijenhuis-bialgebra-pencil} with $(a,b)=(1,t)$.
The restrictions \meqref{eq:manin-pencil-restrictions} follow from
$N|_U=I$, $N|_V=-I$ and closure of the two summands.
\end{proof}

\begin{ex}
\mlabel{ex:nontrivial-nijenhuis-pencil}
Let $B$ be the four-dimensional factorizable dual regular double
constructed from Example~\mref{ex:metric}, with basis
$(e,f,e^*,f^*)$ and canonical tensor $e\o e^*+f\o f^*$.
Let $(e^\vee,f^\vee,E,F)$ be its dual basis and let
$D=B\bowtie B^*$ be the canonical double of this bialgebra.
In $B$ one has $e\succ f=f$ and $f\vee f=e$.
From the coproduct table in Section~\mref{sec:quasitriangular}, the dual algebra satisfies
\[
 F\prec F=E,\qquad F\vee F=-E.
\]
For example, the first equality follows by pairing $F\o F$
with $\Delta_\prec(e^*)=f^*\o f^*$; its pairings with the
other three basis elements are zero.
Apply Proposition~\mref{prop:manin-nijenhuis-pencil} to $(D,B,B^*,\omega_D)$.
Then
\[
 e\succ_t f=(1+t)f,\qquad F\prec_t F=(1-t)E,
\]
\[
 f\vee_t f=(1+t)e,\qquad F\vee_t F=-(1-t)E.
\]
The original and deformed product triples are linearly independent:
if the deformation were $c$ times the original triple, the first
nonzero product would force $c=1$ and the second would force $c=-1$.

The family has the fixed factorizable tensor
\[
 r_D=e\o e^\vee+f\o f^\vee+e^*\o E+f^*\o F,
\]
whose coproducts are determined by
\meqref{eq:nijenhuis-coproduct-pencil} and
\meqref{eq:nijenhuis-coproduct-explicit}.
\end{ex}

\subsection*{Acknowledgment}
The work is supported by Natural Science Foundation of China (No.12271292, 12471036).
\subsection*{Author declarations}
The authors have no conflicts of interest to disclose.
\subsection*{Data availability statement}
No data were used for the research described in this article.

\end{document}